\documentclass[a4paper]{amsart}

\usepackage[lmargin=1in,rmargin=1in,tmargin=1in,bmargin=1in]{geometry}
\RequirePackage{amsmath} 
\RequirePackage{amssymb}
\usepackage{amscd,latexsym,amsthm,amsfonts,amssymb,amsmath,amsxtra}
\usepackage[colorlinks=true,urlcolor=blue,citecolor=blue]{hyperref}
\usepackage{color}
\usepackage[all]{xy}
\usepackage[OT2,T1]{fontenc}
\usepackage{bm}
\usepackage{mathtools}
\usepackage{ mathrsfs }
\usepackage{xcolor}
\usepackage{comment}
\usepackage{marginnote}
\usepackage{enumitem}
\usepackage{thmtools, thm-restate}

\DeclareSymbolFont{cyrletters}{OT2}{wncyr}{m}{n}
\DeclareMathSymbol{\Sha}{\mathalpha}{cyrletters}{"58}

\let\Re\undefined
\let\Im\undefined

\DeclareMathOperator{\Re}{Re}
\DeclareMathOperator{\Im}{Im}

\DeclareMathOperator{\Tr}{Tr}

\DeclareMathOperator{\GL}{GL}

\newcommand{\bQ}{\mathbb{Q}}

\newcommand{\cF}{\mathcal{F}}

\newcommand{\cO}{\mathcal{O}}

\newcommand{\fn}{\mathfrak{n}}
\newcommand{\ff}{\mathfrak{f}}
\newcommand{\fl}{\mathfrak{l}}

\newcommand{\fp}{\mathfrak{p}}

\newcommand{\fq}{\mathfrak{q}}

\def\Re{\operatorname{Re}}

\newcommand{\floor}[1]{{\left\lfloor#1\right\rfloor}}

	\newcommand{\Res}{\operatorname{Res}}

	\newcommand{\K}{\operatorname{K}}

	\newcommand{\Ad}{\operatorname{Ad}}

	\newcommand{\fin}{\operatorname{fin}}
	\newcommand{\diag}{\operatorname{diag}}

	\newcommand{\Vol}{\operatorname{Vol}}

	\newcommand{\RNum}[1]{\uppercase\expandafter{\romannumeral #1\relax}}

\begin{document}
\theoremstyle{plain}
	\newtheorem{thm}{Theorem}[section]
	\newtheorem{cor}[thm]{Corollary}
	\newtheorem{thmy}{Theorem}
        \newtheorem{cory}{Corollary}
	\renewcommand{\thethmy}{\Alph{thmy}}
	\newenvironment{thmx}{\stepcounter{thm}\begin{thmy}}{\end{thmy}}
        \newenvironment{corx}{\stepcounter{cor}\begin{cory}}{\end{cory}}

	\renewcommand{\thecory}{\Alph{cory}}
	\newtheorem{hy}[thm]{Hypothesis}
	\newtheorem*{thma}{Theorem A}
	\newtheorem*{corb}{Corollary B}
	\newtheorem*{thmc}{Theorem C}
        \newtheorem*{thmd}{Theorem D}
	\newtheorem{lemma}[thm]{Lemma}  
	\newtheorem{prop}[thm]{Proposition}
	\newtheorem{conj}[thm]{Conjecture}
	\newtheorem{fact}[thm]{Fact}
	\newtheorem{claim}[thm]{Claim}
	
	\theoremstyle{definition}
	\newtheorem{defn}[thm]{Definition}
	\newtheorem{example}[thm]{Example}
	\theoremstyle{remark}
	
	\newtheorem{remark}[thm]{Remark}	
	\numberwithin{equation}{section}
	

\title[]{Relative Trace Formula and Uniform non-vanishing of Central $L$-values of Hilbert Modular Forms}%
\author{Zhining Wei, Liyang Yang and Shifan Zhao}

\address{Kassar House, 151 Thayer St, Providence, RI 02912 USA}
\email{zhining$\_$wei@brown.edu}

\address{Fine Hall, 304 Washington Rd, Princeton, 
		NJ 08544, USA}
\email{liyangy@princeton.edu}

\address{Math Building, 231 W. 18th Ave, Columbus, OH 43210 USA}
\email{zhao.3326@osu.edu}

\begin{abstract}
Let $\mathcal{F}(\mathbf{k},\mathfrak{q})$ be the set of normalized Hilbert newforms of weight $\mathbf{k}$ and prime level $\mathfrak{q}$. In this paper, utilizing regularized  relative trace formulas, we establish a positive proportion of $\#\{\pi\in\cF(\mathbf{k},\fq):L(1/2,\pi)\neq 0\}$ as $\#\mathcal{F}(\mathbf{k},\mathfrak{q})\to+\infty$. Moreover, our result matches the strength of the best known results in both the level and weight aspects. 
\end{abstract}

\date{\today}
\maketitle
\tableofcontents

\section{Introduction}
Let $\mathcal{F}$ be a finite set of automorphic representations. The Non-vanishing Problem (\textbf{NvP}) relative to $\mathcal{F}$ is to find some $c>0$ such that 
\begin{equation}\label{NvP}
\liminf_{\#\mathcal{F}\to+\infty}\frac{\#\{\pi\in \mathcal{F}:\ L(1/2,\pi)\neq 0\}}{\#\mathcal{F}}\geq c.
\end{equation}

Establishing such a positive proportion $c$ in \eqref{NvP}, or even demonstrating that $\#\{\pi \in \mathcal{F} : L(1/2, \pi) \neq 0\} \geq 1$, can have important arithmetic consequences. A primary case is $\mathcal{F} = \mathcal{F}(k,q)$, the set of  holomorphic newforms of weight $k$ and level $q$, which initiated systematic studies into the automorphic non-vanishing problem. This area has seen extensive investigation, including works such as \cite{Shimura1977}, \cite{Rohrlich1989}, \cite{FriedbergHoffstein1995}, \cite{OnoSkinner1998}, \cite{KowalskiMichel1999}, \cite{VanderKam1999}, \cite{IwaniecSarnak2000}, \cite{IwaniecLuoSarnak2000}, \cite{MichelVanderKam2002}, \cite{Trotabas2011}, \cite{Luo2015}, \cite{BM15}, \cite{BF21}, and \cite{BlomerFouvryKowalskiMichelMilicevicSawin2023}. 

For the family $\mathcal{F} = \mathcal{F}(k, q)$, the uniform limit $\#\mathcal{F} \to +\infty$, corresponding to $kq \to +\infty$, was first investigated in the seminal work of Iwaniec, Luo, and Sarnak \cite{IwaniecLuoSarnak2000}, who showed that $c = 1/8$ is admissible under the generalized Riemann hypothesis. Unconditionally, current research on \eqref{NvP} focuses on the following cases:
\begin{itemize}
    \item For fixed $k \geq 2$ and $q \to +\infty$, the best known result is $c = 1/4$ (cf. \cite{Trotabas2011}).
    \item For $k \to +\infty$ and $q = 1$, the best known result is $c = 1/5$ (cf. \cite{BF21}).
\end{itemize}
The uniform case of $kq \to +\infty$ remains unexplored unconditionally.

In this article, we consider the \textbf{NvP} for $\mathcal{F}=\cF(\mathbf{k},\fq)$, the set of normalized Hilbert newforms of level $\fq$ and holomorphic of weight $\mathbf{k}$. Our goal is to establish \eqref{NvP} \textit{uniformly} and unconditionally as $\#\mathcal{F} \to +\infty$, aiming for an explicit $c$ that matches the strength of the results in \cite{Trotabas2011} and \cite{BF21} in these hybrid cases. 

\subsection{Main Results}
Let $F$ be a totally real field of degree $d_F$. Let $\mathcal{O}_F$ and $\mathbb{A}_F$ denote the ring of integers and the ad\`{e}les ring of $F$, respectively. Let $\mathfrak{q} = \mathcal{O}_F$ or $\mathfrak{q} \subsetneq \mathcal{O}_F$ be a prime ideal with norm $N(\mathfrak{q})$. For a multi-index $\mathbf{k} = (k_v)_{v \mid \infty}\in (2\mathbb{Z})^{\otimes d_F}$, we write $\mathbf{k} \geq \mathbf{4}$ to indicate that $k_v \geq 4$ for all $v \mid \infty$, and we set $\|\mathbf{k}\| = \prod_{v \mid \infty} k_v$. Let $\mathcal{F}(\mathbf{k}, \mathfrak{q})$ denote the set of cuspidal automorphic representations $\pi$ of $\mathrm{PGL}_2$ over $F$ of level $\mathfrak{q}$ and holomorphic of weight $\mathbf{k}$, which is equivalent to the set of normalized Hilbert newforms of weight $\mathbf{k}$ and level $\mathfrak{q}$. 

Our main theorem establishes a uniform positive proportion non-vanishing result for central values $L(1/2,\pi)$, with $\pi\in \cF(\mathbf{k},\fq)$, as follows. 
\begin{thmx}\label{main theorem}
Let notation be as before. Suppose that $\mathbf{k}\geq\mathbf{4}.$ Let $0<\varepsilon<10^{-3}$, $(\log N(\mathfrak{q})\|\mathbf{k}\|)^{\varepsilon}\leq \xi\leq N(\mathfrak{q})^{1/2-\varepsilon}\|\mathbf{k}\|^{1/4-\varepsilon}$, and $(\log\xi)^{3/2+\varepsilon}<A\leq +\infty$. Then
\begin{equation}\label{uniform}
\frac{\#\{\pi\in\cF(\mathbf{k},\fq):L(1/2,\pi)> A^{-1}\}}{\#\cF(\mathbf{k},\fq)}\geq \frac{(1-\varepsilon)\cdot (\mathcal{M}_{\fq,\mathbf{k}}^{(1)})^2}{2N(\fq)\mathcal{M}_{\fq,\mathbf{k}}^{(2)}}
\end{equation}
when $\|\mathbf{k}\|N(\fq)$ is sufficiently large, where
\begin{align*}
\mathcal{M}_{\fq,\mathbf{k}}^{(1)}:=&4\delta_{\mathbf{k}}\textbf{1}_{\mathfrak{q}=\mathcal{O}_F}+(2(N(\mathfrak{q})+1)-4\delta_{\mathbf{k}})\zeta_{\mathfrak{q}}(1)\textbf{1}_{\mathfrak{q}\subsetneq\mathcal{O}_F},\\
\mathcal{M}_{\mathfrak{q},\mathbf{k}}^{(2)}:=&4(N(\mathfrak{q})+1)c_{\mathfrak{q}}\cdot\frac{\log \xi N(\mathfrak{q})^{\frac{1}{2}}\|\mathbf{k}\|}{\log\xi}- \frac{16\zeta_{\mathfrak{q}}(2)\delta_{\mathbf{k}}\cdot\textbf{1}_{\mathfrak{q}\subsetneq \mathcal{O}_F}}{1+N(\mathfrak{q})^{-1}}\cdot \frac{\log \xi\|\mathbf{k}\|}{\log\xi},
\end{align*} 
with $\delta_{\mathbf{k}}:=\textbf{1}_{\sum_{v\mid\infty}k_v\equiv 0\pmod{4}}$,  $c_{\mathfrak{q}}:=\zeta_{\mathfrak{q}}(1)^3\zeta_{\mathfrak{q}}(2)^{-1}\textbf{1}_{\mathfrak{q}\subsetneq \mathcal{O}_F}+\delta_{\mathbf{k}}\textbf{1}_{\mathfrak{q}=\mathcal{O}_F}$, and $\zeta_{\mathfrak{q}}(s):=(1-N(\mathfrak{q})^{-s})^{-1}$ being the local zeta factor.
\end{thmx}

\subsubsection{Uniform non-vanishing}
An explicit calculation of the right hand side of \eqref{uniform} yields the following result.
 
\begin{corx}\label{uniform nonvanishing corollary}
Let notation be as before. Suppose that $\mathbf{k}\geq\mathbf{4}.$ Then 
\begin{equation}\label{eq1.10}
\liminf_{N(\mathfrak{q})\|\mathbf{k}\|\to+\infty}\frac{\#\{\pi\in\cF(\mathbf{k},\mathfrak{q}):L(1/2,\pi)\geq (\log N(\mathfrak{q})\|\mathbf{k}\|)^{-2}\}}{\#\cF(\mathbf{k},\mathfrak{q})}\geq \frac{1}{100}.
\end{equation}	
\end{corx}
This can be regarded as an unconditional uniform non-vanishing result, whereas in \cite[Corollary 1.6, (1.51)]{IwaniecLuoSarnak2000}, the stronger result $c=1/8$ was proved under the assumption of the generalized Riemann hypothesis.

\subsubsection{The weight and level aspects}
By specifying the family $\cF(\mathbf{k},\fq)$ in Theorem \ref{main theorem}, we derive the following consequence. 
\begin{corx}
\label{corollary in the level aspect}
Let notation be as before. Let $\mathbf{k}\geq\mathbf{4}$. 
\begin{itemize}
\item Suppose $\mathbf{k}$ is fixed. Then 
\begin{equation}\label{eq1.2}
\liminf_{N(\fq)\to+\infty}\frac{\#\{\pi\in\cF(\mathbf{k},\fq):L(1/2,\pi)\geq (\log N(\mathfrak{q}))^{-2}\}}{\#\cF(\mathbf{k},\fq)}\geq \frac{1}{4}.
\end{equation}
\item Assume that $\sum_{v|\infty}k_v\equiv0\pmod{4}$. Then 
\begin{equation}\label{eq1.3}
\liminf_{\|\mathbf{k}\|\to+\infty}\frac{\#\{\pi\in\cF(\mathbf{k},\mathcal{O}_F):L(1/2,\pi)\geq (\log \|\mathbf{k}\|)^{-2}\}}{\#\cF(\mathbf{k},\mathcal{O}_F)}\geq \frac{1}{5}.
\end{equation} 
\item Suppose $\mathfrak{q}\subsetneq \mathcal{O}_F$ is fixed. Then  
\begin{equation}\label{eq1.4}
\liminf_{\|\mathbf{k}\|\to+\infty}\frac{\#\{\pi\in\cF(\mathbf{k},\mathfrak{q}):L(1/2,\pi)\geq (\log \|\mathbf{k}\|)^{-2}\}}{\#\cF(\mathbf{k},\mathfrak{q})}\geq \frac{(1-N(\mathfrak{q})^{-1})^3}{10 (1+N(\mathfrak{q})^{-1})}.
\end{equation} 
\item Along $\|\mathbf{k}\|\to+\infty$ and $N(\mathfrak{q})\to+\infty$, we have 
\begin{equation}\label{eq1.7}
\liminf_{N(\mathfrak{q})\to+\infty}\liminf_{\|\mathbf{k}\|\to+\infty}\frac{\#\{\pi\in\cF(\mathbf{k},\mathfrak{q}):L(1/2,\pi)\geq (\log \|\mathbf{k}\|)^{-2}\}}{\#\cF(\mathbf{k},\mathfrak{q})}\geq \frac{1}{10 }.
\end{equation}
\end{itemize}
\end{corx}

Note that \eqref{eq1.2} aligns with the result in \cite{Trotabas2011}; when $F = \mathbb{Q}$, \eqref{eq1.3} reduces to the main result in \cite{BF21}. For totally real fields $F$, \eqref{eq1.3} provides the first positive proportion non-vanishing result for Hilbert modular forms in the weight aspect. Furthermore, \eqref{eq1.4} and \eqref{eq1.7} appear to be new, even in the classical case of $F = \mathbb{Q}$.


\subsection{Strategy of the Proof}\label{sect1.2}
 In this section, we outline the proof for Theorem \ref{main theorem}. The key ingredients include the asymptotic formula for the $\lambda_{\pi}(\mathfrak{n})$-weighted second moment 
 \begin{equation}\label{equ1.5}
 \sum_{\substack{\pi\in \mathcal{F}(\mathbf{k},\mathfrak{q})}}\frac{\lambda_{\pi}(\mathfrak{n})L(1/2,\pi)^2}{L(1,\pi,\Ad)}=\mathrm{MT}_2^*(\mathfrak{n})+O(N(\mathfrak{n})^{1/2+\varepsilon}N(\mathfrak{q})^{\varepsilon}\|\mathbf{k}\|^{1/2+\varepsilon})  
 \end{equation}
as established in Theorem \ref{thm7.1} and Corollary \ref{cor7.2} in \textsection\ref{sec8.1}, along with the asymptotic formula for the $\lambda_{\pi}(\mathfrak{n})$-weighted first moment 
 \begin{equation}\label{equ1.6}
\sum_{\substack{\pi\in \mathcal{F}(\mathbf{k},\mathfrak{q})}}\frac{\lambda_{\pi}(\mathfrak{n})L(1/2,\pi)}{L(1,\pi,\Ad)}=\mathrm{MT}_1^*(\mathfrak{n})+O(N(\mathfrak{n})^{1/2+\varepsilon}N(\mathfrak{q})^{\varepsilon}\|\mathbf{k}\|^{\varepsilon})
\end{equation}
as established in Theorem \ref{thmD} and  Corollary \ref{cor9.9} in \textsection\ref{sec9.6}. Here, $\mathfrak{n}$ is an ideal coprime to $\mathfrak{q}$, and $\mathrm{MT}_i^*(\mathfrak{n})$ ($1 \leq i \leq 2$) refers to the main terms. Details on deriving these estimates are provided in \textsection\ref{sec1.2.1} and \textsection\ref{sec1.2.2}. 
\medskip

We may average \eqref{equ1.5} and \eqref{equ1.6} over $\mathfrak{n}$ with a suitable weight, obtaining 
\begin{thmx}\label{mollifie second moment generalized result}
Let notation be as before. Let $\xi>1$ and $0<\varepsilon<10^{-3}$. Let $\mathfrak{q}\subseteq \mathcal{O}_F$ be an integral ideal. Let $M_{\xi}(\pi)$ be the mollifier defined as in \eqref{M} in \textsection\ref{sec1.1.6}.  
\begin{itemize}
\item (Theorem \ref{thmC}) Let $\mathcal{M}_{\mathfrak{q},\mathbf{k}}^{(2)}$ be  defined as in Theorem \ref{main theorem}. We have 
\begin{equation}\label{A}
\sum_{\substack{\pi\in \mathcal{F}(\mathbf{k},\mathfrak{q})}}\frac{L(1/2,\pi)^2M_{\xi}(\pi)^2}{L(1,\pi,\Ad)}=\frac{\zeta_F(2)^2D_F^{\frac{3}{2}}}{(\Res_{s=1}\zeta_F(s))^2}\prod_{v\mid\infty}\frac{k_v-1}{4\pi^2}\cdot \mathcal{M}_{\mathfrak{q},\mathbf{k}}^{(2)}+\mathcal{E}_{\mathfrak{q},\mathbf{k}}^{(2)},
\end{equation}
where  
\begin{align*}
\mathcal{E}_{\mathfrak{q},\mathbf{k}}^{(2)}\ll (\log\xi)^{-1}N(\mathfrak{q})\|\mathbf{k}\|+\xi^{2+\varepsilon}N(\mathfrak{q})^{\varepsilon}\|\mathbf{k}\|^{1/2+\varepsilon},
\end{align*}
and the implied constant depends only on $\varepsilon$ and $F$.
\item (Theorem \ref{prop9.13}) Let $\mathcal{M}_{\mathfrak{q},\mathbf{k}}^{(1)}$ be  defined as in Theorem \ref{main theorem}. We have 
\begin{equation}\label{B}
\sum_{\substack{\pi\in \mathcal{F}(\mathbf{k},\mathfrak{q})}}\frac{L(1/2,\pi)M_{\xi}(\pi)}{L(1,\pi,\Ad)}=\frac{\zeta_F(2)D_F^{3/2}}{  \Res_{s=1}\zeta_F(s)}\prod_{v\mid\infty}\frac{(k_v-1)}{4\pi ^2}\cdot\mathcal{M}_{\fq,\mathbf{k}}^{(1)}+\mathcal{E}_{\fq,\mathbf{k}}^{(1)},
\end{equation}
where
\[\mathcal{E}_{\fq,\mathbf{k}}^{(1)} \ll (\log\xi)^{-1}N(\mathfrak{q})\|\mathbf{k}\|+N(\mathfrak{q})^{\varepsilon}\|\mathbf{k}\|^{\varepsilon}\xi^{1+2\varepsilon},\]
and the implied constant depends only on $F$ and $\varepsilon$.
\end{itemize}
\end{thmx} 

Utilizing Cauchy inequality we obtain 
\begin{equation}\label{equ1.9}
\bigg[\sum_{\substack{\pi}}\frac{L(1/2,\pi)M_{\xi}(\pi)}{L(1,\pi,\Ad)}\bigg]^2\leq\bigg[\sum_{\substack{\pi}}\frac{\textbf{1}_{L(1/2,\pi)\neq 0}}{L(1,\pi,\Ad)}\bigg]\cdot \bigg[\sum_{\substack{\pi}}\frac{L(1/2,\pi)^2M_{\xi}(\pi)^2}{L(1,\pi,\Ad)}\bigg],
\end{equation}
where $\pi\in \mathcal{F}(\mathbf{k},\mathfrak{q})$. Take $(\log N(\mathfrak{q})\|\mathbf{k}\|)^{\varepsilon}\leq \xi\leq N(\mathfrak{q})^{1/2-\varepsilon}\|\mathbf{k}\|^{1/4-\varepsilon}$ in Theorem \ref{mollifie second moment generalized result}. Substituting \eqref{A} and \eqref{B} into \eqref{equ1.9} yields 
\begin{equation}\label{equ1.10}
\sum_{\substack{\pi\in \mathcal{F}(\mathbf{k},\mathfrak{q})\\
L(1/2,\pi)\neq 0}}\frac{1}{L(1,\pi,\Ad)}\geq \frac{(1-\varepsilon)\cdot (\mathcal{M}_{\fq,\mathbf{k}}^{(1)})^2}{\mathcal{M}_{\fq,\mathbf{k}}^{(2)}},
\end{equation}
which, modulo the harmonic weight $L(1,\pi,\Ad)$, gives \eqref{uniform} in Theorem \ref{main theorem} with $A=+\infty$. We then take advantage of techniques in \cite{KowalskiMichel1999} to get rid of $L(1,\pi,\Ad)$ in \eqref{equ1.10}, which will be briefly discussed in \textsection\ref{sec1.2.3}.

\subsubsection{The weighted second moment via relative trace formula}\label{sec1.2.1} To address this, we adopt a different approach to derive \eqref{equ1.5} over number fields: the regularized relative trace formula ($\mathbf{RTF}$) developed in \cite{Yan23a} and \cite{Yan23c}. 

For a suitable test function $f \in C^{\infty}(\mathrm{PGL}_2(\mathbb{A}_F))$, let $\K(\cdot,\cdot)$ be the associated automorphic kernel (cf. \eqref{f1.5} in \textsection\ref{sec1.2}). Regularizing the integral
\begin{align*}
\int_{F^{\times}\backslash\mathbb{A}_F^{\times}}\int_{F^{\times}\backslash\mathbb{A}_F^{\times}}\K\left(\begin{pmatrix}
x\\
&1
\end{pmatrix},\begin{pmatrix}
y\\
&1
\end{pmatrix}\right)|x|^{s_1}|y|^{s_2}d^{\times}yd^{\times}x,
\end{align*}
in conjunction with the spectral-geometric expansions of $\K(\cdot,\cdot)$, we obtain 
\begin{equation}\label{fc1.12}
\sum_{\substack{\pi\in \mathcal{F}(\mathbf{k},\mathfrak{q})}}\frac{\lambda_{\pi}(\mathfrak{n})L(1/2,\pi)^2}{L(1,\pi,\Ad)}+\mathrm{oldforms}=\mathrm{MT}_2(\mathfrak{n})+\mathrm{Error}_2(\mathfrak{n}),
\end{equation}
where $``\mathrm{oldforms}"$ refers to the contribution from oldforms, $\mathrm{MT}_2(\mathfrak{n})$ is the main term, and $``\mathrm{Error}_2(\mathfrak{n})"$ arises from the regular orbital integrals. Roughly, 
\begin{align*}
\mathrm{Error}_2(\mathfrak{n})\asymp \sum_{\substack{t\in F-\{0,1\}\\
t(1-t)^{-1}\in \mathfrak{q}\mathfrak{n}^{-1}}}\int_{\mathbb{A}_F^{\times}}\int_{\mathbb{A}_F^{\times}}f\left(\begin{pmatrix}
x^{-1}& \\
&1
\end{pmatrix}\begin{pmatrix}
1& t\\
1& 1
\end{pmatrix}\begin{pmatrix}
xy& \\
&1
\end{pmatrix}\right)d^{\times}yd^{\times}x.
\end{align*}

We compute this integral explicitly: the Archimedean part of the inner integrals involves products of   Legendre functions, while the non-Archimedean part contributes a shift convolution of divisor functions, cf. Proposition \ref{prop6.6} in \textsection\ref{sec1.3.5}. A sharp bound on the Legendre functions, combined with certain combinatorial arguments, will yield the desired bound in \eqref{equ1.5}, as  established in \textsection\ref{sec7.3}.

For $F = \mathbb{Q}$, the error terms in \cite{BF21} involve various Gauss hypergeometric functions and their derivatives. In contrast, our formula is simpler and applies uniformly over number fields. The $``\mathrm{oldforms}"$ term will be discussed in \textsection\ref{sec1.2.3.}. 

\subsubsection{The weighted first moment via relative trace formula}\label{sec1.2.2}
Let $\psi$ be an additive character of $F\backslash\mathbb{A}_F$. We employ the relative trace formula associated with the integral
\begin{align*}
\int_{F^{\times}\backslash\mathbb{A}_F^{\times}}\int_{F\backslash\mathbb{A}_F}\K\left(\begin{pmatrix}
a\\
& 1
\end{pmatrix},\begin{pmatrix}
1& b\\
& 1
\end{pmatrix}\begin{pmatrix}
y\\
&1
\end{pmatrix}\right)\psi(b)|a|^{s}dbd^{\times}a
\end{align*}
to handle the weighted first moment, yielding  
\begin{equation}\label{fc1.13}
\sum_{\substack{\pi\in \mathcal{F}(\mathbf{k},\mathfrak{q})}}\frac{\lambda_{\pi}(\mathfrak{n})L(1/2,\pi)}{L(1,\pi,\Ad)}+\mathrm{oldforms}=\mathrm{MT}_1(\mathfrak{n})+\mathrm{Error}_1(\mathfrak{n}),
\end{equation}
where $``\mathrm{oldforms}"$ refers to the contribution from oldforms, $\mathrm{MT}_1(\mathfrak{n})$ is the main term, and $``\mathrm{Error}_1(\mathfrak{n})"$ arises from the regular orbital integrals. Roughly, 
\begin{equation}\label{equa1.12}
\mathrm{Error}_1(\mathfrak{n})\asymp \sum_{\substack{t\in \mathfrak{q}\mathfrak{n}^{-1}\\
t\neq 0}}\int_{\mathbb{A}_F^{\times}}\int_{\mathbb{A}_F}\left(\begin{pmatrix}
a^{-1}\\
& 1
\end{pmatrix}\begin{pmatrix}
-t& -1\\
1&
\end{pmatrix}\begin{pmatrix}
y& b\\
& 1
\end{pmatrix}\right)\psi(b)dbd^{\times}a.
\end{equation} 
 
This type of relative trace formula was first studied by Knightly and Li \cite{KL10} for $F = \mathbb{Q}$. As in loc. cit., we can explicitly compute the inner integrals in \eqref{equa1.12}: the Archimedean part involves products of Kummer confluent hypergeometric functions, while the non-Archimedean part corresponds to simple arithmetic functions, cf. Theorem \ref{thmD} in \textsection\ref{sec9.6}. However, the trivial bounding of Kummer confluent hypergeometric functions in \cite{KL10} and \cite{JK15} does not yield the estimate in \eqref{equ1.6} for all $\mathfrak{q}$ and $\mathbf{k}$, even when $F = \mathbb{Q}$.

In \textsection\ref{sec9.6}, we utilize the Hankel transform and stationary phase arguments to handle the Kummer confluent hypergeometric function in \eqref{equa1.12}, ultimately leading to the desired estimate \eqref{equ1.6}.

\subsubsection{Contributions from oldforms}\label{sec1.2.3.}
Recall that our goal is to establish a uniform non-vanishing result as $N(\mathfrak{q})\|\mathbf{k}\| \to \infty$. The contribution from oldforms may not be negligible when $\mathfrak{q}$ is a prime ideal whose norm is small compared to $\|\mathbf{k}\|$.

The classical application of the Petersson formula for Hilbert modular forms (cf. \cite{Luo2003}, \cite{Trotabas2011}) seems to be inconvenient in this scenario due to the root number. However, using the $\mathbf{RTF}$, the $``\mathrm{oldforms}"$ terms in \eqref{fc1.12} and \eqref{fc1.13} can be handled inductively and symmetrically (see \textsection\ref{sec8.2} and \textsection\ref{sec9.7}). This is another advantage of the relative trace formula.

\subsubsection{Removing the harmonic weight}\label{sec1.2.3}
To prove Theorem \ref{main theorem}, we follow the strategy from \cite{KowalskiMichel1999} to eliminate the harmonic weight $L(1,\pi,\Ad)$ from \eqref{equ1.10}. A key technical requirement is \cite[(25) in \textsection 3.3]{KowalskiMichel1999}, which, as noted in \cite{BF21}, follows from the hybrid bound 
\begin{equation}\label{equ1.15}
L(1/2,\pi)\ll N(\fq)^{\frac{1}{4}+\epsilon}\|\mathbf{k}\|^{\frac{3}{8}+\epsilon}
\end{equation}
for all $\pi\in\cF(\mathbf{k},\fq)$. It is important to note that \eqref{equ1.15} has been  established in either the weight aspect or the level aspect. However, in our setting, where both $\mathfrak{q}$ and $\mathbf{k}$ vary simultaneously, the bound \eqref{equ1.15} remains unproven.

Despite this, we observe that the bound
\begin{equation}\label{equ1.16}
M_{\xi}(\pi)L(1/2,\pi)\ll (N(\mathfrak{q})\|\mathbf{k}\|)^{\frac{1}{2}-\delta}
\end{equation}
for some $\delta>0$ also implies the condition (25) in   \cite[\textsection 3.3]{KowalskiMichel1999}. 

By amplifying according to the estimate \eqref{M} in Theorem \ref{mollifie second moment generalized result}, we establish \eqref{equ1.16} for all $0<\delta<1/4$, cf. Theorem \ref{thm11.1} in \textsection\ref{sec11.2}. Consequently, this allows us to apply the arguments in loc. cit.  to successfully remove the harmonic weights.


\bigskip

\subsection{Outline of the Paper} 
The structure of this paper is as follows. 
\subsubsection{The second moment via the regularized $\mathbf{RTF}$}
In \textsection\ref{sec2} we set up the test function $f=f_{\mathfrak{n},\mathfrak{q}}$ and recall the relevant regularized $\mathbf{RTF}$ from \cite{Yan23a} and \cite{Yan23c}, as summarized in Theorem \ref{thm2.3} of \textsection\ref{sec2.2}. This formula will be instrumental in deriving the asymptotic formula \eqref{equ1.5}.

\begin{itemize}
\item In \textsection\ref{sec3} we compute the spectral side explicitly, proving Theorems \ref{thm3.5} and \ref{spec} in \textsection\ref{sec2.3}, which gives the left hand side of \eqref{fc1.12} in \textsection\ref{sec1.2.1}. 
\item From \textsection\ref{section4} to \textsection\ref{sec7}, we handle the geometric side. In \textsection\ref{section4} and \textsection\ref{section5}, we compute the small cell orbital integral and the dual orbital integral, respectively. These constitute the main terms of the geometric side, as summarized in Proposition \ref{prop5.3} of \textsection\ref{sec5}. In \textsection\ref{sec7}, we analyze the regular orbital integrals, establishing the necessary upper bounds in Proposition \ref{prop6.12}.

\item In \textsection\ref{sec8}, we synthesize all the results from the previous sections to compute the mollified second moment \eqref{A} in Theorem \ref{mollifie second moment generalized result}. 
\end{itemize}
 
\subsubsection{The first moment via the $\mathbf{RTF}$}
In \textsection\ref{sec9}, we investigate the relative trace formula in  \cite{KL10} over totally real fields. After computing the spectral side in \textsection\ref{sec9.1} and \textsection\ref{sec9.7}, and the geometric side in \textsection\ref{sec9.2}--\textsection\ref{sec9.5}, we establish Theorem \ref{prop9.13} in \textsection\ref{sec9.8}, which leads to the mollified first moment \eqref{B} in Theorem \ref{mollifie second moment generalized result}.

\subsubsection{Uniform non-vanishing in harmonic average}
In \textsection\ref{sec10},  we establish uniform non-vanishing with the harmonic weight $L(1,\pi,\Ad)$; see  Theorem \ref{thm10.2}. A special case is the inequality \eqref{equ1.10} in \textsection\ref{sect1.2}. 

\subsubsection{Uniform non-vanishing in natural average}
In \textsection\ref{sec11}, we establish the main result, Theorem \ref{main theorem}, by removing the harmonic weight from Theorem \ref{mollifie second moment generalized result}. This approach builds on the method from \cite{KowalskiMichel1999}, combined with our novel technique for bounding individual mollified central values (cf. Theorem \ref{11.1} in \textsection \ref{sec11.2}).

\subsubsection{Appendix: $\mathbf{RTF}$'s of different types}
In Appendix \ref{classical vs RTF}, we summarize the strategy and challenges of applying the Petersson formula to establish nonvanishing results in both the uniform aspect and over totally real fields (as opposed to the case over $\mathbb{Q}$). We also compare this approach with the use of $\mathbf{RTF}$'s in this paper.

\subsection{Notation Guide}
\subsubsection{Number Fields and Measures}\label{1.1.1}
Let $F$ be a totally real field with ring of integers $\mathcal{O}_F$ and discriminant $D_F$. Let $N_F$ be the absolute norm. Let $\mathfrak{O}_F$ be the different of $F.$ Let $\mathbb{A}_F$ be the adele group of $F.$ Let $\Sigma_F$ be the set of places of $F.$ Denote by $\Sigma_{F,\fin}$ (resp. $\Sigma_{F,\infty}$) the set of non-Archimedean (resp. Archimedean) places. For $v\in \Sigma_F,$ we denote by $F_v$ the corresponding local field. For a non-Archimedean place $v,$ let $\mathcal{O}_v$ be the ring of integers of $F_v$, and $\mathfrak{p}_v$ be the maximal prime ideal in $\mathcal{O}_v$. Given an integral ideal $\mathcal{I},$ we say $v\mid \mathcal{I}$ if $\mathcal{I}\subseteq \mathfrak{p}_v.$ Fix a uniformizer $\varpi_{v}\in\mathfrak{p}_v.$ Denote by $e_v(\cdot)$ the evaluation relative to $\varpi_v$ normalized as $e_v(\varpi_v)=1.$ Let $q_v$ be the cardinality of $\mathbb{F}_v:=\mathcal{O}_v/\mathfrak{p}_v.$ We use $v\mid\infty$ to indicate an Archimedean place $v$ and write $v<\infty$ if $v$ is non-Archimedean. Let $|\cdot|_v$ be the norm in $F_v.$ Put $|\cdot|_{\infty}=\prod_{v\mid\infty}|\cdot|_v$ and $|\cdot|_{\fin}=\prod_{v<\infty}|\cdot|_v.$ Let $|\cdot|_{\mathbb{A}_F}=|\cdot|_{\infty}\otimes|\cdot|_{\fin}$. We will simply write $|\cdot|$ for $|\cdot|_{\mathbb{A}_F}$ in calculation over $\mathbb{A}_F^{\times}$ or its quotient by $F^{\times}$.

\subsubsection{Ideals}
Let $\mathfrak{q}=\mathcal{O}_F$ or $\mathfrak{q}\subsetneq \mathcal{O}_F$ be a prime ideal, and $\mathfrak{n}\subset \mathcal{O}_F$ be an integral ideal with ${\mathfrak{n}}+\mathfrak{q}= \mathcal{O}_F$. Let $\mathfrak{n}=\prod_{v<\infty}\mathfrak{p}_v^{r_v}$ be the primary decomposition. Denote by $e_v(\mathfrak{n})=r_v$ be the valuation of $\mathfrak{n}$ at the nonarchimedean place $v$. Let 
\begin{equation}\label{1.1}
V_{\mathfrak{q}}:=
\begin{cases}
N_F(\mathfrak{q})+1,\ \ &\text{if $\mathfrak{q}\subsetneq \mathcal{O}_F$}\\
1,\ \ &\text{if $\mathfrak{q}=\mathcal{O}_F$}.
\end{cases}
\end{equation}

For convenience, we denote $N(\mathfrak{q}) = N_F(\mathfrak{q})$ throughout this paper.    


\subsubsection{Additive Characters }
Let $\psi_{\mathbb{Q}}$ be the additive character on $\mathbb{Q}\backslash \mathbb{A}_{\mathbb{Q}}$ such that $\psi_{\mathbb{Q}}(t_{\infty})=\exp(2\pi it_{\infty}),$ for $t_{\infty}\in \mathbb{R}\hookrightarrow\mathbb{A}_{\mathbb{Q}}.$ Let $\psi=\psi_{\mathbb{Q}}\circ \Tr_F,$ where $\Tr_F$ is the trace map. Then $\psi(t)=\prod_{v\in\Sigma_F}\psi_v(t_v)$ for $t=(t_v)_v\in\mathbb{A}_F.$ 

At $v<\infty$, each $\psi_v$ has conductor $\mathfrak{p}_v^{-d_v}$, where $d_v$ is the nonnegative integer such that $\mathfrak{p}_v^{d_v}$ is the local different. 

\subsubsection{Haar Measure}\label{sec2.1.2}
For $v\in \Sigma_F,$ let $dt_v$ be the additive Haar measure on $F_v,$ self-dual relative to $\psi_v.$ Then $dt=\prod_{v\in\Sigma_F}dt_v$ is the standard Tamagawa measure on $\mathbb{A}_F$. Let $d^{\times}t_v=\zeta_{F_v}(1)dt_v/|t_v|_v,$ where $\zeta_{F_v}(\cdot)$ is the local Dedekind zeta factor. In particular, $\Vol(\mathcal{O}_v^{\times},d^{\times}t_v)=\Vol(\mathcal{O}_v,dt_v)=N_{F_v}(\mathfrak{D}_{F})^{-1/2}=q_v^{-d_v/2}$ for all finite place $v.$ Moreover, $\Vol(F\backslash\mathbb{A}_F; dt_v)=1$ and $\Vol(F\backslash\mathbb{A}_F^{(1)},d^{\times}t)=\underset{s=1}{\Res}\ \zeta_F(s),$ where $\mathbb{A}_F^{(1)}$ is the subgroup of ideles $\mathbb{A}_F^{\times}$ with norm $1,$ and $\zeta_F(s)=\prod_{v<\infty}\zeta_{F_v}(s)$ is the finite Dedekind zeta function. Denote by $\widehat{F^{\times}\backslash\mathbb{A}_F^{(1)}}$  the Pontryagin dual of $F^{\times}\backslash\mathbb{A}_F^{(1)}.$


\subsubsection{Algebraic Groups}
Let $G=\mathrm{GL}_2$ and $\overline{G}=\mathrm{PGL}_2$. Denote by $Z$ the center of $G$. Let $B$ be the Borel subgroup of $G$ and $B_0$ be the mirabolic subgroup of $G$. Let $T$ (resp. $N$) be the Levi (resp. unipotent radical) of $B$. Let $A=\diag(\GL(1),1).$

\subsubsection{Automorphic Representations}
Let $\mathbf{k}=(k_v)_{v\mid\infty}\in \mathbb{Z}_{>2}^{d_F}$, where $k_v$ is even, $v\mid\infty$. Let $\Pi_{\mathbf{k}}(\mathfrak{q})$ is the set of unitary cuspidal automorphic representations $\pi=\otimes_{v\leq\infty}\pi_v$ of $\mathrm{PGL}_2(\mathbb{A}_F)$ such that
\begin{itemize}
\item for $v\mid\infty$, $\pi_v$ is the discrete series of weight $k_v$;
\item the arithmetic conductor of $\pi_{\mathrm{fin}}:=\otimes_{v<\infty}\pi_v$ divides $\mathfrak{q}$.   
\end{itemize} 

Let $\mathcal{F}(\mathbf{k},\mathfrak{q})\subseteq \Pi_{\mathbf{k}}(\mathfrak{q})$ be the subset of unitary cuspidal automorphic representations $\pi=\otimes_{v\leq\infty}\pi_v$ such that the arithmetic conductor of $\pi_{\mathrm{fin}}:=\otimes_{v<\infty}\pi_v$ is \textit{equal} to $\mathfrak{q}$.

\subsubsection{$L$-function} 
Let $\pi\in \Pi_{\mathbf{k}}(\mathfrak{q})$. For $\Re(s)\gg 1$, let
\begin{align*}
L(s,\pi):=\sum_{\mathfrak{m}\subseteq\mathcal{O}_F} \frac{\lambda_{\pi}(\mathfrak{m})}{N(\mathfrak{m})^s}
\end{align*}
be the Dirichlet series expression for the standard $L$-function of $\pi$. In particular, the Dirichlet coefficients $\lambda_{\pi}(\mathfrak{m})$ are consistent with the Hecke eigenvalue.

\subsubsection{Mollifier}\label{sec1.1.6}
Let $\rho$ be a multiplicative arithmetic function. Suppose $\rho(\mathfrak{p})\ll 1$ for all prime ideals $\mathfrak{p}$, with the implied constant being absolute. Let $\xi>1$ be a parameter to be determined. Define 
\begin{equation}\label{M}
M_{\xi,\rho}(\pi)=\frac{1}{\log \xi} \sum_{\substack{\mathfrak{n} \subseteq \mathcal{O}_F \\ (\mathfrak{n},\mathfrak{q}) = 1}} \frac{\lambda_\pi(\mathfrak{n})\mu_F(\mathfrak{n})\rho(\mathfrak{n})}{\sqrt{N(\mathfrak{n})}}\cdot  \frac{1}{2\pi i}\int_{(2)} \frac{\xi^s}{N(\mathfrak{n})^s} \frac{ds}{s^3}.
\end{equation}
where $\mu_F$ is the M\"{o}bius function. 

One special choice of $\rho$ is given by
\begin{equation}\label{rho}
\rho(\mathfrak{n}):=\textbf{1}_{\mathfrak{n}=\mathcal{O}_F}+\textbf{1}_{\mathfrak{n}\subsetneq \mathcal{O}_F}\prod_{\substack{\mathfrak{p}\mid\mathfrak{n}\\ \text{$\mathfrak{p}$ prime}}}(1+N(\mathfrak{p})^{-1})^{-1}.
\end{equation}

We write $M_{\xi}(\pi)$ for $M_{\xi,\rho}(\pi)$ if $\rho$ is defined by \eqref{rho}. 

\subsubsection{Fan-shaped Contours}\label{sec1.1.1}
Let $z_0=r_0e^{i\theta_0}\in \mathbb{C}^{\times}$, where $r_0=|z_0|$ and $-\pi\leq \theta_0<\pi$. For $R>0$, we define the contour $\mathcal{C}_R(z_0):=\mathcal{C}_R^{(1)}(z_0)\cup \mathcal{C}_R^{(2)}(z_0)\cup \mathcal{C}_R^{(3)}(z_0)$, where $\mathcal{C}_R^{(j)}(z_0)$, $1\leq j\leq 3$, is defined as follows.
\begin{itemize}
\item Define $\mathcal{C}_R^{(1)}(z_0)$ as the line segment along the real axis from $0$ to $R$.
\item Define $\mathcal{C}_R^{(2)}(z_0)=\{Re^{i\theta}:\ 0\leq \theta\leq \theta_0\}$ as the directed arc of a circle with radius $R$. The orientation of this arc is counterclockwise if $0 \leq \theta_0< \pi$, and clockwise if $-\pi\leq \theta_0<0$.
\item Define $\mathcal{C}_R^{(3)}(z_0)=\{re^{i\theta_0}:\ R\geq r\geq 0\}$ as the line segment from the point $z_0$ on the circle of radius $R$ back to the origin along the line defined by the angle $\theta_0$.
\end{itemize}

\section{The Regularized Relative Trace Formula}\label{sec2}

\subsection{Test Functions and the  Automorphic Kernel}\label{sec1.2}
Let $f_{\mathfrak{n},\mathfrak{q}}:=\otimes_{v\leq \infty}f_v\in L^1(\overline{G}(\mathbb{A}_F))$ be the test function defined as follows. 
\begin{itemize}
\item Let $v\mid\infty$. Define  
\begin{equation}\label{t1.5}
f_{v}(g_v):=\frac{k_v-1}{4\pi}\cdot \frac{(2i)^{k_v}\det (g_v)^{k_v/2}\textbf{1}_{\det g_v>0}}{(-b_v+c_v+i(a_v+d_v))^{k_v}},\ \ g_v=\begin{pmatrix}
a_v & b_v\\
c_v & d_v
\end{pmatrix}\in G(F_v).
\end{equation} 
Note that $f_{v}$ is the normalized matrix coefficient relative to a lowest weight unit vector in the discrete series $\pi_v$ (cf. e.g., \cite{KL08}). It is integrable over $\overline{G}(F_v)$ if and only if $k_v>2$. 

\item Let $v\mid\mathfrak{n}$. For $g_v\in G(F_v)$, we define 
\begin{equation}\label{t1.6}
f_v(g_v):=q_v^{-\frac{e_v(\mathfrak{n})}{2}}\sum_{\substack{i+j=e_v(\mathfrak{n})\\
i\geq j\geq 0}}\textbf{1}_{Z(F_v)K_v\diag(\varpi_v^i,\varpi_v^j)K_v}(g_v).
\end{equation}

\item For $q\subsetneq \mathcal{O}_F$, and $v=\mathfrak{q}$, i.e., $\mathfrak{p}_v=\mathfrak{q}$. For $g_v\in G(F_v)$, we define 
\begin{equation}\label{t1.7}
f_v(g_v):=\Vol(K_0(\mathfrak{p}_v))^{-1}\textbf{1}_{Z(F_v)K_0(\mathfrak{p}_v)}(g_v),
\end{equation}
where 
\begin{align*}
K_0(\mathfrak{p}_v):=\bigg\{\begin{pmatrix}
a_v& b_v\\
c_v & d_v
\end{pmatrix}\in K_v:\ c_v\in \mathfrak{p}_v\bigg\}.
\end{align*}

\item Let $v<\infty$ and $v\nmid \mathfrak{n}\mathfrak{q}$. We define, for $g_v\in G(F_v)$, 
\begin{equation}\label{t1.8}
f_v(g_v):=\textbf{1}_{Z(F_v)K_v}(g_v). 
\end{equation}
\end{itemize}

With the above test function $f_{\mathfrak{n},\mathfrak{q}}$, we define the kernel function as
\begin{equation}\label{f1.5}
\K(x,y):=\sum_{\gamma\in \overline{G}(F)}f_{\mathfrak{n},\mathfrak{q}}(x^{-1}\gamma y),\ \ x, y\in G(\mathbb{A}_F). 
\end{equation}
This is conventionally called the \textit{geometric} expansion of the kernel function $\K$. 

We also have the spectral decomposition 
\begin{equation}\label{eq1.6}
\K(x,y)=\sum_{\pi\in \Pi_{\mathbf{k}}(\mathfrak{q})}\sum_{\phi\in\mathfrak{B}_{\pi}}\pi(f_{\mathfrak{n},\mathfrak{q}})\phi(x)\overline{\phi(y)},
\end{equation}
where $\mathfrak{B}_{\pi}$ is an orthonormal basis of $\pi$. Notice that the right hand side of \eqref{eq1.6} is a finite sum.  

\subsection{The Relative Trace Formula}\label{sec2.2}
Let $f_{\mathfrak{n},\mathfrak{q}}$ be defiend as in \textsection\ref{sec1.2}. Let $\textbf{s}=(s_1,s_2)\in \mathbb{C}^2$. Consider the function 
\begin{equation}\label{eq1.5}
J(f_{\mathfrak{n},\mathfrak{q}},\textbf{s}):=\int_{F^{\times}\backslash\mathbb{A}_F^{\times}}\int_{F^{\times}\backslash\mathbb{A}_F^{\times}}\K\left(\begin{pmatrix}
x\\
&1
\end{pmatrix},\begin{pmatrix}
y\\
&1
\end{pmatrix}\right)|x|^{s_1}|y|^{s_2}d^{\times}yd^{\times}x.
\end{equation}

A regularized relative trace formula based on the spectral-geometric expansion of the kernel function $\K(\cdot,\cdot)$ was established in \cite{Yan23a} or \cite[Theorem 3.2]{Yan23c}. However, by utilizing \eqref{eq1.6} and the rapid decay of cusp forms, the function $J(f_{\mathfrak{n},\mathfrak{q}},\textbf{s})$ converges absolutely in $(s_1,s_2)\in \mathbb{C}^2$. Thus, we can provide a simplified argument to derive the specific relative trace formula required for the purposes of this paper. 

\subsubsection{The Spectral Side}
Let $\Re(s_1)\gg 1$ and $\Re(s_2)\gg 1$. Substituting \eqref{eq1.6} into \eqref{eq1.5} and sapping integrals, we obtain 
\begin{equation}\label{eq1.8}
J(f_{\mathfrak{n},\mathfrak{q}},\textbf{s})=\sum_{\pi\in \Pi_{\mathbf{k}}(\mathfrak{q})}\sum_{\phi\in\mathfrak{B}_{\pi}}\mathcal{P}(s_1,\pi(f_{\mathfrak{n},\mathfrak{q}})\phi)\mathcal{P}(s_2,\overline{\phi}),
\end{equation}
where for $s\in \mathbb{C}$, 
\begin{equation}\label{eq2.2}
\mathcal{P}(s,\phi):=\int_{F^{\times}\backslash \mathbb{A}_F^{\times}}\phi\left(\begin{pmatrix}
a\\
&1
\end{pmatrix}\right)|a|^{s}d^{\times}a.
\end{equation}

For accuracy, we denote $J_{\mathrm{Spec}}(f_{\mathfrak{n},\mathfrak{q}},\textbf{s})$ by the integral \eqref{eq1.8}, which is referred to as the \textit{spectral side} of the relative trace formula. 

\subsubsection{The Geometric Side}\label{sec2.3.2}
Substituting the geometric expansion \eqref{f1.5} into \eqref{eq1.5}, the function $J(f_{\mathfrak{n},\mathfrak{q}},\textbf{s})$ boils down to 
\begin{equation}\label{f1.10}
\iint_{(F^{\times}\backslash\mathbb{A}_F^{\times})^2}\sum_{\gamma\in \overline{G}(F)}f_{\mathfrak{n},\mathfrak{q}}\left(\begin{pmatrix}
x^{-1}\\
&1	
\end{pmatrix}
\gamma \begin{pmatrix}
xy&\\
&1
\end{pmatrix}\right)|x|^{s_1+s_2}|y|^{s_2}d^{\times}yd^{\times}x,
\end{equation}
which converges in the region $\Re(s_1)\gg 1$ and $\Re(s_2)\gg 1$.

We denote by $J_{\mathrm{Geom}}(f_{\mathfrak{n},\mathfrak{q}},\textbf{s})$ the integral \eqref{f1.10}, which is conventionally called the \textit{geometric side} of the relative trace formula. 


Write $w=\begin{pmatrix}
& -1\\
1&
\end{pmatrix}$. The Bruhat decomposition is the following disjoint union:
\begin{align*}
\mathrm{GL}_2(F)=&B(F)\bigsqcup B(F)w\bigsqcup T(F)\begin{pmatrix}
1& \\
1&1
\end{pmatrix}wA(F)\bigsqcup\bigsqcup_{t\in F-\{1\}} T(F)\begin{pmatrix}
1& t\\
1&1
\end{pmatrix}A(F).
\end{align*}


Therefore, for $\Re(s_1)\gg 1$ and $\Re(s_2)\gg 1$, substituting the above decomposition into \eqref{f1.10}, we obtain  
\begin{equation}\label{fc2.14}
J_{\mathrm{Geom}}(f_{\mathfrak{n},\mathfrak{q}},\textbf{s})=\sum_{\delta\in \{I_2,w\}}J_{\mathrm{small}}^{\delta}(f_{\mathfrak{n},\mathfrak{q}},\textbf{s})+\sum_{\delta\in \{I_2,w\}}J_{\mathrm{dual}}^{\delta}(f_{\mathfrak{n},\mathfrak{q}},\textbf{s})+J_{\mathrm{reg}}(f_{\mathfrak{n},\mathfrak{q}},\textbf{s}),
\end{equation}
where $J_{\mathrm{small}}^{\delta}(f_{\mathfrak{n},\mathfrak{q}},\textbf{s})$ is defined by 
\begin{align*}
\int_{F^{\times}\backslash\mathbb{A}_F^{\times}}\int_{F^{\times}\backslash\mathbb{A}_F^{\times}}\sum_{\gamma\in B_0(F)}f_{\mathfrak{n},\mathfrak{q}}\left(\begin{pmatrix}
x^{-1}& \\
&1
\end{pmatrix}\gamma \delta\begin{pmatrix}
y& \\
&1
\end{pmatrix}\right)|x|^{s_1}|y|^{s_2}d^{\times}yd^{\times}x,
\end{align*}
and $J_{\mathrm{dual}}^{\delta}(f_{\mathfrak{n},\mathfrak{q}},\textbf{s})$ is defined by 
\begin{align*}
\int_{\mathbb{A}_F^{\times}}\int_{\mathbb{A}_F^{\times}}f_{\mathfrak{n},\mathfrak{q}}\left(\begin{pmatrix}
x^{-1}& \\
&1
\end{pmatrix}\begin{pmatrix}
1\\
1& 1	
\end{pmatrix}\delta\begin{pmatrix}
y& \\
&1
\end{pmatrix}\right)|x|^{s_1}|y|^{s_2}d^{\times}yd^{\times}x,
\end{align*}
and $J_{\mathrm{reg}}(f_{\mathfrak{n},\mathfrak{q}},\textbf{s})$ is defined by 
\begin{align*}
\sum_{t\in F-\{0,1\}}\int_{\mathbb{A}_F^{\times}}\int_{\mathbb{A}_F^{\times}}f_{\mathfrak{n},\mathfrak{q}}\left(\begin{pmatrix}
x^{-1}& \\
&1
\end{pmatrix}\begin{pmatrix}
1& t\\
1& 1
\end{pmatrix}\begin{pmatrix}
xy& \\
&1
\end{pmatrix}\right)|x|^{s_1+s_2}|y|^{s_2}d^{\times}yd^{\times}x.
\end{align*}

\begin{lemma}\label{lemma2.1}
Suppose $\Re(s_1)\gg 1$, $\Re(s_2)\gg 1$, and $\Re(s_1-s_2)\gg 1$. 
\begin{itemize}
\item The function  $J_{\mathrm{small}}^{I_2}(f_{\mathfrak{n},\mathfrak{q}},\textbf{s})$ converges absolutely and is equal to 
\begin{equation}\label{equ2.14}
\int_{\mathbb{A}_F^{\times}}\int_{\mathbb{A}_F^{\times}}\int_{\mathbb{A}_F}f_{\mathfrak{n},\mathfrak{q}}\left(\begin{pmatrix}
1& u\\
&1
\end{pmatrix} 
\begin{pmatrix}
y& \\
&1
\end{pmatrix}\right)\psi(xu)|x|^{1+s_1+s_2}|y|^{s_2}d^{\times}yd^{\times}x.
\end{equation} 
\item The function  $J_{\mathrm{small}}^{w}(f_{\mathfrak{n},\mathfrak{q}},\textbf{s})$ converges absolutely and is equal to 
\begin{equation}\label{equ2.15}
\int_{\mathbb{A}_F^{\times}}\int_{\mathbb{A}_F^{\times}}\int_{\mathbb{A}_F}f_{\mathfrak{n},\mathfrak{q}}\left(\begin{pmatrix}
1& u\\
&1
\end{pmatrix} 
\begin{pmatrix}
y& \\
&1
\end{pmatrix}w\right)\psi(xu)|x|^{1+s_1-s_2}|y|^{-s_2}d^{\times}yd^{\times}x.
\end{equation} 
\end{itemize}
\end{lemma}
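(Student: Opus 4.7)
My plan is to establish the identity for $\delta = I_2$ by combining an unfolding of the $A(F)$-factor of the mirabolic sum against the $y$-integration, followed by adelic Poisson summation on the $N(F)$-factor against the $x$-integration. First I parameterize $\gamma \in B_0(F)$ as $\gamma = n(\beta)a(\alpha) = \bigl(\begin{smallmatrix}\alpha & \beta \\ 0 & 1\end{smallmatrix}\bigr)$ with $\alpha \in F^{\times}$ and $\beta \in F$. Direct matrix multiplication yields
\[
\begin{pmatrix}x^{-1} & \\ & 1\end{pmatrix}\gamma \begin{pmatrix}y & \\ & 1\end{pmatrix} = \begin{pmatrix}x^{-1}\alpha y & x^{-1}\beta \\ 0 & 1\end{pmatrix}.
\]
Then I unfold $\sum_{\alpha \in F^{\times}}$ against $\int_{F^{\times}\backslash\mathbb{A}_F^{\times}}d^{\times}y$ via $y \mapsto \alpha^{-1}y$ (the weight $|y|^{s_2}$ is preserved because $|\alpha|=1$ on $F^{\times}$), turning the quotient integral into $\int_{\mathbb{A}_F^{\times}}d^{\times}y$. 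A further rescaling $y \mapsto xy$ absorbs the $x^{-1}$ into the diagonal entry, producing the matrix $\bigl(\begin{smallmatrix}y & x^{-1}\beta \\ 0 & 1\end{smallmatrix}\bigr)$ and the modified weight $|x|^{s_1+s_2}|y|^{s_2}$.

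Setting $f_y(u) := f_{\mathfrak{n},\mathfrak{q}}\bigl(\begin{smallmatrix}y & u \\ 0 & 1\end{smallmatrix}\bigr)$, I would then apply adelic Poisson summation
\[
\sum_{\beta \in F}f_y(x^{-1}\beta) = |x|\sum_{\xi \in F}\widehat{f_y}(x\xi), \qquad \widehat{f_y}(\eta) := \int_{\mathbb{A}_F}f_y(u)\,\psi(-\eta u)\,du.
\]
Separating $\xi = 0$ from $\xi \in F^{\times}$, the $F^{\times}$-part combines with the remaining $\int_{F^{\times}\backslash\mathbb{A}_F^{\times}}d^{\times}x$ to unfold (via $x \mapsto \xi^{-1}x$, using $|\xi|=1$) into $\int_{\mathbb{A}_F^{\times}}\widehat{f_y}(x)|x|^{1+s_1+s_2}d^{\times}x$. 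The substitution $x \mapsto -x$ converts $\psi(-xu)$ to $\psi(xu)$, and restoring the $u$-integration inside recovers exactly \eqref{equ2.14}.

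For $\delta = w$, the strategy is parallel but reflects the Weyl conjugation $a(\alpha)w \equiv w\,a(\alpha^{-1}) \pmod{Z}$, which allows the same unfolding of $\sum_{\alpha}$ against $d^{\times}y$ (now via $y \mapsto \alpha y$). A direct computation of $\bigl(\begin{smallmatrix}x^{-1} & \\ & 1\end{smallmatrix}\bigr) n(\beta)w\bigl(\begin{smallmatrix}y & \\ & 1\end{smallmatrix}\bigr)$ and $\mathrm{PGL}_2$-normalization identifies the matrix with $\bigl(\begin{smallmatrix}x^{-1}\beta & -(xy)^{-1} \\ 1 & 0\end{smallmatrix}\bigr)$, and after the substitution $y \mapsto z/x$ (i.e., $z = xy$) the weight becomes $|x|^{s_1-s_2}|z|^{s_2}$, reflecting $w$'s effect of inverting the diagonal entry. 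Poisson summation on $\sum_{\beta}$ of $g_z(u) := f\bigl(\begin{smallmatrix}u & -1/z \\ 1 & 0\end{smallmatrix}\bigr)$, followed by unfolding $\sum_{\xi \in F^{\times}}$ against $\int_{F^{\times}\backslash\mathbb{A}_F^{\times}}d^{\times}x$ and the relabeling $z = 1/y$ (so $|z|^{s_2} = |y|^{-s_2}$), then recovers \eqref{equ2.15}, matching the matrix product $\bigl(\begin{smallmatrix}1 & u \\ 0 & 1\end{smallmatrix}\bigr)\bigl(\begin{smallmatrix}y & 0 \\ 0 & 1\end{smallmatrix}\bigr)w = \bigl(\begin{smallmatrix}u & -y \\ 1 & 0\end{smallmatrix}\bigr)$ in the lemma's statement.

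The main technical obstacle is the careful treatment of the $\xi = 0$ term in Poisson summation, which formally contributes the factor $\widehat{f_y}(0)\int_{F^{\times}\backslash\mathbb{A}_F^{\times}}|x|^{1+s_1 \pm s_2}d^{\times}x$. Justifying its precise role --- whether via cancellation with the analogous $\beta = 0$ contribution on the original side, absorption into the dual-cell integrals $J_{\mathrm{dual}}^{\delta}$ of the full geometric decomposition \eqref{fc2.14}, or analytic continuation from a subregion where all Tate-type integrals converge absolutely --- is precisely what the hypothesis $\Re(s_1 - s_2) \gg 1$ (together with $\Re(s_i) \gg 1$) is designed to control, ensuring the $\xi \neq 0$ pieces converge absolutely for both $\delta = I_2$ and $\delta = w$. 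I expect this to be the subtlest step of the argument; the remaining matrix manipulations and Fourier-theoretic maneuvers are essentially routine.
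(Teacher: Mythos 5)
Your proposal follows essentially the same route the paper takes --- both are Fourier expansion along $N(F)\backslash N(\mathbb{A}_F)$ (Poisson summation) applied to the mirabolic sum, followed by unfolding --- but you stop one step short of the argument that actually closes it. The step you flag as "the subtlest" is indeed the crux, and none of the three resolutions you propose is correct. There is no separate $\beta = 0$ contribution to cancel (the $\beta$-sum runs over all of $F$, including $0$, and is fully consumed by the Poisson summation). The $\xi = 0$ term cannot be "absorbed into $J_{\mathrm{dual}}^{\delta}$" because those come from entirely different Bruhat cells in the geometric decomposition \eqref{fc2.14} and are already accounted for. And analytic continuation cannot rescue it because $\int_{F^{\times}\backslash\mathbb{A}_F^{\times}}|x|^{1+s_1+s_2}\,d^{\times}x$ is genuinely divergent for every $\mathbf{s}\in\mathbb{C}^2$ (its archimedean factor contains an integral over the norm direction $(0,\infty)$ with a fixed power weight, which never converges). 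The actual resolution is that the coefficient $\widehat{f_y}(0)=\int_{\mathbb{A}_F}f_{\mathfrak{n},\mathfrak{q}}\bigl(\begin{smallmatrix}y & u \\ 0 & 1\end{smallmatrix}\bigr)du$ vanishes identically: the integral factors over places, and at every archimedean $v$ the local factor $f_v$ is a matrix coefficient of a (holomorphic) discrete series, hence cuspidal, so $\int_{F_v}f_v\bigl(\begin{smallmatrix}y_v & u_v \\ 0 & 1\end{smallmatrix}\bigr)du_v=0$. (Concretely, this is the $x=0$ case of Lemma \ref{lem0.4}, where the right-hand side carries the factor $x^{k-1}$ with $k\geq4$.) This vanishing is exactly what equation \eqref{equ2.16} in the paper's proof records, and it is the load-bearing observation: without it the $\xi=0$ term is not even well-defined and the manipulation collapses. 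Once you insert this fact, the rest of your computation (the change $y\mapsto\alpha^{-1}y$ and $y\mapsto xy$, the unfolding of $\xi\in F^{\times}$ against $\int_{F^{\times}\backslash\mathbb{A}_F^{\times}}d^{\times}x$, and the sign flip $x\mapsto -x$ to match $\psi(xu)$) lines up with the paper for $\delta=I_2$.

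For $\delta=w$, the paper's proof is more economical than yours: rather than redo the $w$-conjugation and Poisson summation from scratch, it observes that the change of variable $y\mapsto y^{-1}$ in the definition of $J_{\mathrm{small}}^{w}$ transports it to the $\delta=I_2$ form with $f$ replaced by $R(w)f$ and the weight $|y|^{s_2}\mapsto|y|^{-s_2}$, after which \eqref{equ2.15} follows verbatim from the $\delta=I_2$ case. Your direct conjugation argument can also be made to work, but you should be aware of the shortcut. Finally, note that absolute convergence is not proved in this lemma at all --- the paper defers it to the explicit local computations of \S\ref{section4} (Proposition \ref{cor3.4}), and your proposal should say the same rather than implicitly treat it as established by the Poisson manipulation.
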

\begin{proof}
Let $g\in G(\mathbb{A}_F)$. Define the function $F(g)$ by 
\begin{align*}
\int_{F^{\times}\backslash\mathbb{A}_F^{\times}}\int_{F^{\times}\backslash\mathbb{A}_F^{\times}}\sum_{\gamma\in B_0(F)}f_{\mathfrak{n},\mathfrak{q}}\left(\begin{pmatrix}
x^{-1}& \\
&1
\end{pmatrix}g^{-1}\gamma \begin{pmatrix}
xy& \\
&1
\end{pmatrix}\right)|x|^{s_1+s_2}|y|^{s_2}d^{\times}yd^{\times}x.
\end{align*}

Then $F(u g)=F(g)$ for all $u\in B_0(F)$. Utilizing Fourier expansion we deduce 
\begin{align*}
F(g)=\int_{F\backslash\mathbb{A}_F}F\left(\begin{pmatrix}
1& u\\
&1
\end{pmatrix}
g\right)du+\sum_{\alpha\in F^{\times}}\int_{F\backslash\mathbb{A}_F}F\left(\begin{pmatrix}
1& u\\
&1
\end{pmatrix}\begin{pmatrix}
\alpha\\
&1
\end{pmatrix}
g\right)\overline{\psi}(u)du.
\end{align*}

Since $f_{v}$, for $v\mid\infty$, is a matrix coefficient of a discrete series, then 
\begin{equation}\label{equ2.16}
\int_{F\backslash\mathbb{A}_F}F\left(\begin{pmatrix}
1& u\\
&1
\end{pmatrix}
g\right)du\equiv 0.	
\end{equation}

Moreover, by changing of variables, the sum 
\begin{align*}
\sum_{\alpha\in F^{\times}}\int_{F\backslash\mathbb{A}_F}F\left(\begin{pmatrix}
1& u\\
&1
\end{pmatrix}\begin{pmatrix}
\alpha\\
&1
\end{pmatrix}
\right)\overline{\psi}(u)du
\end{align*}
is equal to 
\begin{equation}\label{equ2.17}
\int_{\mathbb{A}_F^{\times}}\int_{\mathbb{A}_F^{\times}}\int_{\mathbb{A}_F}f_{\mathfrak{n},\mathfrak{q}}\left(\begin{pmatrix}
x^{-1}\\
&1
\end{pmatrix}\begin{pmatrix}
1& u\\
&1
\end{pmatrix} 
\begin{pmatrix}
y& \\
&1
\end{pmatrix}\right)\psi(u)|x|^{s_1}|y|^{s_2}d^{\times}yd^{\times}x.
\end{equation} 

Therefore, \eqref{equ2.14} follows from \eqref{equ2.16} and \eqref{equ2.17}, along with the change of variable $y\mapsto xy$. The absolute convergence will be proved in \textsection\ref{section4}, where we also obtain a meromorphic continuation of $J_{\mathrm{small}}^{I_2}(f_{\mathfrak{n},\mathfrak{q}},\textbf{s})$.  

By definition, and the change of variable $y\mapsto y^{-1}$, $J_{\mathrm{small}}^{w}(f_{\mathfrak{n},\mathfrak{q}},\textbf{s})$ becomes
\begin{align*}
\int_{F^{\times}\backslash\mathbb{A}_F^{\times}}\int_{F^{\times}\backslash\mathbb{A}_F^{\times}}\sum_{\gamma\in B_0(F)}f_{\mathfrak{n},\mathfrak{q}}\left(\begin{pmatrix}
x^{-1}& \\
&1
\end{pmatrix}\gamma \begin{pmatrix}
y& \\
&1
\end{pmatrix}w\right)|x|^{s_1}|y|^{-s_2}d^{\times}yd^{\times}x.
\end{align*}

Therefore, \eqref{equ2.15} follows similarly from the proof of \eqref{equ2.14}.
\end{proof}

\begin{lemma}\label{lemma2.2}
Suppose $\Re(s_1)\gg 1$, $\Re(s_2)\gg 1$, and $\Re(s_1-s_2)\gg 1$.
\begin{itemize}
\item The function  $J_{\mathrm{dual}}^{I_2}(f_{\mathfrak{n},\mathfrak{q}},\textbf{s})$ converges absolutely, and 
\begin{equation}\label{f2.18}
J_{\mathrm{dual}}^{I_2}(f_{\mathfrak{n},\mathfrak{q}},\textbf{s})=\int_{\mathbb{A}_F^{\times}}\int_{\mathbb{A}_F^{\times}}f_{\mathfrak{n},\mathfrak{q}}\left(\begin{pmatrix}
1\\
x& 1	
\end{pmatrix}\begin{pmatrix}
y& \\
&1
\end{pmatrix}\right)|x|^{s_1+s_2}|y|^{s_2}d^{\times}yd^{\times}x.
\end{equation} 
\item The function  $J_{\mathrm{dual}}^{w}(f_{\mathfrak{n},\mathfrak{q}},\textbf{s})$ converges absolutely,
\begin{equation}\label{f2.19}
J_{\mathrm{dual}}^{w}(f_{\mathfrak{n},\mathfrak{q}},\textbf{s})=\int_{\mathbb{A}_F^{\times}}\int_{\mathbb{A}_F^{\times}}f_{\mathfrak{n},\mathfrak{q}}\left(\begin{pmatrix}
1\\
x& 1	
\end{pmatrix}\begin{pmatrix}
y& \\
&1
\end{pmatrix}w\right)|x|^{s_1-s_2}|y|^{-s_2}d^{\times}yd^{\times}x.
\end{equation} 
\end{itemize}
\end{lemma}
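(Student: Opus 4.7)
The plan is to handle both parts by the same pattern used implicitly in Lemma \ref{lemma2.1}: write out the matrix product, then apply a simple change of variables on the multiplicative group $\mathbb{A}_F^{\times}$, exploiting the center invariance $f_{\mathfrak{n},\mathfrak{q}}(\lambda g)=f_{\mathfrak{n},\mathfrak{q}}(g)$ for $\lambda\in Z(\mathbb{A}_F)$ since $f_{\mathfrak{n},\mathfrak{q}}$ descends from $\overline{G}(\mathbb{A}_F)=\mathrm{PGL}_2(\mathbb{A}_F)$. Unlike Lemma \ref{lemma2.1}, there is no need for any Fourier expansion or cancellation from discrete-series matrix coefficients, because both integrals here are already unfolded (integration is over $\mathbb{A}_F^{\times}\times\mathbb{A}_F^{\times}$ without the quotient by $F^{\times}$).

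For $\delta=I_2$, I would first compute
\[
\begin{pmatrix} x^{-1}&\\ &1\end{pmatrix}\begin{pmatrix} 1&\\ 1& 1\end{pmatrix}\begin{pmatrix} y&\\ &1\end{pmatrix}=\begin{pmatrix} x^{-1}y & \\ y & 1\end{pmatrix},
\]
and then substitute $y\mapsto xy$. The matrix becomes
\[
\begin{pmatrix} y & \\ xy & 1\end{pmatrix}=\begin{pmatrix} 1&\\ x & 1\end{pmatrix}\begin{pmatrix} y&\\ &1\end{pmatrix},
\]
while the weight picks up $|x|^{s_1}|xy|^{s_2}=|x|^{s_1+s_2}|y|^{s_2}$, yielding \eqref{f2.18}.

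For $\delta=w$, the same matrix multiplication gives
\[
\begin{pmatrix} x^{-1}&\\ &1\end{pmatrix}\begin{pmatrix} 1&\\ 1& 1\end{pmatrix}w\begin{pmatrix} y&\\ &1\end{pmatrix}=\begin{pmatrix} 0 & -x^{-1}\\ y & -1\end{pmatrix}.
\]
I would apply $y\mapsto y^{-1}$, then use center invariance to multiply the matrix by the scalar $y$, producing $\left(\begin{smallmatrix} 0 & -y/x\\ 1 & -y\end{smallmatrix}\right)$; then substitute $y\mapsto xy$ to arrive at $\left(\begin{smallmatrix} 0 & -y\\ 1 & -xy\end{smallmatrix}\right)=\left(\begin{smallmatrix}1&\\ x&1\end{smallmatrix}\right)\left(\begin{smallmatrix} y&\\ &1\end{smallmatrix}\right)w$, with the weight transforming to $|x|^{s_1-s_2}|y|^{-s_2}$. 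This is exactly \eqref{f2.19}.

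For the absolute convergence in the region $\Re(s_1)\gg 1$, $\Re(s_2)\gg 1$, $\Re(s_1-s_2)\gg 1$, the argument reduces to local considerations: at finite places $v$, the explicit form of $f_v$ in \eqref{t1.6}--\eqref{t1.8} has support inside $Z(F_v)$ times a bounded set in $G(F_v)$, forcing $|x|_v$ and $|y|_v$ to lie in compact ranges; at archimedean places, the matrix-coefficient formula \eqref{t1.5} evaluated on $\left(\begin{smallmatrix} 1&\\ x_v & 1\end{smallmatrix}\right)\left(\begin{smallmatrix} y_v & \\ & 1\end{smallmatrix}\right)$ yields
\[
|f_v|\ll \frac{|y_v|_v^{k_v/2}}{(x_v^2y_v^2+(y_v+1)^2)^{k_v/2}},
\]
and analogously after multiplication by $w$. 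Since $k_v\geq 4$, this polynomial decay handles the archimedean $x$- and $y$-integrals once the real parts of the exponents are suitably constrained. The main anticipated obstacle is exactly verifying that the weight after the change of variable in the $\delta=w$ case comes out as $|x|^{s_1-s_2}|y|^{-s_2}$ (and not some other linear combination), which requires care with the scalar rescaling step; a full verification of absolute convergence and meromorphic continuation will be given in \textsection\ref{section5}, where these integrals are evaluated in closed form.
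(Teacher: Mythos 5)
Your proposal is correct and matches the paper's own argument, which simply says that \eqref{f2.18} and \eqref{f2.19} ``follow from a direct change of variables'' and defers absolute convergence to \S\ref{section5}. Your three-step substitution for the $\delta=w$ case ($y\mapsto y^{-1}$, rescale by the central element $y$, then $y\mapsto xy$) correctly produces the weight $|x|^{s_1-s_2}|y|^{-s_2}$, and the $\delta=I_2$ case is the single substitution $y\mapsto xy$ as you wrote.
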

\begin{proof}
The expressions \eqref{f2.18} and \eqref{f2.19} follow from a direct change of variables, and the absolute convergence will be verified in \textsection\ref{section5}.
\end{proof}

\subsubsection{The Relative Trace Formula}
Let notation be as in \textsection\ref{sec2.3.2}. Define 
\begin{equation}\label{1.12}
J_{\mathrm{sing}}(f_{\mathfrak{n},\mathfrak{q}},\textbf{s}):=\sum_{\delta\in \{I_2,w\}}J_{\mathrm{small}}^{\delta}(f_{\mathfrak{n},\mathfrak{q}},\textbf{s})+\sum_{\delta\in \{I_2,w\}}J_{\mathrm{dual}}^{\delta}(f_{\mathfrak{n},\mathfrak{q}},\textbf{s}).
\end{equation}

As a consequence of 
\begin{align*}
J_{\mathrm{Spec}}(f_{\mathfrak{n},\mathfrak{q}},\textbf{s})=J_{\mathrm{Geom}}(f_{\mathfrak{n},\mathfrak{q}},\textbf{s})
\end{align*} 
and the orbital decomposition \eqref{fc2.14}, we derive the following abstract relative trace formula. 
\begin{thm}\label{thm2.3}
Let notation be as before. Let $\textbf{s}=(s_1,s_2)\in \mathbb{C}^2$ satisfy $\Re(s_1)\gg 1$, $\Re(s_2)\gg 1$, and $\Re(s_1-s_2)\gg 1$. Then
\begin{equation}\label{1.13}
J_{\mathrm{Spec}}(f_{\mathfrak{n},\mathfrak{q}},\textbf{s})=J_{\mathrm{sing}}(f_{\mathfrak{n},\mathfrak{q}},\textbf{s})+J_{\mathrm{reg}}(f_{\mathfrak{n},\mathfrak{q}},\textbf{s}).
\end{equation}
Moreover, both sides of  \eqref{1.13} admit a meromorphic continuation to $\mathbb{C}^2$.
\end{thm}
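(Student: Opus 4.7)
The strategy is to show that both sides of \eqref{1.13} equal the same double integral $J(f_{\mathfrak{n},\mathfrak{q}},\textbf{s})$ from \eqref{eq1.5} throughout the region $\Re(s_1)\gg 1$, $\Re(s_2)\gg 1$, $\Re(s_1-s_2)\gg 1$, and then to continue each side meromorphically to $\mathbb{C}^2$ by independent means.

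First I would establish the identity in that region. The equality $J_{\mathrm{Spec}}(f_{\mathfrak{n},\mathfrak{q}},\textbf{s}) = J(f_{\mathfrak{n},\mathfrak{q}},\textbf{s})$ follows by substituting the spectral expansion \eqref{eq1.6} into \eqref{eq1.5} and swapping the double integral with the finite sum. The swap is legal because the non-Archimedean support of $f_{\mathfrak{n},\mathfrak{q}}$ forces $\pi(f_{\mathfrak{n},\mathfrak{q}})$ to vanish outside a finite subset of $\Pi_{\mathbf{k}}(\mathfrak{q})$ and to have finite-dimensional image on each remaining $\pi$, so \eqref{eq1.8} is genuinely a finite sum. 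The equality $J(f_{\mathfrak{n},\mathfrak{q}},\textbf{s}) = J_{\mathrm{sing}}(f_{\mathfrak{n},\mathfrak{q}},\textbf{s}) + J_{\mathrm{reg}}(f_{\mathfrak{n},\mathfrak{q}},\textbf{s})$ then comes from plugging the geometric expansion \eqref{f1.5} into \eqref{eq1.5}, decomposing along the Bruhat cells as in \eqref{fc2.14}, and invoking the unfoldings of $J_{\mathrm{small}}^{\delta}$ and $J_{\mathrm{dual}}^{\delta}$ supplied by Lemmas \ref{lemma2.1}--\ref{lemma2.2}; absolute convergence of each piece in the stated region is asserted in those lemmas and (for the regular term) will be confirmed in \textsection\ref{sec7}.

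The meromorphic continuation of the spectral side is the easier half: each factor $\mathcal{P}(s,\phi)$ in \eqref{eq1.8} is a global Hecke--Tate zeta integral for a cusp form restricted to the diagonal torus, entire in $s$ by the standard unfolding to a $\mathrm{GL}_1$-twisted $L$-function, and \eqref{eq1.8} itself is a finite sum of products of two such factors, hence meromorphic (in fact entire) in $\textbf{s}\in\mathbb{C}^2$.

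The main obstacle is the meromorphic continuation of the geometric side. The regular piece $J_{\mathrm{reg}}(f_{\mathfrak{n},\mathfrak{q}},\textbf{s})$ will be shown in \textsection\ref{sec7} to converge absolutely on a wide strip via sharp estimates on local Legendre-function integrals combined with a shifted-convolution-type sum over $t \in F\setminus\{0,1\}$, so its continuation is essentially term-by-term once the uniform bounds are in hand. The singular pieces $J_{\mathrm{small}}^{\delta}$ and $J_{\mathrm{dual}}^{\delta}$, by contrast, do not converge on all of $\mathbb{C}^2$; after Lemmas \ref{lemma2.1}--\ref{lemma2.2} they factorize into products of local Mellin and Tate-type integrals against Whittaker-like data, and the delicate analytic step is to continue these and locate the polar divisors—which will later supply the main terms in Theorem \ref{mollifie second moment generalized result}. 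This work is carried out cell by cell in \textsection\ref{section4}--\textsection\ref{sec5}, and once it is complete the meromorphic continuation of both sides of \eqref{1.13} follows.
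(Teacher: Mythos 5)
Your proposal follows essentially the same route as the paper: the identity in the region of absolute convergence is obtained by substituting the (finite) spectral expansion and the Bruhat-decomposed geometric expansion of the kernel into \eqref{eq1.5}, the spectral side is continued via the rapid decay of cusp forms (the paper records the explicit $L$-function formula in Lemma \ref{lem2.1}), and the continuation of the small, dual, and regular orbital integrals is deferred to \textsection\ref{section4}--\textsection\ref{sec7}, exactly as the paper does (which also notes the statement is a special case of \cite[Corollary 3.3]{Yan23c}). No gaps; the argument is correct as outlined.
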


Theorem \ref{thm2.3} is a special case of \cite[Corollary 3.3]{Yan23c}. The formula \eqref{1.13} will be computed explicitly as the weighted second moment in \textsection\ref{sec8}; see Theorem \ref{thm7.1} in \textsection\ref{sec8.1}.

\section{The Spectral Side}\label{sec3}
In this section we aim to derive a meromorphic continuation of  $J_{\mathrm{Spec}}(f_{\mathfrak{n},\mathfrak{q}},\textbf{s})$ to $\textbf{s}\in \mathbb{C}^2$, and obtain a precise formula for it explicitly as a second moment of automorphic $L$-functions. 
\subsection{Explicit Period Integrals}\label{sec3.1}
Let $\textbf{s}=(s_1,s_2)\in \mathbb{C}$. Let $\phi$ be a cusp form. Define
\begin{equation}\label{eq2.1}
\mathcal{Z}(\textbf{s},\phi):=\langle \phi,\phi\rangle^{-1}\mathcal{P}(s_1,\phi)\mathcal{P}(s_2,\overline{\phi}),
\end{equation}
where for $s\in \mathbb{C}$, the function $\mathcal{P}(s,\phi)$ is defined as in \eqref{eq2.2}.

\begin{lemma}\label{lem2.1}
Let $\pi\in \mathcal{A}_0(\mathbf{k},\mathfrak{q})$. Let $\phi$ be the cusp form corresponding to the vector $\textbf{v}=\otimes_{v\leq\infty}\textbf{v}_v\in \pi$ such that 
\begin{itemize}
\item at $v\mid\infty$, $\textbf{v}_v$ is the lowest weight vector in $\pi_v$;
\item at $v=\mathfrak{q}$, $\textbf{v}_v$ is the local new vector in $\pi_v$, which is a Steinberg representation twisted by a unramified character of $F_v^{\times}$;
\item at $v\nmid \mathfrak{q}\infty$, $\textbf{v}_v$ is a spherical vector.  
\end{itemize}
Let $\textbf{s}=(s_1,s_2)\in \mathbb{C}$. Then $\mathcal{Z}(\textbf{s},\phi)$ admits a meromorphic continuation to $\textbf{s}\in \mathbb{C}^2$. Moreover, we have the explicit calculation 
\begin{equation}\label{f2.2}
\mathcal{Z}(\textbf{s},\phi)=\frac{\zeta_{\mathfrak{q}}(1)L(s_1+1/2,\pi)L(s_2+1/2,\widetilde{\pi})}{2\zeta_{\mathfrak{q}}(2)^2L^{(\mathfrak{q})}(1,\pi,\Ad)D_F^{1/2-s_1-s_2}}\prod_{v\mid\infty}\frac{2^{k_v}\pi \Gamma(k_v/2+s_1)\Gamma(k_v/2+s_2)}{(2\pi)^{s_1+s_2}\Gamma(k_v)},
\end{equation}
where $\zeta_{\mathfrak{q}}(s)=(1-N(\mathfrak{q})^{-s})^{-1}$ and $L^{(\mathfrak{q})}(s,\pi,\Ad)$ is the partial adjoint $L$-function with the local factors $L_v(s,\pi_v,\Ad)$ being removed at $v=\mathfrak{q}\infty$. 
\end{lemma}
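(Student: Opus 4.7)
The strategy is the classical Hecke--Rankin--Selberg computation, organized into three factorizable ingredients: the adelic Hecke integrals for $\mathcal{P}(s_1,\phi)$ and $\mathcal{P}(s_2,\overline{\phi})$, and the Petersson norm $\langle \phi,\phi\rangle$. Each is an Euler product on the given pure tensor $\mathbf{v}$, so both the meromorphic continuation to $\mathbf{s}\in\mathbb{C}^2$ and the shape of \eqref{f2.2} follow formally once every local factor is computed; the main labour is in pinning down constants, self-dual measure factors, and conductor exponents.

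First I would unfold $\mathcal{P}(s,\phi)$ via the Fourier--Whittaker expansion $\phi(g)=\sum_{\alpha\in F^{\times}} W_\phi(\diag(\alpha,1)g)$. Collapsing the sum against $F^\times\backslash \mathbb{A}_F^\times$ gives, for $\Re(s)\gg 1$,
\[\mathcal{P}(s,\phi) \;=\; \int_{\mathbb{A}_F^\times} W_\phi\begin{pmatrix} a & \\ & 1\end{pmatrix} |a|^{s}\,d^\times a \;=\; \prod_v Z_v(s, W_v),\]
with the factorization following from $W_\phi=\prod_v W_v$ on the pure tensor. Applying the same recipe to $\mathcal{P}(s_2,\overline{\phi})$ introduces the contragredient Whittaker functions and produces $L(s_2+1/2,\widetilde{\pi})$ in place of $L(s_1+1/2,\pi)$.

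Next I would evaluate each local Zeta integral according to the three cases for $\mathbf{v}_v$. At an unramified finite $v\nmid \mathfrak{q}\infty$, the Casselman--Shalika formula gives $Z_v(s,W_v)=q_v^{-d_v(s+1/2)}L_v(s+1/2,\pi_v)$ once the local Whittaker function is normalized by the self-dual measure; aggregating the $q_v^{-d_v(\cdots)}$ contributions yields the global discriminant factor $D_F^{1/2-s_1-s_2}$. At $v=\mathfrak{q}$, the local newvector for the Steinberg representation twisted by an unramified character produces $Z_v(s,W_v)=L_v(s+1/2,\pi_v)$ up to an explicit Iwahori-volume constant, and the local adjoint factor there is $L_v(1,\pi_v,\Ad)=\zeta_{\mathfrak{q}}(2)\zeta_{\mathfrak{q}}(1)^{-1}$; this is exactly what will supply the rational prefactor $\zeta_{\mathfrak{q}}(1)/(2\zeta_{\mathfrak{q}}(2)^2)$ in \eqref{f2.2} after cancellation against the Petersson norm. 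At $v\mid \infty$, the lowest-weight vector of the weight-$k_v$ discrete series has Whittaker function (up to normalization) $W_v(\diag(a,1))=a^{k_v/2}e^{-2\pi a}\mathbf{1}_{a>0}$, whence $Z_v(s,W_v)=(2\pi)^{-(s+k_v/2)}\Gamma(s+k_v/2)$, matching the archimedean $\Gamma$-pattern in \eqref{f2.2}.

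Finally, for $\langle \phi,\phi\rangle$ I would invoke a Rankin--Selberg unfolding against an Eisenstein series, expressing the norm as a residue that factors through the Whittaker model. This supplies $\Res_{s=1}\zeta_F(s)$ from the Eisenstein residue and $L^{(\mathfrak{q})}(1,\pi,\Ad)/\zeta_F^{(\mathfrak{q})}(2)$ from the unramified local integrals, while the archimedean and $v=\mathfrak{q}$ local inner products provide the remaining constants. Taking the quotient $\mathcal{P}(s_1,\phi)\mathcal{P}(s_2,\overline{\phi})/\langle \phi,\phi\rangle$ then reassembles into \eqref{f2.2}, and the meromorphic continuation follows from that of $L(s_1+1/2,\pi)L(s_2+1/2,\widetilde{\pi})$ together with the rationality of the local constants. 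The hard part will be the precise bookkeeping at $v=\mathfrak{q}$ and $v\mid\infty$: the Iwahori-volume together with the Steinberg-newvector normalization must combine cleanly with $L_v(1,\pi_v,\Ad)$ to yield the exact factor $\zeta_{\mathfrak{q}}(1)/(2\zeta_{\mathfrak{q}}(2)^2)$, and at archimedean places the Schur-orthogonality constant for the discrete series must cancel the matrix-coefficient normalization $(k_v-1)/(4\pi)$ from \eqref{t1.5} to produce the companion factor $2^{k_v}\pi/\Gamma(k_v)$ rather than some nearby variant.
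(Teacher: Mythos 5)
Your proposal follows essentially the same route as the paper: Fourier--Whittaker unfolding of the two period integrals into local zeta integrals, the Rankin--Selberg factorization of $\langle\phi,\phi\rangle$ (the paper simply cites \cite[\S 4.4.2]{MV10} for this), and a case-by-case local computation at unramified places, at $v=\mathfrak{q}$, and at archimedean places. The only caveats lie in the constants you yourself flag as delicate: with the paper's normalization $W_v(\diag(\varpi_v^{-d_v},1))=1$ the unramified conductor exponent comes out as $q_v^{+d_v(s+1/2)}$ (so that after dividing by $\langle W_v,W_v\rangle_v$ the discriminant lands as $D_F^{-(1/2-s_1-s_2)}$, i.e.\ in the denominator of \eqref{f2.2}), and at $v=\mathfrak{q}$ one has $L_{\mathfrak{q}}(1,\pi_{\mathfrak{q}},\Ad)=\zeta_{\mathfrak{q}}(2)$ rather than $\zeta_{\mathfrak{q}}(2)\zeta_{\mathfrak{q}}(1)^{-1}$.
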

\begin{proof}
For $v\leq\infty$, let $W_v$ be the vector in the Whittaker model of $\pi_v$ corresponding to $\textbf{v}_v$. By Rankin-Selberg convolution (cf. \cite[\textsection 4.4.2]{MV10}), 
\begin{equation}\label{2.1}
\langle \phi,\phi\rangle=2\Lambda(1,\pi,\Ad)\prod_{v\leq \infty}\frac{\zeta_v(2)\langle W_v,W_v\rangle_v}{L_v(1,\pi_v\times\widetilde{\pi}_v)},
\end{equation}
where $\langle W_v,W_v\rangle_v:=\int_{F_v^{\times}}|W_v\left(\diag(a_v,1)\right)|^2d^{\times}a_v$. 

Suppose $\Re(s_1)\gg 1$ and $\Re(s_2)\gg 1$, where $\mathcal{Z}(\textbf{s},\phi)$ converges absolutely. By Fourier-Whittaker expansion, along with \eqref{2.1}, we obtain 
\begin{equation}\label{f2.3}
\mathcal{Z}(\textbf{s},\phi)=\frac{1}{2\Lambda(1,\pi,\Ad)}\prod_{v\leq\infty}\frac{L_v(1,\pi_v\times\widetilde{\pi}_v)}{\zeta_v(2)}\cdot \frac{\mathcal{P}_v(s_1,W_v)\mathcal{P}_v(s_2,\overline{W_v})}{\langle W_v,W_v\rangle_v},
\end{equation}
where for $s\in \mathbb{C}$ with $\Re(s)\gg 1$, we define 
\begin{align*}
\mathcal{P}_v(s_1,W_v):=\int_{F_v^{\times}}W_v\left(\begin{pmatrix}
a_v\\
&1
\end{pmatrix}\right)|a_v|_v^{s}d^{\times}a_v.
\end{align*}

We consider the local integrals $\langle W_v,W_v\rangle_v^{-1}\cdot \mathcal{P}_v(s_1,W_v)\mathcal{P}_v(s_2,\overline{W_v})$ in the following scenarios. 
\begin{itemize}
\item Let $v\nmid\mathfrak{q}\infty$. Utilizing \cite[Proposition 4.6.8]{Bum97}, we obtain by a straight forward calculation that 
\begin{align*}
\mathcal{P}_v(s_1,W_v)\mathcal{P}_v(s_2,\overline{W_v})=L_v(s_1+1/2,\pi_v)L_v(s_2+1/2,\widetilde{\pi}_v)q_v^{d_v(s_1+s_2)}\Vol(\mathcal{O}_v^{\times})^2,
\end{align*}
where $L_v(s+1/2,\pi_v)$ is the local $L$-factor; and 
\begin{align*}
\langle W_v,W_v\rangle_v=&\int_{N(F_v)\backslash G(F_v)}|W_v(g_v)|^2|\det g_v|_vdg_v=\int_{F_v^{\times}}|W_v(\diag(a_v,1))|^2d^{\times}a_v
\\
=&\Vol(\mathcal{O}_v^{\times})\sum_{j\geq -d_v}|W_v(\diag(\varpi_v^j,1))|^2=\frac{L_v(1,\pi_v\times\widetilde{\pi}_v)\Vol(\mathcal{O}_v^{\times})}{\zeta_v(2)}.
\end{align*}




Combining the above calculations we derive that  
\begin{equation}\label{2.2}
\frac{L_v(1,\pi_v\times\widetilde{\pi}_v)\mathcal{P}_v(s_1,W_v)\mathcal{P}_v(s_2,\overline{W_v})}{\zeta_v(2)\langle W_v,W_v\rangle_v}=\frac{L_v(s_1+1/2,\pi_v)L_v(s_2+1/2,\widetilde{\pi}_v)}{q_v^{d_v(1/2-s_1-s_2)}}.
\end{equation}

\item Let $v=\mathfrak{q}$. By assumption, $\pi_v=\chi\otimes\mathrm{St}$, where $\mathrm{St}$ is the Steinberg representation, and $\chi$ is a unramified character, and $W_v$ is a local new form. Hence, for $\gamma\in \mathcal{O}_v^{\times}$,  
\begin{align*}
W_v(\diag(\varpi_v^j\gamma,1))=\chi(\varpi_v)^jq_v^{-j}W_v(I_2)\textbf{1}_{j\geq 0}.
\end{align*}
As a result, we obtain 
\begin{align*}
\mathcal{P}_v(s_1,W_v)=W_v(I_2)\Vol(\mathcal{O}_v^{\times})\sum_{j\geq 0}\frac{\chi(\varpi_v)^j}{q_v^{j(s_1+1)}}=W_v(I_2)\Vol(\mathcal{O}_v^{\times})L_v(s_1+1/2,\pi_v),
\end{align*}
and 
\begin{align*}
\langle W_v,W_v\rangle_v=\int_{F_v^{\times}}|W_v(\diag(a_v,1))|^2d^{\times}a_v=|W_v(I_2)|^2\Vol(\mathcal{O}_v^{\times})\sum_{j\geq 0}q_v^{-2m},
\end{align*}
which is equal to $|W_v(I_2)|^2\Vol(\mathcal{O}_v^{\times})\zeta_v(2)$. Note that in the above calculation we have used the assumption that $d_v=0$, i.e., $\mathfrak{q}$ is not ramified. 

Gathering together the above calculation we derive that 
\begin{equation}\label{2.3}
\frac{L_v(1,\pi_v\times\widetilde{\pi}_v)\mathcal{P}_v(s_1,W_v)\mathcal{P}_v(s_2,\overline{W_v})}{\zeta_v(2)\langle W_v,W_v\rangle_v}=\frac{L_v(s_1+1/2,\pi_v)L_v(s_2+1/2,\widetilde{\pi}_v)}{\zeta_v(2)^2L_v(1,\pi_v\times\widetilde{\pi}_v)^{-1}}.
\end{equation}

\item Let $v\mid\infty$. Then $W_v\left(\begin{pmatrix}
a_v&\\
&1
\end{pmatrix}\right)=a_v^{\frac{k_v}{2}}e^{-2\pi a_v}e^{2\pi}W_v(I_2)\textbf{1}_{a_v>0}$. Hence, 
\begin{equation}\label{2.4}
\frac{\mathcal{P}_v(s_1,W_v)}{e^{2\pi}W_v(I_2)}=\int_0^{\infty}\frac{a_v^{\frac{k_v}{2}+s_1}}{e^{2\pi a_v}}d^{\times}a_v=\frac{\Gamma(k_v/2+s_1)}{(2\pi)^{k_v/2+s_1}},
\end{equation}
and 
\begin{equation}\label{2.5}
\frac{\langle W_v,W_v\rangle_v}{e^{4\pi}|W_v(I_2)|^2}=\int_{F_v^{\times}}\frac{|W_v(\diag(a_v,1))|^2}{e^{4\pi}|W_v(I_2)|^2}d^{\times}a_v=\int_0^{\infty}\frac{a^{k_v}}{e^{4\pi a}}d^{\times}a=\frac{\Gamma(k_v)}{(4\pi)^{k_v}}.
\end{equation}

It follows from \eqref{2.4} and \eqref{2.5} that 
\begin{equation}\label{2.6}
\frac{L_v(1,\pi_v\times\widetilde{\pi}_v)\mathcal{P}_v(s_1,W_v)\mathcal{P}_v(s_2,\overline{W_v})}{\zeta_v(2)\langle W_v,W_v\rangle_v}=\frac{L_v(1,\pi_v\times\widetilde{\pi}_v)\Gamma(k_v/2+s_1)\Gamma(k_v/2+s_2)}{(2\pi)^{s_1+s_2-k_v}\zeta_v(2)\Gamma(k_v)}.
\end{equation}
\end{itemize}

Substituting \eqref{2.2}, \eqref{2.3}, and \eqref{2.6} into \eqref{f2.3}, the function $\mathcal{Z}(\textbf{s},\phi)$ becomes 
\begin{align*}
\frac{L(s_1+1/2,\pi)L(s_2+1/2,\widetilde{\pi})}{2L(1,\pi,\Ad)D_F^{1/2-s_1-s_2}}\prod_{v\mid\infty}\frac{\Gamma(k_v/2+s_1)\Gamma(k_v/2+s_2)}{(2\pi)^{s_1+s_2}2^{-k_v}\zeta_v(2)\Gamma(k_v)}\prod_{v=\mathfrak{q}}\frac{L_v(1,\pi_v\times\widetilde{\pi}_v)}{\zeta_v(2)^2}.
\end{align*}

Therefore, \eqref{f2.2} follows from the above expression along with the facts that 
\begin{align*}
L_v(1,\pi_v\times\widetilde{\pi}_v)=L_v(1,\pi_v,\Ad)\zeta_v(1)=L_v(1,\pi_v,\Ad),
\end{align*}
and $\zeta_v(2)=\pi^{-1}$ at all $v\mid\infty$. Here we have made use of the fact that $\zeta_v(1)=1$ at a real place $v$. 
\end{proof}
\begin{remark}
For $v<\infty$, the local new Whittaker vector (in the above proof) is normalized via
\begin{equation}\label{eq3.10}
W_v\left(\begin{pmatrix}
\varpi_v^{-d_v}\\
&1
\end{pmatrix}\right)=1.
\end{equation}	
\end{remark}

\subsection{Contribution From Old Forms}
Suppose $\mathfrak{q}\subsetneq \mathcal{O}_F$ is a prime. Let $v=\mathfrak{q}$. Let $\pi_v$ be a unitary irreducible admissible unramified representation of $\mathrm{PGL}_2(F_v)$. Let $W_v^{\circ}$ be the normalized spherical vector in the Whittaker model of $\pi_v$ such that $\langle W_v^{\circ}, W_v^{\circ}\rangle=1$.

Denote by $K_v[1]:=\big\{\begin{pmatrix}
a& b\\
c& d
\end{pmatrix}\in K_v:\ c\in \mathfrak{q}\big\}$. Then the subspace $\mathcal{W}_v^{K_v[1]}$ of right-$K_v[1]$-invariant vectors in the Whittaker model of $\pi_v$ is $2$-dimensional. By Atkin-Lehner decomposition, an orthonormal basis of $\mathcal{W}_v^{K_v[1]}$ is of the form $W_v^{\circ}$ and $\alpha_{\pi_v} W_v^{\circ}+\beta_{\pi_v}\pi_v(\diag{1,\varpi_v})W_v^{\circ}$ for some $\alpha_{\pi_v}, \beta_{\pi_v}\in \mathbb{C}$ satisfying 
\begin{equation}\label{2.10}
\begin{cases}
\langle W_v^{\circ},\alpha_{\pi_v} W_v^{\circ}+\beta_{\pi_v}\pi_v(\diag{1,\varpi_v})W_v^{\circ}\rangle_v=0\\
\langle \alpha_{\pi_v} W_v^{\circ}+\beta_{\pi_v}\pi_v(\diag{1,\varpi_v})W_v^{\circ},\alpha_{\pi_v} W_v^{\circ}+\beta_{\pi_v}\pi_v(\diag{1,\varpi_v})W_v^{\circ}\rangle_v=1.
\end{cases}
\end{equation}

\begin{lemma}\label{lem2.2}
Let notation be as before. Let $\mathfrak{q}\subsetneq \mathcal{O}_F$ be a prime ideal. Let $\pi=\otimes_v\pi_v\in \mathcal{F}(\mathbf{k},\mathcal{O}_F)$. Let $\alpha=\alpha_{\pi_{\mathfrak{q}}}$ and $\beta=\beta_{\pi_{\mathfrak{q}}}$ be the coefficients defined by \eqref{2.10}. Let $\chi_v$ be the nontrivial unramified quadratic character of $F_v^{\times}$. 
\begin{itemize}
\item Let $\lambda_{\pi}(\mathfrak{q})$ be the $\mathfrak{q}$-th Dirichlet coefficient of $L(s,\pi)$. Then 
\begin{equation}\label{eq2.11}
\begin{cases}
\alpha^2=\lambda_{\pi}(\mathfrak{q})^2N(\mathfrak{q})^{-1}L_{\mathfrak{q}}(1/2,\pi_{\mathfrak{q}})L_{\mathfrak{q}}(1/2,\pi_{\mathfrak{q}}\times\chi_{\mathfrak{q}}),\\
\beta^2=\zeta_{\mathfrak{q}}(1)^2\zeta_{\mathfrak{q}}(2)^{-2}L_{\mathfrak{q}}(1/2,\pi_{\mathfrak{q}})L_{\mathfrak{q}}(1/2,\pi_{\mathfrak{q}}\times\chi_{\mathfrak{q}}). 
\end{cases}
\end{equation}

\item Moreover, we have 
\begin{equation}\label{2.11}
1+(\alpha+\beta)^2=2(1+N(\mathfrak{q})^{-1})L_{\mathfrak{q}}(1/2,\pi_{\mathfrak{q}}\times\chi_{\mathfrak{q}}).
\end{equation}
\end{itemize} 	
\end{lemma}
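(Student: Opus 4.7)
Set $\delta := \pi_v(\diag(1,\varpi_v))W_v^{\circ}$ and $c := \langle W_v^{\circ},\delta\rangle_v$. My first step is to reduce \eqref{2.10} to an elementary linear-algebra problem in $(\alpha,\beta)$. A change of variables $a_v\mapsto a_v\varpi_v^{-1}$ in the Whittaker inner product shows $\langle\delta,\delta\rangle_v=\langle W_v^{\circ},W_v^{\circ}\rangle_v=1$; moreover, since $\pi_v$ is unramified unitary, the Satake parameters $\mu_v,\nu_v$ (with $\mu_v\nu_v=1$) are either unitary complex conjugates or both real, so every value of $W_v^{\circ}$ on $\diag(\varpi_v^n,1)$ is real, and $c\in\mathbb{R}$. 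The system \eqref{2.10} then collapses to $\alpha+\beta c=0$ and $\alpha^2+2\alpha\beta c+\beta^2=1$, whose solution is $\beta^2=(1-c^2)^{-1}$ and $\alpha^2=c^2(1-c^2)^{-1}$.

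The second step is the explicit computation of $c$. Working in $\overline{G}=\PGL_2$, one has $\delta(\diag(\varpi_v^n,1))=W_v^{\circ}(\diag(\varpi_v^{n-1},1))$ for all $n$. Using the Macdonald formula
\[
W_v^{\circ}(\diag(\varpi_v^n,1))=W_v^{\circ}(I_2)\, q_v^{-n/2}\,\frac{\mu_v^{n+1}-\nu_v^{n+1}}{\mu_v-\nu_v},\qquad n\geq 0,
\]
and the normalization $\langle W_v^{\circ},W_v^{\circ}\rangle_v=1$ (which determines $W_v^{\circ}(I_2)^2\Vol(\mathcal{O}_v^{\times})$ via the unramified computation already carried out in the proof of Lemma \ref{lem2.1}), I would expand $c=\Vol(\mathcal{O}_v^{\times})\sum_{n\geq 1}W_v^{\circ}(\diag(\varpi_v^n,1))W_v^{\circ}(\diag(\varpi_v^{n-1},1))$, sum the two resulting geometric series (in $\mu_v^2/q_v$ and $\nu_v^2/q_v$), and collapse the answer using $\mu_v\nu_v=1$ and $\lambda_{\pi}(\mathfrak{q})=\mu_v+\nu_v$. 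The outcome should be the clean formula
\[
c=\frac{\lambda_{\pi}(\mathfrak{q})}{(1+N(\mathfrak{q})^{-1})N(\mathfrak{q})^{1/2}}.
\]

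The third step is bookkeeping. The identity $(1-\mu_v^2/q_v)(1-\nu_v^2/q_v)=(1+q_v^{-1})^2-\lambda_{\pi}(\mathfrak{q})^2/q_v$, combined with $L_{\mathfrak{q}}(1/2,\pi_{\mathfrak{q}})L_{\mathfrak{q}}(1/2,\pi_{\mathfrak{q}}\times\chi_{\mathfrak{q}})=(1-\mu_v^2/q_v)^{-1}(1-\nu_v^2/q_v)^{-1}$ and $\zeta_{\mathfrak{q}}(1)^2\zeta_{\mathfrak{q}}(2)^{-2}=(1+N(\mathfrak{q})^{-1})^2$, rewrites $1-c^2$ directly as an inverse product of local $L$-factors. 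Plugging into $\beta^2=(1-c^2)^{-1}$ and $\alpha^2=c^2\beta^2$ yields \eqref{eq2.11}.

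For \eqref{2.11}, I would use the orthogonality relation $\alpha=-\beta c$ to write $\alpha+\beta=\beta(1-c)$, so
\[
1+(\alpha+\beta)^2=1+\frac{(1-c)^2}{1-c^2}=1+\frac{1-c}{1+c}=\frac{2}{1+c}.
\]
Substituting the explicit value of $c$ and expanding $L_{\mathfrak{q}}(1/2,\pi_{\mathfrak{q}}\times\chi_{\mathfrak{q}})^{-1}=1+\lambda_{\pi}(\mathfrak{q})N(\mathfrak{q})^{-1/2}+N(\mathfrak{q})^{-1}$ finishes the identification. The main obstacle is the Macdonald-sum computation in the second step, where one has to track the geometric-series manipulations carefully to land on the clean factorization; a secondary subtlety is fixing the signs of $\alpha,\beta$ (equivalently, the sign of $c$) consistently with the convention of \eqref{2.10}, which must be the choice $c>0$ in order to reproduce the asserted formula in part (2).
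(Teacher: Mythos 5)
Your proposal is correct and follows essentially the same route as the paper: reduce \eqref{2.10} to $\alpha=-\beta c$ and $\beta^2(1-c^2)=1$, compute $c=\gamma$ via Macdonald's formula, and rewrite $1-c^2$ and $\tfrac{2}{1+c}$ in terms of local $L$-factors. One small correction: there is no sign ``choice'' to make at the end — $c$ is uniquely determined (with the sign of $\lambda_\pi(\mathfrak{q})$), the overall sign of $(\alpha,\beta)$ is immaterial since only $\alpha^2$, $\beta^2$, and $(\alpha+\beta)^2$ appear, and the identity $1+(\alpha+\beta)^2=\tfrac{2}{1+c}$ holds for either sign of $c$, so your concluding caveat is unnecessary.
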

\begin{proof}
Write $\gamma=\langle W_v^{\circ},\pi_v(\diag{1,\varpi_v})W_v^{\circ}\rangle_v$. The constraints \eqref{2.10} amounts to 
\begin{align*}
\begin{cases}
\alpha+\beta\gamma=0\\
\alpha^2+\beta^2+2\alpha\beta\gamma=1
\end{cases}\ \ \Leftrightarrow\ \ \ \begin{cases}
\alpha=-\beta\gamma\\
\beta^2(1-\gamma^2)=1.
\end{cases}
\end{align*}

Therefore, we have 
\begin{equation}\label{2.12}
1+(\alpha+\beta)^2=1+\frac{(1-\gamma)^2}{1-\gamma^2}=\frac{2}{1+\gamma}.
\end{equation}

Utilizing Macdonald's formula we have 
\begin{equation}\label{eq2.14}
\gamma=\lambda_{\pi}(\mathfrak{q})N(\mathfrak{q})^{-1/2}(1+N(\mathfrak{q})^{-1})^{-1},
\end{equation}
from which we obtain \eqref{eq2.11}. Substituting \eqref{eq2.14} into \eqref{2.12} leads to 
\begin{align*}
1+(\alpha+\beta)^2=\frac{2(1+N(\mathfrak{q})^{-1})}{1-\lambda_{\pi}(\mathfrak{q})N(\mathfrak{q})^{-1/2}+N(\mathfrak{q})^{-1}}.
\end{align*}

Therefore, \eqref{2.11} follows from the fact that $\chi_v(\varpi_v)=-1$.
\end{proof}

Let $\pi=\otimes_v\pi_v$ be a unitary automorphic representation of $\mathrm{PGL}_2$ over $F$. By the definition of the test function $f=\otimes_vf_v$, we have $\pi(f_{\mathfrak{n},\mathfrak{q}})\phi\equiv 0$ for all $\phi\in \pi$ unless 
\begin{itemize}
\item $\phi$ is right invariant by the compact subgroup $K_{\mathfrak{q}}[1]\otimes\prod_{v\nmid\mathfrak{q\infty}}K_v$;
\item $\pi_v$ is the discrete series of weight $k_v$ for all $v\mid\infty$.
\end{itemize}

By the definition of $f_{\mathfrak{n},\mathfrak{q}}$, we obtain 
\begin{equation}\label{f2.15}
J_{\mathrm{Spec}}(f_{\mathfrak{n},\mathfrak{q}},\textbf{s})=J_{\mathrm{Spec}}^{\mathrm{new}}(f_{\mathfrak{n},\mathfrak{q}},\textbf{s})+J_{\mathrm{Spec}}^{\mathrm{old}}(f_{\mathfrak{n},\mathfrak{q}},\textbf{s}),
\end{equation}
where 
\begin{align*}
J_{\mathrm{Spec}}^{\mathrm{new}}(f_{\mathfrak{n},\mathfrak{q}},\textbf{s}):=&\sum_{\substack{\pi\in \mathcal{F}(\mathbf{k},\mathfrak{q}),\ \phi\in \mathcal{B}_{\pi}^{\mathrm{new}}}}\frac{\mathcal{P}(s_1,\pi(f_{\mathfrak{n},\mathfrak{q}})\phi)\mathcal{P}(s_2,\overline{\phi})}{\langle\phi,\phi\rangle},\\
J_{\mathrm{Spec}}^{\mathrm{old}}(f_{\mathfrak{n},\mathfrak{q}},\textbf{s}):=&\sum_{\substack{\pi\in \mathcal{F}(\mathbf{k},\mathcal{O}_F),\ \phi\in \mathfrak{B}_{\pi}^{K_{\mathfrak{q}}[1]}}}\frac{\mathcal{P}(s_1,\pi(f_{\mathfrak{n},\mathfrak{q}})\phi)\mathcal{P}(s_2,\overline{\phi})}{\langle\phi,\phi\rangle}.
\end{align*}
Here $\mathcal{P}(s_1,\pi(f_{\mathfrak{n},\mathfrak{q}})\phi)$ and $\mathcal{P}(s_2,\overline{\phi})$ are defined by \eqref{eq2.2}, and 
\begin{itemize}
\item $\mathcal{B}_{\pi}^{\mathrm{new}}$ is an orthonormal basis of $\pi$ consisting of vectors $\phi$ satisfying the constraints in Lemma \ref{lem2.1}; 
\item $\mathfrak{B}_{\pi}^{K_{\mathfrak{q}}[1]}$ is an orthonormal basis of $\pi$ consisting of right-$K_{\mathfrak{q}}[1]\otimes\prod_{v\nmid\mathfrak{q\infty}}K_v$-invariant vectors.
\end{itemize}
Hence, $\#\mathcal{B}_{\pi}^{\mathrm{new}}=1$ and $\#\mathfrak{B}_{\pi}^{K_{\mathfrak{q}}[1]}=2$. 
\begin{lemma}\label{lem2.3}
Let notation be as before. Then $J_{\mathrm{Spec}}^{\mathrm{new}}(f_{\mathfrak{n},\mathfrak{q}},\textbf{s})$ admits a meromorphic continuation to $\textbf{s}\in \mathbb{C}^2$. Moreover, $J_{\mathrm{Spec}}^{\mathrm{new}}(f_{\mathfrak{n},\mathfrak{q}},\textbf{s})$ is equal to
is equal to 
\begin{align*}
\sum_{\substack{\pi\in \mathcal{F}(\mathbf{k},\mathfrak{q})}}\frac{\lambda_{\pi}(\mathfrak{n})L(s_1+1/2,\pi)L(s_2+1/2,\widetilde{\pi})}{2\zeta_{\mathfrak{q}}(2)^2L^{(\mathfrak{q})}(1,\pi,\Ad)D_F^{2-s_1-s_2}}\prod_{v\mid\infty}\frac{2^{k_v}\pi\Gamma(k_v/2+s_1)\Gamma(k_v/2+s_2)}{(2\pi)^{s_1+s_2}\Gamma(k_v)}.
\end{align*}
\end{lemma}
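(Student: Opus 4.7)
The plan is to reduce the spectral sum to an application of Lemma \ref{lem2.1} by explicitly computing the action $\pi(f_{\mathfrak{n},\mathfrak{q}})\phi$ for $\phi\in\mathcal{B}_{\pi}^{\mathrm{new}}$ with $\pi\in\mathcal{F}(\mathbf{k},\mathfrak{q})$. Since $f_{\mathfrak{n},\mathfrak{q}}=\otimes_v f_v$ is factorizable and the newform $\phi$ corresponds to a pure tensor $\otimes_v\mathbf{v}_v$ of the type fixed in Lemma \ref{lem2.1}, I would analyze $\pi_v(f_v)\mathbf{v}_v$ place by place. At $v\mid\infty$, Schur orthogonality for matrix coefficients of the discrete series $\pi_v$, together with the normalizing constant $(k_v-1)/(4\pi)$ which is precisely the formal degree, shows that $\pi_v(f_v)$ is the orthogonal projector onto the lowest weight line $\mathbb{C}\mathbf{v}_v$. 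At $v=\mathfrak{q}$, the normalization $\Vol(K_0(\mathfrak{p}_v))^{-1}\textbf{1}_{Z(F_v)K_0(\mathfrak{p}_v)}$ acts as the identity on the $K_0(\mathfrak{q})$-fixed local new vector. At $v\mid\mathfrak{n}$, the $q_v^{-e_v(\mathfrak{n})/2}$-normalized sum of double-coset indicators is the classical Hecke operator $T_{\mathfrak{p}_v^{e_v(\mathfrak{n})}}$ and acts on the normalized spherical vector by the eigenvalue $\lambda_\pi(\mathfrak{p}_v^{e_v(\mathfrak{n})})$, up to a controlled volume factor. At the remaining finite $v\nmid\mathfrak{n}\mathfrak{q}$, $f_v=\textbf{1}_{Z(F_v)K_v}$ acts by the scalar $\Vol(K_v/Z(F_v)\cap K_v)$ using the measure from \textsection\ref{sec2.1.2}.

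Multiplying these local actions and invoking the Euler-product identity $\lambda_\pi(\mathfrak{n})=\prod_{v\mid\mathfrak{n}}\lambda_\pi(\mathfrak{p}_v^{e_v(\mathfrak{n})})$ yields $\pi(f_{\mathfrak{n},\mathfrak{q}})\phi=C\cdot\lambda_\pi(\mathfrak{n})\cdot\phi$, where $C$ collects the global volume constants determined by the Tamagawa conventions of \textsection\ref{1.1.1}--\textsection\ref{sec2.1.2}, most notably $\prod_{v<\infty}\Vol(\mathcal{O}_v,dt_v)=D_F^{-1/2}$ and the volume of $K_0(\mathfrak{q})$. Substituting into the definition of $J_{\mathrm{Spec}}^{\mathrm{new}}$ gives
\[
J_{\mathrm{Spec}}^{\mathrm{new}}(f_{\mathfrak{n},\mathfrak{q}},\textbf{s})=C\sum_{\pi\in\mathcal{F}(\mathbf{k},\mathfrak{q})}\lambda_\pi(\mathfrak{n})\,\mathcal{Z}(\textbf{s},\phi),
\]
and applying Lemma \ref{lem2.1} to each $\mathcal{Z}(\textbf{s},\phi)$ produces the stated closed form, the constant $C$ combining with the $D_F^{1/2-s_1-s_2}$ and $\zeta_{\mathfrak{q}}(1)$ appearing in Lemma \ref{lem2.1} to yield the $D_F^{2-s_1-s_2}$ denominator and absorb the $\zeta_{\mathfrak{q}}(1)$ factor of the numerator.

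The meromorphic continuation to $\textbf{s}\in\mathbb{C}^2$ is then immediate: the sum over $\mathcal{F}(\mathbf{k},\mathfrak{q})$ is finite for fixed weight and level, each $L(s+1/2,\pi)$ is entire because $\pi$ is cuspidal on $\mathrm{PGL}_2$, and the remaining gamma and local zeta factors are explicit meromorphic functions of $\textbf{s}$. The principal obstacle is the bookkeeping of the constant $C$: reconciling the exponents of $D_F$, the local volumes at $\mathfrak{q}$, and the Archimedean formal-degree factor against the normalizations in Lemma \ref{lem2.1} requires careful use of the self-dual Haar measures, the local new Whittaker normalization \eqref{eq3.10}, and the Macdonald-type identities for unramified matrix coefficients. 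Once these conventions are tracked consistently, everything else is a matter of assembling the local $L$-factors from Lemma \ref{lem2.1} into the global $L$-function notation appearing on the right-hand side.
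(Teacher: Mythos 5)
Your plan reproduces the paper's argument: the paper's eq.~\eqref{2.14} records $\pi(f_{\mathfrak{n},\mathfrak{q}})\phi=\lambda_{\pi}(\mathfrak{n})D_F^{-3/2}\,\phi$ for $\phi\in\mathcal{B}_{\pi}^{\mathrm{new}}$, obtained by exactly the place-by-place reading of the test function you describe (projector at $v\mid\infty$ from the formal degree, identity at $v=\mathfrak{q}$, Hecke eigenvalue at $v\mid\mathfrak{n}$, volume of $K_v$ elsewhere), and Lemma~\ref{lem2.3} then follows by feeding this eigenvalue relation into Lemma~\ref{lem2.1}.

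The one item to correct is your final claim that the scalar $C$ absorbs the $\zeta_{\mathfrak{q}}(1)$ from Lemma~\ref{lem2.1}. The paper's $C$ is the pure power $D_F^{-3/2}$, arising from the product of Tamagawa volumes of the local maximal compacts under the self-dual normalization of \textsection\ref{sec2.1.2}, while the local factor at $v=\mathfrak{q}$ is $\Vol(K_0(\mathfrak{p}_{\mathfrak{q}}))^{-1}\cdot\Vol(K_0(\mathfrak{p}_{\mathfrak{q}}))=1$ ($\mathfrak{q}$ unramified, so $d_{\mathfrak{q}}=0$); no $\zeta_{\mathfrak{q}}(1)^{\pm1}$ appears in $C$. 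Substituting Lemma~\ref{lem2.1} literally therefore leaves a $\zeta_{\mathfrak{q}}(1)$ that is absent from the stated formula of Lemma~\ref{lem2.3}. This is an internal inconsistency between the two lemma statements rather than something $C$ should be made to fix --- the $\zeta_{\mathfrak{q}}(1)$-free form is the one used downstream in \textsection\ref{sec8} and \textsection\ref{sec10} --- so your proof should present $C=D_F^{-3/2}$ as a clean scalar and flag the $\zeta_{\mathfrak{q}}(1)$ mismatch as a bookkeeping discrepancy to be resolved in the normalization of Lemma~\ref{lem2.1}, not conjured away by the Hecke action.
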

\begin{proof}
By the construction of $f_{\mathfrak{n},\mathfrak{q}}$, we have
\begin{equation}\label{2.14}
\pi(f_{\mathfrak{n},\mathfrak{q}})\phi=\lambda_{\pi}(\mathfrak{n})D_F^{-3/2}\cdot \phi
\end{equation}
for $\phi\in \mathcal{B}_{\pi}^{\mathrm{new}}$. Hence, Lemma \ref{lem2.3} follows from \eqref{2.14} and Lemma \ref{lem2.1}. 
\end{proof}

\begin{lemma}\label{lem2.4}
Let notation be as before. Let $\chi_{\mathfrak{q}}$ be the nontrivial unramified quadratic character of $F_{\mathfrak{q}}^{\times}$. We have the following assertions. 
\begin{itemize}
\item $J_{\mathrm{Spec}}^{\mathrm{old}}(f_{\mathfrak{n},\mathfrak{q}},\textbf{s})$ admits a meromorphic continuation to $\textbf{s}\in \mathbb{C}^2$. Moreover,   $J_{\mathrm{Spec}}^{\mathrm{old}}(f_{\mathfrak{n},\mathfrak{q}},\textbf{s})$ can be expressed as  
\begin{equation}\label{eq2.17}
\sum_{\substack{\pi\in \mathcal{F}(\mathbf{k},\mathcal{O}_F)}}\frac{\lambda_{\pi}(\mathfrak{n})C_{\pi_{\mathfrak{q}}}(\textbf{s})L(s_1+1/2,\pi)L(s_2+1/2,\widetilde{\pi})}{2L(1,\pi,\Ad)D_F^{2-s_1-s_2}}\prod_{v\mid\infty}\frac{\Gamma(k_v/2+s_1)\Gamma(k_v/2+s_2)}{(2\pi)^{s_1+s_2}2^{-k_v}\pi^{-1}\Gamma(k_v)},
\end{equation}
where $C_{\pi_{\mathfrak{q}}}(\textbf{s})$ is defined by 
\begin{equation}\label{equ2.18}
1+L_{\mathfrak{q}}(1/2,\pi_{\mathfrak{q}})L_{\mathfrak{q}}(1/2,\pi_{\mathfrak{q}}\times\chi_{\mathfrak{q}})\prod_{j=1}^2(N(\mathfrak{q})^{-s_j}+N(\mathfrak{q})^{-s_j-1}-\lambda_{\pi}(\mathfrak{q})N(\mathfrak{q})^{-1/2}).
\end{equation}
 
 \item In particular, $J_{\mathrm{Spec}}^{\mathrm{old}}(f_{\mathfrak{n},\mathfrak{q}},\textbf{0})$ is equal to  
\begin{equation}\label{eq2.18}
\frac{1+N(\mathfrak{q})^{-1}}{D_F^{2}}\sum_{\substack{\pi\in \mathcal{F}(\mathbf{k},\mathcal{O}_F)}}\frac{\lambda_{\pi}(\mathfrak{n})L_{\mathfrak{q}}(1/2,\pi_{\mathfrak{q}}\times\chi_{\mathfrak{q}})|L(1/2,\pi)|^2}{L(1,\pi,\Ad)}\prod_{v\mid\infty}\frac{2^{k_v}\pi\Gamma(k_v/2)^2}{\Gamma(k_v)}.
\end{equation}
\end{itemize}
\end{lemma}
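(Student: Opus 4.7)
The plan is to pick the explicit orthonormal basis $\mathfrak{B}_\pi^{K_\mathfrak{q}[1]} = \{\phi^\sharp, \phi^\flat\}$ suggested by \textsection 3.2: $\phi^\sharp$ corresponds in the Whittaker model to the normalized spherical vector $W_\mathfrak{q}^\circ$ at $\mathfrak{q}$ (with the remaining local components chosen as in Lemma \ref{lem2.1}), and $\phi^\flat$ corresponds at $\mathfrak{q}$ to the Atkin-Lehner combination $\alpha W_\mathfrak{q}^\circ + \beta \pi_\mathfrak{q}(\diag(1,\varpi_\mathfrak{q})) W_\mathfrak{q}^\circ$, with the coefficients $\alpha, \beta$ of Lemma \ref{lem2.2}. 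Both vectors are $K_0(\mathfrak{p}_\mathfrak{q})$-fixed, so by \eqref{t1.7} the local action $\pi_\mathfrak{q}(f_\mathfrak{q})$ is the identity on each; combined with the Hecke eigenvalue identity at places dividing $\mathfrak{n}$ one gets $\pi(f_{\mathfrak{n},\mathfrak{q}}) \phi = \lambda_\pi(\mathfrak{n}) D_F^{-3/2} \phi$ for both basis vectors, reducing the problem to the computation of $\mathcal{Z}(\mathbf{s}, \phi^\sharp) + \mathcal{Z}(\mathbf{s}, \phi^\flat)$ in the sense of \eqref{eq2.1}.

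I would compute $\mathcal{Z}(\mathbf{s}, \phi^\sharp)$ by running the proof of Lemma \ref{lem2.1} verbatim, the only modification being at $v = \mathfrak{q}$ where $\pi_\mathfrak{q}$ is now unramified and the local period contribution is given by the spherical formula \eqref{2.2} instead of the Steinberg formula \eqref{2.3}. After collecting the global constants this produces the ``$1$'' in \eqref{equ2.18}, multiplied by the global prefactor displayed in \eqref{eq2.17}. For $\phi^\flat$, every local factor at $v \neq \mathfrak{q}$ agrees with that of $\phi^\sharp$. At $v = \mathfrak{q}$, I would exploit the trivial central character to identify $\pi_\mathfrak{q}(\diag(1,\varpi_\mathfrak{q})) W_\mathfrak{q}^\circ$ with $\pi_\mathfrak{q}(\diag(\varpi_\mathfrak{q}^{-1}, 1)) W_\mathfrak{q}^\circ$, yielding
\[\mathcal{P}_\mathfrak{q}(s, \pi_\mathfrak{q}(\diag(1,\varpi_\mathfrak{q})) W_\mathfrak{q}^\circ) = N(\mathfrak{q})^{-s} \mathcal{P}_\mathfrak{q}(s, W_\mathfrak{q}^\circ),\]
so that the $\mathfrak{q}$-local contribution to $\mathcal{Z}(\mathbf{s}, \phi^\flat)$ differs from that of $\mathcal{Z}(\mathbf{s}, \phi^\sharp)$ by the multiplicative factor $(\alpha + \beta N(\mathfrak{q})^{-s_1})(\alpha + \beta N(\mathfrak{q})^{-s_2})$.

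Using $\alpha = -\beta \gamma$ with $\gamma = \lambda_\pi(\mathfrak{q}) N(\mathfrak{q})^{-1/2}(1+N(\mathfrak{q})^{-1})^{-1}$, the value of $\beta^2$ from \eqref{eq2.11}, and the identity $\zeta_\mathfrak{q}(1)/\zeta_\mathfrak{q}(2) = 1+N(\mathfrak{q})^{-1}$, this product rearranges to
\[L_\mathfrak{q}(1/2, \pi_\mathfrak{q}) L_\mathfrak{q}(1/2, \pi_\mathfrak{q} \times \chi_\mathfrak{q}) \prod_{j=1}^2 \bigl(N(\mathfrak{q})^{-s_j} + N(\mathfrak{q})^{-s_j-1} - \lambda_\pi(\mathfrak{q}) N(\mathfrak{q})^{-1/2}\bigr),\]
which is exactly the non-``$1$'' term of \eqref{equ2.18}. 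Meromorphic continuation to $\mathbf{s} \in \mathbb{C}^2$ is immediate from the analytic continuation of automorphic $L$-functions. For the specialization at $\mathbf{s} = \mathbf{0}$ needed in \eqref{eq2.18}, the two $\mathfrak{q}$-factors in $C_{\pi_\mathfrak{q}}(\mathbf{s})$ coincide, and \eqref{2.11} yields $1 + (\alpha + \beta)^2 = 2(1+N(\mathfrak{q})^{-1}) L_\mathfrak{q}(1/2, \pi_\mathfrak{q} \times \chi_\mathfrak{q})$, finishing the proof after collecting the global constants.

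The main obstacle is the careful bookkeeping of normalizing constants in passing from $\phi^\sharp$ to the ``$1$'' in \eqref{equ2.18}: one must reconcile the Steinberg-case local ratio $\zeta_\mathfrak{q}(2)^{-2} L_\mathfrak{q}(1, \pi_\mathfrak{q} \times \widetilde{\pi}_\mathfrak{q})$ of Lemma \ref{lem2.1} with its unramified analogue $\zeta_\mathfrak{q}(2)^{-1} L_\mathfrak{q}(1, \pi_\mathfrak{q} \times \widetilde{\pi}_\mathfrak{q})$, absorb the difference into the transition from $L^{(\mathfrak{q})}(1, \pi, \Ad)$ in \eqref{f2.2} to the complete $L(1, \pi, \Ad)$ in \eqref{eq2.17}, and track the compensating powers of $D_F$ and the archimedean prefactors that appear in \eqref{eq2.17} versus \eqref{f2.2}.
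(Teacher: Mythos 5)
Your proof follows essentially the same route as the paper's: the same Atkin--Lehner orthonormal basis, the same observation that the $\diag(1,\varpi_\mathfrak{q})$-translate scales the period by $N(\mathfrak{q})^{-s}$, and the same use of \eqref{eq2.11} and \eqref{2.11} to convert $1+\prod_{j}(\alpha+\beta N(\mathfrak{q})^{-s_j})$ into $C_{\pi_\mathfrak{q}}(\mathbf{s})$. One small slip in your closing remark: for unramified $\pi_\mathfrak{q}$ with $d_\mathfrak{q}=0$, the local ratio from \eqref{2.2} is simply $1$ rather than $\zeta_\mathfrak{q}(2)^{-1}L_\mathfrak{q}(1,\pi_\mathfrak{q}\times\widetilde{\pi}_\mathfrak{q})$ --- which is why the ``$1$'' in $C_{\pi_\mathfrak{q}}(\mathbf{s})$ comes out clean with the complete $L(1,\pi,\mathrm{Ad})$ in the denominator and no stray $\zeta_\mathfrak{q}$ factors.
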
 
\begin{proof}
Let $\pi=\otimes_{v\leq\infty}\pi_v\in \mathcal{F}(\mathbf{k},\mathcal{O}_F)$. Let $\phi^{\circ}$ be the new form in $\pi$ such that $\langle \phi^{\circ},\phi^{\circ}\rangle=1$. Let $\alpha_{\pi_{\mathfrak{q}}}$ and $\beta_{\pi_{\mathfrak{q}}}$ be the coefficients in Lemma \ref{lem2.2}. Then   
\begin{equation}\label{2.17}
\mathfrak{B}_{\pi}^{K_{\mathfrak{q}}[1]}=\mathrm{Span}\{\phi^{\circ}, \alpha_{\pi_{\mathfrak{q}}}\phi^{\circ}+\beta_{\pi_{\mathfrak{q}}}\pi_{\mathfrak{q}}(\diag(1,\varpi_{\mathfrak{q}}))\phi^{\circ}\}.
\end{equation}

Let $\phi\in \mathfrak{B}_{\pi}^{K_{\mathfrak{q}}[1]}$. Parallel to \eqref{2.14} we have $\pi(f_{\mathfrak{n},\mathfrak{q}})\phi=\lambda_{\pi}(\mathfrak{n})D_F^{-3/2}\phi$. Hence, for $\Re(s_1)\gg 1$ and $\Re(s_2)\gg 1$, we obtain 
\begin{equation}\label{2.18}
J_{\mathrm{Spec}}^{\mathrm{old}}(f_{\mathfrak{n},\mathfrak{q}},\textbf{s})=\sum_{\substack{\pi\in \mathcal{F}(\mathbf{k},\mathcal{O}_F),\ \phi\in \mathfrak{B}_{\pi}^{K_{\mathfrak{q}}[1]}}}\frac{\lambda_{\pi}(\mathfrak{n})\cdot \mathcal{P}(s_1,\phi)\mathcal{P}(s_2,\overline{\phi})}{\langle\phi,\phi\rangle}.
\end{equation}

By a change of variable, 
\begin{align*}
\mathcal{P}(s_1,\pi_{\mathfrak{q}}(\diag(1,\varpi_{\mathfrak{q}}))\phi^{\circ})=N(\mathfrak{q})^{-s_1}\mathcal{P}(s_1,\phi^{\circ}).
\end{align*}
Therefore, we obtain, for $\phi=\alpha_{\pi_{\mathfrak{q}}}\phi^{\circ}+\beta_{\pi_{\mathfrak{q}}}\pi_{\mathfrak{q}}(\diag(1,\varpi_{\mathfrak{q}}))\phi^{\circ}$, that 
\begin{equation}\label{2.19}
\mathcal{P}(s_1,\phi)\mathcal{P}(s_2,\overline{\phi})=\mathcal{P}(s_1,\phi^{\circ})\mathcal{P}(s_2,\overline{\phi^{\circ}})\cdot \prod_{j=1}^{2}(\alpha_{\pi_{\mathfrak{q}}}+\beta_{\pi_{\mathfrak{q}}}N(\mathfrak{q})^{-s_j}).
\end{equation}

Substituting \eqref{2.19} into \eqref{2.18}, the function $J_{\mathrm{Spec}}^{\mathrm{old}}(f_{\mathfrak{n},\mathfrak{q}},\textbf{s})$ boils down to 
\begin{equation}\label{2.20}
\sum_{\substack{\pi\in \mathcal{F}(\mathbf{k},\mathcal{O}_F),\ \phi^{\circ}\in \mathfrak{B}_{\pi}^{\mathrm{new}}}}\bigg[1+\prod_{j=1}^{2}(\alpha_{\pi_{\mathfrak{q}}}+\beta_{\pi_{\mathfrak{q}}}N(\mathfrak{q})^{-s_j})\bigg]\cdot \frac{\lambda_{\pi}(\mathfrak{n})\cdot \mathcal{P}(s_1,\phi^{\circ})\mathcal{P}(s_2,\overline{\phi^{\circ}})}{\langle\phi^{\circ},\phi^{\circ}\rangle}.
\end{equation}

By \eqref{eq2.11} in Lemma \ref{lem2.2}, $1+\prod_{j=1}^{2}(\alpha_{\pi_{\mathfrak{q}}}+\beta_{\pi_{\mathfrak{q}}}N(\mathfrak{q})^{-s_j})$ amounts to  
\begin{align*}
&1+\zeta_{\mathfrak{q}}(1)^2\zeta_{\mathfrak{q}}(2)^{-2}(N(\mathfrak{q})^{-s_1}-\gamma)(N(\mathfrak{q})^{-s_2}-\gamma)L_{\mathfrak{q}}(1/2,\pi_{\mathfrak{q}})L_{\mathfrak{q}}(1/2,\pi_{\mathfrak{q}}\times\chi_{\mathfrak{q}})\\
=&1+L_{\mathfrak{q}}(1/2,\pi_{\mathfrak{q}})L_{\mathfrak{q}}(1/2,\pi_{\mathfrak{q}}\times\chi_{\mathfrak{q}})\prod_{j=1}^2(N(\mathfrak{q})^{-s_j}+N(\mathfrak{q})^{-s_j-1}-\lambda_{\pi}(\mathfrak{q})N(\mathfrak{q})^{-1/2}).
\end{align*}
Here $\gamma$ is defined as in \eqref{eq2.14}. 

Therefore, \eqref{eq2.17} follows from Lemma \ref{lem2.1} and \eqref{2.20}. Taking $\textbf{s}=(0,0)$ in \eqref{eq2.14}, along with \eqref{2.11} in Lemma \ref{lem2.2}, we then obtain \eqref{eq2.18}. 
\end{proof}

\subsection{The Spectral Side $J_{\mathrm{Spec}}(f_{\mathfrak{n},\mathfrak{q}},\textbf{s})$}\label{sec2.3}
Substituting Lemmas \ref{lem2.3} and \ref{lem2.4} into \eqref{f2.15}, we obtain the following results. 
\begin{thm}\label{thm3.5}
Let notation be as before. Let $\mathfrak{q}=\mathcal{O}_F$. Then $J_{\mathrm{Spec}}(f_{\mathfrak{n},\mathfrak{q}},\textbf{s})$ admits a meromorphic continuation to $\textbf{s}\in \mathbb{C}^2$. Moreover, $J_{\mathrm{Spec}}(f_{\mathfrak{n},\mathfrak{q}},\textbf{s})$ can be expressed explicitly as 
\begin{align*}
\prod_{v\mid\infty}\frac{\Gamma(k_v/2+s_1)\Gamma(k_v/2+s_2)}{2^{-k_v}\pi^{-1}(2\pi)^{s_1+s_2}\Gamma(k_v)}\sum_{\substack{\pi\in \mathcal{F}(\mathbf{k},\mathcal{O}_F)}}\frac{\lambda_{\pi}(\mathfrak{n})L(s_1+1/2,\pi)L(s_2+1/2,\widetilde{\pi})}{2L(1,\pi,\Ad)D_F^{2-s_1-s_2}}.
\end{align*}	
\end{thm}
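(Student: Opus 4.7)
The plan is to deduce Theorem \ref{thm3.5} as the specialization of the general spectral decomposition \eqref{f2.15} to the level-one case $\mathfrak{q}=\mathcal{O}_F$. The first observation is that the old-form summand $J_{\mathrm{Spec}}^{\mathrm{old}}(f_{\mathfrak{n},\mathfrak{q}},\textbf{s})$ is defined in terms of the compact subgroup $K_{\mathfrak{q}}[1]$ and an Atkin--Lehner decomposition at the prime $v=\mathfrak{q}$; both structures are vacuous when $\mathfrak{q}=\mathcal{O}_F$, since no such finite prime exists. Hence the decomposition \eqref{f2.15} collapses to $J_{\mathrm{Spec}}(f_{\mathfrak{n},\mathcal{O}_F},\textbf{s})=J_{\mathrm{Spec}}^{\mathrm{new}}(f_{\mathfrak{n},\mathcal{O}_F},\textbf{s})$, and the entire spectral side is accounted for by Lemma \ref{lem2.3}.

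Next, I would specialize Lemma \ref{lem2.3} to $\mathfrak{q}=\mathcal{O}_F$: the local factor $\zeta_{\mathfrak{q}}(2)$ is to be read as $1$, and the partial adjoint $L$-function $L^{(\mathfrak{q})}(1,\pi,\Ad)$ coincides with the full $L(1,\pi,\Ad)$ since no local adjoint factor has been removed. Substituting these simplifications into Lemma \ref{lem2.3} yields precisely the expression asserted in Theorem \ref{thm3.5}. The overall $D_F^{2-s_1-s_2}$ factor arises by combining the $D_F^{1/2-s_1-s_2}$ appearing in \eqref{f2.2} of Lemma \ref{lem2.1} with the normalization $D_F^{-3/2}$ from the Hecke eigenvalue identity \eqref{2.14}, consistently with the derivation of Lemma \ref{lem2.3}.

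The meromorphic continuation to $\textbf{s}\in\mathbb{C}^2$ is then immediate from the resulting finite spectral expansion: $\mathcal{F}(\mathbf{k},\mathcal{O}_F)$ is a finite set for each fixed $\mathbf{k}$, and every summand is a product of Gamma factors and standard automorphic $L$-functions, each meromorphic on $\mathbb{C}^2$ as a function of $\textbf{s}$. I expect no substantive obstacle in this step; the genuine work is concentrated in Lemma \ref{lem2.1}, whose proof relies on Rankin--Selberg unfolding together with explicit local Whittaker computations at the archimedean, ramified, and unramified places. Theorem \ref{thm3.5} itself is essentially a bookkeeping specialization: verify that no local data at a non-existent prime $\mathfrak{q}$ contributes, and read off the resulting formula from Lemma \ref{lem2.3}.
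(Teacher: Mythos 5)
Your proposal is correct and follows exactly the paper's intended argument: the one-line preamble to Theorem \ref{thm3.5} reads ``Substituting Lemmas \ref{lem2.3} and \ref{lem2.4} into \eqref{f2.15},'' and in the case $\mathfrak{q}=\mathcal{O}_F$ only Lemma \ref{lem2.3} contributes, precisely because the old-form piece is vacuous as you observe. Your tracking of the factors $\zeta_{\mathfrak{q}}(2)^2\mapsto 1$, $L^{(\mathfrak{q})}(1,\pi,\Ad)\mapsto L(1,\pi,\Ad)$, and the power $D_F^{2-s_1-s_2}$ obtained from combining \eqref{f2.2} with \eqref{2.14} all check out, as does the meromorphic continuation argument via finiteness of $\mathcal{F}(\mathbf{k},\mathcal{O}_F)$.
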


\begin{cor}\label{cor2.6}
Let notation be as before. Let $\mathfrak{q}=\mathcal{O}_F$. Then 
\begin{align*}
J_{\mathrm{Spec}}(f_{\mathfrak{n},\mathfrak{q}},\textbf{0})=\frac{1}{2D_F^{2}}\prod_{v\mid\infty}\frac{\pi\cdot 2^{k_v} \Gamma(k_v/2)^2}{\Gamma(k_v)}\sum_{\substack{\pi\in \mathcal{F}(\mathbf{k},\mathcal{O}_F)}}\frac{\lambda_{\pi}(\mathfrak{n})|L(1/2,\pi)|^2}{L(1,\pi,\Ad)}.
\end{align*}	
\end{cor}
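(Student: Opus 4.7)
The plan is to obtain Corollary \ref{cor2.6} as a direct specialization of Theorem \ref{thm3.5} at the point $\textbf{s}=(0,0)$, so the work is essentially bookkeeping. First, I would observe that Theorem \ref{thm3.5} already provides an explicit formula for $J_{\mathrm{Spec}}(f_{\mathfrak{n},\mathcal{O}_F},\textbf{s})$ as a meromorphic function on $\mathbb{C}^2$, and crucially that the expression on the right-hand side has no pole at $\textbf{s}=(0,0)$: the $\Gamma$-factors $\Gamma(k_v/2+s_j)$ are holomorphic for $s_j$ near $0$ since $k_v\geq 4$, and the central $L$-values $L(1/2,\pi)$, $L(1/2,\widetilde\pi)$ are entire for cuspidal $\pi$. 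Thus the evaluation at $\textbf{s}=(0,0)$ is legitimate.

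Next, I would simply substitute $s_1=s_2=0$ and simplify the prefactor: $\Gamma(k_v/2+s_1)\Gamma(k_v/2+s_2)\mapsto \Gamma(k_v/2)^2$, $(2\pi)^{s_1+s_2}\mapsto 1$, and $D_F^{2-s_1-s_2}\mapsto D_F^{2}$. Combining with $2^{-k_v}\pi^{-1}$ in the denominator yields the Archimedean factor $\pi\cdot 2^{k_v}\Gamma(k_v/2)^2/\Gamma(k_v)$, matching the claimed formula.

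Finally, I would handle the arithmetic side. For $\pi\in\mathcal{F}(\mathbf{k},\mathcal{O}_F)$, the representation $\pi$ of $\mathrm{PGL}_2(\mathbb{A}_F)$ has trivial central character and is therefore self-dual, so $\widetilde\pi\cong \pi$ and $L(1/2,\widetilde\pi)=L(1/2,\pi)$. Since the Hecke eigenvalues $\lambda_\pi(\mathfrak{m})$ are real, $L(1/2,\pi)\in\mathbb{R}$, hence $L(1/2,\pi)L(1/2,\widetilde\pi)=|L(1/2,\pi)|^2$. Assembling these identities inside the sum over $\pi\in\mathcal{F}(\mathbf{k},\mathcal{O}_F)$ delivers the stated expression for $J_{\mathrm{Spec}}(f_{\mathfrak{n},\mathcal{O}_F},\textbf{0})$.

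There is no genuine obstacle here; the only thing to be slightly careful about is to confirm holomorphicity at $\textbf{s}=(0,0)$ so that the specialization of the meromorphic continuation from Theorem \ref{thm3.5} is unambiguous, and to note that for $\mathfrak{q}=\mathcal{O}_F$ the old-form contribution $J_{\mathrm{Spec}}^{\mathrm{old}}$ is absent, so Theorem \ref{thm3.5} already captures the full spectral side.
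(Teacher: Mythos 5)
Your proposal is correct and follows essentially the same route as the paper: the paper presents Corollary \ref{cor2.6} as an immediate specialization of Theorem \ref{thm3.5} at $\textbf{s}=(0,0)$, with the same bookkeeping of $\Gamma$-factors and the same self-duality observation $L(1/2,\pi)L(1/2,\widetilde\pi)=|L(1/2,\pi)|^2$. Your added remarks on holomorphicity at the origin and the absence of the old-form term when $\mathfrak{q}=\mathcal{O}_F$ are accurate and merely make explicit what the paper leaves implicit.
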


\begin{thm}\label{spec}
Let notation be as before. Let $\mathfrak{q}\subsetneq \mathcal{O}_F$ be a prime ideal. Then $J_{\mathrm{Spec}}(f_{\mathfrak{n},\mathfrak{q}},\textbf{s})$ admits a meromorphic continuation to $\textbf{s}\in \mathbb{C}^2$. Moreover, 
\begin{align*}
J_{\mathrm{Spec}}(f_{\mathfrak{n},\mathfrak{q}},\textbf{s})=J_{\mathrm{Spec}}^{\mathrm{new}}(f_{\mathfrak{n},\mathfrak{q}},\textbf{s})+J_{\mathrm{Spec}}^{\mathrm{old}}(f_{\mathfrak{n},\mathfrak{q}},\textbf{s}),\tag{\ref{f2.15}}
\end{align*}
where $J_{\mathrm{Spec}}^{\mathrm{new}}(f_{\mathfrak{n},\mathfrak{q}},\textbf{s})$ is defined by  
\begin{align*}
\prod_{v\mid\infty}\frac{2^{k_v}\pi\Gamma(k_v/2+s_1)\Gamma(k_v/2+s_2)}{(2\pi)^{s_1+s_2}\Gamma(k_v)}\sum_{\substack{\pi\in \mathcal{F}(\mathbf{k},\mathfrak{q})}}\frac{\lambda_{\pi}(\mathfrak{n})L(s_1+1/2,\pi)L(s_2+1/2,\widetilde{\pi})}{2\zeta_{\mathfrak{q}}(2)^2L^{(\mathfrak{q})}(1,\pi,\Ad)D_F^{2-s_1-s_2}},
\end{align*}
and $J_{\mathrm{Spec}}^{\mathrm{old}}(f_{\mathfrak{n},\mathfrak{q}},\textbf{s})$ is defined by 
\begin{align*}
\prod_{v\mid\infty}\frac{\pi \Gamma(k_v/2+s_1)\Gamma(k_v/2+s_2)}{(2\pi)^{s_1+s_2}2^{-k_v}\Gamma(k_v)}\sum_{\substack{\pi\in \mathcal{F}(\mathbf{k},\mathcal{O}_F)}}\frac{\lambda_{\pi}(\mathfrak{n})C_{\pi_{\mathfrak{q}}}(\textbf{s})L(s_1+1/2,\pi)L(s_2+1/2,\widetilde{\pi})}{2L(1,\pi,\Ad)D_F^{2-s_1-s_2}}.
\end{align*}
Here $C_{\pi_{\mathfrak{q}}}(\textbf{s})$ is defined as in \eqref{equ2.18} in Lemma \ref{lem2.4}.
\end{thm}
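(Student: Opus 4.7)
The plan is to assemble Theorem \ref{spec} directly from the decomposition \eqref{f2.15} together with the two local computations already established as Lemma \ref{lem2.3} and Lemma \ref{lem2.4}. Since the test function $f_{\mathfrak{n},\mathfrak{q}}$ at the place $v=\mathfrak{q}$ is the normalized characteristic function of $Z(F_v)K_0(\mathfrak{p}_v)$ (cf.\ \eqref{t1.7}), any cuspidal vector $\phi$ with $\pi(f_{\mathfrak{n},\mathfrak{q}})\phi\not\equiv 0$ must be right invariant under $K_0(\mathfrak{p}_v)$ at $v=\mathfrak{q}$ and spherical away from $\mathfrak{q}\infty$, and $\pi_{v}$ must be the discrete series of weight $k_v$ for each $v\mid\infty$. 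Hence only $\pi\in\Pi_{\mathbf{k}}(\mathfrak{q})$ survive, and such $\pi$ split according to whether the arithmetic conductor equals $\mathfrak{q}$ (giving $\mathcal{F}(\mathbf{k},\mathfrak{q})$) or equals $\mathcal{O}_F$ (giving $\mathcal{F}(\mathbf{k},\mathcal{O}_F)$ with a two-dimensional $K_0(\mathfrak{p}_v)$-fixed subspace). This dichotomy is precisely the decomposition \eqref{f2.15}.

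For the newform part $J_{\mathrm{Spec}}^{\mathrm{new}}(f_{\mathfrak{n},\mathfrak{q}},\textbf{s})$, I would invoke Lemma \ref{lem2.3}: for $\pi\in\mathcal{F}(\mathbf{k},\mathfrak{q})$ the local newvector at $v=\mathfrak{q}$ is (a scalar multiple of) the lone $K_0(\mathfrak{p}_v)$-fixed vector, so $\pi(f_{\mathfrak{n},\mathfrak{q}})\phi=\lambda_\pi(\mathfrak{n})D_F^{-3/2}\phi$ on $\mathcal{B}_{\pi}^{\mathrm{new}}$, and the evaluation of $\mathcal{Z}(\textbf{s},\phi)$ from Lemma \ref{lem2.1} plugs in directly. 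This yields the partial adjoint $L$-function $L^{(\mathfrak{q})}(1,\pi,\Ad)$ and the factor $\zeta_{\mathfrak{q}}(2)^{-2}$ in the displayed formula for $J_{\mathrm{Spec}}^{\mathrm{new}}$.

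For the oldform part $J_{\mathrm{Spec}}^{\mathrm{old}}(f_{\mathfrak{n},\mathfrak{q}},\textbf{s})$, I would apply Lemma \ref{lem2.4}, which already handles the orthonormal basis \eqref{2.17} of the two-dimensional fixed subspace and produces the Euler-type factor $C_{\pi_{\mathfrak{q}}}(\textbf{s})$ of \eqref{equ2.18} via the identities for $\alpha_{\pi_\mathfrak{q}},\beta_{\pi_\mathfrak{q}}$ from \eqref{eq2.11} in Lemma \ref{lem2.2}. The only point that requires care is the meromorphic continuation: for $\Re(s_1),\Re(s_2)\gg 1$ the original expression converges absolutely; continuation then follows because the right-hand sides of the two expressions in Lemma \ref{lem2.3} and Lemma \ref{lem2.4} are finite sums over $\mathcal{F}(\mathbf{k},\mathfrak{q})$ or $\mathcal{F}(\mathbf{k},\mathcal{O}_F)$ of products of standard automorphic $L$-functions (which have entire completions times simple gamma factors), so each summand is meromorphic in $\textbf{s}\in\mathbb{C}^2$ with poles only at the known points of $L(s+1/2,\pi)$.

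The main obstacle, and really the only nontrivial point, is bookkeeping the local factor at $v=\mathfrak{q}$: one must verify that the ratio $L^{(\mathfrak{q})}(1,\pi,\Ad)\zeta_{\mathfrak{q}}(2)^{-2}$ appearing in the newform block is consistent with the full $L(1,\pi,\Ad)$ appearing in the oldform block after multiplying by the local volume $\operatorname{Vol}(K_0(\mathfrak{p}_v))^{-1}$ encoded in $f_{\mathfrak{q}}$, and that the normalization \eqref{eq3.10} of the local new Whittaker vector is the one used in Macdonald's formula \eqref{eq2.14}. Once these local constants match, summing the two expressions yields precisely the statement of Theorem \ref{spec}.
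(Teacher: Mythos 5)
Your proposal assembles Theorem \ref{spec} exactly as the paper does: the decomposition \eqref{f2.15} reduces the statement to plugging in Lemma \ref{lem2.3} for the newform block and Lemma \ref{lem2.4} for the oldform block, with meromorphic continuation inherited termwise from the finitely many summands. Your "main obstacle" paragraph slightly overstates the difficulty (the $L^{(\mathfrak{q})}(1,\pi,\Ad)\zeta_{\mathfrak{q}}(2)^{-2}$ factor and the full $L(1,\pi,\Ad)$ appear for disjoint sets of $\pi$ and are not compared with each other, and the $\Vol(K_0(\mathfrak{p}_v))^{-1}$ normalization is already absorbed into the identity $\pi(f_{\mathfrak{n},\mathfrak{q}})\phi=\lambda_\pi(\mathfrak{n})D_F^{-3/2}\phi$ proved in Lemmas \ref{lem2.3} and \ref{lem2.4}), but this does not affect the correctness of the argument, which is the same as the paper's.
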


\begin{cor}\label{cor2.7}
Let notation be as before. Let $\mathfrak{q}\subsetneq \mathcal{O}_F$ be a prime ideal. Then
\begin{equation}\label{2.23}
J_{\mathrm{Spec}}(f_{\mathfrak{n},\mathfrak{q}},\textbf{0})=J_{\mathrm{Spec}}^{\mathrm{new}}(f_{\mathfrak{n},\mathfrak{q}},\textbf{0})+J_{\mathrm{Spec}}^{\mathrm{old}}(f_{\mathfrak{n},\mathfrak{q}},\textbf{0}),
\end{equation}
where  
\begin{align*}
J_{\mathrm{Spec}}^{\mathrm{new}}(f_{\mathfrak{n},\mathfrak{q}},\textbf{0})=\sum_{\substack{\pi\in \mathcal{F}(\mathbf{k},\mathfrak{q})}}\frac{\lambda_{\pi}(\mathfrak{n})|L(1/2,\pi)|^2}{2\zeta_{\mathfrak{q}}(2)^2L^{(\mathfrak{q})}(1,\pi,\Ad)D_F^{2}}\prod_{v\mid\infty}\frac{2^{k_v}\pi \Gamma(k_v/2)^2}{\Gamma(k_v)},
\end{align*}
and $J_{\mathrm{Spec}}^{\mathrm{old}}(f_{\mathfrak{n},\mathfrak{q}},\textbf{0})$ is defined by 
\begin{align*}
\frac{1+N(\mathfrak{q})^{-1}}{D_F^{2}}\sum_{\substack{\pi\in \mathcal{F}(\mathbf{k},\mathcal{O}_F)}}\frac{\lambda_{\pi}(\mathfrak{n})L_{\mathfrak{q}}(1/2,\pi_{\mathfrak{q}}\times\chi_{\mathfrak{q}})|L(1/2,\pi)|^2}{L(1,\pi,\Ad)}\prod_{v\mid\infty}\frac{2^{k_v}\pi \Gamma(k_v/2)^2}{\Gamma(k_v)}.
\end{align*}
\end{cor}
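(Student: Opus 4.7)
The statement is a direct specialization of Theorem \ref{spec} and Lemma \ref{lem2.4} at $\textbf{s}=(0,0)$, so the plan is almost entirely bookkeeping. The decomposition \eqref{2.23} is just \eqref{f2.15} evaluated at $\textbf{s}=\textbf{0}$, which is valid since both $J_{\mathrm{Spec}}^{\mathrm{new}}$ and $J_{\mathrm{Spec}}^{\mathrm{old}}$ admit meromorphic continuations to $\mathbb{C}^2$ (by Lemmas \ref{lem2.3} and \ref{lem2.4}, respectively) and are in fact holomorphic at the origin.

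For the new form piece, I would simply set $s_1=s_2=0$ in the explicit expression for $J_{\mathrm{Spec}}^{\mathrm{new}}(f_{\mathfrak{n},\mathfrak{q}},\textbf{s})$ from Theorem \ref{spec}: the prefactor $(2\pi)^{s_1+s_2}$ collapses to $1$, each $\Gamma(k_v/2+s_j)$ becomes $\Gamma(k_v/2)$, and $L(s_j+1/2,\pi)$ becomes $L(1/2,\pi)$ (using that $\widetilde{\pi}=\pi$ since $\pi$ is self-dual as a representation of $\mathrm{PGL}_2$). This yields immediately the displayed formula for $J_{\mathrm{Spec}}^{\mathrm{new}}(f_{\mathfrak{n},\mathfrak{q}},\textbf{0})$.

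For the old form piece, the formula \eqref{eq2.18} already computed in Lemma \ref{lem2.4} is literally the stated expression. To make the connection transparent, I would point out that evaluating the factor $C_{\pi_{\mathfrak{q}}}(\textbf{s})$ in \eqref{equ2.18} at $\textbf{s}=(0,0)$ gives
\[
C_{\pi_{\mathfrak{q}}}(\textbf{0})=1+L_{\mathfrak{q}}(1/2,\pi_{\mathfrak{q}})L_{\mathfrak{q}}(1/2,\pi_{\mathfrak{q}}\times\chi_{\mathfrak{q}})\bigl(1+N(\mathfrak{q})^{-1}-\lambda_{\pi}(\mathfrak{q})N(\mathfrak{q})^{-1/2}\bigr)^2,
\]
which matches $1+(\alpha_{\pi_{\mathfrak{q}}}+\beta_{\pi_{\mathfrak{q}}})^2$ via the formulas \eqref{eq2.11} of Lemma \ref{lem2.2}. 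By the identity \eqref{2.11}, this simplifies to $2(1+N(\mathfrak{q})^{-1})L_{\mathfrak{q}}(1/2,\pi_{\mathfrak{q}}\times\chi_{\mathfrak{q}})$; substituting into \eqref{eq2.17} and combining the factor of $2$ with the $1/2$ in the denominator yields the stated formula.

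There is no genuine obstacle here: all analytic input (meromorphic continuation, absolute convergence of period integrals, explicit local matrix coefficient calculations) has been carried out in Lemmas \ref{lem2.1}--\ref{lem2.4} and Theorem \ref{spec}. The only point worth double-checking is that the cancellation between the factor $\zeta_{\mathfrak{q}}(1)^2 \zeta_{\mathfrak{q}}(2)^{-2}$ implicit in $\beta^2$ and the ratio $L^{(\mathfrak{q})}(1,\pi,\Ad)/L(1,\pi,\Ad)$ works out correctly, so that the final form of $J_{\mathrm{Spec}}^{\mathrm{old}}(f_{\mathfrak{n},\mathfrak{q}},\textbf{0})$ carries the full adjoint $L$-value $L(1,\pi,\Ad)$ in the denominator rather than the partial one.
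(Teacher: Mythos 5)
Your proposal is correct and takes essentially the same route as the paper: Corollary \ref{cor2.7} is obtained exactly by setting $\textbf{s}=\textbf{0}$ in Theorem \ref{spec} (equivalently in Lemmas \ref{lem2.3} and \ref{lem2.4}), using the identity \eqref{2.11} to simplify $C_{\pi_{\mathfrak{q}}}(\textbf{0})$ and absorb the resulting factor of $2$. The concern you flag at the end is moot because in the oldform sum $\pi\in\mathcal{F}(\mathbf{k},\mathcal{O}_F)$ has $\pi_{\mathfrak{q}}$ unramified, so Lemma \ref{lem2.4} already writes the denominator as the full $L(1,\pi,\Ad)$ with no stray $\zeta_{\mathfrak{q}}$ factors to cancel.
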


\section{The Small Cell Orbital Integrals}\label{section4}
Let $\delta\in \{I_2,w\}$, where $w=\begin{pmatrix}
	& -1\\
	1
\end{pmatrix}$. Let $\textbf{s}=(s_1,s_2)$. Recall the definition:
\begin{align*}
J_{\mathrm{small}}^{I_2}(f_{\mathfrak{n},\mathfrak{q}},\textbf{s}):=&\int_{\mathbb{A}_F^{\times}}\int_{\mathbb{A}_F^{\times}}\int_{\mathbb{A}_F}f_{\mathfrak{n},\mathfrak{q}}\left(\begin{pmatrix}
y& u\\
&1
\end{pmatrix}\delta\right)\psi(xu)du|x|^{1+s_1+s_2}|y|^{s_2}d^{\times}yd^{\times}x,
\end{align*}
and 
\begin{equation}\label{equa4.1}
J_{\mathrm{small}}^{w}(f_{\mathfrak{n},\mathfrak{q}},\textbf{s})=J_{\mathrm{small}}^{I_2}(R(w)f_{\mathfrak{n},\mathfrak{q}},\textbf{s}'),	
\end{equation}
with $\textbf{s}'=(s_1,-s_2).$ Here $R(w)f_{\mathfrak{n},\mathfrak{q}}(g):=f_{\mathfrak{n},\mathfrak{q}}(gw)$, $g\in G(\mathbb{A}_F)$.

We will show that this integral converges absolutely in  $\Re(s_1)-|\Re(s_2|)>0$, and admits a meromorphic in $\textbf{s}\in \mathbb{C}^2$. 

\subsection{Calculation of Local Integrals}
For each place $v$, we define the local integral 
\begin{align*}
J_{\mathrm{small},v}^{\delta}(\textbf{s}):=\int_{F_v^{\times}}\int_{F_v^{\times}}\Phi_v^{\delta}(x_v,y_v)|x_v|_v^{1+s_1+s_2}|y_v|_v^{s_2}d^{\times}y_vd^{\times}x_v,
\end{align*}
where 
\begin{align*}
\Phi_v^{\delta}(x_v,y_v):=\int_{F_v}f_v\left(\begin{pmatrix}
y_v& u_v\\
&1
\end{pmatrix}\delta\right)\psi_v(x_vu_v)du_v.
\end{align*}

\begin{lemma}\label{lemma0.3}
Let $J_{\mathrm{small},\fin}^{\delta}(\textbf{s}):=\prod_{v<\infty}J_{\mathrm{small},v}^{\delta}(\textbf{s})$.
\begin{itemize}
\item Let $\Re(s_1+s_2)>0$. Then 
\begin{equation}\label{f3.1}
\prod_{v<\infty}\int_{F_v^{\times}}\int_{F_v^{\times}}\big|\Phi_v^{\delta}(x_v,y_v)\big||x_v|_v^{1+\Re(s_1+s_2)}|y_v|_v^{\Re(s_2)}d^{\times}y_vd^{\times}x_v<\infty.
\end{equation}
\item The function $J_{\mathrm{small},\fin}^{\delta}(\textbf{s})$ admits a meromorphic continuation to $\textbf{s}\in \mathbb{C}^2$, given explicitly by 
\begin{equation}\label{0.5}
J_{\mathrm{small},\fin}^{\delta}(\textbf{s})=\frac{(\textbf{1}_{\mathfrak{q}=\mathcal{O}_F}+\textbf{1}_{\mathfrak{q}\subsetneq \mathcal{O}_F, 
\delta=I_2})\cdot V_{\mathfrak{q}}\cdot \zeta_F(1+s_1+s_2)}{D_F^{1/2-s_1-s_2}N(\mathfrak{n})^{1/2+s_2}}\prod_{v\mid\mathfrak{n}}\sum_{l=0}^{e_v(\mathfrak{n})}q_v^{l(s_2-s_1)}.
\end{equation}
\end{itemize}
\end{lemma}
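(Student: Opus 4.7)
The plan is to exploit the pure-tensor structure $f_{\mathfrak{n},\mathfrak{q}}=\otimes_vf_v$, so that $\Phi_v^{\delta}$ and $J_{\mathrm{small},v}^{\delta}$ factor place-by-place and the whole integral becomes a product $J_{\mathrm{small},\fin}^{\delta}(\textbf{s})=\prod_{v<\infty}J_{\mathrm{small},v}^{\delta}(\textbf{s})$. Absolute convergence of the product will follow as soon as each local factor has been computed and shown to behave like $\zeta_v(1+s_1+s_2)$ at all but finitely many places. The computation naturally splits into four subcases according to the position of $v$ relative to $\mathfrak{n}\mathfrak{q}$ and the two choices of $\delta$.

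For $\delta=w$ I would first observe that $R(w)f_v=f_v$ at every finite place other than $v=\mathfrak{q}$: at such places $f_v$ is right-$K_v$-invariant and $w\in K_v$. Hence $\Phi_v^w=\Phi_v^{I_2}$ away from $\mathfrak{q}$, and the only nontrivial issue is at $v=\mathfrak{q}$. When $\mathfrak{q}\subsetneq\mathcal{O}_F$, the matrix $\begin{pmatrix}y_v&u_v\\&1\end{pmatrix}w$ has lower-left entry equal to $1\notin\mathfrak{p}_v$, so no $Z(F_v)$-scalar can bring it into $K_0(\mathfrak{p}_v)$; hence $\Phi_{\mathfrak{q}}^w\equiv 0$ and $J_{\mathrm{small},\fin}^w(\textbf{s})=0$. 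This accounts for the indicator $\textbf{1}_{\mathfrak{q}=\mathcal{O}_F}+\textbf{1}_{\mathfrak{q}\subsetneq\mathcal{O}_F,\delta=I_2}$ in \eqref{0.5}.

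The local calculation for $\delta=I_2$ then runs as follows. At $v\nmid\mathfrak{n}\mathfrak{q}$, a short scaling argument modulo $Z(F_v)$ shows that $\begin{pmatrix}y_v&u_v\\&1\end{pmatrix}\in Z(F_v)K_v$ iff $v(y_v)=0$ and $v(u_v)\geq 0$, so $\Phi_v^{I_2}(x_v,y_v)=\Vol(\mathcal{O}_v)\textbf{1}_{\mathcal{O}_v^{\times}}(y_v)\textbf{1}_{v(x_v)\geq -d_v}$, and integrating in $x_v,y_v$ yields $q_v^{d_v(s_1+s_2-1/2)}\zeta_v(1+s_1+s_2)$. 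At $v=\mathfrak{q}\subsetneq\mathcal{O}_F$ the $(2,1)$-entry of $g$ is automatically $0$, so the same characterization applies, and the volume normalization $\Vol(K_0(\mathfrak{p}_v))^{-1}=V_{\mathfrak{q}}\Vol(K_v)^{-1}$ inserts the extra factor $V_{\mathfrak{q}}$. At $v\mid\mathfrak{n}$ with $r=e_v(\mathfrak{n})$, I would decompose each $Z(F_v)K_v\diag(\varpi_v^i,\varpi_v^j)K_v$ into upper-triangular Iwasawa representatives of the form $Z(F_v)\begin{pmatrix}\varpi_v^a&\alpha\\&\varpi_v^b\end{pmatrix}K_v$ with $a+b=r$ and $\alpha$ running over a cross-section, then characterize which cosets the matrix $\begin{pmatrix}y_v&u_v\\&1\end{pmatrix}$ can hit via the PGL${}_2$ Cartan invariant $a(g)=v(y_v)-2\min(v(y_v),v(u_v),0)=i-j$; summing over admissible pairs $(i,j)$ with $i+j=r$ and $i\geq j\geq 0$, evaluating the Fourier integral in $u_v$ (which collapses to a volume on a shifted $\mathcal{O}_v$-coset conditioned on a shift of $x_v$), and integrating in $x_v,y_v$ yields, after reindexing the summation, the contribution $q_v^{d_v(s_1+s_2-1/2)}\zeta_v(1+s_1+s_2)N(\mathfrak{p}_v^{r})^{-1/2-s_2}\sum_{l=0}^{r}q_v^{l(s_2-s_1)}$.

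Assembling the local factors produces $\prod_vq_v^{d_v(s_1+s_2-1/2)}=D_F^{s_1+s_2-1/2}$, $\prod_v\zeta_v(1+s_1+s_2)=\zeta_F(1+s_1+s_2)$, the product $N(\mathfrak{n})^{-1/2-s_2}\prod_{v\mid\mathfrak{n}}\sum_{l=0}^{e_v(\mathfrak{n})}q_v^{l(s_2-s_1)}$ from the level-divisor places, and the factor $V_{\mathfrak{q}}$ from $v=\mathfrak{q}$; together these give \eqref{0.5}. Since each $\Phi_v^{\delta}$ is a nonnegative combination of characteristic functions, \eqref{f3.1} is equivalent to the convergence of the same Euler product, which converges absolutely as soon as $\Re(s_1+s_2)>0$. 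The meromorphic continuation to $\textbf{s}\in\mathbb{C}^2$ then follows immediately from \eqref{0.5}, each ingredient being manifestly meromorphic in $\textbf{s}$. The main technical obstacle is the place-by-place computation at $v\mid\mathfrak{n}$: one must correctly pair the Iwasawa representatives with the Cartan classes, verify that the Fourier integral in $u_v$ vanishes outside a prescribed range, and check that the parity-restricted sum over pairs $(i,j)$ reorganizes into the full polynomial $\sum_{l=0}^{r}q_v^{l(s_2-s_1)}$; once this matching is in place, the remaining steps are routine.
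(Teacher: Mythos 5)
Your proposal is correct and follows essentially the same route as the paper: factor the integral into local factors, determine the support of each $f_v$ via the double-coset constraints, evaluate the Fourier integral in $u_v$ using orthogonality of characters, and assemble the Euler product. The paper handles the $v\mid\mathfrak{n}$ case by directly writing the system of valuation inequalities $\{2l+r_2=r,\ l\geq 0,\ l+r_2\geq 0,\ l+e_v(u_v)\geq 0\}$ and solving for $l$; your Cartan-invariant/Iwasawa-cell framing repackages the same combinatorics and leads to the identical sum $\sum_{l=0}^{r}q_v^{l(s_2-s_1)}$, so the difference is purely expository. Your explicit observation that $R(w)f_v=f_v$ for $v\neq\mathfrak{q}$ (right-$K_v$-invariance, $w\in K_v$) and that the lower-left entry obstruction kills $\Phi_{\mathfrak{q}}^{w}$ is made only implicitly in the paper, so that is a small clarity gain; likewise your justification of \eqref{f3.1} (each $\Phi_v^{\delta}$ is nonnegative because the $u_v$-support at every place is a subgroup $\mathfrak{p}_v^{-l}$, so its Fourier transform is a nonnegative volume times an indicator) is sound, matching the paper's computation of the absolute integral by specializing $(s_1,s_2)\mapsto(\Re s_1,\Re s_2)$.
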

\begin{proof}

Let $v<\infty$ be a finite place. We consider the following scenarios.   
\begin{itemize}
\item Suppose $v\nmid \mathfrak{q}$. Let $r=\max\{e_v(\mathfrak{n}),0\}$, and $r_2=e_v(y_v)$. By the definition of $f_v$ in and \eqref{t1.6} and \eqref{t1.8}, $f_v\left(\begin{pmatrix}
y_v& u_v\\
&1
\end{pmatrix}\delta\right)$ is a multiple of the characteristic function for the restrictions    
\begin{equation}\label{3.2}
\varpi_v^l\begin{pmatrix}
\varpi_v^{r_2}& u_v\\
&1
\end{pmatrix}\in \bigsqcup_{\substack{i+j=r\\
i\geq j\geq 0}}K_v\begin{pmatrix}
\varpi_v^i\\
&\varpi_v^j
\end{pmatrix}K_v
\end{equation}
for some $l\in\mathbb{Z}$. Notice that \eqref{3.2} reduces to the following
\begin{align*}
\begin{cases}
2l+r_2=r,\ \ l\geq 0\\
l+r_2\geq 0,\ \ l+e_v(u_v)\geq 0
\end{cases}\ \ \Leftrightarrow\ \ \
\begin{cases}
0\leq l\leq r,\ \ r_2=r-2l\\
e_v(u_v)\geq -l.
\end{cases} 
\end{align*}

Substituting the above restrictions into the definition of $J_{\mathrm{small},v}^{\delta}(\textbf{s})$ yields
\begin{align*}
J_{\mathrm{small},v}^{\delta}(\textbf{s})=&q_v^{-\frac{r}{2}}\Vol(\mathcal{O}_v^{\times})\int_{F_v^{\times}}\sum_{l=0}^rq_v^{s_2(2l-r)}\int_{\mathfrak{p}_v^{-l}}\psi_v(x_vu_v)du_v|x_v|_v^{1+s_1+s_2}d^{\times}x_v.
\end{align*}

Swapping the integrals, along with orthogonality of characters, we otbain 
\begin{align*}
J_{\mathrm{small},v}^{\delta}(\textbf{s})=&q_v^{-\frac{r}{2}}\Vol(\mathcal{O}_v^{\times})\sum_{l=0}^rq_v^{s_2(2l-r)}\sum_{j\geq l-d_v}q_v^{-j(1+s_1+s_2)}\int_{\mathfrak{p}_v^{-l}}\int_{\varpi_v^j\mathcal{O}_v^{\times}}\psi_v(x_vu_v)d^{\times}x_vdu_v.
\end{align*}

As a consequence, we derive that  
\begin{equation}\label{3.3}
J_{\mathrm{small},v}^{\delta}(\textbf{s})=\Vol(\mathcal{O}_v)\Vol(\mathcal{O}_v^{\times})^2q_v^{-(s_2+1/2)r}\sum_{l=0}^rq_v^{(s_2-s_1)l}q_v^{d_v(1+s_1+s_2)}\zeta_v(1+s_1+s_2).
\end{equation}

Likewise, the integral 
\begin{align*}
\int_{F_v^{\times}}\int_{F_v^{\times}}\big|\Phi_v^{\delta}(x_v,y_v)\big||x_v|_v^{1+\Re(s_1+s_2)}|y_v|_v^{\Re(s_2)}d^{\times}y_vd^{\times}x_v
\end{align*}
is equal to 
\begin{align*}
q_v^{-\frac{r}{2}}\Vol(\mathcal{O}_v^{\times})\sum_{l=0}^rq_v^{\Re(s_2)(2l-r)}\sum_{j\geq l-d_v}q_v^{-j(1+\Re(s_1+s_2))}\int_{\mathfrak{p}_v^{-l}}\Big|\int_{\varpi_v^j\mathcal{O}_v^{\times}}\psi_v(x_vu_v)du_v\Big|d^{\times}x_v,
\end{align*}
which further simplifies to 
\begin{equation}\label{f3.4}
\frac{\Vol(\mathcal{O}_v)\Vol(\mathcal{O}_v^{\times})^2}{q_v^{(\Re(s_2)+1/2)r}}\sum_{l=0}^rq_v^{(\Re(s_2-s_1))l}q_v^{d_v(1+\Re(s_1+s_2))}\zeta_v(1+\Re(s_1+s_2)).
\end{equation}

\item Suppose $\mathfrak{q}\subsetneq \mathcal{O}_F$ and $v=\mathfrak{q}$. By the definition of $f_v$ in \eqref{t1.7}, $f_v\left(\begin{pmatrix}
y_v& u_v\\
&1
\end{pmatrix}\delta\right)$ is a multiple of the characteristic function for the restrictions    
\begin{equation}\label{3.4}
\varpi_v^l\begin{pmatrix}
\varpi_v^{r_2}& u_v\\
&1
\end{pmatrix}\delta\in K_0(\mathfrak{p}_v)
\end{equation}
for some $l\in \mathbb{Z}$. Notice that \eqref{3.4} is empty if $\delta=w=\begin{pmatrix}
	& -1\\
	1
\end{pmatrix}$. Hence, we assume $\delta=I_2$ for the moment. In this case, \eqref{3.4} amounts to 
\begin{align*}
\begin{cases}
2l+r_2=r,\ \ l= 0\\
l+r_2= 0,\ \ l+e_v(u_v)\geq 0
\end{cases}\ \ \Leftrightarrow\ \ \
\begin{cases}
l=r_2=r=0\\
e_v(u_v)\geq 0.
\end{cases} 
\end{align*}

Substituting these constraints into the definition of $J_{\mathrm{small},v}^{\delta}(\textbf{s})$ leads to 
\begin{align*}
J_{\mathrm{small},v}^{\delta}(\textbf{s})=\frac{\textbf{1}_{\delta=I_2}}{\Vol(K_0(\mathfrak{p}_v))}\int_{F_v^{\times}}\int_{\mathcal{O}_v}\psi_v(x_vu_v)du_v|x_v|_v^{1+s_1+s_2}d^{\times}x_v=c_v\sum_{j\geq 0}q_v^{-j(1+s_1+s_2)},
\end{align*}
where $c_v:=\textbf{1}_{\delta=I_2}\cdot \Vol(\mathcal{O}_v,dx_v)\Vol(K_0(\mathfrak{p}_v))^{-1}$. Thus,
\begin{equation}\label{3.5}
J_{\mathrm{small},v}^{\delta}(\textbf{s})=\frac{\textbf{1}_{\delta=I_2}\cdot \Vol(\mathcal{O}_v,dx_v)}{\Vol(K_0(\mathfrak{p}_v))}\cdot \zeta_v(1+s_1+s_2).
\end{equation}

Likewise, the integral 
\begin{align*}
\int_{F_v^{\times}}\int_{F_v^{\times}}\big|\Phi_v^{\delta}(x_v,y_v)\big||x_v|_v^{1+\Re(s_1+s_2)}|y_v|_v^{\Re(s_2)}d^{\times}y_vd^{\times}x_v
\end{align*}
is equal to 
\begin{equation}\label{f3.7}
\textbf{1}_{\delta=I_2}\cdot \Vol(\mathcal{O}_v)\Vol(\mathcal{O}_v^{\times})^2\Vol(K_0(\mathfrak{p}_v))^{-1}\cdot \zeta_v(1+\Re(s_1+s_2)).
\end{equation}

\end{itemize}

Therefore, the estimate \eqref{f3.1} follows from \eqref{f3.4} and \eqref{f3.7}; and the equality \eqref{0.5} follows from \eqref{3.4} and \eqref{3.5}. 
\end{proof}

\begin{lemma}\label{lem0.4}
Let $x\in \mathbb{R}$ and $y>0$. Let $k$ be a positive even integer. Then 
\begin{equation}\label{f0.6}
\int_{\mathbb{R}}\frac{e^{2\pi i xu}}{(-u+i(y+1))^{k}}du=\frac{(2\pi i)^{k}}{\Gamma(k)}x^{k-1}e^{-2\pi x(y+1)}\cdot \textbf{1}_{x\geq 0}. 
\end{equation}
\end{lemma}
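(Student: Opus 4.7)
The plan is a standard contour integration argument. First I rewrite the integrand: since $k$ is even, $(-u + i(y+1))^k = (-1)^k (u - i(y+1))^k = (u - i(y+1))^k$, so the integral equals
\begin{equation*}
\int_{\mathbb{R}}\frac{e^{2\pi i x u}}{(u - i(y+1))^{k}}\,du.
\end{equation*}
The integrand is meromorphic in $u$ with a single pole of order $k$ at $u_0 = i(y+1)$, which lies in the open upper half-plane since $y > 0$.

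For $x > 0$, I would close the contour in the upper half-plane along a large semicircle of radius $R$. Because $|e^{2\pi i x u}| = e^{-2\pi x \operatorname{Im}(u)}$ decays on this semicircle and $k \geq 2$ gives enough polynomial decay, the Jordan-lemma estimate makes the semicircular contribution vanish as $R \to \infty$. The residue theorem then yields
\begin{equation*}
\int_{\mathbb{R}}\frac{e^{2\pi i x u}}{(u - i(y+1))^{k}}\,du = 2\pi i \cdot \operatorname{Res}_{u = u_0} \frac{e^{2\pi i x u}}{(u - i(y+1))^{k}} = \frac{2\pi i}{(k-1)!} \cdot \frac{d^{k-1}}{du^{k-1}} e^{2\pi i x u}\Big|_{u = i(y+1)},
\end{equation*}
and the derivative evaluates to $(2\pi i x)^{k-1} e^{-2\pi x (y+1)}$. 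Multiplying out gives precisely $(2\pi i)^k x^{k-1} e^{-2\pi x(y+1)}/\Gamma(k)$, matching the right-hand side of \eqref{f0.6}.

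For $x < 0$, I would close the contour in the lower half-plane (where $|e^{2\pi i x u}|$ now decays), which contains no poles, so the integral is $0$, matching the indicator $\textbf{1}_{x \geq 0}$. For $x = 0$, since $k \geq 2$ the integral $\int_{\mathbb{R}} (u - i(y+1))^{-k} du$ converges absolutely and a routine closing-the-contour argument (again in the pole-free lower half-plane) shows it equals $0$, consistent with $x^{k-1}|_{x=0} = 0$.

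I do not expect any genuine obstacle here: the only minor care needed is the sign bookkeeping from $(-u + i(y+1))^k$ versus $(u - i(y+1))^k$ (where the hypothesis that $k$ is even is used), and the Jordan-lemma justification on the large semicircle, which is immediate given the polynomial factor of order $k \geq 2$.
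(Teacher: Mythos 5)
Your proof is correct and takes essentially the same approach as the paper: a residue computation after closing the contour in the appropriate half-plane, with the evenness of $k$ needed for the sign bookkeeping. The only cosmetic difference is that the paper first substitutes $u \mapsto -u$ and closes in the lower half-plane around the pole at $-i(y+1)$, whereas you absorb the sign via $(-1)^k = 1$ and close in the upper half-plane around $i(y+1)$; both yield the same residue and the same formula.
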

\begin{proof}
By the change of variable $u\mapsto -u$, we have
\begin{align*}
I(x):=\int_{\mathbb{R}}\frac{e^{2\pi i xu}}{(-u+i(y+1))^{k}}du=\int_{\mathbb{R}}\frac{e^{-2\pi i xu}}{(u+i(y+1))^{k}}du.
\end{align*}

If $x<0$, then $I(x)\equiv 0$ by shifting contour along the $y$-axis direction. Suppose $x\geq 0$. Shifting contour and by Cauchy integral, we obtain 
\begin{align*}
I(x)=\int_{\mathbb{R}}\frac{e^{-2\pi i xu}}{(u+i(y+1))^{k}}du=-\frac{2\pi i}{(k-1)!}\cdot \frac{\partial^{k-1}(e^{-2\pi ixu})}{\partial u^{k-1}}\bigg|_{u=-i(y+1)},
\end{align*}
from which \eqref{f0.6} holds. 
\end{proof}

\begin{lemma}\label{lem3.3}
Let $v\mid\infty$. Let $\delta\in \{I_2,w\}$. 
\begin{itemize}
\item The function  $J_{\mathrm{small},v}^{\delta}(s)$ converges in $\Re(s_1)>-k_v/2$ and $\Re(s_2)>-k_v/2$. 
\item For $\Re(s_1)>-k_v/2$ and $\Re(s_2)>-k_v/2$, we have 
\begin{equation}\label{0.6}
J_{\mathrm{small},v}^{\delta}(\textbf{s})=\frac{2^{k_v}(k_v-1)(\textbf{1}_{\delta=I_2}+i^{k_v}\textbf{1}_{\delta=w})}{4\pi\cdot (2\pi)^{s_1+s_2}}\cdot \frac{\Gamma(k_v/2+s_1)\Gamma(k_v/2+s_2)}{\Gamma(k_v)}.
\end{equation}
\end{itemize}
\end{lemma}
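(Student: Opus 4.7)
The plan is to carry out an explicit computation in two stages: first compute the Fourier integral $\Phi_v^{\delta}(x_v,y_v)$ using Lemma \ref{lem0.4}, and then evaluate the resulting $(x_v,y_v)$-integral as a pair of Gamma integrals. Both convergence and the explicit formula \eqref{0.6} will fall out of the Mellin integral representation of $\Gamma$.

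For $\delta=I_2$, one has
\[\begin{pmatrix} y_v & u_v \\ & 1\end{pmatrix}=\begin{pmatrix} a_v & b_v\\ c_v & d_v\end{pmatrix},\ \det=y_v,\ -b_v+c_v+i(a_v+d_v)=-u_v+i(y_v+1),\]
so plugging into \eqref{t1.5} and applying Lemma \ref{lem0.4} (with $x=x_v$, $y=y_v$) one gets
\[\Phi_v^{I_2}(x_v,y_v)=\frac{k_v-1}{4\pi}\cdot\frac{(2i)^{k_v}(2\pi i)^{k_v}}{\Gamma(k_v)}\,y_v^{k_v/2}\,x_v^{k_v-1}\,e^{-2\pi x_v(y_v+1)}\,\mathbf{1}_{x_v,y_v>0}.\]
Since $k_v$ is even, the prefactor collapses to $(k_v-1)4^{k_v}\pi^{k_v-1}/(4\Gamma(k_v))$. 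Substituting this into the definition of $J_{\mathrm{small},v}^{I_2}(\textbf{s})$ and separating the two integrals, the $y_v$-integral equals $(2\pi x_v)^{-(k_v/2+s_2)}\Gamma(k_v/2+s_2)$, valid for $\Re(s_2)>-k_v/2$, after which the $x_v$-integral equals $(2\pi)^{-(k_v/2+s_1)}\Gamma(k_v/2+s_1)$, valid for $\Re(s_1)>-k_v/2$. A small simplification using $4^{k_v}/2^{k_v}=2^{k_v}$ produces exactly \eqref{0.6} with the factor $\mathbf{1}_{\delta=I_2}$.

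For $\delta=w$, multiplication on the right by $w$ gives
\[\begin{pmatrix} y_v & u_v\\ & 1\end{pmatrix}w=\begin{pmatrix} u_v & -y_v\\ 1 & 0\end{pmatrix},\]
so $\det=y_v$ and the denominator becomes $(y_v+1+iu_v)^{k_v}=i^{k_v}(u_v-i(y_v+1))^{k_v}$. A pair of changes of variables $u_v\mapsto -u_v$ (using that $k_v$ is even so $(-1)^{k_v}=1$) reduces the Fourier integral to the form handled by Lemma \ref{lem0.4}, which yields
\[\Phi_v^{w}(x_v,y_v)=\frac{k_v-1}{4\pi}\cdot\frac{i^{k_v}(2i)^{k_v}(2\pi i)^{k_v}}{i^{k_v}\Gamma(k_v)}\cdot y_v^{k_v/2}x_v^{k_v-1}e^{-2\pi x_v(y_v+1)}\mathbf{1}_{x_v,y_v>0},\]
which differs from the $\delta=I_2$ expression exactly by the factor $i^{k_v}$ coming from $(y_v+1+iu_v)^{k_v}=i^{k_v}(u_v-i(y_v+1))^{k_v}$. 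The remaining double Mellin integral is identical to the $I_2$ case, so the same Gamma-function evaluation, valid in the same half-planes $\Re(s_j)>-k_v/2$, yields \eqref{0.6} with the $i^{k_v}$ decoration in the $\delta=w$ term.

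The only real care is bookkeeping the powers of $i$ in the $\delta=w$ case; once the factorization $y_v+1+iu_v=i(u_v-i(y_v+1))$ is made and the orientation of $u_v$ reversed, Lemma \ref{lem0.4} applies verbatim, and convergence in $\Re(s_1),\Re(s_2)>-k_v/2$ is automatic since both the $x_v$- and $y_v$-integrals are standard $\Gamma$-integrals with absolutely convergent integrands in that region.
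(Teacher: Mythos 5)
Your proof is correct and follows essentially the same route as the paper: both evaluate the inner Fourier integral via Lemma \ref{lem0.4}, reduce the remaining double integral to a product of Gamma integrals (the paper via the substitution $y\mapsto yx^{-1}$, you by iterated integration), and both obtain the $\delta=w$ case from the relation $f_v(gw)=i^{k_v}f_v(g)$. One typo to fix: in your displayed formula for $\Phi_v^{w}$ the factor $i^{k_v}$ appears in both numerator and denominator and would cancel; as your prose makes clear, it should appear only once, giving $\Phi_v^{w}=i^{k_v}\Phi_v^{I_2}$ and hence the $i^{k_v}\textbf{1}_{\delta=w}$ term in \eqref{0.6}.
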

\begin{proof}
By definition of $f_v$ in \eqref{t1.5}, the function $J_{\mathrm{small},v}^{I_2}(\textbf{s})$ boils down to  
\begin{align*}
\frac{(2i)^{k_v}(k_v-1)}{4\pi}\int_{F_v^{\times}}\int_0^{\infty}\int_{\mathbb{R}}\frac{y_v^{\frac{k_v}{2}}\psi_v(x_vu_v)}{(-u_v+i(y_v+1))^{k_v}}du_v|x_v|_v^{1+s_1+s_2}|y_v|_v^{s_2}d^{\times}y_vd^{\times}x_v.
\end{align*}
Moreover, taking advantage of the definition of $f_v$ as the matrix coefficient of lowest weight vectors, we obtain $f_v\left(\begin{pmatrix}
y_v& u_v\\
&1
\end{pmatrix}w\right)=i^{k_v}f_v\left(\begin{pmatrix}
y_v& u_v\\
&1
\end{pmatrix}\right)$. Hence, 
\begin{equation}\label{3.8}
J_{\mathrm{small},v}^{w}(\textbf{s})=i^{k_v}J_{\mathrm{small},v}^{I_2}(\textbf{s}).
\end{equation}

By Lemma \ref{lem0.4}, along with the triangle inequality, we have
\begin{equation}\label{3.9}
|J_{\mathrm{small},v}^{I_2}(\textbf{s})|\ll \int_0^{\infty}\int_0^{\infty}y_v^{\frac{k_v}{2}}x^{k-1}e^{-2\pi x(y+1)}|x_v|_v^{\Re(s_1+s_2)}|y_v|_v^{\Re(s_2)}d^{\times}y_vdx_v,
\end{equation}
which converges absolutely in $\Re(s_1)>-k_v/2$ and $\Re(s_2)>-k_v/2$. Here the implied constant in \eqref{3.9} depends only on $k_v$. 

Henceforth we may assume $\Re(s_1)>-k_v/2$ and $\Re(s_2)>-k_v/2$. By Lemma \ref{lem0.4} we obtain 
\begin{align*}
J_{\mathrm{small},v}^{I_2}(\textbf{s})=\frac{(2i)^{k_v}(k_v-1)}{4\pi}\frac{(2\pi i)^{k_v}}{\Gamma(k_v)}\int_0^{\infty}\int_0^{\infty}y^{\frac{k_v}{2}}x^{k_v-1}e^{-2\pi x(y+1)}\cdot x^{s_1+s_2}y^{s_2-1}dydx.
\end{align*}

Changing the variable $y\mapsto yx^{-1}$ yields
\begin{align*}
J_{\mathrm{small},v}^{I_2}(\textbf{s})=\frac{(2i)^{k_v}(k_v-1)}{4\pi}\frac{(2\pi i)^{k_v}}{\Gamma(k_v)}\int_0^{\infty}\int_0^{\infty}y^{\frac{k_v}{2}+s_2-1}x^{\frac{k_v}{2}+s_1-1}e^{-2\pi (y+x)}dydx.
\end{align*}

Therefore, \eqref{0.6} follows from \eqref{3.8} and the above calculation of $J_{\mathrm{small},v}^{I_2}(\textbf{s})$. 
\end{proof}

\subsection{Meromorphic Continuation of $J_{\mathrm{small}}^{\delta}(f_{\mathfrak{n},\mathfrak{q}},\textbf{s})$}
Utilizing Lemmas \ref{lemma0.3} and \ref{lem3.3} with $\delta=I_2$, along with the relation \eqref{equa4.1}, we obtain the following corollary. 
\begin{prop}\label{cor3.4} 
Let notation be as before. 
\begin{itemize}
\item The function  $J_{\mathrm{small}}^{I_2}(f_{\mathfrak{n},\mathfrak{q}},\textbf{s})$ converges absolutely in the region 
\begin{align*}
\begin{cases}
\Re(s_1+s_2)>0\\
\min\{\Re(s_1), \Re(s_2)\}>-\min_{v\mid\infty}\{k_v/2\}.
\end{cases}
\end{align*}
Moreover, it admits a meromorphic continuation to $\mathbb{C}^2$ given explicitly by  
\begin{align*}
J_{\mathrm{small}}^{I_2}(f_{\mathfrak{n},\mathfrak{q}},\textbf{s})=&\prod_{v\mid\infty}\frac{2^{k_v}(k_v-1)}{4\pi\cdot (2\pi)^{s_1+s_2}}\cdot \frac{\Gamma(k_v/2+s_1)\Gamma(k_v/2+s_2)}{\Gamma(k_v)}\\
&\cdot \frac{V_{\mathfrak{q}}\cdot \zeta_F(1+s_1+s_2)}{D_F^{1/2-s_1-s_2}N(\mathfrak{n})^{1/2+s_2}}\prod_{v\mid\mathfrak{n}}\sum_{l=0}^{e_v(\mathfrak{n})}q_v^{l(s_2-s_1)}.
\end{align*}
\item The function  $J_{\mathrm{small}}^{w}(f_{\mathfrak{n},\mathfrak{q}},\textbf{s})$ converges absolutely in the region 
\begin{align*}
\begin{cases}
\Re(s_1-s_2)>0\\
\Re(s_1)>-\min_{v\mid\infty}\{k_v/2\},\ \ \Re(s_2)<\min_{v\mid\infty}\{k_v/2\}.
\end{cases}
\end{align*}
Moreover, it admits a meromorphic continuation to $\mathbb{C}^2$ given explicitly by  
\begin{align*}
J_{\mathrm{small}}^{w}(f_{\mathfrak{n},\mathfrak{q}},\textbf{s})=&\textbf{1}_{\mathfrak{q}=\mathcal{O}_F}\cdot\prod_{v\mid\infty}\frac{(2i)^{k_v}(k_v-1)}{4\pi\cdot (2\pi)^{s_1-s_2}}\cdot \frac{\Gamma(k_v/2+s_1)\Gamma(k_v/2-s_2)}{\Gamma(k_v)}\\
&\cdot \frac{ V_{\mathfrak{q}}\cdot \zeta_F(1+s_1-s_2)}{D_F^{1/2-s_1+s_2}N(\mathfrak{n})^{1/2-s_2}}\prod_{v\mid\mathfrak{n}}\sum_{l=0}^{e_v(\mathfrak{n})}q_v^{l(-s_2-s_1)}.
\end{align*}
\end{itemize}
\end{prop}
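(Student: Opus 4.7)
The plan is to exploit the tensor-product structure of $f_{\mathfrak{n},\mathfrak{q}}=\otimes_v f_v$, of the additive character $\psi=\prod_v\psi_v$, and of the Tamagawa measures to factor the adelic integrals as products of the local integrals $J_{\mathrm{small},v}^{\delta}(\textbf{s})$ that were computed in Lemmas \ref{lemma0.3} and \ref{lem3.3}. The heavy lifting has essentially been done in those two lemmas; what remains is to verify absolute convergence, assemble the product, read off the convergence region, and observe that the explicit formula supplies a meromorphic continuation.

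For the $\delta=I_2$ case, I would first establish absolute convergence in the region
$$\Re(s_1+s_2)>0,\qquad \min\{\Re(s_1),\Re(s_2)\}>-\min_{v\mid\infty}\{k_v/2\}.$$
The bound \eqref{f3.1} of Lemma \ref{lemma0.3} gives an absolutely convergent Euler product over finite places in $\Re(s_1+s_2)>0$, while \eqref{3.9} in the proof of Lemma \ref{lem3.3} controls each archimedean integral in the stated archimedean region. Fubini--Tonelli then justifies
$$J_{\mathrm{small}}^{I_2}(f_{\mathfrak{n},\mathfrak{q}},\textbf{s})=\prod_{v\le\infty}J_{\mathrm{small},v}^{I_2}(\textbf{s}).$$
Substituting the closed form \eqref{0.5} at $\delta=I_2$ (where the indicator collapses to $V_{\mathfrak{q}}$) together with \eqref{0.6} at $\delta=I_2$ (where the prefactor is $2^{k_v}(k_v-1)/(4\pi)$) yields the displayed formula. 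Meromorphic continuation to $\mathbb{C}^2$ is immediate because the right-hand side is a product of $\zeta_F$, Gamma factors, and the polynomials $\sum_{l=0}^{e_v(\mathfrak{n})}q_v^{l(s_2-s_1)}$, each meromorphic on all of $\mathbb{C}$.

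For the $\delta=w$ case, I would invoke the functional identity \eqref{equa4.1}, namely
$$J_{\mathrm{small}}^{w}(f_{\mathfrak{n},\mathfrak{q}},\textbf{s})=J_{\mathrm{small}}^{I_2}(R(w)f_{\mathfrak{n},\mathfrak{q}},\textbf{s}'),\qquad \textbf{s}'=(s_1,-s_2),$$
which explains why the formula in the claim involves $s_1-s_2$ and $-s_2$ rather than $s_1+s_2$ and $s_2$. The same tensor-product argument factors the integral as $\prod_v J_{\mathrm{small},v}^{w}(\textbf{s}')$, and feeding this into Lemmas \ref{lemma0.3} and \ref{lem3.3} with $\delta=w$ at the shifted parameters $\textbf{s}'$ produces the stated answer. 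Two features deserve special mention: the archimedean factor picks up a factor of $i^{k_v}$ at each $v\mid\infty$ (by equation (3.8) in Lemma \ref{lem3.3}), which combines with $2^{k_v}$ to form $(2i)^{k_v}$; and at a ramified prime $v=\mathfrak{q}\subsetneq\mathcal{O}_F$ the local integral vanishes because $w\notin K_0(\mathfrak{p}_v)$, forcing the factor $\textbf{1}_{\mathfrak{q}=\mathcal{O}_F}$ that already appears in \eqref{0.5}.

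The step I expect to require the most care, though it is still routine, is the uniform convergence of the Euler product over finite places: one must check that the exact local expression \eqref{3.3} combined with the termination of the inner sum at $v\mid\mathfrak{n}$ yields a product which, after separating the contribution of $\zeta_v(1+s_1+s_2)$, assembles into the global $\zeta_F(1+s_1+s_2)$ in $\Re(s_1+s_2)>0$. Beyond this Euler-product bookkeeping, the proof is a direct assembly of the local outputs already in hand.
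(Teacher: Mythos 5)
Your proposal is correct and follows the paper's own route exactly: the paper states the proposition is obtained by "Utilizing Lemmas \ref{lemma0.3} and \ref{lem3.3} with $\delta=I_2$, along with the relation \eqref{equa4.1}," which is precisely the factor-into-local-integrals strategy plus the $w$-translation identity at $\textbf{s}'=(s_1,-s_2)$ that you describe. The details you spell out — the collapse of the indicator to $V_{\mathfrak{q}}$ for $\delta=I_2$, the appearance of $(2i)^{k_v}$ from the factor $i^{k_v}$ in \eqref{3.8}, and the $\textbf{1}_{\mathfrak{q}=\mathcal{O}_F}$ from the vanishing at $v=\mathfrak{q}$ when $\delta=w$ — are all the correct ingredients.
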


In particular, $J_{\mathrm{small}}^{w}(f_{\mathfrak{n},\mathfrak{q}},\textbf{s})\equiv 0$ if $\mathfrak{q}\subsetneq \mathcal{O}_F$; and for $\textbf{s}=(s_1,0)$, we have  $J_{\mathrm{small}}^{I_2}(f_{\mathfrak{n},\mathfrak{q}},\textbf{s})+J_{\mathrm{small}}^{w}(f_{\mathfrak{n},\mathfrak{q}},\textbf{s})\equiv 0$ if $\sum_{v\mid\infty}k_v\equiv 2\pmod{4}$ and $\mathfrak{q}=\mathcal{O}_F$.

\section{The Dual Orbital Integrals}\label{section5}
Let $\delta\in \{I_2,w\}$. Let $\textbf{s}=(s_1,s_2)\in \mathbb{C}^2$. Recall the dual orbital integral
\begin{align*}
J_{\mathrm{dual}}^{I_2}(f_{\mathfrak{n},\mathfrak{q}},\textbf{s}):=\int_{\mathbb{A}_F^{\times}}\int_{\mathbb{A}_F^{\times}}f_{\mathfrak{n},\mathfrak{q}}\left(\begin{pmatrix}
1& \\
x&1
\end{pmatrix}\begin{pmatrix}
y& \\
&1
\end{pmatrix}\delta\right)|x|^{s_1+s_2}|y|^{s_2}d^{\times}yd^{\times}x,
\end{align*}
and 
\begin{equation}\label{equa5.1}
J_{\mathrm{dual}}^{w}(f_{\mathfrak{n},\mathfrak{q}},\textbf{s})=J_{\mathrm{dual}}^{I_2}(R(w)f_{\mathfrak{n},\mathfrak{q}},\textbf{s}'),	
\end{equation}
with $\textbf{s}'=(s_1,-s_2).$ Here $R(w)f_{\mathfrak{n},\mathfrak{q}}(g):=f_{\mathfrak{n},\mathfrak{q}}(gw)$, $g\in G(\mathbb{A}_F)$.

We will show that this integral converges absolutely in  $\Re(s_1)-|\Re(s_2|)>1$, and admits a meromorphic in $\textbf{s}\in \mathbb{C}^2$.

\subsection{Calculation of Local Integrals}
Let $v$ be a place. Define 
\begin{align*}
J_{\mathrm{dual},v}^{\delta}(f_{\mathfrak{n},\mathfrak{q}},\textbf{s}):=\int_{F_v^{\times}}\int_{F_v^{\times}}f_v\left(\begin{pmatrix}
1& \\
x_v&1
\end{pmatrix}\begin{pmatrix}
y_v& \\
&1
\end{pmatrix}\delta\right)|x_v|_v^{s_1+s_2}|y_v|_v^{s_2}d^{\times}y_vd^{\times}x_v.
\end{align*}
\begin{lemma}\label{lemma4.1}
Let $J_{\mathrm{dual},\fin}^{\delta}(f_{\mathfrak{n},\mathfrak{q}},\textbf{s}):=\prod_{v<\infty}J_{\mathrm{dual},v}^{\delta}(f_{\mathfrak{n},\mathfrak{q}},\textbf{s})$. 
\begin{itemize}
\item Let $\Re(s_1+s_2)>1$. Then 
\begin{equation}\label{eq4.1}
\prod_{v<\infty}\int_{F_v^{\times}}\int_{F_v^{\times}}\bigg|f_{v}\left(\begin{pmatrix}
1& \\
x_v&1
\end{pmatrix}\begin{pmatrix}
y_v& \\
&1
\end{pmatrix}\delta\right)|x_v|_v^{s_1+s_2}|y_v|_v^{s_2}\bigg|d^{\times}y_vd^{\times}x_v<\infty.
\end{equation}
\item The function $J_{\mathrm{dual},\fin}^{\delta}(\textbf{s})$ admits a meromorphic continuation to $\textbf{s}\in \mathbb{C}^2$, given explicitly by 
\begin{equation}\label{f4.1}
J_{\mathrm{dual},\fin}^{\delta}(f_{\mathfrak{n},\mathfrak{q}},\textbf{s})=\frac{(\textbf{1}_{\mathfrak{q}=\mathcal{O}_F}+\textbf{1}_{\mathfrak{q}\subsetneq \mathcal{O}_F, 
\delta=I_2})\cdot V_{\mathfrak{q}}\cdot \zeta_F(s_1+s_2)}{N(\mathfrak{n})^{1/2-s_1}N(\mathfrak{q})^{s_1+s_2}D_F}\prod_{v\mid\mathfrak{n}}\sum_{l=0}^{e_v(\mathfrak{n})}q_v^{l(s_2-s_1)}.
\end{equation}
\end{itemize}
\end{lemma}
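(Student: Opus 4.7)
The plan is to mimic the local-global strategy from Lemma \ref{lemma0.3}: compute $J_{\mathrm{dual},v}^{\delta}(\textbf{s})$ explicitly at each finite place $v$, then take the product. Split the finite places into three groups according to the definition of $f_{\mathfrak{n},\mathfrak{q}}$ in \eqref{t1.6}, \eqref{t1.7}, \eqref{t1.8}: (i) $v\nmid\mathfrak{n}\mathfrak{q}$, (ii) $v\mid\mathfrak{n}$ (hence $v\nmid\mathfrak{q}$ by coprimality), and (iii) $v=\mathfrak{q}$ when $\mathfrak{q}\subsetneq\mathcal{O}_F$.

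For cases (i) and (ii), write the matrix inside $f_v$ as $\begin{pmatrix} y_v & \\ x_vy_v & 1 \end{pmatrix}\delta$ and multiply by $\varpi_v^{l}$ to detect when it lies in the double coset $K_v\diag(\varpi_v^{i},\varpi_v^{j})K_v$ with $i+j=e_v(\mathfrak{n})$, $i\geq j\geq 0$. Setting $r=e_v(\mathfrak{n})$, $r_1=e_v(x_v)$, $r_2=e_v(y_v)$, the same Bruhat/elementary divisor analysis used in the proof of Lemma \ref{lemma0.3} produces the constraints $l\in[0,r]$, $r_2=r-2l$, and $r_1\geq l-r$. (Here the support is independent of $\delta$ because right-multiplication by $w$ only permutes columns, leaving the double coset description intact.) Integrating in $y_v$ first via $\Vol(\mathcal{O}_v^{\times},d^{\times}y_v)q_v^{-r_2s_2}$, and then in $x_v$ via a geometric series, the local integral evaluates to
\[
J_{\mathrm{dual},v}^{\delta}(\textbf{s})=\Vol(\mathcal{O}_v^{\times})^{2}\,\zeta_v(s_1+s_2)\,q_v^{-e_v(\mathfrak{n})(\frac{1}{2}-s_1)}\sum_{l=0}^{e_v(\mathfrak{n})}q_v^{l(s_2-s_1)}.
\]
For case (iii), the condition $\varpi_v^{l}\begin{pmatrix} y_v & \\ x_vy_v & 1 \end{pmatrix}\delta\in Z(F_v)K_0(\mathfrak{p}_v)$ is \emph{empty} when $\delta=w$ (the $(2,1)$-entry becomes a unit, contradicting the Iwahori congruence), which produces the indicator $\textbf{1}_{\delta=I_2}$. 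For $\delta=I_2$ the same condition forces $l=0$, $e_v(y_v)=0$, and $e_v(x_v)\geq 1$, yielding
\[
J_{\mathrm{dual},v}^{I_2}(\textbf{s})=\Vol(K_0(\mathfrak{p}_v))^{-1}\Vol(\mathcal{O}_v^{\times})^{2}\,\zeta_v(s_1+s_2)\,N(\mathfrak{q})^{-(s_1+s_2)}.
\]

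Now take the product over $v<\infty$. Using the normalizations $\Vol(\mathcal{O}_v^{\times},d^{\times}t_v)=q_v^{-d_v/2}$ from \textsection\ref{sec2.1.2} gives $\prod_{v<\infty}\Vol(\mathcal{O}_v^{\times})^{2}=D_F^{-1}$, while $\Vol(K_0(\mathfrak{p}_v))^{-1}=V_{\mathfrak{q}}$ in our measure (with $\Vol(K_v)=1$ at $v\nmid\mathfrak{D}_F$), and $\prod_{v<\infty}\zeta_v(s_1+s_2)=\zeta_F(s_1+s_2)$. Collecting the per-place factors yields the explicit formula \eqref{f4.1}. The absolute convergence claim \eqref{eq4.1} follows from the identical calculation run with $|f_v|$ and $\Re(s_j)$ in place of $f_v$ and $s_j$: every local piece remains nonnegative so the local formula is unchanged, and the infinite product is finite precisely when the Euler product $\zeta_F(\Re(s_1+s_2))$ converges, i.e. $\Re(s_1+s_2)>1$. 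Meromorphic continuation is then automatic, since the right-hand side of \eqref{f4.1} is meromorphic in $\textbf{s}\in\mathbb{C}^{2}$ with its only singularity coming from $\zeta_F(s_1+s_2)$.

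The only non-routine step is case (iii): one must carefully verify both that $\delta=w$ annihilates the local integral at $v=\mathfrak{q}$, and that the volume $\Vol(K_0(\mathfrak{p}_v))^{-1}$ reproduces precisely the factor $V_{\mathfrak{q}}$ appearing in \eqref{f4.1}. The rest is bookkeeping essentially identical to Lemma \ref{lemma0.3}.
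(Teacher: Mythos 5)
Your proposal is correct and follows essentially the same route the paper takes: split the finite places into $v\nmid\mathfrak{n}\mathfrak{q}$, $v\mid\mathfrak{n}$, and $v=\mathfrak{q}$, compute the local integral by the elementary-divisor / Iwahori analysis to arrive at the local formulas $q_v^{(s_1-1/2)e_v(\mathfrak{n})}\Vol(\mathcal{O}_v^{\times})^2\zeta_v(s_1+s_2)\sum_{l=0}^{e_v(\mathfrak{n})}q_v^{(s_2-s_1)l}$ and, at $v=\mathfrak{q}$, $\Vol(K_0(\mathfrak{p}_v))^{-1}\Vol(\mathcal{O}_v^{\times})^2 q_v^{-s_1-s_2}\zeta_v(s_1+s_2)\textbf{1}_{\delta=I_2}$, and then multiply. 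Your vanishing argument at $v=\mathfrak{q}$ with $\delta=w$ (the other constraints force the scaling exponent $l=0$, leaving the $(2,1)$-entry a unit) and the volume identifications $\prod_v\Vol(\mathcal{O}_v^\times)^2=D_F^{-1}$, $\Vol(K_0(\mathfrak{p}_{\mathfrak{q}}))^{-1}=V_{\mathfrak{q}}$ match what the paper does implicitly, so nothing is missing.
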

\begin{proof}
Let $v<\infty$ be a finite place. We consider the following scenarios.
\begin{itemize}
\item Suppose $v\nmid\mathfrak{q}$. Let $r=\max\{e_v(\mathfrak{n}),0\}$, $r_1=e_v(x_v)$, and $r_2=e_v(y_v)$. By the definition of $f_v$ in and \eqref{t1.6} and \eqref{t1.8}, $f_{v}\left(\begin{pmatrix}
1& \\
x_v&1
\end{pmatrix}\begin{pmatrix}
y_v& \\
&1
\end{pmatrix}\delta\right)$ is a multiple of the characteristic function for the restrictions    
\begin{equation}\label{4.1}
\varpi_v^l\begin{pmatrix}
\varpi_v^{r_2}& \\
\varpi_v^{r_1+r_2} &1
\end{pmatrix}\in \bigsqcup_{\substack{i+j=r\\
i\geq j\geq 0}}K_v\begin{pmatrix}
\varpi_v^i\\
&\varpi_v^j
\end{pmatrix}K_v
\end{equation}
for some $l\in\mathbb{Z}$. Notice that \eqref{4.1} reduces to the following
\begin{equation}\label{4.2}
\begin{cases}
2l+r_2=r,\ \ l\geq 0\\
l+r_2\geq 0,\ \ l+r_1+r_2\geq 0
\end{cases}\ \ \Leftrightarrow\ \ \
\begin{cases}
0\leq l\leq r,\ \ r_2=r-2l\\
r_1\geq l-r.
\end{cases} 
\end{equation}

Consequently, it follows from \eqref{4.2} that 
\begin{equation}\label{4.3}
\int_{F_v^{\times}}f_{v}\left(\begin{pmatrix}
1& \\
x_v&1
\end{pmatrix}\begin{pmatrix}
y_v& \\
&1
\end{pmatrix}\delta\right)|y_v|_v^{s_2}d^{\times}y_v=q_v^{-\frac{r}{2}}\Vol(\mathcal{O}_v^{\times})\sum_{l=0}^rq_v^{(2l-r)s_2}\cdot \textbf{1}_{r_1\geq l-r}.
\end{equation}

Substituting \eqref{4.3} into the definition of $J_{\mathrm{dual},v}^{\delta}(f_{\mathfrak{n},\mathfrak{q}},\textbf{s})$, we obtain 
\begin{equation}\label{4.4}
J_{\mathrm{dual},v}^{\delta}(f_{\mathfrak{n},\mathfrak{q}},\textbf{s})=q_v^{(s_1-1/2)r}\Vol(\mathcal{O}_v^{\times})^2\sum_{l=0}^rq_v^{(s_2-s_1)l}\zeta_v(s_1+s_2).
\end{equation}

\item Suppose $\mathfrak{q}\subsetneq \mathcal{O}_F$ and $v=\mathfrak{q}$. By the definition of $f_v$ in \eqref{t1.7}, the function $f_{v}\left(\begin{pmatrix}
1& \\
x_v&1
\end{pmatrix}\begin{pmatrix}
y_v& \\
&1
\end{pmatrix}\delta\right)$ is a multiple of the characteristic function for the restrictions    
\begin{equation}\label{4.5}
\varpi_v^l\begin{pmatrix}
\varpi_v^{r_2}& \\
\varpi_v^{r_1+r_2} &1
\end{pmatrix}\delta\in K_0(\mathfrak{p}_v)
\end{equation}
for some $l\in \mathbb{Z}$. Notice that \eqref{4.5} is empty if $\delta=w=\begin{pmatrix}
	& -1\\
	1
\end{pmatrix}$. Hence, we assume $\delta=I_2$ for the moment. In this case, \eqref{4.5} amounts to 
\begin{equation}\label{4.6}
\begin{cases}
2l+r_2=r,\ \ l= 0\\
l+r_2= 0,\ \ l+r_1+r_2\geq 1
\end{cases}\ \ \Leftrightarrow\ \ \
\begin{cases}
l=r_2=r=0\\
r_1\geq 1.
\end{cases} 
\end{equation}

Substituting \eqref{4.6} into the definition of $J_{\mathrm{dual},v}^{\delta}(f_{\mathfrak{n},\mathfrak{q}},\textbf{s})$, we obtain 
\begin{equation}\label{4.7}
J_{\mathrm{dual},v}^{\delta}(f_{\mathfrak{n},\mathfrak{q}},\textbf{s})=\frac{\Vol(\mathcal{O}_v^{\times})^2\textbf{1}_{\delta=I_2}}{\Vol(K_0(\mathfrak{p}_v))}\cdot q_v^{-s_1-s_2}\zeta_v(s_1+s_2).
\end{equation}
\end{itemize}

Therefore, \eqref{f4.1} follows from \eqref{4.4} and \eqref{4.7}. Let $\Re(s_1+s_2)>1$. Replacing $(s_1,s_2)$ with $(\Re(s_1), \Re(s_2))$ into \eqref{f4.1}, the integral 
\begin{align*}
\prod_{v<\infty}\int_{F_v^{\times}}\int_{F_v^{\times}}\bigg|f_{v}\left(\begin{pmatrix}
1& \\
x_v&1
\end{pmatrix}\begin{pmatrix}
y_v& \\
&1
\end{pmatrix}\delta\right)|x_v|_v^{s_1+s_2}|y_v|_v^{s_2}\bigg|d^{\times}y_vd^{\times}x_v
\end{align*} 
is equal to 
\begin{align*}
\frac{(\textbf{1}_{\mathfrak{q}=\mathcal{O}_F}+\textbf{1}_{\mathfrak{q}\subsetneq \mathcal{O}_F, 
\delta=I_2})\cdot V_{\mathfrak{q}}\cdot \zeta_F(\Re(s_1+s_2))}{N(\mathfrak{n})^{1/2-\Re(s_1)}N(\mathfrak{q})^{\Re(s_1+s_2)}D_F}\prod_{v\mid\mathfrak{n}}\sum_{l=0}^{e_v(\mathfrak{n})}q_v^{l(\Re(s_2-s_1))}<\infty.
\end{align*}
Hence, \eqref{eq4.1} follows. 
\end{proof}

\subsubsection{Archimedean Integrals}

\begin{lemma}\label{lemma4.2}
Let $v\mid\infty$. We have the following assertions.
\begin{itemize}
\item The integral $J_{\mathrm{dual},v}(\textbf{s})$ converges absolutely in 
\begin{equation}\label{eq4.11}
\begin{cases}
\Re(s_1+s_2)>1\\
\Re(s_1)<k_v/2-1,\ \Re(s_2)<k_v/2.
\end{cases}
\end{equation}
\item Let $\textbf{s}=(s_1,s_2)$ be in the region defined by \eqref{eq4.11}. The function $J_{\mathrm{dual},v}(\textbf{s})$ is equal to 
\begin{equation}\label{eq4.10}
\frac{2^{k_v}(k_v-1)\cos\frac{\pi  (s_1+s_2)}{2}}{2\pi}B(s_1+s_2,k_v-s_1-s_2)B(k_v/2-s_1,k_v/2-s_2).
\end{equation}
In particular, \eqref{eq4.10} gives an explicit meromorphic continuation of $J_{\mathrm{dual},v}(\textbf{s})$ to $\mathbb{C}^2$, which is holomorphic in 
\begin{align*}
\begin{cases}
\Re(s_1+s_2)>0\\
\Re(s_1)<k_v/2,\ \Re(s_2)<k_v/2.
\end{cases}
\end{align*}
\end{itemize}
\end{lemma}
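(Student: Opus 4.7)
The plan is to evaluate $J_{\mathrm{dual},v}^{\delta}(\textbf{s})$ by decoupling the double integral into a Beta integral in $y_v$ and a one-dimensional contour integral in $x_v$, both of which can be computed in closed form. I focus on $\delta=I_2$, since the case $\delta=w$ differs only by an overall factor $i^{k_v}$ coming from the identity $f_v(\cdot w)=i^{k_v}f_v(\cdot)$ for the matrix coefficient \eqref{t1.5} (this same factor already appeared in the proof of Lemma \ref{lem3.3}, and the relation \eqref{equa5.1} then yields the $w$-case).

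Plugging \eqref{t1.5} into the definition with $g_v=\begin{pmatrix}1&\\ x_v&1\end{pmatrix}\begin{pmatrix}y_v&\\ &1\end{pmatrix}$, one reads off $a_v=y_v$, $b_v=0$, $c_v=x_vy_v$, $d_v=1$, so that the support forces $y_v>0$ and the denominator becomes $(x_vy_v+i(y_v+1))^{k_v}$. The substitution $u=x_vy_v$ for fixed $y_v>0$ decouples the $y_v$-dependence from the contour:
\begin{align*}
J_{\mathrm{dual},v}^{I_2}(\textbf{s})=\frac{(2i)^{k_v}(k_v-1)}{4\pi}\int_0^\infty y_v^{k_v/2-s_1-1}\int_{\bR}\frac{|u|^{s_1+s_2-1}}{(u+i(y_v+1))^{k_v}}\,du\,dy_v.
\end{align*}
A further rescaling $u\mapsto(y_v+1)v$ extracts the factor $(y_v+1)^{s_1+s_2-k_v}$ from the inner integral, reducing it to $I(s):=\int_{\bR}|v|^{s-1}(v+i)^{-k_v}\,dv$ with $s=s_1+s_2$. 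The outer $y_v$-integral is then a Beta integral equal to $B(k_v/2-s_1,\,k_v/2-s_2)$, convergent for $\Re(s_1),\Re(s_2)<k_v/2$.

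The remaining step is to evaluate $I(s)$. I would split $I(s)=\int_0^\infty v^{s-1}(v+i)^{-k_v}dv+\int_0^\infty v^{s-1}(-v+i)^{-k_v}dv$ and, on each half-line, rotate the contour to $\pm i\,\bR_{>0}$ via $v=e^{\pm i\pi/2}t$. The poles of $(v\pm i)^{-k_v}$ lie at $\mp i$, in the opposite half-plane from each rotation, so Cauchy's theorem applies provided $0<\Re(s)<k_v$ (the bound $\Re(s)<k_v$ ensures vanishing of the quarter-circles at infinity, and $\Re(s)>0$ ensures integrability near $v=0$). Each rotated integral is a standard Beta integral $B(s,k_v-s)$ times the unimodular phase $e^{\pm i\pi(s-k_v)/2}$, and since $k_v$ is even the two pieces combine to
\begin{align*}
I(s)=\bigl(e^{i\pi(s-k_v)/2}+e^{-i\pi(s-k_v)/2}\bigr)B(s,k_v-s)=2(-1)^{k_v/2}\cos\tfrac{\pi s}{2}\,B(s,k_v-s).
\end{align*}
Multiplying by $(2i)^{k_v}=2^{k_v}(-1)^{k_v/2}$ and the normalising factor $(k_v-1)/(4\pi)$, together with the $y_v$-Beta, gives exactly \eqref{eq4.10}. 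The resulting finite product of Beta functions provides the claimed meromorphic continuation to $\bC^2$, holomorphic wherever both Beta arguments have positive real part.

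Finally, the absolute-convergence region \eqref{eq4.11} follows by repeating the same substitutions with the integrand replaced by its absolute value, yielding the same Beta-type majorant; the conditions $\Re(s_1+s_2)>1$ and $\Re(s_1)<k_v/2-1$ stated in \eqref{eq4.11} are stronger than strictly necessary but are the ones compatible with the non-Archimedean factor from Lemma \ref{lemma4.1} (where one needs $\Re(s_1+s_2)>1$ for $\zeta_F(s_1+s_2)$ to converge absolutely). The principal technical point is the justification of the contour rotation for $I(s)$—namely the decay of $v^{s-1}(v\pm i)^{-k_v}$ along large quarter-arcs and integrability at the origin—after which the computation is a clean assembly of Beta-function identities.
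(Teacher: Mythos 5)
Your proof is correct and follows essentially the same route as the paper. The paper also begins by plugging \eqref{t1.5} into the definition, makes the substitution $x_v\mapsto x_vy_v^{-1}$ (your $u=x_vy_v$), and evaluates the inner contour integral by rotation along the fan-shaped contour $\mathcal{C}_R(i(y+1))$; the outer integral is then the same Beta integral $B(k_v/2-s_1,k_v/2-s_2)$. The only cosmetic differences are (i) you peel off the $(y_v+1)$-dependence by a real rescaling $u\mapsto(y_v+1)v$ before rotating, whereas the paper rotates the contour directly with the parameter $y+1$ attached and reads off the factor $(1+y)^{s_1+s_2-k_v}$ from the residue-free deformation; and (ii) you split into $v>0$ and $v<0$ and rotate each half-line separately, whereas the paper writes $J_{\mathrm{dual},v}=J^+ + \overline{J^+(\overline{\textbf{s}})}$ and handles $x<0$ by Schwarz reflection, which is algebraically the same since $k_v$ is even. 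Your side remark that the stated convergence region \eqref{eq4.11} is not tight but is the one compatible with the non-archimedean zeta factor is a fair reading of the paper's intent.
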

\begin{proof}
By definition of $f_v$ in \eqref{t1.5}, along with a change of variable $x_v\mapsto x_vy_v^{-1}$, we have
\begin{align*}
J_{\mathrm{dual},v}(\textbf{s})=\frac{(2i)^{k_v}(k_v-1)}{4\pi}\int_{F_v^{\times}}\int_0^{\infty}\frac{y_v^{\frac{k_v}{2}}}{(x_v+i(y_v+1))^{k_v}}|x_v|_v^{s_1+s_2}|y_v|_v^{-s_1}d^{\times}y_vd^{\times}x_v.
\end{align*}

Since $k_v\in \mathbb{Z}_{\geq 4}$, we conclude that  
\begin{align*}
\int_{F_v^{\times}}\int_0^{\infty}\bigg|\frac{y_v^{\frac{k_v}{2}}}{(x_v+i(y_v+1))^{k_v}}|x_v|_v^{s_1+s_2}|y_v|_v^{-s_1}\bigg|d^{\times}y_vd^{\times}x_v<\infty
\end{align*}
in the region described in \eqref{eq4.11}.

Hence, $J_{\mathrm{dual},v}(\textbf{s})$ converges absolutely in the region \eqref{eq4.11}, where  
\begin{equation}\label{0.9}
J_{\mathrm{dual},v}(\textbf{s})=J_{\mathrm{dual},v}^+(\textbf{s})+\overline{J_{\mathrm{dual},v}^+(\overline{\textbf{s}})},
\end{equation}
where $\overline{\textbf{s}}=(\overline{s_1},\overline{s_2})$, and 
\begin{equation}\label{4.12}
J_{\mathrm{dual},v}^+(\textbf{s}):=\frac{(2i)^{k_v}(k_v-1)}{4\pi}\int_0^{\infty}\int_0^{\infty}\frac{y^{\frac{k_v}{2}-s_1-1}}{(x+i(y+1))^{k_v}}x^{s_1+s_2-1}dydx.
\end{equation}

Let $h(z):=(z+i(y+1))^{-k_v}e^{(s_1+s_2-1)\log z}$, where $y>0$. Then $h(z)$ is holomorphic in the region $-10^{-1}<\arg(z)<\pi /2+10^{-1}$. Integrating $h(z)$ along the sector contour $\mathcal{C}_R(i(y+1))$ as defined in \textsection\ref{sec1.1.1}, and taking $R\to\infty$, we obtain 
\begin{equation}\label{4.13}
\int_0^{\infty}\frac{x^{s_1+s_2-1}}{(x+i(y+1))^{k_v}}dx=\frac{e^{\frac{\pi i (s_1+s_2)}{2}}}{i^{k_v}(1+y)^{k_v-s_1-s_2}}\int_0^{\infty}\frac{x^{s_1+s_2-1}}{(x+1)^{k_v}}dx.
\end{equation}

Substituting \eqref{4.13} into \eqref{4.12} leads to 
\begin{align*}
J_{\mathrm{dual},v}^+(\textbf{s})=\frac{2^{k_v}(k_v-1)e^{\frac{\pi i (s_1+s_2)}{2}}}{4\pi}B(s_1+s_2,k_v-s_1-s_2)B(k_v/2-s_1,k_v/2-s_2).
\end{align*}
In conjunction with \eqref{0.9} we then derive \eqref{eq4.10}. 
\end{proof}

\subsection{Meromorphic Continuation of $J_{\mathrm{dual}}^{\delta}(f_{\mathfrak{n},\mathfrak{q}},\textbf{s})$}
\begin{prop}\label{cor4.3}
Let notation be as before. Let notation be as before. 
\begin{itemize}
\item Then function $J_{\mathrm{dual}}^{I_2}(f_{\mathfrak{n},\mathfrak{q}},\textbf{s})$ converges absolutely in the region 
\begin{equation}\label{eq4.15}
\begin{cases}
\Re(s_1+s_2)>1\\
\Re(s_1)<\min_{v\mid\infty}\{k_v/2\}-1,\ \Re(s_2)<\min_{v\mid\infty}\{k_v/2\}.
\end{cases}
\end{equation}
Moreover, it admits a meromorphic continuation to $\mathbb{C}^2$ given explicitly by
\begin{align*}
J_{\mathrm{dual}}^{I_2}(f_{\mathfrak{n},\mathfrak{q}},\textbf{s})=&\prod_{v\mid\infty}\frac{2^{k_v}(k_v-1)}{4\pi\cdot (2\pi)^{-(s_1+s_2)}}\cdot\frac{\Gamma(k_v/2-s_1)\Gamma(k_v/2-s_2)}{\Gamma(k_v)}\\
&\frac{ V_{\mathfrak{q}}\cdot \zeta_F(1-s_1-s_2)}{D_F^{1/2+s_1+s_2}N(\mathfrak{q})^{s_1+s_2}N(\mathfrak{n})^{1/2-s_1}}\prod_{v\mid\mathfrak{n}}\sum_{l=0}^{e_v(\mathfrak{n})}q_v^{l(s_2-s_1)}.
\end{align*} 
\item Then function $J_{\mathrm{dual}}^{w}(f_{\mathfrak{n},\mathfrak{q}},\textbf{s})$ converges absolutely in the region 
\begin{align*}
\begin{cases}
\Re(s_1-s_2)>1\\
\Re(s_1)<\min_{v\mid\infty}\{k_v/2\}-1,\ \Re(s_2)>-\min_{v\mid\infty}\{k_v/2\}.
\end{cases}
\end{align*}
Moreover, it admits a meromorphic continuation to $\mathbb{C}^2$ given explicitly by
\begin{align*}
J_{\mathrm{dual}}^{w}(f_{\mathfrak{n},\mathfrak{q}},\textbf{s})=&\textbf{1}_{\mathfrak{q}=\mathcal{O}_F}\cdot \prod_{v\mid\infty}\frac{(2i)^{k_v}(k_v-1)}{4\pi\cdot (2\pi)^{-(s_1-s_2)}}\cdot\frac{\Gamma(k_v/2-s_1)\Gamma(k_v/2+s_2)}{\Gamma(k_v)}\\
&\frac{V_{\mathfrak{q}}\cdot \zeta_F(1-s_1+s_2)}{D_F^{1/2+s_1-s_2}N(\mathfrak{q})^{s_1-s_2}N(\mathfrak{n})^{1/2-s_1}}\prod_{v\mid\mathfrak{n}}\sum_{l=0}^{e_v(\mathfrak{n})}q_v^{l(-s_2-s_1)}.
\end{align*} 
\end{itemize}

\end{prop}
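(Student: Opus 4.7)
The proof treats the two cases $\delta = I_2$ and $\delta = w$ separately, with the first being the main computation. For $\delta = I_2$, the plan is to factor
\begin{equation*}
J_{\mathrm{dual}}^{I_2}(f_{\mathfrak{n},\mathfrak{q}},\textbf{s}) = J_{\mathrm{dual},\fin}^{I_2}(f_{\mathfrak{n},\mathfrak{q}},\textbf{s}) \cdot \prod_{v \mid \infty} J_{\mathrm{dual},v}^{I_2}(\textbf{s})
\end{equation*}
and invoke Lemmas \ref{lemma4.1} and \ref{lemma4.2}. The region of absolute convergence \eqref{eq4.15} is the intersection of the regions supplied by these two lemmas. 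Substituting the explicit formulas, the finite part contributes $\zeta_F(s_1+s_2)$ together with the factors involving $V_{\mathfrak{q}}$, $N(\mathfrak{q})$, $N(\mathfrak{n})$, and $D_F$; while each Archimedean place contributes the Beta-function product from \eqref{eq4.10}, which simplifies via $B(a,b) = \Gamma(a)\Gamma(b)/\Gamma(a+b)$ to a factor involving $\cos\tfrac{\pi(s_1+s_2)}{2}\Gamma(s_1+s_2)\Gamma(k_v/2-s_1)\Gamma(k_v/2-s_2)/\Gamma(k_v)$.

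The key step is then to apply the functional equation of $\zeta_F$. Starting from $\Lambda_F(s) = D_F^{s/2}\Gamma_{\mathbb{R}}(s)^{d_F}\zeta_F(s) = \Lambda_F(1-s)$, combined with Legendre's duplication formula $\Gamma(s/2)\Gamma((s+1)/2) = 2^{1-s}\sqrt{\pi}\Gamma(s)$ and the reflection identity $\Gamma(a)\Gamma(1-a) = \pi/\sin(\pi a)$, one derives
\begin{equation*}
\left[\cos\tfrac{\pi s}{2}\Gamma(s)\right]^{d_F}\zeta_F(s) = 2^{-d_F}(2\pi)^{d_F s}D_F^{1/2-s}\zeta_F(1-s).
\end{equation*}
Applying this at $s = s_1 + s_2$ converts the accumulated $\cos$, $\Gamma(s_1+s_2)^{d_F}$, and $\zeta_F(s_1+s_2)$ into $\zeta_F(1-s_1-s_2)$; careful bookkeeping of the powers of $2$, $2\pi$, and $D_F$ then reproduces the stated closed form. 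Meromorphic continuation to $\textbf{s} \in \mathbb{C}^2$ is inherited from that of $\zeta_F$ and $\Gamma$.

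For $\delta = w$, I would invoke the identity \eqref{equa5.1}, reducing the computation to the $\delta = I_2$ case applied to the translated test function $R(w)f_{\mathfrak{n},\mathfrak{q}}$ at the parameter $(s_1,-s_2)$. At each Archimedean place, the matrix-coefficient description of $f_v$ gives $R(w)f_v = i^{k_v}f_v$ (compare the derivation of \eqref{3.8}), producing the overall factor $\prod_{v\mid\infty}i^{k_v}$. At $v = \mathfrak{q}$ with $\mathfrak{q} \subsetneq \mathcal{O}_F$, the support of $R(w)f_v$ becomes $ZK_0(\mathfrak{p}_v)w^{-1}$, and a direct Cartan-decomposition check — mirroring the vanishing of the $\delta = w$ subcase in the proof of Lemma \ref{lemma4.1} — shows the local integral vanishes, yielding the prefactor $\textbf{1}_{\mathfrak{q} = \mathcal{O}_F}$. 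The remaining factors are then obtained by the substitution $s_2 \mapsto -s_2$ in the $\delta = I_2$ formula, which also produces the stated convergence region.

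The principal difficulty will be careful bookkeeping of the various numerical factors (powers of $D_F$, $2\pi$, the factor $V_{\mathfrak{q}}$, and the $N(\mathfrak{q})$-prefactor) through both the functional-equation transformation and the $s_2 \mapsto -s_2$ substitution. No new analytic input is required beyond the lemmas already established and the standard functional equation of $\zeta_F$.
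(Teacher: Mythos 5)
Your proposal is correct and follows essentially the same route as the paper's proof: combine Lemmas \ref{lemma4.1} and \ref{lemma4.2} for the $\delta=I_2$ case, apply the functional equation of $\zeta_F$ (the identity you derive from the reflection and duplication formulas is exactly the paper's \eqref{4.17} rearranged), and reduce $\delta=w$ to $\delta=I_2$ via \eqref{equa5.1} together with $R(w)f_v = i^{k_v}f_v$ at archimedean places and the vanishing of the local integral at $v=\mathfrak{q}$ when $\mathfrak{q}\subsetneq\mathcal{O}_F$. No gaps.
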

\begin{proof}
Combining Lemma \ref{lemma4.1} with Lemma \ref{lemma4.2} we conclude that $J_{\mathrm{dual}}^{I_2}(f_{\mathfrak{n},\mathfrak{q}},\textbf{s})$ converges absolutely in the region defined by \eqref{eq4.15}, and admits the expression 	
\begin{align*}
J_{\mathrm{dual}}^{\delta}(f_{\mathfrak{n},\mathfrak{q}},\textbf{s})=&\frac{(\textbf{1}_{\mathfrak{q}=\mathcal{O}_F}+\textbf{1}_{\mathfrak{q}\subsetneq \mathcal{O}_F, 
\delta=I_2})\cdot V_{\mathfrak{q}}\cdot \zeta_F(s_1+s_2)}{N(\mathfrak{n})^{1/2-s_1}N(\mathfrak{q})^{s_1+s_2}D_F}\prod_{v\mid\mathfrak{n}}\sum_{l=0}^{e_v(\mathfrak{n})}q_v^{l(s_2-s_1)}\\
&\prod_{v\mid\infty}\frac{2^{k_v-1}(\textbf{1}_{\delta=I_2}+i^{k_v}\textbf{1}_{\delta=w})\Gamma(k_v/2-s_1)\Gamma(k_v/2-s_2)\cos\frac{\pi  (s_1+s_2)}{2}}{\pi(k_v-1)^{-1}\Gamma(s_1+s_2)^{-1}\Gamma(k_v)}.
\end{align*}

Recall the functional equation of the Dedekind zeta function:
\begin{equation}\label{4.15}
\pi^{-\frac{(1-s)d_F}{2}}\Gamma((1-s)/2)^{d_F}\zeta_F(1-s)=D_F^{s-1/2}\pi^{-\frac{sd_F}{2}}\Gamma(s/2)^{d_F}\zeta_F(s),
\end{equation}
where $d_F$ is the degree of $F$, and $D_F$ is the absolutely discriminant. 

Utilizing Euler's reflection formula and the Legendre duplication formula, we obtain 
\begin{equation}\label{4.16}
\frac{\Gamma(s/2)}{\Gamma((1-s)/2)}=\frac{\Gamma(s/2)\Gamma((1+s)/2)\cos\frac{\pi s}{2}}{\pi}=\frac{2^{1-s}\Gamma(s)\cos\frac{\pi s}{2}}{\sqrt{\pi}}
\end{equation}

Substituting \eqref{4.16} into \eqref{4.15} leads to 
\begin{equation}\label{4.17}
\zeta_F(1-s)=D_F^{s-1/2}\bigg[\pi^{-s}\cdot 2^{1-s}\Gamma(s)\cos\frac{\pi s}{2}\bigg]^{d_F}\zeta_F(s).
\end{equation}

Then Corollary \ref{cor4.3} follows from substituting \eqref{4.17} into the above formula for $J_{\mathrm{dual}}^{I_2}(f_{\mathfrak{n},\mathfrak{q}},\textbf{s})$, in conjunction with \eqref{equa5.1}.
\end{proof}

\section{The Singular Orbital Integrals}\label{sec5}
Let $\Re(s_1)\gg 1$ and $\Re(s_2)\gg 1$.  Recall the definition:
\begin{align*}
J_{\mathrm{sing}}(f_{\mathfrak{n},\mathfrak{q}},\textbf{s}):=\sum_{\delta\in \{I_2,w\}}J_{\mathrm{small}}^{\delta}(f_{\mathfrak{n},\mathfrak{q}},\textbf{s})+\sum_{\delta\in \{I_2,w\}}J_{\mathrm{dual}}^{\delta}(f_{\mathfrak{n},\mathfrak{q}},\textbf{s}).\tag{\ref{1.12}}
\end{align*}

By Proposition \ref{cor3.4} and Proposition \ref{cor4.3}, $J_{\mathrm{sing}}(f_{\mathfrak{n},\mathfrak{q}},\textbf{s})$ admits a meromorphic continuation to $(s_1,s_2)\in \mathbb{C}^2$.

For $s\in \mathbb{C}$, we define the $s$-divisor function $\tau_s(\mathfrak{n}):=\prod_{v<\infty}\tau_{s,v}(\mathfrak{n})$, where $\tau_{s,v}(\mathfrak{n}):=\sum_{l=0}^{e_v(\mathfrak{n})}q_v^{-ls}$. Denote by $\tau(\mathfrak{n})=\tau_s(\mathfrak{n})\big|_{s=0}$. Note that $\tau(\mathfrak{n})$ is the generalization of the classical divisor function. 

\begin{lemma}\label{lem5.1}
Let notation be as before. Let $\mathfrak{q}=\mathcal{O}_F$. Let $\textbf{s}=(s,0)\in \mathbb{C}^2$. Let $\varepsilon>0$, and $\mathcal{C}_{\varepsilon}:=\big\{z\in\mathbb{C}:\ |z|=\varepsilon\big\}$.  
\begin{itemize}
\item $J_{\mathrm{sing}}(f_{\mathfrak{n},\mathfrak{q}},\textbf{s})\equiv 0$ if $\sum_{v\mid\infty}k_v\equiv 2\pmod{4}$.
\item Suppose $\sum_{v\mid\infty}k_v\equiv 0\pmod{4}$. Then 
\begin{equation}\label{5.1}
J_{\mathrm{sing}}(f_{\mathfrak{n},\mathfrak{q}},\textbf{s})=A_{\mathfrak{n}}(s)\tau_s(\mathfrak{n})\zeta_F(1+s)+A_{\mathfrak{n}}(-s)N(\mathfrak{n})^s\tau_s(\mathfrak{n})\zeta_F(1-s),
\end{equation}
where 
\begin{equation}\label{a6.2}
A_{\mathfrak{n}}(s):=2\prod_{v\mid\infty}\frac{2^{k_v}(k_v-1)\Gamma(k_v/2+s)\Gamma(k_v/2)}{2\cdot (2\pi)^{1+s}\Gamma(k_v)}\cdot \frac{1}{D_F^{1/2-s}N(\mathfrak{n})^{1/2}}.
\end{equation}

\item Suppose $\sum_{v\mid\infty}k_v\equiv 0\pmod{4}$. The function $J_{\mathrm{sing}}(f_{\mathfrak{n},\mathfrak{q}},\textbf{s})$ is holomorphic at $\textbf{s}=\textbf{0}=(0,0)$, with 
\begin{equation}\label{5.2}
J_{\mathrm{sing}}(f_{\mathfrak{n},\mathfrak{q}},\textbf{0})=2\cdot\frac{d(s\zeta_F(1+s)H_{\mathfrak{n}}(s))}{ds}\bigg|_{s=0}=2\cdot \frac{1}{2\pi i}\oint_{\mathcal{C}_{\varepsilon}}\frac{\zeta_F(1+s)H_{\mathfrak{n}}(s)}{s}ds,
\end{equation}
where 
\begin{equation}\label{eq5.2}
H_{\mathfrak{n}}(s):=2\prod_{v\mid\infty}\frac{2^{k_v}(k_v-1)\Gamma((k_v+s)/2)^2}{2\cdot (2\pi)^{1+s}\Gamma(k_v)}\cdot \frac{\tau(\mathfrak{n})}{D_F^{1/2-s}N(\mathfrak{n})^{(1+s)/2}}.
\end{equation}
\end{itemize}	
\end{lemma}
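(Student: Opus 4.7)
The plan is to substitute the explicit formulas from Propositions \ref{cor3.4} and \ref{cor4.3} into the decomposition \eqref{1.12} and exploit the symmetries at $\textbf{s} = (s, 0)$. Since $\mathfrak{q} = \mathcal{O}_F$ one has $V_{\mathfrak{q}} = 1$ and $N(\mathfrak{q}) = 1$, and at $\textbf{s} = (s, 0)$ both exponents $q_v^{l(s_2 - s_1)}$ and $q_v^{l(-s_2 - s_1)}$ collapse to $q_v^{-ls}$, so the local finite sums become $\tau_s(\mathfrak{n})$ in all four pieces. The $\delta = I_2$ and $\delta = w$ contributions differ only by the archimedean factor $\prod_{v\mid\infty} i^{k_v} = i^K$ with $K := \sum_{v\mid\infty} k_v$. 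Thus $J_{\mathrm{small}}^{I_2} + J_{\mathrm{small}}^{w}$ and $J_{\mathrm{dual}}^{I_2} + J_{\mathrm{dual}}^{w}$ each acquire the overall factor $1 + i^K$, which vanishes for $K \equiv 2 \pmod 4$. This gives the first bullet.

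For the second bullet assume $K \equiv 0 \pmod 4$, so $1 + i^K = 2$. I would then collect constants --- the factor $\prod_{v\mid\infty} 2^{k_v}(k_v-1)/(4\pi)$ combined with the overall $2$, together with $(2\pi)^{1+s} = 2\pi(2\pi)^s$ --- to identify $2J_{\mathrm{small}}^{I_2}$ at $(s,0)$ with $A_{\mathfrak{n}}(s)\tau_s(\mathfrak{n})\zeta_F(1+s)$, and $2J_{\mathrm{dual}}^{I_2}$ at $(s,0)$ with $A_{\mathfrak{n}}(-s)N(\mathfrak{n})^s\tau_s(\mathfrak{n})\zeta_F(1-s)$, yielding \eqref{5.1}.

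The third bullet is where the real content lies. The key observation is the identity $N(\mathfrak{n})^s\,\tau_s(\mathfrak{n}) = \tau_{-s}(\mathfrak{n})$, proved by reindexing $l \mapsto e_v(\mathfrak{n}) - l$ in each local factor. Setting $\Phi(s) := A_{\mathfrak{n}}(s)\tau_s(\mathfrak{n})\zeta_F(1+s)$, the two summands in \eqref{5.1} become $\Phi(s) + \Phi(-s)$. Writing $\zeta_F(1+s) = \kappa/s + c_0 + O(s)$ with $\kappa = \Res_{s=1}\zeta_F(s)$, a direct check gives $A_{\mathfrak{n}}(0)\tau(\mathfrak{n}) = H_{\mathfrak{n}}(0)$, so the residues $\pm\kappa H_{\mathfrak{n}}(0)$ of $\Phi(\pm s)$ at $s = 0$ cancel; hence $\Phi(s) + \Phi(-s)$ is holomorphic at $s=0$ with value $2(\kappa \alpha_1 + c_0 \alpha_0)$, where $\alpha_0 = A_{\mathfrak{n}}(0)\tau(\mathfrak{n})$ and $\alpha_1 = \frac{d}{ds}\bigl(A_{\mathfrak{n}}(s)\tau_s(\mathfrak{n})\bigr)\big|_{s=0}$.

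The hardest step is the final identification $\alpha_1 = H_{\mathfrak{n}}'(0)$: although $A_{\mathfrak{n}}(s)\tau_s(\mathfrak{n})$ and $H_{\mathfrak{n}}(s)$ are genuinely different functions --- cf.\ $\Gamma(k_v/2+s)\Gamma(k_v/2)$ versus $\Gamma((k_v+s)/2)^2$, and the presence of $N(\mathfrak{n})^{-s/2}$ only in the latter --- their first derivatives at $s = 0$ must agree exactly. I would verify this by log-differentiation: for $A_{\mathfrak{n}}$ one obtains $A_{\mathfrak{n}}'(0)/A_{\mathfrak{n}}(0) = \sum_{v\mid\infty}\psi(k_v/2) - d_F\log(2\pi) + \log D_F$ with $\psi = \Gamma'/\Gamma$, while $\tau_s'(\mathfrak{n})\big|_{s=0} = -\tfrac{1}{2}\tau(\mathfrak{n})\log N(\mathfrak{n})$ (via $\sum_{l=0}^{e_v}l = e_v(e_v+1)/2$), so
\[
\alpha_1 = H_{\mathfrak{n}}(0)\left[\sum_{v\mid\infty}\psi(k_v/2) - d_F\log(2\pi) + \log D_F - \tfrac{1}{2}\log N(\mathfrak{n})\right].
\]
A parallel log-differentiation of $H_{\mathfrak{n}}(s)$ produces exactly the same right-hand side for $H_{\mathfrak{n}}'(0)$, closing the argument. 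The displayed formula \eqref{5.2} then follows from $J_{\mathrm{sing}}(f_{\mathfrak{n},\mathfrak{q}},\textbf{0}) = 2(\kappa H_{\mathfrak{n}}'(0) + c_0 H_{\mathfrak{n}}(0)) = 2\,\frac{d}{ds}\bigl(s\zeta_F(1+s)H_{\mathfrak{n}}(s)\bigr)\big|_{s=0}$, and the contour form is the standard residue identity $\Res_{s=0}\bigl(\zeta_F(1+s)H_{\mathfrak{n}}(s)/s\bigr) = \kappa H_{\mathfrak{n}}'(0) + c_0 H_{\mathfrak{n}}(0)$.
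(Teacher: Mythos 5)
Your argument is correct and mirrors the paper's proof in structure: substitute Propositions~\ref{cor3.4} and~\ref{cor4.3} into~\eqref{1.12} at $\textbf{s}=(s,0)$, factor out $1+i^{\sum_{v\mid\infty}k_v}$ to settle the first two bullets, then Taylor-expand around $s=0$ and identify $A_{\mathfrak{n}}(s)\tau_s(\mathfrak{n})$ with $H_{\mathfrak{n}}(s)$ up to first order (both via the $\Gamma$-factor match and via $\tau_s'(\mathfrak{n})\big|_{s=0}=-\tfrac{1}{2}\tau(\mathfrak{n})\log N(\mathfrak{n})$). Your intermediate observation $N(\mathfrak{n})^s\tau_s(\mathfrak{n})=\tau_{-s}(\mathfrak{n})$, which rewrites $J_{\mathrm{sing}}(f_{\mathfrak{n},\mathfrak{q}},(s,0))=\Phi(s)+\Phi(-s)$ in a manifestly even form, is a mild streamlining of the holomorphy step over the paper's direct Laurent expansion, not a genuinely different route.
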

\begin{proof}
It follows from Propositions \ref{cor3.4} and \ref{cor4.3} that 
$J_{\mathrm{small}}^{\delta}(f_{\mathfrak{n},\mathfrak{q}},\textbf{s})$ is equal to 
\begin{equation}\label{5.3}
\Big[\textbf{1}_{\delta=I_2}+\textbf{1}_{\delta=w}\prod_{v\mid\infty}i^{k_v}\Big]\prod_{v\mid\infty}\frac{2^{k_v}(k_v-1)\Gamma(k_v/2+s)\Gamma(k_v/2)}{4\pi\cdot (2\pi)^{s}\Gamma(k_v)}\cdot \frac{\zeta_F(1+s)\tau_s(\mathfrak{n})}{D_F^{1/2-s}N(\mathfrak{n})^{1/2}},
\end{equation}
and $J_{\mathrm{dual}}^{\delta}(f_{\mathfrak{n},\mathfrak{q}},\textbf{s})$ is equal to
\begin{equation}\label{5.4}
\Big[\textbf{1}_{\delta=I_2}+\textbf{1}_{\delta=w}\prod_{v\mid\infty}i^{k_v}\Big]\prod_{v\mid\infty}\frac{2^{k_v}(k_v-1)\Gamma(k_v/2-s)\Gamma(k_v/2)}{4\pi\cdot (2\pi)^{-s}\Gamma(k_v)}\cdot \frac{\zeta_F(1-s)\tau_s(\mathfrak{n})}{D_F^{1/2+s}N(\mathfrak{n})^{1/2-s}}.
\end{equation}

Therefore, $J_{\mathrm{sing}}(f_{\mathfrak{n},\mathfrak{q}},\textbf{s})\equiv 0$ if $\sum_{v\mid\infty}k_v\equiv 0\pmod{4}$. Assuming $\sum_{v\mid\infty}k_v\equiv 2\pmod{4}$, then \eqref{5.1} follows from  \eqref{5.3} and \eqref{5.4}. 

Although the right hand side of \eqref{5.1} is not an even function, we can still rewrite it into a even form at $s=0$. Consider the Taylor expansions
\begin{align*}
A_{\mathfrak{n}}(s)=a_0+a_1s+O(s^2),\ \ \tau_s(\mathfrak{n})=b_0+b_1s+O(s^2),\ \ \zeta_F(1+s)=\frac{R}{s}+c_0+O(s),
\end{align*} 
together with $N(\mathfrak{n})^s=1+s\log N(\mathfrak{n})+O(s^2)$, we obtain from \eqref{5.1} that 
\begin{align*}
J_{\mathrm{sing}}(f_{\mathfrak{n},\mathfrak{q}},\textbf{s})=&(a_0+a_1s)(b_0+b_1s)(Rs^{-1}+c_0)\\
&+(a_0-a_1s)(1+s\log N(\mathfrak{n}))(b_0+b_1s)(-Rs^{-1}+c_0)+O(s)\\
=&2(a_0b_0c_0+a_1b_0R)-a_0b_0R\log N(\mathfrak{n})+O(s).
\end{align*}

Comparing $H_{\mathfrak{n}}(s)$ defined in \eqref{eq5.2} and $A_{\mathfrak{n}}(s)$ defined in \eqref{a6.2}, we have 
\begin{align*}
(N(\mathfrak{n})^{s/2}H_{\mathfrak{n}}(s))\big|_{s=0}=A_{\mathfrak{n}}(0)\tau(\mathfrak{n}),\ \ \ \frac{d(N(\mathfrak{n})^{s/2}H_{\mathfrak{n}}(s))}{ds}\Big|_{s=0}=\tau(\mathfrak{n})\cdot\frac{d A_{\mathfrak{n}}(s)}{ds}\Big|_{s=0}.
\end{align*}
Notice that $b_0=\tau(\mathfrak{n})$. Therefore, 
\begin{align*}
H_{\mathfrak{n}}(s)=a_0b_0+(a_1b_0-2^{-1}a_0b_0\log N(\mathfrak{n}))s+O(s^2),
\end{align*}
from which we deduce that 
\begin{align*}
H_{\mathfrak{n}}(s)\zeta_F(1+s)+H_{\mathfrak{n}}(-s)\zeta_F(1-s)=2(a_0b_0c_0+a_1b_0R)-a_0b_0R\log N(\mathfrak{n})+O(s).
\end{align*}

As a consequence, we conclude that 
\begin{align*}
J_{\mathrm{sing}}(f_{\mathfrak{n},\mathfrak{q}},\textbf{s})=&H_{\mathfrak{n}}(s)\zeta_F(1+s)+H_{\mathfrak{n}}(-s)\zeta_F(1-s)+O(s).
\end{align*}

Taking $s\to 0$, we obtain 
\begin{align*}
J_{\mathrm{sing}}(f_{\mathfrak{n},\mathfrak{q}},\textbf{0})=&2H_{\mathfrak{n}}(0)\frac{d(s\zeta_F(1+s))}{ds}\bigg|_{s=0}+2\Res_{s=1}\zeta_F(s)\cdot \frac{dH_{\mathfrak{n}}(s)}{ds}\bigg|_{s=0},
\end{align*}
which is the formula \eqref{5.2}. 
\end{proof}

\begin{lemma}\label{lem5.2}
Let notation be as before. Let $\mathfrak{q}\subsetneq\mathcal{O}_F$ be a prime ideal. Let $\textbf{s}=(s/2,s/2)\in \mathbb{C}^2$. Let $\varepsilon>0$, and $\mathcal{C}_{\varepsilon}:=\big\{z\in\mathbb{C}:\ |z|=\varepsilon\big\}$.  
\begin{itemize}
\item We have 
\begin{equation}\label{5.5}
J_{\mathrm{sing}}(f_{\mathfrak{n},\mathfrak{q}},\textbf{s})=\frac{N(\mathfrak{q})+1}{2}\cdot \Big[H_{\mathfrak{n}}(s)\zeta_F(1+s)+N(\mathfrak{q})^{-s}H_{\mathfrak{n}}(-s)\zeta_F(1-s)\Big],
\end{equation}
where $H_{\mathfrak{n}}(s)$ is defined as in \eqref{eq5.2}.

\item The function $J_{\mathrm{sing}}(f_{\mathfrak{n},\mathfrak{q}},\textbf{s})$ is holomorphic at $\textbf{s}=\textbf{0}=(0,0)$, with 
\begin{equation}\label{5.6}
J_{\mathrm{sing}}(f_{\mathfrak{n},\mathfrak{q}},\textbf{0})=\frac{N(\mathfrak{q})+1}{2}\cdot\frac{1}{2\pi i}\oint_{\mathcal{C}_{\varepsilon}}\frac{\zeta_F(1+s)H_{\mathfrak{n}}(s)(1+N(\mathfrak{q})^s)}{s}ds.	
\end{equation}
\end{itemize}	
\end{lemma}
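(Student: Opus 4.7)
The plan is to proceed by direct substitution into the meromorphic formulas of Proposition \ref{cor3.4} and Proposition \ref{cor4.3}, taking advantage of the vanishing of all ``$\delta=w$'' contributions when $\mathfrak{q}\subsetneq\mathcal{O}_F$, and then to extract the value at $\mathbf{s}=\mathbf{0}$ via a Laurent expansion.

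First, I would note that for $\mathfrak{q}\subsetneq\mathcal{O}_F$ the indicator $\textbf{1}_{\mathfrak{q}=\mathcal{O}_F}$ in both Propositions \ref{cor3.4} and \ref{cor4.3} forces $J^{w}_{\mathrm{small}}(f_{\mathfrak{n},\mathfrak{q}},\mathbf{s})\equiv 0$ and $J^{w}_{\mathrm{dual}}(f_{\mathfrak{n},\mathfrak{q}},\mathbf{s})\equiv 0$. Hence $J_{\mathrm{sing}}=J^{I_2}_{\mathrm{small}}+J^{I_2}_{\mathrm{dual}}$. Now setting $s_1=s_2=s/2$, the local sums $\sum_{l=0}^{e_v(\mathfrak{n})}q_v^{l(s_2-s_1)}$ collapse to $1+e_v(\mathfrak{n})$, so the finite product becomes $\tau(\mathfrak{n})$; moreover $V_{\mathfrak{q}}=N(\mathfrak{q})+1$, $\Gamma(k_v/2+s_1)\Gamma(k_v/2+s_2)=\Gamma((k_v+s)/2)^{2}$, and the archimedean factor $4\pi(2\pi)^{s}$ in the denominator of $J^{I_2}_{\mathrm{small}}$ regroups as $2(2\pi)^{1+s}$. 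Matching with the definition \eqref{eq5.2} of $H_{\mathfrak{n}}(s)$, one gets
\begin{equation*}
J^{I_2}_{\mathrm{small}}(f_{\mathfrak{n},\mathfrak{q}},\mathbf{s})=\tfrac{N(\mathfrak{q})+1}{2}\,H_{\mathfrak{n}}(s)\,\zeta_F(1+s).
\end{equation*}
An identical substitution in Proposition \ref{cor4.3}, with $\Gamma(k_v/2-s_1)\Gamma(k_v/2-s_2)=\Gamma((k_v-s)/2)^{2}$, $\zeta_F(1-s_1-s_2)=\zeta_F(1-s)$ and the extra factor $N(\mathfrak{q})^{-s}$, gives
\begin{equation*}
J^{I_2}_{\mathrm{dual}}(f_{\mathfrak{n},\mathfrak{q}},\mathbf{s})=\tfrac{N(\mathfrak{q})+1}{2}\,N(\mathfrak{q})^{-s}\,H_{\mathfrak{n}}(-s)\,\zeta_F(1-s),
\end{equation*}
and summing proves \eqref{5.5}.

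For the value at $s=0$ I would expand both terms in Laurent series around $s=0$. Write $\zeta_F(1+s)=R/s+c_0+O(s)$ with $R=\Res_{s=1}\zeta_F(s)$, so that $\zeta_F(1-s)=-R/s+c_0+O(s)$; also $H_{\mathfrak{n}}(\pm s)=H_{\mathfrak{n}}(0)\pm s H_{\mathfrak{n}}'(0)+O(s^{2})$ and $N(\mathfrak{q})^{-s}=1-s\log N(\mathfrak{q})+O(s^{2})$. A short calculation shows that the $s^{-1}$ terms coming from $H_{\mathfrak{n}}(0)R/s$ in the first summand and $-H_{\mathfrak{n}}(0)R/s$ in the second cancel, proving holomorphicity at $\mathbf{s}=\mathbf{0}$. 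Collecting the constant coefficients yields
\begin{equation*}
J_{\mathrm{sing}}(f_{\mathfrak{n},\mathfrak{q}},\mathbf{0})=\tfrac{N(\mathfrak{q})+1}{2}\bigl[2c_0 H_{\mathfrak{n}}(0)+2R H_{\mathfrak{n}}'(0)+R H_{\mathfrak{n}}(0)\log N(\mathfrak{q})\bigr].
\end{equation*}
Finally, one checks by a direct residue computation that this quantity equals $\tfrac{N(\mathfrak{q})+1}{2}\cdot\Res_{s=0}\bigl[s^{-1}\zeta_F(1+s)H_{\mathfrak{n}}(s)(1+N(\mathfrak{q})^{s})\bigr]$, because $\zeta_F(1+s)/s$ contributes poles of orders two and one with coefficients $R$ and $c_0$, while $H_{\mathfrak{n}}(s)(1+N(\mathfrak{q})^{s})=2H_{\mathfrak{n}}(0)+s\bigl(2H_{\mathfrak{n}}'(0)+H_{\mathfrak{n}}(0)\log N(\mathfrak{q})\bigr)+O(s^{2})$. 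Converting the residue to a contour integral over $\mathcal{C}_\varepsilon$ yields \eqref{5.6}.

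The entire argument is essentially a mechanical substitution into Propositions \ref{cor3.4} and \ref{cor4.3}; the only genuinely delicate point is the bookkeeping of the Laurent coefficients at $s=0$, where one must verify both the cancellation of the simple poles and the identification of the finite part with the residue expression $\Res_{s=0}\bigl[s^{-1}\zeta_F(1+s)H_{\mathfrak{n}}(s)(1+N(\mathfrak{q})^{s})\bigr]$. This is where the sign asymmetry $\zeta_F(1-s)=-R/s+c_0+O(s)$ and the combination $1+N(\mathfrak{q})^{s}$ conspire to produce the symmetric contour integral in \eqref{5.6}.
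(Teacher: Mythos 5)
Your proof is correct and follows essentially the same route as the paper: substitute $s_1=s_2=s/2$ into Propositions~\ref{cor3.4} and~\ref{cor4.3}, use the vanishing of the $w$-orbit contributions for $\mathfrak{q}\subsetneq\mathcal{O}_F$ to reduce to the $I_2$-terms, match these with $H_{\mathfrak{n}}(\pm s)$ to get \eqref{5.5}, then Laurent-expand at $s=0$ and identify the constant term with a residue. The only superficial difference is that the paper splits the final residue identity into the two intermediate equalities \eqref{5.10} and \eqref{5.11} and adds them, whereas you compute the residue of $s^{-1}\zeta_F(1+s)H_{\mathfrak{n}}(s)(1+N(\mathfrak{q})^s)$ in one step; these are equivalent.
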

\begin{proof}
By Propositions \ref{cor3.4} and \ref{cor4.3}, we derive from \eqref{1.12} that 
\begin{equation}\label{5.8}
J_{\mathrm{sing}}(f_{\mathfrak{n},\mathfrak{q}},\textbf{s})=J_{\mathrm{small}}^{I_2}(f_{\mathfrak{n},\mathfrak{q}},\textbf{s})+J_{\mathrm{dual}}^{I_2}(f_{\mathfrak{n},\mathfrak{q}},\textbf{s}),
\end{equation}
where 
\begin{align*}
J_{\mathrm{small}}^{I_2}(f_{\mathfrak{n},\mathfrak{q}},\textbf{s})=&V_{\mathfrak{q}}\prod_{v\mid\infty}\frac{2^{k_v}(k_v-1)\Gamma((k_v+s)/2)^2}{2\cdot (2\pi)^{1+s}\Gamma(k_v)}\cdot \frac{\tau(\mathfrak{n})\cdot \zeta_F(1+s)}{D_F^{1/2-s}N(\mathfrak{n})^{(1+s)/2}},\\
J_{\mathrm{dual}}^{I_2}(f_{\mathfrak{n},\mathfrak{q}},\textbf{s})=&V_{\mathfrak{q}}\prod_{v\mid\infty}\frac{2^{k_v}(k_v-1)\Gamma((k_v-s)/2)^2}{2\cdot (2\pi)^{1-s}\Gamma(k_v)}\cdot\frac{\tau(\mathfrak{n})\cdot \zeta_F(1-s)}{D_F^{1/2+s}N(\mathfrak{q})^{s}N(\mathfrak{n})^{(1-s)/2}}.
\end{align*}

Hence \eqref{5.5} follows from \eqref{5.8}. Moreover, we have 
\begin{align*}
J_{\mathrm{sing}}(f_{\mathfrak{n},\mathfrak{q}},\textbf{0})=\lim_{s\to 0}\big[J_{\mathrm{small}}^{I_2}(f_{\mathfrak{n},\mathfrak{q}},\textbf{s})+J_{\mathrm{dual}}^{I_2}(f_{\mathfrak{n},\mathfrak{q}},\textbf{s})\big]=0.
\end{align*}
Hence, the meromorphic function $J_{\mathrm{sing}}(f_{\mathfrak{n},\mathfrak{q}},\textbf{s})$ is holomorphic at $\textbf{s}=\textbf{0}$. 

Consider the Taylor expansion 
\begin{equation}\label{eq5.6}
H_{\mathfrak{n}}(s)=a_0+a_1s+O(s^2),
\end{equation}
where $a_0=H_{\mathfrak{n}}(0)$ and $a_1=\frac{dH_{\mathfrak{n}}(s)}{ds}\big|_{s=0}$. Let $c_0$ be the constant term in the Taylor expansion of $\zeta_F(1+s)$, namely, $c_0=\frac{d(s\zeta_F(1+s))}{ds}\big|_{s=0}$. We have
\begin{align*}
&H_{\mathfrak{n}}(s)\zeta_F(1+s)+N(\mathfrak{q})^{-s}H_{\mathfrak{n}}(-s)\zeta_F(1-s)\\
=&2a_0c_0+2a_1\Res_{s=1}\zeta_F(s)+a_0\log N(\mathfrak{q})\cdot\Res_{s=1}\zeta_F(s)+O(s).
\end{align*}
Notice that 
\begin{equation}\label{5.10}
a_0c_0+a_1\Res_{s=1}\zeta_F(s)=\frac{1}{2\pi i}\oint_{\mathcal{C}_{\varepsilon}}\frac{\zeta_F(1+s)H_{\mathfrak{n}}(s)}{s}ds
\end{equation}
and 
\begin{equation}\label{5.11}
a_0c_0+(a_1+a_0\log N(\mathfrak{q}))\Res_{s=1}\zeta_F(s)=\frac{1}{2\pi i}\oint_{\mathcal{C}_{\varepsilon}}\frac{\zeta_F(1+s)N(\mathfrak{q})^sH_{\mathfrak{n}}(s)}{s}ds.
\end{equation}

Therefore, \eqref{5.6} follows from \eqref{5.10} and \eqref{5.11}. 
\end{proof}

Combining Lemma \ref{lem5.1} and Lemma \ref{lem5.2} we obtain the following result. 
\begin{prop}\label{prop5.3}
Let notation be as before. Let $\mathfrak{q}\subseteq\mathcal{O}_F$ be an integral ideal. Let $\textbf{s}=(s/2,s/2)\in \mathbb{C}^2$. Let $\varepsilon>0$, and $\mathcal{C}_{\varepsilon}:=\big\{z\in\mathbb{C}:\ |z|=\varepsilon\big\}$. 
\begin{itemize}
\item The function $J_{\mathrm{sing}}(f_{\mathfrak{n},\mathfrak{q}},\textbf{s})$ is holomorphic at $\textbf{s}=\textbf{0}=(0,0)$, with 
\begin{equation}\label{5.13}
J_{\mathrm{sing}}(f_{\mathfrak{n},\mathfrak{q}},\textbf{0})=\frac{(N(\mathfrak{q})+1)\cdot\delta_{\mathbf{k},\mathfrak{q}}}{2}\cdot\frac{1}{2\pi i}\oint_{\mathcal{C}_{\varepsilon}}\frac{\zeta_F(1+s)H_{\mathfrak{n}}(s)(1+N(\mathfrak{q})^s)}{s}ds,	
\end{equation}
where $\delta_{\mathbf{k},\mathfrak{q}}:=\textbf{1}_{\mathfrak{q}\subsetneq \mathcal{O}_F}+\textbf{1}_{\mathfrak{q}=\mathcal{O}_F  \& \sum_{v\mid\infty}k_v\equiv 0\pmod{4}}$, and 
\begin{align*}
H_{\mathfrak{n}}(s):=2\prod_{v\mid\infty}\frac{2^{k_v}(k_v-1)\Gamma((k_v+s)/2)^2}{2\cdot (2\pi)^{1+s}\Gamma(k_v)}\cdot \frac{\tau(\mathfrak{n})}{D_F^{1/2-s}N(\mathfrak{n})^{(1+s)/2}}.\tag{\ref{eq5.2}}
\end{align*}
\item Explicitly, we have
\begin{align*}
J_{\mathrm{sing}}(f_{\mathfrak{n},\mathfrak{q}},\textbf{0})=& (N(\mathfrak{q})+1)\cdot\delta_{\mathbf{k},\mathfrak{q}}\bigg[H_{\mathfrak{n}}(0)\frac{d(s\zeta_F(1+s))}{ds}\bigg|_{s=0}+\Res_{s=1}\zeta_F(s)\cdot H_{\mathfrak{n}}'(0)\bigg]\\
& +\frac{(N(\mathfrak{q})+1)\cdot \delta_{\mathbf{k},\mathfrak{q}}}{2}\cdot H_{\mathfrak{n}}(0)\log N(\mathfrak{q})\cdot\Res_{s=1}\zeta_F(s),
\end{align*}
where $H_{\mathfrak{n}}'(0):=\frac{dH_{\mathfrak{n}}(s)}{ds}\big|_{s=0}$ is the first derivation of $H_{\mathfrak{n}}(s)$ at $s=0$. 
\end{itemize}
\end{prop}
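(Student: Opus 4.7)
The plan is to obtain both bullets essentially as formal consequences of Lemmas \ref{lem5.1} and \ref{lem5.2}, with the second bullet reduced to an elementary residue computation at $s=0$.

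For the first bullet I split into the two cases already separated in the preceding lemmas. When $\mathfrak{q}=\mathcal{O}_F$ we have $N(\mathfrak{q})=1$, so the prefactor $(N(\mathfrak{q})+1)/2$ equals $1$ and the factor $1+N(\mathfrak{q})^s$ is identically $2$; the right-hand side of \eqref{5.13} therefore collapses to $2\delta_{\mathbf{k},\mathfrak{q}}\cdot\frac{1}{2\pi i}\oint_{\mathcal{C}_{\varepsilon}}\frac{\zeta_F(1+s)H_{\mathfrak{n}}(s)}{s}\,ds$, which matches \eqref{5.2} when $\sum_{v\mid\infty}k_v\equiv 0\pmod 4$ (since $\delta_{\mathbf{k},\mathcal{O}_F}=1$ in that case) and vanishes otherwise, in accordance with the dichotomy in Lemma \ref{lem5.1}. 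When $\mathfrak{q}\subsetneq\mathcal{O}_F$ is prime, $\delta_{\mathbf{k},\mathfrak{q}}=1$ and \eqref{5.13} coincides verbatim with \eqref{5.6} of Lemma \ref{lem5.2}. Holomorphy of $J_{\mathrm{sing}}(f_{\mathfrak{n},\mathfrak{q}},\textbf{s})$ at $\textbf{s}=\textbf{0}$ is inherited directly from those lemmas.

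For the second bullet, I substitute the Laurent/Taylor expansions
\begin{align*}
\zeta_F(1+s)&=\frac{\Res_{s=1}\zeta_F(s)}{s}+\frac{d(s\zeta_F(1+s))}{ds}\bigg|_{s=0}+O(s),\\
H_{\mathfrak{n}}(s)&=H_{\mathfrak{n}}(0)+H_{\mathfrak{n}}'(0)\,s+O(s^2),\\
1+N(\mathfrak{q})^s&=2+s\log N(\mathfrak{q})+O(s^2)
\end{align*}
into the integrand of \eqref{5.13}, multiply them out, and read off the coefficient of $s^{-1}$ in the resulting Laurent expansion of $\zeta_F(1+s)H_{\mathfrak{n}}(s)(1+N(\mathfrak{q})^s)/s$. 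The residue at $s=0$ comes out to $2\,H_{\mathfrak{n}}(0)\,\frac{d(s\zeta_F(1+s))}{ds}|_{s=0}+2\,H_{\mathfrak{n}}'(0)\,\Res_{s=1}\zeta_F(s)+H_{\mathfrak{n}}(0)\log N(\mathfrak{q})\cdot\Res_{s=1}\zeta_F(s)$. Multiplying by the prefactor $(N(\mathfrak{q})+1)\delta_{\mathbf{k},\mathfrak{q}}/2$ recovers exactly the three terms in the claimed explicit expansion.

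There is no genuine obstacle: the proposition is a consolidation of Lemmas \ref{lem5.1} and \ref{lem5.2} together with a one-step residue calculation. The only point worth watching is that the $\log N(\mathfrak{q})$ contribution automatically disappears when $\mathfrak{q}=\mathcal{O}_F$ (since $\log 1=0$), so that the unified formula reduces correctly to the simpler formula in Lemma \ref{lem5.1} in that case.
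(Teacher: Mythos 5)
Your proof is correct and follows exactly the route the paper intends: the paper itself gives no argument beyond ``Combining Lemma \ref{lem5.1} and Lemma \ref{lem5.2},'' and your case split plus the residue computation (reading off the coefficient of $s^{-1}$ in $\zeta_F(1+s)H_{\mathfrak{n}}(s)(1+N(\mathfrak{q})^s)/s$, yielding $2a_0c_0+2a_1R+a_0R\log N(\mathfrak{q})$) is precisely the consolidation that is being elided. Your remark that the $\log N(\mathfrak{q})$ term vanishes for $\mathfrak{q}=\mathcal{O}_F$ is the right consistency check with \eqref{5.2}.
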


\section{The Regular Orbital Integrals}\label{sec7}
Recall that the regular orbital integral $J_{\mathrm{reg}}(f_{\mathfrak{n},\mathfrak{q}},\textbf{s})$ is defined by 
\begin{align*}
\sum_{t\in F-\{0,1\}}\int_{\mathbb{A}_F^{\times}}\int_{\mathbb{A}_F^{\times}}f_{\mathfrak{n},\mathfrak{q}}\left(\begin{pmatrix}
x^{-1}& \\
&1
\end{pmatrix}\begin{pmatrix}
1& t\\
1& 1
\end{pmatrix}\begin{pmatrix}
xy& \\
&1
\end{pmatrix}\right)|x|^{s_1+s_2}|y|^{s_2}d^{\times}yd^{\times}x.
\end{align*}

Following the proof of \cite[Theorem 2.1]{RR05} the function $J_{\mathrm{reg}}(f_{\mathfrak{n},\mathfrak{q}},\textbf{s})$ converges absolutely in the region 
\begin{align*}
\begin{cases}
-\min_{v\mid\infty}\{k_v/2\}+1<\Re(s_1+s_2)<\min_{v\mid\infty}\{k_v/2\},\\
-\min_{v\mid\infty}\{k_v/2\}+1<\Re(s_2)<\min_{v\mid\infty}\{k_v/2\}.
\end{cases}
\end{align*}  

Since $\min_{v\mid\infty}\{k_v/2\}\geq 2$, then  $J_{\mathrm{reg}}(f_{\mathfrak{n},\mathfrak{q}},\textbf{0})$ is well defined. Our main result in the section is the following estimate.
\begin{prop}\label{prop6.12}
Let notation be as before. Let $\varepsilon>0$. Then 
\begin{equation}\label{6.49}
J_{\mathrm{reg}}(f_{\mathfrak{n},\mathfrak{q}},\textbf{0})\ll N(\mathfrak{n})^{\frac{1}{2}+\varepsilon}N(\mathfrak{q})^{\varepsilon}\prod_{v\mid\infty}\frac{2^{k_v-1}(k_v-1)}{\pi\sqrt{k_v}}B\left(\frac{k_v}{2},\frac{k_v}{2}\right),
\end{equation}
where the implied constant depends only on $\varepsilon$ and $F$.
\end{prop}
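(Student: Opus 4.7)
The plan is to exploit the factorization $f_{\mathfrak{n},\mathfrak{q}}=\prod_v f_v$, which splits each term in $J_{\mathrm{reg}}$ as a product over places. Computing the matrix product
\begin{equation*}
\begin{pmatrix} x^{-1} & \\ & 1\end{pmatrix}
\begin{pmatrix} 1 & t\\ 1 & 1\end{pmatrix}
\begin{pmatrix} xy & \\ & 1\end{pmatrix}
=\begin{pmatrix} y & x^{-1}t \\ xy & 1\end{pmatrix},
\end{equation*}
the finite-place integral at each $v\nmid\mathfrak{n}\mathfrak{q}$ imposes $K_v$-support constraints on $e_v(x)$, $e_v(y)$, $e_v(x^{-1}t)$, which, together with the Hecke constraint from \eqref{t1.6} at $v\mid\mathfrak{n}$ and the $K_0(\mathfrak{p}_v)$ constraint from \eqref{t1.7} at $v=\mathfrak{q}$, force $t(1-t)^{-1}\in\mathfrak{q}\mathfrak{n}^{-1}$ and produce a non-archimedean weight expressible as a product of divisor-type functions of the numerator and denominator of $t(1-t)^{-1}$. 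In parallel, the archimedean local integrals evaluate, by Proposition \ref{prop6.6}, to a product over $v\mid\infty$ of classical Legendre functions $\mathcal{L}_v(t)$ (of index determined by $k_v$), up to an explicit amplitude.

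The second step is a sharp pointwise bound on $\mathcal{L}_v$. Starting from a Mehler-type integral representation and applying stationary phase around its unique non-degenerate critical point, I expect to obtain
\begin{equation*}
|\mathcal{L}_v(t)|\ll \frac{(k_v-1)\,2^{k_v-1}}{\pi\sqrt{k_v}}\,B\!\left(\frac{k_v}{2},\frac{k_v}{2}\right),
\end{equation*}
uniformly over $t$ in the relevant archimedean range. The $k_v^{-1/2}$ saving is precisely the square-root cancellation at the critical point; the beta factor is the leading-order amplitude, and these two ingredients together account for the entire archimedean product appearing in \eqref{6.49}.

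Finally, setting $\alpha=t(1-t)^{-1}\in\mathfrak{q}\mathfrak{n}^{-1}\setminus\{0,-1\}$, hence $t=\alpha/(\alpha+1)$, I would estimate the remaining $F$-rational sum by a Shiu/Nair-type bound on the shifted convolution of divisor functions, using the rapid decay of $\mathcal{L}_v$ outside a bounded archimedean region to restrict $\alpha$ to a box of size $O(1)$. The normalization $q_v^{-e_v(\mathfrak{n})/2}$ in \eqref{t1.6} contributes the prefactor $N(\mathfrak{n})^{-1/2}$; combined with the divisor bound on the count and weight of admissible $\alpha$, this produces the $N(\mathfrak{n})^{1/2+\varepsilon}N(\mathfrak{q})^{\varepsilon}$ in \eqref{6.49}.

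The main obstacle is the sharp Legendre bound: a trivial estimate for the oscillatory integral loses $\sqrt{k_v}$ and would break the hybrid uniformity in $\|\mathbf{k}\|$ demanded by \eqref{equ1.5}. A secondary, but still nontrivial, technical point is the combinatorial handling of the divisibility constraint $t(1-t)^{-1}\in\mathfrak{q}\mathfrak{n}^{-1}$, whose careful treatment distinguishes the sharp $N(\mathfrak{n})^{1/2+\varepsilon}$ from the weaker $N(\mathfrak{n})^{1+\varepsilon}$ one obtains by trivially counting the relevant pairs.
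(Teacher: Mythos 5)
Your high-level plan—factor the sum over places, reduce to $u=t(1-t)^{-1}\in\mathfrak{q}\mathfrak{n}^{-1}$ with a divisor-type non-archimedean weight, bound the archimedean Legendre factors, then estimate the resulting rational sum—does track the paper's strategy (Proposition \ref{prop6.6}, Lemmas \ref{lem6.8}--\ref{6.10}, Lemma \ref{lem6.11}, then \S\ref{sec7.3.3}). But there are two substantive gaps, both at exactly the places you flag as ``the main obstacle'' and a ``secondary technical point.''

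First, the claimed uniform bound
$|\mathcal{L}_v(t)|\ll (k_v-1)2^{k_v-1}\pi^{-1}k_v^{-1/2}B(k_v/2,k_v/2)$ is false: the archimedean local factor has a genuine singularity as $u\to 0$ or $u\to -1$. In the paper's notation, for $|2u+1|_v\le 1$ one only has $|\mathcal{P}_v(u)|\ll k_v^{-1/2}|u(u+1)|_v^{-1/4-\varepsilon}$ (Lemma \ref{lem6.8}), while for $|2u+1|_v>1$ the bound is $k_v^{-1/2}|u(u+1)|_v^{-1/2}(2^{m_v-1}-1)^{-(k_v/2-1)}$ (Lemma \ref{lem6.9}). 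The $k_v^{-1/2}$ gain is indeed what makes the weight aspect work, but you cannot discard the $u$-dependence and still close the sum: the blow-up near $u=0,-1$ has to be paired against the arithmetic normalization, and the decay for $|u|_v$ large is only of polynomial strength when $k_v=4$ (the factor $(2^{m_v}-2)^{k_v/2-1}\ge 2^{m_v-1}$ used in \eqref{6.55} is only a single power of $2^{m_v}$). So the sum over $u$ does \emph{not} localize to ``a box of size $O(1)$,'' and a Shiu/Nair argument confined to a bounded region cannot capture the tail. Also, for what it's worth, the paper does not prove the Legendre bounds by stationary phase: it cites the classical estimate $|P_k(x)|<\sqrt{2\pi^{-1}}k^{-1/2}(1-x^2)^{-1/4}$ from \cite[\textsection 5.4.4]{MOS66} and bounds the integral representation \eqref{6.32} of $Q_k$ directly (the $k_v^{-1/2}$ enters through $\int_{-1}^1(1-t^2)^{k_v/2-1}\,dt=B(1/2,k_v/2)$, by Stirling).

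Second, and more seriously for general $F$: your estimate has no mechanism for dealing with the unit group. The set $\mathfrak{q}\mathfrak{n}^{-1}\setminus\{0,-1\}$ contains, for each fixed norm $N(u)=N(\mathfrak{q}\mathfrak{n}^{-1})m$, an entire $\mathcal{O}_F^\times$-orbit whose members have wildly different archimedean profiles $(|u|_v)_{v\mid\infty}$. To make the sum converge one needs to partition by the archimedean size vector $\mathbf{m}=(m_v)_v$ with $|2u+1|_v\asymp 2^{m_v}$ (the sets $\mathfrak{S}_{\mathbf m}$ of \eqref{6.36.}--\eqref{6.37}) and then invoke Dirichlet's unit theorem: for fixed $\mathbf m$ and fixed norm, the number of admissible $u$ is $\ll (N(\mathfrak{n})/(mN(\mathfrak{q})))^\varepsilon \prod_v(m_v+1)$ (Lemma \ref{lem6.11}). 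Without this lattice-point count the sum over $u$ with the mild polynomial weights $e_{v,\mathfrak{n}}(u)\ll N((u)\mathfrak{n})^\varepsilon$ does not obviously converge, let alone with the right power of $N(\mathfrak{n})$. This is the ingredient that distinguishes the totally real case from $F=\mathbb{Q}$, and it is entirely absent from your sketch. Filling in these two points is exactly \S\ref{sec7.3.3}.
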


\subsection{Calculation of $J_v(t)$}\label{sec6.1}
Since $J_{\mathrm{reg}}(f_{\mathfrak{n},\mathfrak{q}},\textbf{0})$ converges absolutely, we may decompose it into local integrals:  
\begin{equation}\label{eq1.9}
J_{\mathrm{reg}}(f_{\mathfrak{n},\mathfrak{q}},\textbf{0})=\sum_{t\in F-\{0,1\}}\prod_{v\leq\infty}J_v(t),
\end{equation}
where  
\begin{align*}
J_v(t):=\int_{F_v^{\times}}\int_{F_v^{\times}}f_{v}\left(\begin{pmatrix}
y_v& x_v^{-1}t\\
x_vy_v& 1
\end{pmatrix}\right)d^{\times}y_vd^{\times}x_v.
\end{align*}
\begin{lemma}\label{lemma1.3}
Let $v<\infty$ and $v\nmid \mathfrak{n}\mathfrak{q}$. Then 
\begin{equation}\label{a1.5}
J_v(t)=\Vol(\mathcal{O}_v^{\times})^2\cdot (1-e_v(1-t))\cdot (e_v(t)-e_v(1-t)+1)\cdot \textbf{1}_{e_v(1-t)\leq 0}.
\end{equation}
In particular, $J_v(t)\equiv 1$ if $e_v(t)=e_v(1-t)=0$. 
\end{lemma}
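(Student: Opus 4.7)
The plan is to reduce the integral to a counting problem over pairs of integers indexing the valuations of $x_v$ and $y_v$, then enumerate directly.

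First I would unfold the definition. At a place $v \nmid \mathfrak{n}\mathfrak{q}$ the test function from \eqref{t1.8} is $f_v = \mathbf{1}_{Z(F_v)K_v}$. Writing
$$M(x_v,y_v) := \begin{pmatrix} y_v & x_v^{-1}t \\ x_v y_v & 1 \end{pmatrix},$$
a direct computation gives $\det M = y_v(1-t)$. The condition $M \in Z(F_v)K_v$ amounts to the existence of $n \in \mathbb{Z}$ with $\varpi_v^{-n} M \in \mathrm{GL}_2(\mathcal{O}_v)$; equivalently, all four entries of $M$ have valuation $\geq n$ and $e_v(\det M)=2n$. The determinant condition forces $n = (e_v(y_v)+e_v(1-t))/2$, which in particular requires $e_v(y_v) \equiv e_v(1-t) \pmod 2$.

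Next I would set $a := e_v(x_v)$, $b := e_v(y_v)$, $c := e_v(t)$, $d := e_v(1-t)$, and rewrite the four entrywise constraints as
$$b \geq d, \qquad b + d \leq 0, \qquad a \geq \tfrac{d-b}{2}, \qquad a \leq c - \tfrac{b+d}{2}.$$
The first two constraints force $d \leq 0$ and $b \in \{d, d+2, \ldots, -d\}$, a set of cardinality $1-d$. For each such $b$, the range for $a$ has length $c - d + 1$ (independent of $b$). A small check is needed that this length is nonnegative: when $d < 0$, the ultrametric inequality in $F_v$ gives $c = e_v(-(1-t)+1) = d$, and when $d = 0$ one has $c \geq 0$ (otherwise $|t|_v > 1$ would force $|1-t|_v > 1$). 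Either way, $c \geq d$.

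Finally, since $f_v \equiv 1$ on the allowed set and each lattice point $(a,b)$ contributes $\Vol(\mathcal{O}_v^\times)^2$ from the product $d^\times x_v\, d^\times y_v$, summing over the valid $(a,b)$ yields the stated formula \eqref{a1.5}. The special case $e_v(t)=e_v(1-t)=0$ gives $1 \cdot 1 \cdot 1 = 1$, and $\Vol(\mathcal{O}_v^\times)=1$ at places unramified in $F$ (which covers almost all $v$). There is no serious obstacle here; the main care is just in keeping the parity condition $b \equiv d \pmod 2$ together with the inequality $d \leq b \leq -d$ to count $b$ correctly, and verifying the automatic bound $c \geq d$ so that the $a$-range is nonempty.
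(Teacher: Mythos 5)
Your proof is correct and follows essentially the same route as the paper: both unfold the membership condition $\varpi_v^{\pm l} M \in K_v$ into a determinant equation plus four entrywise valuation inequalities, eliminate the scaling parameter, and count the resulting lattice points $(e_v(x_v), e_v(y_v))$. Your explicit verification that $e_v(t) \geq e_v(1-t)$ holds automatically whenever $e_v(1-t) \leq 0$ (so the $a$-range is nonempty) is a small point the paper glosses over, but otherwise the arguments are the same.
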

\begin{proof}
Write $e_v(x_v)=r_1$, and  $e_v(y_v)=r_2$. By the definition of $f_v$ in \eqref{t1.8}, $f_{v}\left(\begin{pmatrix}
y_v& x_v^{-1}t\\
x_vy_v& 1
\end{pmatrix}\right)$ is the characteristic function of the number of $r_1, r_2\in \mathbb{Z}$ such that the following constraint  
\begin{equation}\label{1.1.10}
\varpi_v^l\begin{pmatrix}
x_v^{-1}& \\
&1
\end{pmatrix}\begin{pmatrix}
1& t\\
1& 1
\end{pmatrix}\begin{pmatrix}
x_vy_v& \\
&1
\end{pmatrix}=\varpi_v^l\begin{pmatrix}
\varpi_v^{r_2}& \varpi_v^{-r_1}t\\
\varpi_v^{r_1+r_2}& 1
\end{pmatrix}\in K_v
\end{equation}
holds for some $l\in \mathbb{Z}$. Note that \eqref{1.1.10} amounts to  
\begin{equation}\label{1.1.11}
\begin{cases}
2l+r_2+e_v(1-t)=0\\
l+r_2\geq 0,\ \ l\geq 0\\
l-r_1+e_v(t)\geq 0,\ \ l+r_1+r_2\geq 0
\end{cases}
\end{equation}
Here the constraint $2l+r_2+e_v(1-t)=0$ comes from the restriction on the determinants in \eqref{1.1.10}. A further investigation of \eqref{1.1.11} boils down to 
\begin{equation}\label{1.1.12}
\begin{cases}
|r_2|\leq -e_v(1-t),\ \ r_2\equiv -e_v(1-t)\pmod{2}\\
|2r_1+r_2-e_v(t)|\leq e_v(t)-e_v(1-t).
\end{cases}
\end{equation}

As a consequence, 
\begin{align*}
J_v(t)=\Vol(\mathcal{O}_v^{\times})^2\cdot \textbf{1}_{e_v(1-t)\leq 0,\ e_v(t)-e_v(1-t)\geq 0}\sum_{\text{$r_1, r_2$ satisfying \eqref{1.1.12}}}1.
\end{align*}

Therefore, \eqref{a1.5} follows from the fact that the number of $r_1, r_2$ satisfying \eqref{1.1.12} is $(1-e_v(1-t))\cdot (e_v(t)-e_v(1-t)+1)$. 
\end{proof}

\begin{lemma}
Let $v\mid \mathfrak{q}$. Then 
\begin{equation}\label{1.1.13}
J_v(t)=e_v(t)\Vol(\mathcal{O}_v^{\times})^2\cdot \Vol(K_0(\mathfrak{p}_v))^{-1}\cdot \textbf{1}_{e_v(t)\geq 1}.
\end{equation}
\end{lemma}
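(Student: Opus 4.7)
The plan is to mimic the reasoning in Lemma~\ref{lemma1.3}, but with the local test function $f_v$ from \eqref{t1.7} in place of $\mathbf{1}_{Z(F_v)K_v}$; the essential change is that membership in $K_0(\mathfrak{p}_v)$ forces the lower-left entry to lie in $\mathfrak{p}_v$ (not just $\mathcal{O}_v$), which will ultimately pin down $l = 0$ and produce the factor $e_v(t)$.

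First I would substitute $f_v = \Vol(K_0(\mathfrak{p}_v))^{-1} \mathbf{1}_{Z(F_v)K_0(\mathfrak{p}_v)}$ into the definition of $J_v(t)$, writing $e_v(x_v) = r_1$ and $e_v(y_v) = r_2$ and absorbing the $\mathcal{O}_v^{\times}$-parts to produce the factor $\Vol(\mathcal{O}_v^{\times})^2$. The existence of $l \in \mathbb{Z}$ with
$$\varpi_v^l\begin{pmatrix} \varpi_v^{r_2} & \varpi_v^{-r_1}t \\ \varpi_v^{r_1+r_2} & 1 \end{pmatrix}\in K_0(\mathfrak{p}_v)$$
is then equivalent to the system
$$l \geq 0,\quad l+r_2\geq 0,\quad l-r_1+e_v(t)\geq 0,\quad l+r_1+r_2\geq 1,\quad 2l+r_2+e_v(1-t)=0,$$
where the key new constraint (compared to \eqref{1.1.11}) is $l+r_1+r_2 \geq 1$, coming from the Iwahori condition.

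Next I would eliminate $r_2 = -2l - e_v(1-t)$ via the determinant equation and rewrite the remaining constraints as
$$0 \leq l \leq -e_v(1-t),\qquad l + e_v(1-t) + 1 \leq r_1 \leq l + e_v(t).$$
A short case analysis on $e_v(t)$ and $e_v(1-t)$ then shows that only $e_v(t) \geq 1$ contributes: if $e_v(t)\geq 1$ then $e_v(1-t)=0$, forcing $l = 0$ and leaving exactly $e_v(t)$ admissible values of $r_1$, namely $r_1 \in \{1, 2, \ldots, e_v(t)\}$; if $e_v(t) = 0$ then the $r_1$-range is empty; and if $e_v(t) < 0$ then $e_v(1-t) = e_v(t)$, so the lower bound $r_1 \geq l+e_v(t)+1$ exceeds the upper bound $r_1 \leq l+e_v(t)$.

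Assembling these counts, the integral collapses to $\Vol(K_0(\mathfrak{p}_v))^{-1}\cdot e_v(t)\Vol(\mathcal{O}_v^{\times})^2\cdot \mathbf{1}_{e_v(t)\geq 1}$, which is exactly \eqref{1.1.13}. No real obstacle is expected here; the only point deserving care is confirming that the Iwahori-strengthened inequality $l+r_1+r_2\geq 1$ interacts with the determinant relation $2l+r_2 = -e_v(1-t)$ to rule out all cases other than $e_v(t)\geq 1$, and to kill the $l$-freedom that was present in the unramified computation of Lemma~\ref{lemma1.3}.
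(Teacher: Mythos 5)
Your proposal is correct and follows essentially the same route as the paper: convert membership in $Z(F_v)K_0(\mathfrak{p}_v)$ into valuation inequalities, with the Iwahori condition $l + r_1 + r_2 \geq 1$ replacing the $\geq 0$ from Lemma~\ref{lemma1.3}, then count. The only minor cosmetic difference is that the paper immediately records the equalities $l = l + r_2 = 0$ (using that the diagonal entries of a $K_0(\mathfrak{p}_v)$-element are forced to be units), whereas you start from the weaker containments $l \geq 0$, $l + r_2 \geq 0$ and extract $l = r_2 = e_v(1-t) = 0$ through the case analysis; both reach identical constraints and the same count of $e_v(t)\,\mathbf{1}_{e_v(t)\geq 1}$.
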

\begin{proof}
Write $e_v(x_v)=r_1$ and $e_v(y_v)=r_2$. By the definition of $f_v$ in \eqref{t1.7}, $\Vol(K_0(\mathfrak{p}_v))f_{v}\left(\begin{pmatrix}
y_v& x_v^{-1}t\\
x_vy_v& 1
\end{pmatrix}\right)$ is the characteristic function of the number of $r_1, r_2\in \mathbb{Z}$ such that the following constraint  
\begin{equation}\label{1.1.14}
\varpi_v^l\begin{pmatrix}
x^{-1}& \\
&1
\end{pmatrix}\begin{pmatrix}
1& t\\
1& 1
\end{pmatrix}\begin{pmatrix}
xy& \\
&1
\end{pmatrix}=\varpi_v^l\begin{pmatrix}
\varpi_v^{r_2}& \varpi_v^{-r_1}t\\
\varpi_v^{r_1+r_2}& 1
\end{pmatrix}\in K_0(\mathfrak{p}_v)
\end{equation}
holds for some $l\in \mathbb{Z}$. Similar to the analysis in the proof of Lemma \ref{lemma1.3}, the restrictions in \eqref{1.1.14} amount to 
\begin{equation}\label{1.1.15}
\begin{cases}
2l+r_2+e_v(1-t)=0\\
l+r_2=l=0\\
l-r_1+e_v(t)\geq 0,\ \ l+r_1+r_2\geq 1
\end{cases}\ \ \Leftrightarrow\ \ \begin{cases}
l=r_2=e_v(1-t)=0\\
1\leq r_1\leq e_v(t).
\end{cases}
\end{equation}

Consequently, we obtain from (the second system of constraints in) \eqref{1.1.15} that 
\begin{align*}
J_v(t)=\frac{\Vol(\mathcal{O}_v^{\times})^2\cdot \textbf{1}_{e_v(1-t)=0}}{\Vol(K_0(\mathfrak{p}_v))}\sum_{1\leq r_1\leq e_v(t)}1=\frac{e_v(t)}{\Vol(K_0(\mathfrak{p}_v))}\cdot \textbf{1}_{e_v(t)\geq 1},
\end{align*}
which is \eqref{1.1.13}. 
\end{proof}


\begin{lemma}\label{lem1.6}
Let $v\mid \mathfrak{n}$. Denote by $e_v(\mathfrak{n})=r\geq 1$. Then 
\begin{equation}\label{f1.16}
J_v(t)=\frac{\Vol(\mathcal{O}_v^{\times})^2}{q_v^{\frac{r}{2}}}(r+1-e_v(1-t))(r+1-e_v(1-t)+e_v(t))\cdot \textbf{1}_{e_v(t)-e_v(1-t)\geq -r}.
\end{equation}
\end{lemma}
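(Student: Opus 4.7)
The argument mirrors that of Lemma \ref{lemma1.3}, with the single modification that the support of $f_v$ is now a disjoint union of double cosets $K_v\diag(\varpi_v^i,\varpi_v^j)K_v$ indexed by $(i,j)$ with $i+j=r:=e_v(\mathfrak{n})\geq 1$ and $i\geq j\geq 0$. Set $r_1=e_v(x_v)$, $r_2=e_v(y_v)$, and write $M_{r_1,r_2}:=\begin{pmatrix}\varpi_v^{r_2} & \varpi_v^{-r_1}t\\ \varpi_v^{r_1+r_2} & 1\end{pmatrix}$. By \eqref{t1.6}, since the double cosets in the sum are pairwise disjoint, we have
\[
f_v\!\left(\begin{pmatrix}y_v & x_v^{-1}t\\ x_vy_v & 1\end{pmatrix}\right)=q_v^{-r/2}\cdot \mathbf{1}\!\left[\exists\,l\in\mathbb{Z},\,(i,j):\ \varpi_v^l M_{r_1,r_2}\in K_v\diag(\varpi_v^i,\varpi_v^j)K_v\right].
\]

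Using $\det M_{r_1,r_2}=\varpi_v^{r_2}(1-t)\cdot(\text{unit})$, I will translate the coset membership into the two conditions
\begin{itemize}
\item[(a)] \emph{Determinant:} $2l+r_2+e_v(1-t)=i+j=r$, which determines $l=(r-r_2-e_v(1-t))/2$.
\item[(b)] \emph{Integrality of entries:} $l\geq 0$, $l+r_2\geq 0$, $l-r_1+e_v(t)\geq 0$, and $l+r_1+r_2\geq 0$.
\end{itemize}
The auxiliary constraint $j\leq r/2$ (equivalent to $i\geq j$) is automatic: Smith normal form for any $2\times 2$ matrix over $\mathcal{O}_v$ with determinant of valuation $r$ forces the minimum entry valuation $j$ to satisfy $j\leq r/2$. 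In particular, once (a) and (b) hold, the pair $(i,j)$ is uniquely recovered from the minimum entry valuation, so each valid $(r_1,r_2)$ contributes exactly one $q_v^{-r/2}$ to the integrand.

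Substituting $r_2=r-2l-e_v(1-t)$, condition (b) collapses to
\begin{align*}
0\leq l\leq r-e_v(1-t),\qquad l-r+e_v(1-t)\leq r_1\leq l+e_v(t).
\end{align*}
The number of admissible $l$ is $r+1-e_v(1-t)$, and for each such $l$ the number of admissible $r_1$ is $r+1-e_v(1-t)+e_v(t)$. I will then check that these two nonnegativity conditions coincide with the single indicator $\mathbf{1}_{e_v(t)-e_v(1-t)\geq -r}$: indeed, if $e_v(1-t)>0$ then $1-t\in\mathfrak{p}_v$, so $t$ is a unit and $e_v(t)=0$, whence $e_v(t)-e_v(1-t)\geq -r$ forces $e_v(1-t)\leq r$; the case $e_v(1-t)\leq 0$ is immediate.

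Multiplying the count by the Haar-measure factor $\Vol(\mathcal{O}_v^{\times})^2$ coming from $(x_v,y_v)\in\varpi_v^{r_1}\mathcal{O}_v^{\times}\times\varpi_v^{r_2}\mathcal{O}_v^{\times}$ and by $q_v^{-r/2}$ gives exactly \eqref{f1.16}. The only non-routine point in the argument is justifying that the condition $j\leq r/2$ is automatic and that a unique $(i,j)$ is hit for each valid $(r_1,r_2)$, so that one really sums $q_v^{-r/2}$ and not a multiple; everything else is bookkeeping parallel to the $v\nmid\mathfrak{n}\mathfrak{q}$ case already handled in Lemma \ref{lemma1.3} (which this lemma recovers upon formally setting $r=0$).
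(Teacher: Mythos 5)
Your proposal is correct and follows essentially the same route as the paper's proof: translate membership in the disjoint union of double cosets $K_v\diag(\varpi_v^i,\varpi_v^j)K_v$ into the determinant identity $2l+r_2+e_v(1-t)=r$ together with integrality of the four entries, then count the lattice points $(l,r_1)$ (the paper parametrizes by $(r_2,r_1)$ instead, but the count is identical). Your explicit justification that the condition $e_v(1-t)\leq r$ is absorbed into the single indicator $\textbf{1}_{e_v(t)-e_v(1-t)\geq -r}$ is a point the paper leaves implicit, and is handled correctly.
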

\begin{proof}
Write $e_v(x_v)=r_1$, $e_v(y_v)=r_2$. By the definition of $f_v$ in \eqref{t1.6}, the function $f_{v}\left(\begin{pmatrix}
y_v& x_v^{-1}t\\
x_vy_v& 1
\end{pmatrix}\right)$ is equal to the number of $r_1, r_2\in \mathbb{Z}$ such that the following constraint
\begin{align*}
\varpi_v^l\begin{pmatrix}
y& x^{-1}t\\
xy& 1
\end{pmatrix}=\varpi_v^l\begin{pmatrix}
\varpi_v^{r_2}& \varpi_v^{-r_1}t\\
\varpi_v^{r_1+r_2}& 1
\end{pmatrix}\in \bigsqcup_{\substack{i+j=e_v(\mathfrak{n})\\
i\geq j\geq 0}}K_v\begin{pmatrix}
\varpi_v^i\\
&\varpi_v^j
\end{pmatrix}K_v
\end{align*}
holds for some $l\in \mathbb{Z}$, which amounts to 
\begin{equation}\label{1.17}
\begin{cases}
2l+r_2+e_v(1-t)=r\\
l+r_2\geq 0,\ \ l\geq 0\\
l-r_1+e_v(t)\geq 0,\ \ l+r_1+r_2\geq 0.
\end{cases}
\end{equation}

Relaxing the dependence on $l$, \eqref{1.17} is equivalent to 
\begin{equation}\label{1.18}
\begin{cases}
|r_2|\leq r-e_v(1-t),\ \ r_2\equiv r-e_v(1-t)\pmod{2}\\
|2r_1+r_2-e_v(t)|\leq r-e_v(1-t)+e_v(t).
\end{cases}
\end{equation}

Therefore, we obtain 
\begin{align*}
J_v(t)=q_v^{-\frac{r}{2}}\cdot \Vol(\mathcal{O}_v^{\times})^2\cdot \textbf{1}_{e_v(1-t)\leq r,\ e_v(t)-e_v(1-t)\geq -r}\sum_{\text{$r_1, r_2$ satisfying \eqref{1.18}}}1.
\end{align*}
Therefore, \eqref{f1.16} follows from the fact that the number of $r_1, r_2$ satisfying \eqref{1.18} is $(r-e_v(1-t)+1)\cdot (r-e_v(1-t)+e_v(t)+1)$. 
\end{proof}


Now we consider the archimedean place:
\begin{equation}\label{f1.19}
J_v(t):=\int_{F_v^{\times}}\int_{F_v^{\times}}f_{v}\left(\begin{pmatrix}
y_v& x_v^{-1}t\\
x_vy_v& 1
\end{pmatrix}\right)\cdot \textbf{1}_{y_v(1-t)>0}d^{\times}y_vd^{\times}x_v,\ \ \ v\mid\infty.
\end{equation}

\begin{lemma}\label{lem1.3}
Let $k\in\mathbb{Z}_{>2}$ be even. Let $t\in \mathbb{R}-\{0,1\}$. Let 
\begin{equation}\label{f1.20}
J(t):=\int_{\mathbb{R}^{\times}}\int_{\mathbb{R}^{\times}}\frac{(y(1-t))^{\frac{k}{2}}\textbf{1}_{y(1-t)>0}}{(-x^{-1}t+xy+i(y+1))^{k}}d^{\times}yd^{\times}x.
\end{equation}
We have the following assertions. 
\begin{itemize}
\item The function $J(t)$ is equal to 
\begin{align*}
-\frac{2B(k/2,k/2)}{i^{k}}\cdot\bigg[P_{\frac{k}{2}-1}\left(\frac{1+t}{1-t}\right)\log |t|_v +2\sum_{j=0}^{\floor{\frac{k}{4}}}\frac{k-4j+1}{(2j-1)(\frac{k}{2}-j)}P_{\frac{k}{2}-2j}\left(\frac{1+t}{1-t}\right)\bigg],
\end{align*}
where $B(\cdot, \cdot)$ is the Beta function, and $P_n$ refers to the $n$-th Legendre polynomial. 
\item Suppose that $t>0$. Then 
\begin{align*}
J(t)=\frac{4B(k/2,k/2)}{i^{k}}\cdot Q_{\frac{k}{2}-1}\left(\frac{1+t}{1-t}\right),
\end{align*}
where $Q_{n}$ is the $n$-th Legendre function of the second kind.
\end{itemize}
\end{lemma}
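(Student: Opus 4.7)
The plan is to reduce the two‐dimensional integral $J(t)$ to a single‐variable integral on $\mathbb{R}^{\times}$, and then identify the latter with a Legendre integral representation. After rewriting the denominator as $(x+i)y+(i-t/x)$ and substituting $u=y(1-t)>0$, the $y$‐integration becomes a standard Mellin--Beta integral $\int_{0}^{\infty}u^{k/2-1}[(x+i)u+(1-t)(i-t/x)]^{-k}\,du=B(k/2,k/2)(x+i)^{-k/2}[(1-t)(i-t/x)]^{-k/2}$. The algebraic identity $(x+i)(i-t/x)=i(x+i)(x+it)/x$ (checked by direct expansion) together with the decomposition $d^{\times}x=dx/|x|$ then yield
\[
J(t)=\frac{(1-t)^{k/2}B(k/2,k/2)}{i^{k/2}}\bigl[\mathcal{I}_{+}(t)+(-1)^{k/2}\mathcal{I}_{-}(t)\bigr],
\]
where $\mathcal{I}_{\pm}(t):=\int_{0}^{\infty}x^{k/2-1}[(x\pm i)(x\pm it)]^{-k/2}\,dx$.

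For the second assertion (the case $t>0$), I would observe that both singularities of the integrand of $\mathcal{I}_{+}$ sit on the negative imaginary axis, so rotating the contour to the positive imaginary axis and substituting $x=iy$ is legitimate and yields $\mathcal{I}_{+}(t)=i^{-k/2}I^{*}(t)$, with $I^{*}(t):=\int_{0}^{\infty}y^{k/2-1}[(y+1)(y+t)]^{-k/2}\,dy$. After $y=\sqrt{t}\,u$ the integrand is invariant under $u\mapsto 1/u$, so $I^{*}(t)=2t^{-k/4}\int_{1}^{\infty}u^{k/2-1}[(u+\sqrt{t})(u+1/\sqrt{t})]^{-k/2}\,du$. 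Setting $u=e^{\phi}$ and using $(u+\sqrt{t})(u+1/\sqrt{t})=2e^{\phi}(\cosh\phi+\cosh\xi)$ with $\sqrt{t}=e^{\xi}$ matches this with Heine's classical formula $Q_{n}(\cosh\eta)=\int_{0}^{\infty}(\cosh\eta+\sinh\eta\cosh\phi)^{-n-1}\,d\phi$ at $\cosh\eta=(1+t)/(1-t)$, $n=k/2-1$, giving $I^{*}(t)=2(1-t)^{-k/2}Q_{k/2-1}((1+t)/(1-t))$. The parallel rotation of $\mathcal{I}_{-}$ to the negative imaginary axis produces the same $Q$‐value, and assembling the pieces yields $J(t)=\frac{4B(k/2,k/2)}{i^{k}}Q_{k/2-1}((1+t)/(1-t))$.

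For the first assertion (general $t\in\mathbb{R}\setminus\{0,1\}$), I would substitute into this formula the classical decomposition $Q_{n}(z)=\tfrac{1}{2}P_{n}(z)\log\tfrac{z+1}{z-1}-W_{n-1}(z)$ with $W_{n-1}(z)=\sum_{\ell=1}^{n}\ell^{-1}P_{\ell-1}(z)P_{n-\ell}(z)$, using the Ferrers convention on the cut $(-1,1)$ when $t<0$. A short computation gives $(z+1)/(z-1)=1/t$, hence $\log[(z+1)/(z-1)]=-\log|t|$ (plus an imaginary constant that cancels after multiplication by $i^{-k}$ and absorption into the polynomial piece). This yields the lemma once the polynomial $W_{k/2-2}(z)$ is rewritten as the single‐indexed sum $\sum_{j}\tfrac{k-4j+1}{(2j-1)(k/2-j)}P_{k/2-2j}(z)$ appearing in the statement.

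The main obstacle is this last combinatorial identification: converting the double sum $W_{k/2-2}(z)=\sum_{\ell}\ell^{-1}P_{\ell-1}(z)P_{k/2-1-\ell}(z)$ into a single‐indexed combination of Legendre polynomials with the explicit rational coefficients $\tfrac{k-4j+1}{(2j-1)(k/2-j)}$. I would verify it first in small cases ($k=4$ gives $W_{0}=1$; $k=6$ gives $W_{1}=\tfrac{3}{2}P_{1}$; $k=8$ gives $W_{2}=\tfrac{5}{3}P_{2}+\tfrac{1}{6}$) and then prove the general identity either by induction on $k$ using the three‐term recurrence for $P_{n}$, or by matching coefficients in the generating function $(1-2zs+s^{2})^{-1/2}=\sum_{n\geq 0}P_{n}(z)s^{n}$. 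A secondary subtlety is branch‐ and sign‐tracking for $t>1$ (where $z<-1$ and $(1-t)^{k/2}$ may be negative) and for $t<0$ (where $z\in(-1,1)$ sits on the branch cut of $Q_{k/2-1}$ so the Ferrers convention must be used); these choices and the direction of the contour rotations must be made consistently so that the final real formula is independent of them.
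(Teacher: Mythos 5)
Your reduction of the double integral is essentially the paper's: integrate out $y$ via a Mellin--Beta integral (justified by a sector-contour rotation), leaving a one-dimensional $x$-integral; your $\mathcal{I}_{\pm}$ are the paper's $J_{\pm}(t)$ up to the factor $i^{\mp k/2}$ coming from the identity $(x+i)(ix-t)=i(x+i)(x+it)$, and your assembled prefactor is correct even though the intermediate $(1-t)$-power is miscopied. Where you genuinely diverge is in identifying $\int_0^\infty x^{k/2-1}[(x+i)(x+it)]^{-k/2}dx$ with a Legendre function: the paper substitutes $a=x/(1+x)$ to produce ${}_2F_1(k/2,k/2;k;1-z^{-1})$, applies the quadratic transformation, and quotes the tabulated identity $B(k/2,k/2)\,{}_2F_1(k/2,k/2;k;1-z)=2(1-z)^{-k/2}Q_{k/2-1}((1+z)/(1-z))$ from \cite[p.~233--234]{MOS66}, whereas you rotate the contour to the real axis and invoke Heine's integral representation after the $u\mapsto 1/u$ symmetrization. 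Your route is correct for $0<t<1$ (the constants do work out to $I^{*}(t)=2(1-t)^{-k/2}Q_{k/2-1}((1+t)/(1-t))$), and for $t>1$ it closes once you insert $Q_n(-x)=(-1)^{n+1}Q_n(x)$ to absorb the sign of $(1-t)^{k/2}$. Also, the ``main obstacle'' you single out --- rewriting Christoffel's $W_{k/2-2}$ as a single-indexed Legendre sum --- is not an obstacle: it is exactly the tabulated expansion of $Q_n$ that the paper cites from \cite[p.~234]{MOS66}. (Your small-case checks do reveal, correctly, that this sum must start at $j=1$; the $j=0$ term as printed in the statement is spurious.)

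The one genuine gap is the case $t<0$, which your first assertion requires. You propose to ``substitute into this formula'' the $\log$-plus-polynomials decomposition of $Q_{k/2-1}$, but for $t<0$ you have not established any formula to substitute into: your derivation of $J(t)=\tfrac{4B(k/2,k/2)}{i^{k}}Q_{k/2-1}(\cdot)$ rests on rotating the contour of $\mathcal{I}_{+}$ from $(0,\infty)$ to $i(0,\infty)$, and for $t<0$ the pole of the integrand at $x=-it=i|t|$ lies on the target ray, so that rotation fails (a partial rotation picks up residues). Appealing to ``the Ferrers convention'' does not repair this. The fix --- and what the paper actually does --- is to note that $z\mapsto (1-z)^{k/2}\int_0^\infty x^{k/2-1}(x+i)^{-k/2}(ix-z)^{-k/2}\,dx$ is holomorphic on $\mathbb{C}\setminus(i[0,\infty)\cup\{1\})$, which contains the negative real axis; one identifies it with $\tfrac{2B(k/2,k/2)}{i^{k}}Q_{k/2-1}((1+z)/(1-z))$ for $\Re(z)>0$, continues the explicit $\log z$-plus-Legendre-polynomial expression across $\Re(z)=0$, and observes that adding the conjugate term turns $\log z$ into $\log|t|$. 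You need to add this continuation step (or an equivalent direct evaluation for $t<0$) for the first assertion to be complete.
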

\begin{proof}
We consider the following scenarios. 
\begin{itemize}
\item Suppose $1-t>0$. Since $d^{\times}x=|x|^{-1}dx$ and $d^{\times}y=|y|^{-1}dy$, then 
\begin{equation}\label{f1.6}
J(t)=(1-t)^{\frac{k}{2}}\int_{-\infty}^{\infty}\int_{0}^{\infty}\frac{x^{k}y^{\frac{k}{2}}}{(-t+x^2y+i(xy+x))^{k}}d^{\times}yd^{\times}x=J_+(t)+\overline{J_+(t)},
\end{equation}
where
\begin{align*}
J_+(t):=&(1-t)^{\frac{k}{2}}\int_{0}^{\infty}\int_{0}^{\infty}\frac{x^{k-1}y^{\frac{k}{2}-1}}{(-t+x^2y+i(xy+x))^{k}}dydx.
\end{align*}

By the change of variable $y\mapsto yx^{-1}$, we obtain 
\begin{align*}
J_+(t):=(1-t)^{\frac{k}{2}}\int_{0}^{\infty}\int_{0}^{\infty}\frac{x^{\frac{k}{2}-1}y^{\frac{k}{2}-1}}{(-t+xy+i(y+x))^{k}}dydx.
\end{align*}

Notice that $-t+xy+i(y+x)=1-t+(x+i)(y+i)$. Hence,
\begin{equation}\label{1.5}
J_+(t)=(1-t)^{\frac{k}{2}}\int_{0}^{\infty}\frac{x^{\frac{k}{2}-1}}{(x+i)^k}\int_{0}^{\infty}\frac{y^{\frac{k}{2}-1}}{(y+\frac{1-t}{x+i}+i)^{k}}dydx.
\end{equation}

For $z_0\in \mathbb{C}-\{0\}$, let $\mathcal{C}_R(z_0)$ be the contour defined as in \textsection\ref{sec1.1.1}. 
Let $\mathcal{D}_R(z_0)$ be the closed region with boundary being $\mathcal{C}_R(z_0)$. Since $z_0\neq 0$, the function $h_0(z)=z^{\frac{k}{2}-1}(z+z_0)^{-k}$ is holomorphic in $\mathcal{D}_R(z_0)$ for all $R>0$. As a consequence,  
\begin{equation}\label{f1.7}
\int_{\mathcal{C}_R(z_0)}h_0(z)dz=\sum_{j=1}^3\int_{\mathcal{C}_R^{(j)}(z_0)}h_0(z)dz=0.	
\end{equation}

Notice that $\lim_{R\to+\infty}h_0(z)=0$ uniformly for $z\in \mathcal{C}_R^{(2)}(z_0)$.  Hence,
\begin{equation}\label{f1.8}
\lim_{R\to+\infty}\int_{\mathcal{C}_R^{(2)}(z_0)}h_0(z)dz=0.
\end{equation}

Taking $R\to+\infty$ into \eqref{f1.7}, along with \eqref{f1.8}, we obtain 
\begin{equation}\label{1.6}
\int_{0}^{\infty}\frac{y^{\frac{k}{2}-1}}{(y+z_0)^{k}}dy=z_0^{-\frac{k}{2}}\int_{L}\frac{y^{\frac{k}{2}-1}}{(y+1)^{k}}dy=z_0^{-\frac{k}{2}} B(k/2,k/2),
\end{equation}
where $L$ denotes the half-line defined by the positive real multiples of $z_0$. 

Let $z_0=(1-t)(x+i)^{-1}+i$. Since $t\neq 1$ and $x\neq 0$, then $z_0\neq 0$. It follows from \eqref{1.5} and \eqref{1.6} that 
\begin{equation}\label{1.7'}
J_+(t)=(1-t)^{\frac{k}{2}}B(k/2,k/2)\int_{0}^{\infty}\frac{x^{\frac{k}{2}-1}}{(x+i)^{\frac{k}{2}}\left(ix-t\right)^{\frac{k}{2}}}dx.
\end{equation}

\item Suppose $1-t<0$. Then 
\begin{equation}\label{1.12.}
J(t)=(t-1)^{\frac{k}{2}}\int_{\mathbb{R}^{\times}}\int_0^{\infty}\frac{y^{\frac{k}{2}}}{(x^{-1}t+xy+i(y-1))^{k}}d^{\times}yd^{\times}x=J_-(t)+\overline{J_-(t)},
\end{equation}
where
\begin{align*}
J_-(t):=(t-1)^{\frac{k}{2}}\int_0^{\infty}\int_0^{\infty}\frac{y^{\frac{k}{2}-1}}{(x^{-1}t+xy+i(y-1))^{k}}dyd^{\times}x.
\end{align*}

Changing the variable $y\mapsto yx^{-1}$ leads to 
\begin{align*}
J_-(t)=(t-1)^{\frac{k}{2}}\int_0^{\infty}\frac{x^{\frac{k}{2}-1}}{(x+i)^{k}}\int_0^{\infty}\frac{y^{\frac{k}{2}-1}}{(y+\frac{t-1}{x+i}-i)^{k}}dydx.
\end{align*}

Since $\frac{t-1}{x+i}-i\neq 0$, we deduce from the above expression and \eqref{1.6} that
\begin{equation}\label{f1.12}
J_-(t)=(1-t)^{\frac{k}{2}}B(k/2,k/2)\int_0^{\infty}\frac{x^{\frac{k}{2}-1}}{(x+i)^{\frac{k}{2}}(ix-t)^{\frac{k}{2}}}dx.
\end{equation}
\end{itemize}

Let $\varepsilon>0$. Considering \eqref{1.7'} and \eqref{f1.12}, we define the auxiliary integral 
\begin{align*}
h(z):=(1-z)^{\frac{k}{2}}B(k/2,k/2)\int_{0}^{\infty}\frac{x^{\frac{k}{2}-1}}{(x+i)^{\frac{k}{2}}\left(ix-z\right)^{\frac{k}{2}}}dx.
\end{align*}

Since $k\geq 4$, $h(z)$ converges absolutely in $\{z\in \mathbb{C}:\ \Re(z)>0,\ z\neq 1\}$, defining a holomorphic function therein. Set 
$H(z):=h(z)+\overline{h(\overline{z})},$ where $\Re(z)>0.$

Now we proceed to investigate the meromorphic continuation of $H(z)$ across the vertical line $\Re(z)=0$. Suppose $\Re(z)>0$. Making use of a similar manipulation of contour integral (with $z_0=i$), we derive 
\begin{equation}\label{1.7}
h(z)=\frac{(1-z)^{\frac{k}{2}}B(k/2,k/2)}{i^{k}}\int_{0}^{\infty}\frac{x^{\frac{k}{2}-1}}{(x+1)^{\frac{k}{2}}\left(x+z\right)^{\frac{k}{2}}}dx.
\end{equation}

Put $a=\frac{x}{1+x}\in (0,+\infty)$. Then $x=\frac{a}{1-a}$ and $dx=\frac{da}{(1-a)^2}$. So
\begin{equation}\label{1.8}
\int_{0}^{\infty}\frac{x^{\frac{k}{2}-1}dx}{(x+1)^{\frac{k}{2}}\left(x+z\right)^{\frac{k}{2}}}=\int_0^1\frac{a^{\frac{k}{2}-1}(1-a)^{\frac{k}{2}-1}}{\left(a(1-t)+z\right)^{\frac{k}{2}}}da=\frac{{}_2F_1\left(\frac{k}{2}, \frac{k}{2}; k; 1-z^{-1}\right)}{z^{\frac{k}{2}}B(k/2,k/2)^{-1}},
\end{equation}
where ${}_2F_1\left(\frac{k}{2}, \frac{k}{2}; k; 1-z^{-1}\right)$ is the hypergeometric function. Furthermore, it is well known that  
\begin{equation}\label{1.9}
{}_2F_1\left(\frac{k}{2}, \frac{k}{2}; k; 1-z^{-1}\right)=z^{\frac{k}{2}}{}_2F_1\left(\frac{k}{2}, \frac{k}{2}; k; 1-z\right).
\end{equation}

Since $k$ is even, utilizing  \cite[\textsection 5.4, p.233-p.234]{MOS66} we obtain 
\begin{equation}\label{1.10}
B(k/2,k/2)\cdot {}_2F_1\left(\frac{k}{2},\frac{k}{2};k; 1-z\right)=2(1-z)^{-\frac{k}{2}}\cdot Q_{\frac{k}{2}-1}\left(\frac{1+z}{1-z}\right),
\end{equation}
where $Q_{\frac{k}{2}-1}$ is the Legendre function of the second kind.

Substituting \eqref{1.8} and \eqref{1.9} into \eqref{1.7} yields 
\begin{equation}\label{1.11}
h(z)=\frac{2B(k/2,k/2)}{i^{k}}\cdot Q_{\frac{k}{2}-1}\left(\frac{1+z}{1-z}\right).
\end{equation}

Utilizing the relation between $Q_{\frac{k}{2}-1}$ and Legendre polynomials (cf, p.234 in loc. cit.), we can  explicitly express $Q_{\frac{k}{2}-1}\left(\frac{1+z}{1-z}\right)$ as
\begin{equation}\label{1.20}
-\frac{1}{2}P_{\frac{k}{2}-1}\left(\frac{1+z}{1-z}\right)\cdot\log z -\sum_{j=0}^{\floor{\frac{k}{4}}}\frac{k-4j+1}{(2j-1)(\frac{k}{2}-j)}P_{\frac{k}{2}-2j}\left(\frac{1+z}{1-z}\right),
\end{equation}
where $P_n$ refers to the $n$-th Legendre polynomial. 

We can thus extend $Q_{\frac{k}{2}-1}\left(\frac{1+z}{1-z}\right)$ to a holomorphic function in the region $\{s\in \mathbb{C}:\ \text{$s=1$ or $s\in i[0,\infty)$}\}$. In particular, for all $t\in \mathbb{R}-\{0,1\}$, combining \eqref{1.11} with \eqref{1.20}, the function $H(t)$ boils down to 
\begin{align*}
-\frac{2B(k/2,k/2)}{i^{k}}\cdot\bigg[P_{\frac{k}{2}-1}\left(\frac{1+t}{1-t}\right)\log |t|_v +2\sum_{j=0}^{\floor{\frac{k}{4}}}\frac{k-4j+1}{(2j-1)(\frac{k}{2}-j)}P_{\frac{k}{2}-2j}\left(\frac{1+t}{1-t}\right)\bigg].
\end{align*}

As a consequence of \eqref{f1.6}, \eqref{1.7'}, \eqref{1.12.}, and \eqref{f1.12}, we obtain $J(t)=H(t)$ for all $t\in \mathbb{R}-\{0,1\}$.
Therefore, Lemma \ref{lem1.3} follows. 
\end{proof}

\begin{lemma}\label{lem1.7}
Let $v\mid\infty$. Let $t\in F-\{0,1\}$. Let $t_v$ be the component of $t\in F_v\simeq \mathbb{R}$. We have the following assertions. 
\begin{itemize}
\item Let $J_v(t)$ be the local integral defined by \eqref{f1.19}. Then  
\begin{align*}
J_v(t)=&-\frac{2^{k_v-1}(k_v-1)}{\pi}\cdot B\left(\frac{k_v}{2},\frac{k_v}{2}\right)\cdot P(k_v;t),
\end{align*}
where 
\begin{align*}
P(k_v;t):=P_{\frac{k_v}{2}-1}\left(\frac{1+t}{1-t}\right)\log |t|_v +2\sum_{j=0}^{\floor{k_v/4}}\frac{k_v-4j+1}{(2j-1)(\frac{k_v}{2}-j)}P_{\frac{k_v}{2}-2j}\left(\frac{1+t}{1-t}\right).
\end{align*}
Here $B(\cdot, \cdot)$ is the Beta function, and $P_n$ refers to the $n$-th Legendre polynomial.
\item Suppose $t_v>0$, i.e., $t$ is positive in $F_v\simeq \mathbb{R}$. Then 
\begin{align*}
J_v(t)=&\frac{2^{k_v}(k_v-1)}{\pi}\cdot  B\left(\frac{k_v}{2},\frac{k_v}{2}\right)\cdot Q_{\frac{k}{2}-1}\left(\frac{1+t}{1-t}\right),
\end{align*}
where $Q_{n}$ is the $n$-th Legendre function of the second kind.
\end{itemize}

\end{lemma}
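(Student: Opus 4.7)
The plan is to reduce Lemma \ref{lem1.7} directly to Lemma \ref{lem1.3}, which has already done the essential analytic work. The key observation is that at an archimedean place the integrand in $J_v(t)$ is, up to an explicit constant, the integrand defining $J(t)$ in \eqref{f1.20} with $k=k_v$.

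First I would substitute the explicit matrix coefficient \eqref{t1.5} into \eqref{f1.19}. For the matrix $g_v=\begin{pmatrix} y_v & x_v^{-1}t \\ x_vy_v & 1\end{pmatrix}$ one computes $\det g_v = y_v(1-t)$ and $-b_v+c_v+i(a_v+d_v) = -x_v^{-1}t + x_vy_v + i(y_v+1)$, so that the positivity condition $\textbf{1}_{\det g_v>0}$ in $f_v$ is exactly the indicator $\textbf{1}_{y_v(1-t)>0}$ appearing in \eqref{f1.19}. Substituting this into the definition of $J_v(t)$ and pulling out the normalizing constant of $f_v$ gives the identity
\begin{equation*}
J_v(t) = \frac{(k_v-1)(2i)^{k_v}}{4\pi} \cdot J(t),
\end{equation*}
where $J(t)$ is the integral defined in \eqref{f1.20} of Lemma \ref{lem1.3} with $k=k_v$.

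Next I would invoke Lemma \ref{lem1.3} in its two cases. For general $t\in F-\{0,1\}$, the first part of Lemma \ref{lem1.3} gives $J(t) = -\tfrac{2}{i^{k_v}} B(k_v/2,k_v/2)\, P(k_v;t)$; combining with the prefactor $\tfrac{(k_v-1)(2i)^{k_v}}{4\pi}$ and using $k_v$ even (so that $(2i)^{k_v}/i^{k_v} = 2^{k_v}$) collapses to $-\tfrac{2^{k_v-1}(k_v-1)}{\pi} B(k_v/2,k_v/2)\, P(k_v;t)$, which is the first asserted formula. For the subcase $t_v>0$, the second part of Lemma \ref{lem1.3} gives $J(t) = \tfrac{4}{i^{k_v}}B(k_v/2,k_v/2)\, Q_{k_v/2-1}\!\left(\tfrac{1+t}{1-t}\right)$; the same arithmetic simplification of constants yields $\tfrac{2^{k_v}(k_v-1)}{\pi} B(k_v/2,k_v/2)\, Q_{k_v/2-1}\!\left(\tfrac{1+t}{1-t}\right)$, as required.

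There is essentially no obstacle here beyond careful bookkeeping: since $F_v\simeq \mathbb{R}$ for $v\mid\infty$ and $k_v$ is even, all the contour-shifting, hypergeometric reduction, and the transition between $Q_{k_v/2-1}$ and its expansion in Legendre polynomials $P_n$ were already handled inside the proof of Lemma \ref{lem1.3}. The only mild subtlety is verifying that the $t_v>0$ branch does not require the Legendre-polynomial expansion of $Q_{k_v/2-1}$, since $\tfrac{1+t}{1-t}\in \mathbb{R}$ lies outside the branch cut of $Q_{k_v/2-1}$ on $[-1,1]$, so the second formula is valid as stated without the logarithmic term.
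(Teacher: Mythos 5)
Your proposal is correct and follows the same route as the paper: substitute the explicit matrix coefficient \eqref{t1.5} into \eqref{f1.19}, obtain $J_v(t)=\frac{(2i)^{k_v}(k_v-1)}{4\pi}J(t)$, and appeal to Lemma \ref{lem1.3}. The arithmetic simplification of the constants is right (note that $(2i)^{k_v}/i^{k_v}=2^{k_v}$ holds for any $k_v$, not just even ones, so that parenthetical is unnecessary but harmless).
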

\begin{proof}
Substituting the definition of $f_v$ (cf. \eqref{t1.5}) 
\begin{align*}
f_{v}\left(\begin{pmatrix}
y_v& x_v^{-1}t\\
x_vy_v& 1
\end{pmatrix}\right)=\frac{(2i)^{k_v}(k_v-1)}{4\pi}\cdot \frac{(y_v(1-t))^{\frac{k_v}{2}}}{(-x_v^{-1}t+x_vy_v+i(y_v+1))^{k_v}}
\end{align*} 
into the definition \eqref{f1.19}, we obtain 
\begin{align*}
J_v(t)=\frac{(2i)^{k_v}(k_v-1)}{4\pi}\cdot J(t),
\end{align*}
where $J(t)$ is defined by \eqref{f1.20}. Then Lemma \ref{lem1.7} follows from Lemma \ref{lem1.3}.
\end{proof}

\subsection{Exact Formula for $J_{\mathrm{reg}}(f_{\mathfrak{n},\mathfrak{q}},\textbf{0})$}\label{sec1.3.5}
Combining the local calculations in \textsection\ref{sec6.1}, we will prove a precise formula for the regular orbital integral $J_{\mathrm{reg}}(f_{\mathfrak{n},\mathfrak{q}},\textbf{0})$ in this subsection. Our main result is the following. 
\begin{prop}\label{prop6.6}
Let notation be as before. Then 
\begin{equation}\label{6.28}
J_{\mathrm{reg}}(f_{\mathfrak{n},\mathfrak{q}},\textbf{0})=\frac{V_{\mathfrak{q}}D_F^{-1}}{N(\mathfrak{n})^{\frac{1}{2}}}\prod_{v\mid\infty}\frac{2^{k_v-1}(k_v-1)}{\pi}B\left(\frac{k_v}{2},\frac{k_v}{2}\right)\sum_{\substack{u\in \mathfrak{q}\mathfrak{n}^{-1}\\
u\not\in \{0,-1\}}}\mathcal{P}(u)\prod_{v<\infty}e_{v,\mathfrak{n}}(u),
\end{equation}
where $\mathcal{P}(u):=\prod_{v\mid\infty}\mathcal{P}_v(u)$ and $e_{v,\mathfrak{n}}$ are defined as follows.
\begin{itemize}
\item Let $v\mid\infty$ and $u\in F-\{0,-1\}$. 
\begin{itemize}
\item Suppose $\ |2u+1|_v\leq 1$. Then 
\begin{align*}
\mathcal{P}_v(u):=-\Big[P_{\frac{k_v}{2}-1}(2u+1)\log\Big|\frac{u}{u+1}\Big|_v +2\sum_{j=0}^{\floor{k_v/4}}\frac{k_v-4j+1}{(2j-1)(\frac{k_v}{2}-j)}P_{\frac{k_v}{2}-2j}(2u+1)\Big]
\end{align*}
\item Suppose $\ |2u+1|_v>1$. Then 
\begin{align*}
\mathcal{P}_v(u)=2 Q_{\frac{k_v}{2}-1}(2u+1).
\end{align*}
\end{itemize}
\item Let $v<\infty$. We define 
\begin{equation}\label{6.29}
e_{v,\mathfrak{n}}(u):=\begin{cases}
(e_v(\mathfrak{n})+1+e_v(1+u))(e_v(\mathfrak{n})+1+e_v(u)),\ \ & \text{if $v\nmid \mathfrak{q}$},\\
e_v(u),\ \ & \text{if $v=\mathfrak{q}$}.
\end{cases}
\end{equation}
\end{itemize}
\end{prop}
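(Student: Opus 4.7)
The plan is to start from the factorization
\begin{align*}
J_{\mathrm{reg}}(f_{\mathfrak{n},\mathfrak{q}},\mathbf{0}) = \sum_{t \in F - \{0,1\}} \prod_{v \leq \infty} J_v(t)
\end{align*}
from \eqref{eq1.9} and perform the global substitution $u = t/(1-t)$, which is a bijection $F-\{0,1\}\to F-\{0,-1\}$ with $t = u/(1+u)$, $1-t = 1/(1+u)$, and crucially $(1+t)/(1-t) = 2u+1$. At the non-archimedean places this gives $e_v(t) = e_v(u) - e_v(1+u)$ and $e_v(1-t) = -e_v(1+u)$, which allows me to rewrite each of the local formulas from Lemmas \ref{lemma1.3}, \ref{lem1.6}, together with the computation at $v\mid\mathfrak{q}$, purely in terms of $e_v(u)$ and $e_v(1+u)$.

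First, I will translate the finite local integrals. At $v \nmid \mathfrak{n}\mathfrak{q}$ this yields
\begin{align*}
J_v(t) = \Vol(\mathcal{O}_v^{\times})^2\,(1 + e_v(1+u))(1 + e_v(u))\cdot \mathbf{1}_{u \in \mathcal{O}_v};
\end{align*}
at $v \mid \mathfrak{n}$ with $r_v = e_v(\mathfrak{n})$ it gives
\begin{align*}
J_v(t) = q_v^{-r_v/2}\,\Vol(\mathcal{O}_v^{\times})^2\,(r_v + 1 + e_v(1+u))(r_v + 1 + e_v(u))\cdot \mathbf{1}_{u \in \mathfrak{p}_v^{-r_v}};
\end{align*}
and at $v = \mathfrak{q}$ it gives
\begin{align*}
J_v(t) = \Vol(\mathcal{O}_v^{\times})^2\,\Vol(K_0(\mathfrak{p}_v))^{-1}\,e_v(u)\cdot \mathbf{1}_{u \in \mathfrak{p}_v}.
\end{align*}
In each case the auxiliary constraint on $e_v(1+u)$ that formally appeared in the original indicator (namely $e_v(1-t)\leq r_v$, or $e_v(1-t) = 0$ at $v=\mathfrak{q}$) is automatic from the constraint on $e_v(u)$ by the ultrametric inequality applied to $1+u$.

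Second, I will multiply these finite factors together. The joint support of the product is precisely $\{u \in \mathfrak{q}\mathfrak{n}^{-1} : u \neq 0, -1\}$, so the outer sum over $t$ becomes the sum over $u$ in the statement. The volume factors collapse cleanly: $\prod_{v<\infty}\Vol(\mathcal{O}_v^\times)^2 = D_F^{-1}$, the powers $q_v^{-r_v/2}$ at $v\mid\mathfrak{n}$ assemble into $N(\mathfrak{n})^{-1/2}$, and at $v=\mathfrak{q}$ the quotient $\Vol(\mathcal{O}_v^\times)^2/\Vol(K_0(\mathfrak{p}_v))$ produces (up to the harmless factor already accounted for in $D_F^{-1}$) the combinatorial factor $V_{\mathfrak{q}}=N(\mathfrak{q})+1$, using $[K_v:K_0(\mathfrak{p}_v)]=N(\mathfrak{q})+1$. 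The remaining numerical factors are exactly the $e_{v,\mathfrak{n}}(u)$ defined in \eqref{6.29}.

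Third, at each archimedean place $v$, I will apply Lemma \ref{lem1.7} with the argument $(1+t)/(1-t)=2u+1$. The dichotomy $|2u+1|_v\leq 1$ versus $|2u+1|_v>1$ matches $t_v\leq 0$ versus $t_v>0$: in the first case I use the Legendre-polynomial-plus-logarithm formula from Part 1 of the lemma (noting $\log|t|_v = \log|u/(1+u)|_v$), in the second case the cleaner formula in terms of $Q_{k_v/2-1}$ from Part 2. Pulling out the common prefactor $\prod_{v\mid\infty}\frac{2^{k_v-1}(k_v-1)}{\pi}B(k_v/2,k_v/2)$ and combining with the finite contribution gives \eqref{6.28}; the extra factor of $2$ in the $Q$-branch of $\mathcal{P}_v(u)$ accounts for the $2^{k_v}$ versus $2^{k_v-1}$ discrepancy between the two parts of Lemma \ref{lem1.7}. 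The only real obstacle is the second step: one must verify carefully that the collection of local indicators and volume normalizations coheres globally into the single constraint $u\in\mathfrak{q}\mathfrak{n}^{-1}\setminus\{0,-1\}$ with the correct leading coefficient $V_{\mathfrak{q}}D_F^{-1}N(\mathfrak{n})^{-1/2}$, since any miscounting at $v=\mathfrak{q}$ or at the archimedean normalizations would produce spurious factors.
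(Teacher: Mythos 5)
Your proposal is correct and follows essentially the same route as the paper's proof: substitute $u=t/(1-t)$, rewrite the non-archimedean local integrals from Lemmas \ref{lemma1.3}--\ref{lem1.6} in terms of $e_v(u)$ and $e_v(1+u)$, note that the joint support is exactly $\mathfrak{q}\mathfrak{n}^{-1}\setminus\{0,-1\}$, collect the volume factors into $V_\mathfrak{q}D_F^{-1}N(\mathfrak{n})^{-1/2}$, and plug $(1+t)/(1-t)=2u+1$ into Lemma \ref{lem1.7} with the sign dichotomy $t_v>0 \Leftrightarrow |2u+1|_v>1$. The paper states the same conclusion more tersely (equation \eqref{1.35} and the observation that the constraints \eqref{1.37} encode $u\in\mathfrak{q}\mathfrak{n}^{-1}$), but the computations you spell out, including the ultrametric arguments showing the auxiliary constraints on $e_v(1+u)$ are redundant, are exactly the content of that step.
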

\begin{proof}
For $t\in F-\{0,1\}$, we write $u=t(1-t)^{-1}\in F-\{0,-1\}$. From the nonarchimedean analysis in Lemmas \ref{lemma1.3}--\ref{lem1.6}, we have, for $v<\infty$, 
\begin{equation}\label{1.35}
\prod_{v<\infty}J_v(t)=  N(\mathfrak{n})^{-\frac{1}{2}}\Vol(K_0(\mathfrak{q}))^{-1}\textbf{1}_u\cdot \prod_{v<\infty}e_{v,\mathfrak{n}},
\end{equation}
where $\textbf{1}_u$ is the characteristic function of the following constraints:
\begin{equation}\label{1.37}
\begin{cases}
e_v(u)\geq 0,\ \ & \text{$v\nmid \mathfrak{n}\mathfrak{q}$}\\
e_v(u)\geq 1,\ \ & \text{$v=\mathfrak{q}$}\\
e_v(u)\geq -e_v(\mathfrak{n}),\ \ & \text{$v\mid \mathfrak{n}$}.
\end{cases} 
\end{equation}

Note that \eqref{1.37} amounts to $u\in \mathfrak{q}\mathfrak{n}^{-1}$. Moreover, $t>0$ if and only if $|2u+1|=|(t+1)/(t-1)|>1$. Therefore, the formula \eqref{6.28} follows from \eqref{1.35} and Lemma \ref{lem1.7}.
\end{proof}

\begin{remark}\label{divisor function remark}
When $F = \mathbb{Q}$ and $\mathfrak{q} = \mathcal{O}_F$, the factor $\prod_{v < \infty} e_{v,\mathfrak{n}}(u)$ simplifies to a shifted convolution of divisor functions. This finding aligns with the convolution formula proved by Kuznetsov; see \cite[Theorem 17]{IS97} and \cite[Theorem 4.2]{BF21}. However, in this paper, we utilize slightly different archimedean weights $\mathcal{P}(u)$. This discrepancy arises from our use of the period integral representation for \textit{complete} $L$-functions, as opposed to the Dirichlet series representation for \textit{finite} $L $-functions employed in \cite{IS97}, \cite{BF21}, and other prior studies.   
\end{remark}

\subsection{Majorization of  $J_{\mathrm{reg}}(f_{\mathfrak{n},\mathfrak{q}},\textbf{0})$}\label{sec7.3}
\subsubsection{Properties of Legendre Functions}
Let $k\in \mathbb{Z}$. Let $Q_k$ be the Legendre function of the second kind. Let $x\in \mathbb{R}-[-1,1]$. By \cite[p.234]{MOS66}, we have the integral representation 
\begin{equation}\label{6.32}
Q_k(x)=2^{-k-1}\int_{-1}^1\frac{(1-t^2)^k}{(x-t)^{k+1}}dt.
\end{equation}

Let $P_k$ be the Legendre polynomial, $k\geq 1$. According to \cite[\textsection 5.4.4, p.237]{MOS66},
\begin{equation}\label{6.33}
|P_k(x)|<\sqrt{2\pi^{-1}}k^{-1/2}(1-x^2)^{-1/4},\ \ x\in [-1,1].
\end{equation} 

\subsubsection{Analysis of $\mathcal{P}_v(u)$}

Let $u\in \mathfrak{q}\mathfrak{n}^{-1}-\{0,-1\}$. Let $\mathcal{P}_v(u)$ be defined as in Proposition \ref{prop6.6}. We have the upper bounds for $|\mathcal{P}_v(u)|$ as follows. 
 \begin{lemma}\label{lem6.8}
Let notation be as before. Let $v\mid\infty$. Suppose $|2u+1|_v\leq 1$. Then 
\begin{equation}\label{6.34}
|\mathcal{P}_v(u)|\ll \frac{1}{k_v^{1/2}|u(u+1)|_v^{1/4+\varepsilon}},
\end{equation}
where the implied constant depends only on $\varepsilon$.	
\end{lemma}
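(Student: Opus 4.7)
The plan is to change variables to $x:=2u+1$, which lies in $[-1,1]$ under the hypothesis $|2u+1|_v\leq 1$; together with $u\notin\{0,-1\}$ this forces $u\in(-1,0)$, so that $1-x^2=-4u(u+1)=4|u(u+1)|_v$, and Bernstein's inequality \eqref{6.33} then reads $|P_n(x)|\ll n^{-1/2}|u(u+1)|_v^{-1/4}$ for every $n\geq 1$. This is the main tool, and the two pieces of $\mathcal{P}_v(u)$ from Proposition \ref{prop6.6} will be estimated separately.

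First I would treat the isolated logarithmic term $P_{k_v/2-1}(x)\log|u/(u+1)|_v$. Combining the Bernstein bound with the elementary estimate $|\log|u/(u+1)|_v|\leq|\log|u|_v|+|\log|u+1|_v|\ll_\varepsilon|u(u+1)|_v^{-\varepsilon}$ (via $|\log r|\ll_\varepsilon r^{-\varepsilon}+r^{\varepsilon}$ applied to $|u|_v,|u+1|_v\in(0,1)$) directly yields the contribution $\ll_\varepsilon k_v^{-1/2}|u(u+1)|_v^{-1/4-\varepsilon}$, matching \eqref{6.34} for this piece.

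Next I would bound the summatory piece. Reindexing by $m:=k_v/2-2j$, the coefficient $(k_v-4j+1)/[(2j-1)(k_v/2-j)]$ becomes $2(2m+1)/[(k_v/2-m-1)(k_v/2+m)]$. A key observation is that $m$ inherits the parity of $k_v/2$, while $k_v/2-1$ has the opposite parity, so $k_v/2-m-1\neq 0$ throughout the sum and in fact has absolute value at least $1$; in particular no singular term appears. Applying \eqref{6.33} to $P_m(x)$ termwise and grouping the range of $m$ dyadically according to $|k_v/2-m-1|$ gives, for each dyadic block, a contribution $\ll k_v^{-1/2}|u(u+1)|_v^{-1/4}$, whence the full sum is $\ll k_v^{-1/2}|u(u+1)|_v^{-1/4}\log k_v$.

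The main obstacle is removing the $\log k_v$ factor, since Lemma \ref{lem6.8} claims a clean $k_v^{-1/2}$ dependence. In the range $|u(u+1)|_v\leq k_v^{-\eta}$ for any fixed $\eta>0$, the absorption is free via $\log k_v\ll_{\varepsilon,\eta}|u(u+1)|_v^{-\varepsilon}$. In the complementary range $|u(u+1)|_v\gtrsim 1$, Bernstein is wasteful, and one instead identifies $\tfrac12\mathcal{P}_v(u)$ as the value of the Legendre function of the second kind $\mathfrak{Q}_{k_v/2-1}(x)$ on $(-1,1)$ and invokes its classical asymptotic $\mathfrak{Q}_n(\cos\theta)=O(n^{-1/2}(\sin\theta)^{-1/2})$, uniform on compact subsets of $(0,\pi)$; this gives the $k_v^{-1/2}$ bound without logarithm. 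Interpolating between these two regimes, together with the bound already obtained for the logarithmic term, then yields \eqref{6.34}.
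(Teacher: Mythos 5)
Your reindexing $m = k_v/2 - 2j$ and the parity observation that $k_v/2 - m - 1$ never vanishes are correct, and you are right to flag the logarithm: applying \eqref{6.33} termwise gives $\sum_{1\leq j\ll k_v}\frac{(k_v-4j+1)^{1/2}}{|(2j-1)(k_v/2-j)|}\asymp \sum_{j\geq 1}\frac{1}{(2j-1)\sqrt{k_v}}\asymp \frac{\log k_v}{\sqrt{k_v}}$, so the paper's assertion that \eqref{6.34.} holds with an absolute implied constant does overlook a factor of $\log k_v$. The statement of the lemma is still true, but the paper's triangle-inequality calculation does not quite close it; harmlessly so for the downstream estimates (Prop.~\ref{prop6.12}, Cor.~\ref{cor7.2}), since $\prod_{v\mid\infty}\log k_v\ll_{\varepsilon}\|\mathbf{k}\|^{\varepsilon}$ and those bounds already carry $\|\mathbf{k}\|^{\varepsilon}$ room.

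Your remedy via the Legendre function of the second kind is the right idea, but as written it has two gaps. The regimes $|u(u+1)|_v\leq k_v^{-\eta}$ and $|u(u+1)|_v\gtrsim 1$ are not complementary, and ``interpolating'' is not an argument: split instead at a single $k_v$-dependent threshold, say $|u(u+1)|_v = k_v^{-2}$. Below the threshold, $\log k_v\ll_{\varepsilon}|u(u+1)|_v^{-\varepsilon}$ absorbs the logarithm; above it, $\sin\theta = 2|u(u+1)|_v^{1/2}\gg k_v^{-1}$. For that upper regime, the asymptotic you cite as ``uniform on compact subsets of $(0,\pi)$'' is too weak, because after the split $\theta$ may still approach $0$ or $\pi$ as $k_v\to\infty$; what is needed is the stronger form, valid uniformly for $c n^{-1}\leq\theta\leq\pi - c n^{-1}$ (equivalently $n\sin\theta\gg 1$), which gives $|Q_n(\cos\theta)|\ll (n\sin\theta)^{-1/2}\asymp k_v^{-1/2}|u(u+1)|_v^{-1/4}$ directly and in fact proves the sharper bound without the $\varepsilon$ on that range. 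With these two repairs your argument closes and is tighter than the paper's.
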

\begin{proof}
Utilizing \eqref{6.33} we obtain 
\begin{align*}
|\mathcal{P}_v(u)|\leq & \bigg|P_{\frac{k_v}{2}-1}(2u+1)\log\Big|\frac{u}{u+1}\Big|_v +2\sum_{j=0}^{\floor{k_v/4}}\frac{k_v-4j+1}{(2j-1)(\frac{k_v}{2}-j)}P_{\frac{k_v}{2}-2j}(2u+1)\bigg|\\
\ll & \frac{k_v^{-1/2}|\log(|u|_v^{-1}-1
)|}{|u(u+1)|_v^{1/4}}+\sum_{j=0}^{\floor{k_v/4}}\frac{(k_v-4j+1)^{1/2}}{|(2j-1)(k_v/2-j)||u(u+1)|_v^{1/4}}+\frac{1}{k_v^2},
\end{align*}
where the term $k_v^{-2}$ measures the contribution from $j=\floor{k_v/4}$ when $k_v\in 4\mathbb{Z}$, along with the fact that $P_0(x)\equiv 1$. Estimate the sum over $j$ we obtain 
\begin{equation}\label{6.34.}
|\mathcal{P}_v(u)|\ll \frac{1+|\log(|u|_v^{-1}-1
)|}{k_v^{1/2}|u(u+1)|_v^{1/4}},\ \ \ |2u+1|_v\leq 1,
\end{equation}
where the implied constant is absolute. Let $\varepsilon>0$. Then for $|2u+1|_v\leq 1$, we have 
\begin{equation}\label{equa6.35}
|\log(|u|_v^{-1}-1
)|\ll_{\varepsilon} |u(u+1)|_v^{-\varepsilon},
\end{equation} 
where the implied constant depends only on $\varepsilon
$. Substituting \eqref{equa6.35} into \eqref{6.34.} leads to the desired estimate \eqref{6.34}.
\end{proof}

\begin{lemma}\label{lem6.9}
Let notation be as before. Let $v\mid\infty$ and $m_v\in \mathbb{Z}_{\geq 2}$. Suppose $2^{m_v-1}<|2u+1|_v\leq 2^{m_v}$. Then 
\begin{equation}\label{eq6.38}
|\mathcal{P}_v(u)|\ll \frac{2^{-k_v/2}k_v^{-1/2}}{|u(u+1)|_v^{1/2}(2^{m_v-1}-1)^{k_v/2-1}},
\end{equation}
where the implied constant is absolute.
\end{lemma}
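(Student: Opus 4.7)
Because $m_v\ge 2$ forces $|2u+1|_v>2$, we are in the regime where $\mathcal{P}_v(u)=2Q_{k_v/2-1}(2u+1)$. Set $K:=k_v/2-1\in\mathbb{Z}_{\ge1}$ and $x:=2u+1$; the target reduces to bounding $|Q_K(x)|$. My approach is to work directly from the integral representation \eqref{6.32}, extract the correct $K$-decay by a crude pointwise estimate of the denominator, and then re-package the result into the weighted form stated in \eqref{eq6.38}.

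The first step is a symmetry reduction: substituting $t\mapsto -t$ in \eqref{6.32} yields $Q_K(-x)=(-1)^{K+1}Q_K(x)$, hence $|Q_K(x)|=|Q_K(|x|)|$, and we may assume $x>1$. The second step is the main size estimate: in \eqref{6.32} we bound the denominator uniformly by $(x-t)^{K+1}\ge(x-1)^{K+1}$ (valid since $t\le 1<x$), and evaluate the remaining integral as a Beta function,
\[
\int_{-1}^{1}(1-t^2)^K\,dt \;=\; B\!\left(\tfrac12,K+1\right) \;=\; \frac{\sqrt{\pi}\,\Gamma(K+1)}{\Gamma(K+3/2)} \;\ll\; K^{-1/2},
\]
the last estimate by Stirling (or Wendel's inequality). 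This delivers the coarse bound
\[
|Q_K(x)| \;\ll\; \frac{2^{-K-1}}{K^{1/2}\,(x-1)^{K+1}}.
\]

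The third step converts $(x-1)^{-(K+1)}$ into the target shape. Writing
\[
\frac{1}{(x-1)^{K+1}} \;=\; \frac{1}{(x^{2}-1)^{1/2}\,(2^{m_v-1}-1)^{K}}\cdot\left(\frac{2^{m_v-1}-1}{x-1}\right)^{\!K}\cdot\left(\frac{x+1}{x-1}\right)^{\!1/2},
\]
the first bracketed factor is $\le 1$ because $x>2^{m_v-1}$, and on the dyadic annulus $x\in(2^{m_v-1},2^{m_v}]$ with $m_v\ge 2$ the second factor is bounded by $\sqrt{(2^{m_v-1}+1)/(2^{m_v-1}-1)}\le\sqrt{3}$. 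Using the identity $(2u+1)^{2}-1=4u(u+1)$, so $(x^{2}-1)^{1/2}=2|u(u+1)|_v^{1/2}$, and absorbing the constant factor into $\ll$, we recover \eqref{eq6.38}.

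\textbf{Main obstacle.} There is no analytic difficulty here; the only delicate point is the uniformity of the constant in the third step, which is precisely the reason the hypothesis is stated with $m_v\ge 2$ (so that $2^{m_v-1}-1\ge 1$ stays bounded away from $0$, keeping the ratio $(x+1)/(x-1)$ controlled throughout the annulus). The coarse pointwise bound $(x-t)\ge(x-1)$ is lossy at $t$ near $-1$, but the Beta-integral's $K^{-1/2}$ concentration near $t=0$ means this loss does not affect the final exponent, so no sharper argument (stationary phase on the Heine representation, etc.) is needed.
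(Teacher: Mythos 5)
Your proof is correct and uses essentially the same argument as the paper: bound the integrand in the representation \eqref{6.32} by replacing $(x-t)^{K+1}$ with $(x-1)^{K+1}$, evaluate the remaining Beta integral to extract the $k_v^{-1/2}$, then repackage. The only differences are organizational — you invoke the parity $Q_K(-x)=(-1)^{K+1}Q_K(x)$ and a single algebraic factorization, where the paper instead splits on the sign of $2u+1$ and on $m_v=2$ versus $m_v>2$ — but these are cosmetic; the route and the source of each factor are the same.
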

\begin{proof}
Taking advantage of the integral representation  \eqref{6.32},
\begin{align*}
\mathcal{P}_v(u)=2 Q_{\frac{k_v}{2}-1}(2u+1)=2^{1-k_v/2}\int_{-1}^1\frac{(1-t^2)^{k_v/2-1}}{(2u+1-t)^{k_v/2}}dt.
\end{align*}

Now we consider the scenario that $|2u+1|_v> 1$. Then  there exists a unique integer $m_v\in \mathbb{Z}_{\geq 1}$ such that $2^{m_v-1}<|2u+1|_v\leq 2^{m_v}$. 

\begin{itemize}
\item If $2u+1$ is positive in $F_v$, then $2\leq 2^{m_v-1}<|2u+1|_v\leq 2^{m_v}$. Thus 
\begin{align*}
|\mathcal{P}_v(u)|=2^{1-k_v/2}\int_{-1}^1\frac{(1-t^2)^{k_v/2-1}}{(2u+1-t)^{k_v/2}}dt\leq \frac{2^{1-k_v/2}}{(2^{m_v-1}-1)^{k_v/2}}\int_{-1}^1(1-t^2)^{k_v/2-1}dt.
\end{align*}

By the change of variable $t\mapsto \sqrt{t}$, we derive
\begin{align*}
\int_{-1}^1(1-t^2)^{k_v/2-1}dt=\int_0^1(1-t)^{k_v/2-1}t^{-1/2}dt=B(1/2,k_v/2)=\frac{\sqrt{\pi}\cdot\Gamma(k_v/2)}{\Gamma(k_v/2+1/2)}.
\end{align*}
In conjunction with Stirling formula, we obtain  
\begin{equation}\label{6.36}
|\mathcal{P}_v(u)|\leq \frac{2^{1-k_v/2}\sqrt{\pi}}{(2^{m_v-1}-1)^{k_v/2}}\cdot \frac{\Gamma(k_v/2)}{\Gamma(k_v/2+1/2)}\ll \frac{2^{-k_v/2}}{(2^{m_v-1}-1)^{k_v/2}}\cdot k_v^{-1/2},
\end{equation}
where the implied constant is absolute.
\item If $2u+1$ is negative in $F_v$, then $-2^{m_v}\leq 2u+1\leq -2^{m_v-1}\leq -2$. Thus 
\begin{align*}
|\mathcal{P}_v(u)|=2^{1-\frac{k_v}{2}}\int_{-1}^1\frac{(1-t^2)^{\frac{k_v}{2}-1}}{(-2(u+1)+1+t)^{\frac{k_v}{2}}}dt\leq \frac{2^{1-\frac{k_v}{2}}}{(2^{m_v-1}-1)^{\frac{k_v}{2}}}\int_{-1}^1(1-t^2)^{\frac{k_v}{2}-1}dt,
\end{align*}
which yields the same bound as \eqref{6.36}. 

Consider the following cases:
\begin{itemize}
\item If $m_v=2$, then it follows from $2^{m_v-1}<|2u+1|_v\leq 2^{m_v}$ that $3/4\leq |u(u+1)|_v\leq 15/4$. In this case \eqref{6.36} boils down to 
\begin{equation}\label{f6.38}
|\mathcal{P}_v(u)|\ll \frac{2^{-k_v/2}}{|u(u+1)|_v^{1/2}(2^{m_v-1}-1)^{k_v/2-1}}\cdot k_v^{-1/2},
\end{equation}
where the implied constant is absolute.

\item If $m_v>2$, then the constraint $2^{m_v-1}<|2u+1|_v\leq 2^{m_v}$ implies that $|u(u+1)|_v\geq 15/4$, and $|u(u+1)|_v^{1/2}\leq 2^{m_v-1}$. So \eqref{6.36} becomes 
\begin{equation}\label{f6.39}
|\mathcal{P}_v(u)|\ll \frac{2^{-\frac{k_v}{2}}k_v^{-\frac{1}{2}}}{(|u(u+1)|_v-1)^{\frac{1}{2}}(2^{m_v-1}-1)^{\frac{k_v}{2}-1}}\ll \frac{2^{-\frac{k_v}{2}}k_v^{-\frac{1}{2}}|u(u+1)|_v^{-\frac{1}{2}}}{(2^{m_v-1}-1)^{\frac{k_v}{2}-1}},
\end{equation}
where the implied constant is absolute. 
\end{itemize}
\end{itemize}

Combining \eqref{f6.38} with \eqref{f6.39} we then conclude \eqref{eq6.38}.
\end{proof}

\begin{lemma}\label{6.10}
Let notation be as before. Let $v\mid\infty$. Suppose $1<|2u+1|_v\leq 2$. Then 
\begin{equation}\label{eq6.36}
|\mathcal{P}_v(u)|\ll |u(u+1)|_v^{-1/2}k_v^{-1/2},
\end{equation}
where the implied constant is absolute.
\end{lemma}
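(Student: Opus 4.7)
The plan is to follow the approach of Lemma \ref{lem6.9}, but to handle the critical regime $1<|2u+1|_v\le 2$ via a sharper treatment of the integral representation, since the naive lower bound $(2u+1-t)\ge |2u+1|_v-1$ degenerates to zero here. By the symmetry $|Q_n(-x)|=Q_n(|x|)$ for $|x|>1$, together with the identity $|u(u+1)|_v=(|2u+1|_v^2-1)/4$, it suffices to treat $u\in(0,1/2]$, so that $\mathcal{P}_v(u)=2Q_{k_v/2-1}(2u+1)$ is positive and $|u(u+1)|_v=u(u+1)$.

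Starting from the integral representation \eqref{6.32} with $k=k_v/2-1$, and substituting $t=1-2\eta$ (using $1-(1-2\eta)^2=4\eta(1-\eta)$), a direct calculation that collapses the powers of $2$ produces the compact formula
\begin{equation*}
\mathcal{P}_v(u)\;=\;\int_0^1\frac{\eta^{k_v/2-1}(1-\eta)^{k_v/2-1}}{(u+\eta)^{k_v/2}}\,d\eta.
\end{equation*}
Using $\eta\le u+\eta$ to cancel $k_v/2-1$ powers of the denominator, and the inequality $\log(1-\eta)\le -\eta$ to replace $(1-\eta)^{k_v/2-1}$ by $e^{-(k_v/2-1)\eta}$, then extending the integration to $[0,\infty)$, reduces the bound to
\begin{equation*}
\mathcal{P}_v(u)\;\le\;\int_0^\infty\frac{e^{-(k_v/2-1)\eta}}{u+\eta}\,d\eta\;=\;e^{y}E_1(y), \qquad y:=(k_v/2-1)u,
\end{equation*}
where $E_1$ denotes the standard exponential integral.

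To extract the desired $k_v^{-1/2}$ saving, I would use the identity $e^yE_1(y)=\int_0^\infty e^{-yt}/(1+t)\,dt$ (obtained from Fubini applied to $(y+z)^{-1}=\int_0^\infty e^{-(y+z)t}\,dt$) and apply Cauchy--Schwarz, pairing $e^{-yt}$ with $(1+t)^{-1}$; combined with $\int_0^\infty e^{-2yt}\,dt=(2y)^{-1}$ and $\int_0^\infty(1+t)^{-2}\,dt=1$, this yields $e^yE_1(y)\le(2y)^{-1/2}$. Substituting $y=(k_v/2-1)u$ gives $\mathcal{P}_v(u)\ll(k_vu)^{-1/2}$. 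Since $u\in(0,1/2]$ forces $u+1\le 3/2$, I conclude $u^{-1/2}\le\sqrt{3/2}\,|u(u+1)|_v^{-1/2}$, which establishes \eqref{eq6.36}.

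The main obstacle is retaining both factors $k_v^{-1/2}$ and $|u(u+1)|_v^{-1/2}$ simultaneously. Simply dropping $(1-\eta)^{k_v/2-1}$ makes the resulting Laplace integral divergent, while any crude split of the $\eta$-interval yields only a logarithmic or $(k_vu)^{-1}$ improvement. Encoding $(1-\eta)^{k_v/2-1}$ as the exponential weight $e^{-(k_v/2-1)\eta}$ and then invoking Cauchy--Schwarz against $(1+t)^{-1}$ is the key maneuver that forces both savings to appear at once.
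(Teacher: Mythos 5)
Your proof is correct. It starts from the same integral representation \eqref{6.32} as the paper, and your symmetry reduction $u\mapsto-(1+u)$ (equivalently $|Q_n(-x)|=Q_n(|x|)$) replaces the paper's explicit two-case split on the sign of $2u+1$; your identity $\mathcal{P}_v(u)=\int_0^1\eta^{k_v/2-1}(1-\eta)^{k_v/2-1}(u+\eta)^{-k_v/2}\,d\eta$ checks out (the powers of $2$ do cancel exactly), as do the subsequent bounds. Where you diverge is in how the two savings are extracted: the paper uses the single pointwise inequality $(2u+1-t)^{k_v/2}\ge(2u)^{1/2}(1-t)^{k_v/2-1/2}$, which immediately reduces the integral to $B(k_v/2,1/2)\asymp k_v^{-1/2}$ by Stirling, whereas you route through the exponential integral $e^{y}E_1(y)$ and a Cauchy--Schwarz pairing to prove $e^{y}E_1(y)\le(2y)^{-1/2}$ with $y=(k_v/2-1)u$. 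The paper's argument is shorter and stays entirely within Beta-function estimates; yours is a bit longer but yields a clean closed formula for $\mathcal{P}_v(u)$ on $(0,1/2]$ and an explicit constant $((k_v-2)u)^{-1/2}$, and the interpolation mechanism (Cauchy--Schwarz against $(1+t)^{-1}$) is robust and would generalize to other exponents. Both arguments rely on $k_v\ge4$ so that $k_v/2-1>0$, which is in force throughout the section.
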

\begin{proof}
We consider the following cases according to the sign of $2u+1$.
\begin{itemize}
\item Suppose $2u+1$ is positive in $F_v$. It follows from $1<|2u+1|_v\leq 2$ that $0<u<1/2$ in $F_v$. Utilizing \eqref{6.32}, we obtain 
\begin{equation}\label{6.41}
|\mathcal{P}_v(u)|=2^{1-\frac{k_v}{2}}\int_{-1}^1\frac{(1-t^2)^{\frac{k_v}{2}-1}}{(2u+1-t)^{\frac{k_v}{2}}}dt\leq \frac{2^{1-\frac{k_v}{2}}}{u^{1/2}}\int_{-1}^1\frac{(1-t^2)^{\frac{k_v}{2}-1}}{(1-t)^{\frac{k_v}{2}-1/2}}dt.
\end{equation}

Changing the variable $1+t\mapsto t-1$, we derive 
\begin{equation}\label{6.42}
2^{1-\frac{k_v}{2}}\int_{-1}^1\frac{(1+t)^{\frac{k_v}{2}-1}}{(1-t)^{1/2}}dt=2^{1-\frac{k_v}{2}}\int_{0}^2\frac{t^{\frac{k_v}{2}-1}}{(2-t)^{1/2}}dt\ll \int_{0}^1\frac{t^{\frac{k_v}{2}-1}}{(1-t)^{1/2}}dt.
\end{equation}

Gathering \eqref{6.41} with \eqref{6.42}, along with the asymptotic behavior of Beta functions, we obtain 
\begin{equation}\label{6.43}
|\mathcal{P}_v(u)|\ll u^{-1/2}k_v^{-1/2},
\end{equation}
where the implied constant is absolute. 

\item Suppose $2u+1$ is negative in $F_v$. It follows from $1<|2u+1|_v\leq 2$ that $-3/2<u<-1$ in $F_v$. Utilizing \eqref{6.32}, we obtain 
\begin{align*}
|\mathcal{P}_v(u)|=2^{1-\frac{k_v}{2}}\int_{-1}^1\frac{(1-t^2)^{\frac{k_v}{2}-1}}{(-2(u+1)+1+t)^{\frac{k_v}{2}}}dt\leq \frac{2^{1-\frac{k_v}{2}}}{|u+1|_v^{1/2}}\int_{-1}^1\frac{(1-t^2)^{\frac{k_v}{2}-1}}{(1+t)^{\frac{k_v}{2}-1/2}}dt,
\end{align*}
which is $\ll |u+1|_v^{-1}k_v^{-1}$, with the implied constant being absolute.
\end{itemize}

Combining the above discussions we the conclude \eqref{eq6.36}.
\end{proof}

\subsubsection{Majorization of  $J_{\mathrm{reg}}(f_{\mathfrak{n},\mathfrak{q}},\textbf{0})$}\label{sec7.3.3}
Let $\textbf{m}=(m_v)_{v\mid\infty}\in \mathbb{Z}_{\geq 0}^{d_F}$. For $v\mid\infty$, we define the set of algebraic numbers
\begin{equation}\label{6.36.}
\mathfrak{S}_{\textbf{m}}:=\big\{u=(u_v)_{v\leq\infty}\in \mathfrak{q}\mathfrak{n}^{-1}-\{0,-1\}:\ u_v\in I_{m_v},\ v\mid\infty\big\},
\end{equation}
where  
\begin{align*}
\begin{cases}
I_{m_v}:=\big\{u\in \mathfrak{q}\mathfrak{n}^{-1}-\{0,-1\}:\ |2u+1|_v\leq 1\big\},\ \ &\text{if $m_v=0$,}\\
I_{m_v}:=\big\{u\in \mathfrak{q}\mathfrak{n}^{-1}-\{0,-1\}:\ 2^{m_v-1}<|2u+1|_v\leq 2^{m_v}\big\},\ \ &\text{if $m_v\geq 1$.}
\end{cases}
\end{align*}

By definition, we have the partition
\begin{equation}\label{6.37}
\mathfrak{q}\mathfrak{n}^{-1}-\{0,-1\}=\bigsqcup_{\textbf{m}=(m_v)_{v\mid\infty}\in \mathbb{Z}_{\geq 0}^{d_F}}\mathfrak{S}_{\textbf{m}}.
\end{equation}

\begin{lemma}\label{lem6.11}
Let $u\in \mathfrak{S}_{\textbf{m}}$, where $\textbf{m}=(m_v)_{v\mid\infty}\in \mathbb{Z}_{\geq 0}^{d_F}$. Let $m\in \mathbb{Z}_{\geq 1}$. Then 
\begin{equation}\label{6.40}
\sum_{\substack{u\in\mathfrak{S}_{\textbf{m}}\\
N(u)=N(\mathfrak{q})N(\mathfrak{n})^{-1}m}}1\ll \left(\frac{N(\mathfrak{n})}{mN(\mathfrak{q})}\right)^{\varepsilon}\prod_{v\mid\infty}(m_v+1),
\end{equation}
where the implied constant depends only on $\varepsilon$ and $F$.
\end{lemma}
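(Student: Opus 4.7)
\medskip

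The plan is to reduce the count to the classical ideal-unit decomposition. First, for any $u$ contributing to the sum, the fractional ideal $\mathfrak{a}:=(u)\mathfrak{n}\mathfrak{q}^{-1}$ is an integral ideal of $\mathcal{O}_F$ of absolute norm
\[
N(\mathfrak{a})=N(u)\cdot N(\mathfrak{n})/N(\mathfrak{q})=m.
\]
By the standard estimate on the ideal-divisor function, the number of integral ideals of $\mathcal{O}_F$ of norm $m$ is $\ll_\varepsilon m^\varepsilon$. Since $m=N(u)N(\mathfrak{n})/N(\mathfrak{q})$, this bound can be rewritten using any $\varepsilon>0$ and absorbed (after renaming $\varepsilon$) into the desired prefactor $(N(\mathfrak{n})/(mN(\mathfrak{q})))^\varepsilon$. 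Hence it suffices to bound, for each fixed $\mathfrak{a}$, the number of $u\in\mathfrak{S}_{\mathbf{m}}$ with $(u)\mathfrak{n}\mathfrak{q}^{-1}=\mathfrak{a}$ by $\ll\prod_{v\mid\infty}(m_v+1)$.

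Fix such an $\mathfrak{a}$ and assume the fractional ideal $\mathfrak{a}\mathfrak{q}\mathfrak{n}^{-1}$ is principal (otherwise no $u$ can contribute), with a generator $u_0$. Every $u$ with $(u)\mathfrak{n}\mathfrak{q}^{-1}=\mathfrak{a}$ is then of the form $u=\eta u_0$ for some unit $\eta\in\mathcal{O}_F^\times$. The archimedean constraint $u_v\in I_{m_v}$ becomes, on the log scale, a constraint on $\log|\eta|_v$: for $v\in V_+:=\{v\mid\infty:m_v\geq 1\}$, the dyadic annulus $I_{m_v}$ forces $\log|\eta|_v$ into an interval of length $\log 2+O(1)$; for $v\in V_0:=\{v\mid\infty:m_v=0\}$, we obtain only the upper bound $\log|\eta|_v\leq -\log|u_0|_v$.

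By Dirichlet's unit theorem the logarithmic unit lattice $\Lambda:=\log(\mathcal{O}_F^\times/\{\pm 1\})$ has rank $d_F-1$ inside the hyperplane $H:=\{x\in\mathbb{R}^{d_F}:\sum_v x_v=0\}$. The count of units satisfying the above constraints equals (up to the $2^{d_F}$ sign choices) the number of points of $\Lambda$ in a region cut out by $|V_+|$ intervals of bounded length and $|V_0|$ upper half-lines. The identity $\sum_v\log|u|_v=\log N(u)$ (a consequence of $\eta$ being a unit) pins down $\sum_{v\in V_0}\log|\eta u_0|_v$, and in conjunction with the upper bounds at $V_0$ confines each $\log|\eta|_v$ for $v\in V_0$ to an interval of length $O\bigl(\sum_{v\in V_+}m_v\bigr)$. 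A volume estimate for lattice points in a box in $H$ then gives a count $\ll\prod_{v\in V_+}(m_v+1)=\prod_{v\mid\infty}(m_v+1)$, which combined with the ideal count completes the argument.

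The main obstacle will be the unit-counting step in the case $|V_0|\geq 2$, where the constraint region has $|V_0|-1$ a priori unbounded directions inside $H$; here one has to convert the hyperplane identity together with the upper bounds at $V_0$ into a simplex-type count whose volume is controlled by $\sum_{v\in V_+}m_v$ (which in turn is $\ll\prod_{v\in V_+}(m_v+1)$ since each $m_v\geq 1$ on $V_+$). The other steps—the divisor-function bound and the pinning at $V_+$—are essentially routine once Dirichlet's unit theorem is in place.
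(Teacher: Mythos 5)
Your strategy is the same as the paper's: count integral ideals of norm $m$ by the divisor bound, then for each such ideal count the units via Dirichlet's theorem and a volume estimate on the logarithmic hyperplane. However, there is a concrete gap in the bookkeeping around the factor $\bigl(N(\mathfrak{n})/(mN(\mathfrak{q}))\bigr)^{\varepsilon}=N(u)^{-\varepsilon}$.

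You claim the divisor count $\ll m^{\varepsilon}$ "can be absorbed (after renaming $\varepsilon$) into the desired prefactor $(N(\mathfrak{n})/(mN(\mathfrak{q})))^{\varepsilon}$," but this is false: when $N(u)=mN(\mathfrak{q})/N(\mathfrak{n})\geq 1$ the prefactor is $\leq 1$ while $m^{\varepsilon}\geq 1$, and there is no inequality $m^{\varepsilon}\ll N(u)^{-\varepsilon'}$ in general. The divisor count and the prefactor are not interchangeable. In fact, the $\varepsilon$-power in the stated bound is there to absorb the term that you later drop in the unit-counting step: after choosing a reduced generator $u_0$ with $\log|u_0|_v=\tfrac{1}{d_F}\log N(u)+O(1)$, the interval to which each $\log|\eta|_v$ ($v\in V_0$) is confined has length $O\bigl(1+\sum_{v\in V_+}m_v-\log N(u_0)\bigr)$, not merely $O\bigl(\sum_{v\in V_+}m_v\bigr)$. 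The omitted term $-\log N(u_0)=\log\bigl(N(\mathfrak{n})/(mN(\mathfrak{q}))\bigr)$ is precisely what forces the $(N(\mathfrak{n})/(mN(\mathfrak{q})))^{\varepsilon}$ factor to appear; conflating it with the divisor count hides the issue rather than resolving it. (For comparison, the paper leaves a genuine $\prod_{v\mid\infty}\bigl|\log(|b_j|_v^{-1}2^{m_v+1})\bigr|$ as the intermediate unit-count bound and then invokes $N(b_j)=N(\mathfrak{q})N(\mathfrak{n})^{-1}m$, which is where the dependence on $N(u)$ is meant to be tracked.) You should therefore keep the $m^{\varepsilon}$ from the ideal count as a separate factor and show explicitly how the lattice-point estimate produces the $\bigl(N(\mathfrak{n})/(mN(\mathfrak{q}))\bigr)^{\varepsilon}\prod_v(m_v+1)$ bound from the interval lengths that actually arise.
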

\begin{proof}
Write $\mathfrak{a}=\mathfrak{q}^{-1}\mathfrak{n}$. Since $u\in \mathfrak{S}_{\textbf{m}}$, then $(u)\mathfrak{a}$ is an integral ideal with norm $m$. Note that there are $O(m^{\varepsilon})$ integral ideals $\{\mathfrak{b}_j\}$ of norm $m$, where the implied constant is absolute. Then 
\begin{equation}\label{6.38}
\sum_{\substack{u\in\mathfrak{S}_{\textbf{m}}\\
N(u)=N(\mathfrak{q})N(\mathfrak{n})^{-1}m}}1=\sum_j\sum_{\substack{u\in\mathfrak{S}_{\textbf{m}}\\
(u)\mathfrak{a}=\mathfrak{b}_j}}1.
\end{equation}
So $\mathfrak{a}^{-1}\mathfrak{b}_j=(u)$ is a principal ideal. Write $\mathfrak{a}^{-1}\mathfrak{b}_j=(b_j)$ for some $b_j\in F^{\times}$. Then 
\begin{equation}\label{6.39}
\sum_{\substack{u\in\mathfrak{S}_{\textbf{m}}\\
(u)\mathfrak{a}=\mathfrak{b}_j}}1=\sum_{\substack{u\in\mathfrak{S}_{\textbf{m}}\\
u\in b_j\mathcal{O}_F^{\times}}}1=\sum_{\substack{\gamma
\in \mathcal{O}_F^{\times}\\
\gamma\in b_j^{-1}\mathfrak{S}_{\textbf{m}}}}1\ll \prod_{v\mid\infty}\bigg|\log (|b_j|_v^{-1}2^{m_v+1})\bigg|,
\end{equation}
where the last inequality is a consequence of the Dirichlet unit theorem and a volume calculation under the logarithmic map, and the implied constant depends on the base field $F$.

Therefore, \eqref{6.40} follows from \eqref{6.38}, \eqref{6.39}, and the fact that $N(b_j)=N(u)=N(\mathfrak{q})N(\mathfrak{n})^{-1}m$. 
\end{proof}

\begin{proof}[Proof of Proposition \ref{prop6.12}]
As a consequence of the decomposition \eqref{6.37}, we have
\begin{equation}\label{6.50}
\mathcal{P}_{\mathfrak{q},\mathfrak{n}}:=\sum_{\substack{u\in \mathfrak{q}\mathfrak{n}^{-1}\\
u\not\in \{0,-1\}}}\mathcal{P}(u)\prod_{v<\infty}e_{v,\mathfrak{n}}(u)=\sum_{\textbf{m}=(m_v)_{v\mid\infty}\in \mathbb{Z}_{\geq 0}^{d_F}}\sum_{\substack{u\in \mathfrak{S}_{\textbf{m}}}}\mathcal{P}(u)\prod_{v<\infty}e_{v,\mathfrak{n}}(u),
\end{equation}
where $\mathcal{P}(u)$ and $e_{v,\mathfrak{n}}(u)$ are defined as in Proposition \ref{prop6.6}. 

Since $N(1+u)\leq N(u)$ amounts to $N(-(1+u))=N(1+u)\leq N(-(1+u)+1)$. Hence, upon the changing of variable $u\mapsto -(1+u)$, which is invariant in the domain $F-\{0,-1\}$, we derive from \eqref{6.50} that
\begin{equation}\label{6.51}
\big|\mathcal{P}_{\mathfrak{q},\mathfrak{n}}\big|\leq \sum_{\textbf{m}=(m_v)_{v\mid\infty}\in \mathbb{Z}_{\geq 0}^{d_F}}\sum_{\substack{u\in \mathfrak{S}_{\textbf{m}}\\ N(u)\leq N(1+u)}}\mathcal{P}(u)\prod_{v<\infty}e_{v,\mathfrak{n}}(u).
\end{equation}

Let $\textbf{m}=(m_v)_{v\mid\infty}\in \mathbb{Z}_{\geq 0}^{d_F}$, and  $u\in\mathfrak{S}_{\textbf{m}}$. Then $
N(u)=N(\mathfrak{q})N(\mathfrak{n})^{-1}m$ for some positive integer $m$. By Lemmas \ref{lem6.8}, \ref{lem6.9}, and \ref{6.10}, we obtain 
\begin{align*}
|\mathcal{P}(u)|\ll \prod_{\substack{v\mid\infty\\ m_v=0}}\frac{k_v^{-1/2}}{|u(u+1)|_v^{\frac{1}{4}+\varepsilon}}\cdot \prod_{\substack{v\mid\infty\\ m_v=1}}\frac{k_v^{-1/2}}{|u(u+1)|_v^{\frac{1}{2}}}\prod_{\substack{v\mid\infty\\ m_v\geq 2}}\frac{k_v^{-1/2}}{|u(u+1)|_v^{\frac{1}{2}}(2^{m_v}-2)^{\frac{k_v}{2}-1}},
\end{align*}
where the implied constant is absolute. Notice that 
\begin{align*}
\prod_{\substack{v\mid\infty,\ m_v=0}}|u(u+1)|_v^{\frac{1}{4}-\varepsilon}\ll 1.
\end{align*}
Substituting this into the above bound for $|\mathcal{P}(u)|$ leads to
\begin{equation}\label{6.52}
|\mathcal{P}(u)|\ll \frac{1}{N(u(u+1))^{1/2}}\prod_{v\mid\infty}k_v^{-1/2}\prod_{\substack{v\mid\infty\\ m_v\geq 2}}\frac{1}{(2^{m_v}-2)^{k_v/2-1}}
\end{equation}
for $u\in\mathfrak{S}_{\textbf{m}}$. Moreover, according to the definition \eqref{6.29}, we have, for $u\in\mathfrak{S}_{\textbf{m}}$, and for $\varepsilon>0$, that 
\begin{equation}\label{6.54}
|e_{v,\mathfrak{n}}(u)|\ll N((u)\mathfrak{n})^{\varepsilon}\ll N(\mathfrak{n})^{\varepsilon}N(u)^{\varepsilon},
\end{equation}
where the implied constant depends at most on $\varepsilon$ and $F$. 

Substituting \eqref{6.52} and \eqref{6.54} into \eqref{6.51} we obtain 
\begin{equation}\label{6.55}
\big|\mathcal{P}_{\mathfrak{q},\mathfrak{n}}\big|\ll \sum_{\textbf{m}=(m_v)_{v\mid\infty}\in \mathbb{Z}_{\geq 0}^{d_F}}\sum_{u\in \mathfrak{S}_{\textbf{m}}}\frac{N(\mathfrak{n})^{\varepsilon}}{N(u)^{1-\varepsilon}}\prod_{v\mid\infty}k_v^{-\frac{1}{2}}\prod_{\substack{v\mid\infty\\ m_v\geq 2}}\frac{1}{2^{m_v}},
\end{equation}
where we have made use of the inequality that $(2^{m_v}-2)^{k_v/2-1}\geq 2^{m_v-1}$, which follows from the assumption that $k_v\geq 4$ for each $v\mid\infty$. 

Let $u\in\mathfrak{S}_{\textbf{m}}$, we write $N(u)=N(\mathfrak{q}\mathfrak{n}^{-1})m$ for a unique $m\in \mathbb{Z}_{\geq 1}$. Then 
\begin{align*}
N(\mathfrak{q}\mathfrak{n}^{-1})m=\prod_{v\mid\infty}|u|_v\ll \prod_{v\mid\infty,\ m_v\geq 2}(2^{m_v-1}+1/2)\leq \prod_{v\mid\infty,\ m_v\geq 2}2^{m_v},
\end{align*}
where the implied constant is absolute. Hence, 
\begin{equation}\label{6.56}
\prod_{v\mid\infty,\ m_v\geq 2}2^{3\varepsilon m_v}\gg  N(\mathfrak{q}\mathfrak{n}^{-1})^{3\varepsilon}m^{3\varepsilon},
\end{equation}
where the implied constant depends on $d_F$, the degree of $F/\mathbb{Q}$. So \eqref{6.55} implies 
\begin{equation}\label{6.57}
\big|\mathcal{P}_{\mathfrak{q},\mathfrak{n}}\big|\ll \frac{N(\mathfrak{n})^{1+4\varepsilon}}{N(\mathfrak{q})^{1+3\varepsilon}}\sum_{m\geq 1}\sum_{\textbf{m}}\sum_{\substack{u\in \mathfrak{S}_{\textbf{m}}\\ N(u)=N(\mathfrak{q}\mathfrak{n}^{-1})m}}\frac{1}{m^{1+2\varepsilon}}\prod_{v\mid\infty}k_v^{-\frac{1}{2}}\prod_{\substack{v\mid\infty\\ m_v\geq 2}}\frac{1}{2^{(1-3\varepsilon)m_v}},
\end{equation}
where $\textbf{m}=(m_v)_{v\mid\infty}\in \mathbb{Z}_{\geq 0}^{d_F}$. 

Utilizing Lemma \ref{lem6.11} to bound the inner sum in \eqref{6.57}, we deduce 
\begin{equation}\label{6.59}
\big|\mathcal{P}_{\mathfrak{q},\mathfrak{n}}\big|\ll \frac{N(\mathfrak{n})^{1+4\varepsilon}}{N(\mathfrak{q})^{1+3\varepsilon}}\prod_{v\mid\infty}k_v^{-\frac{1}{2}}\sum_{m\geq 1}\frac{1}{m^{1+2\varepsilon}}\prod_{\substack{v\mid\infty}}\bigg[\sum_{m_v\geq 0}\frac{m_v+1}{2^{(1-3\varepsilon)m_v}}\bigg]\ll \frac{N(\mathfrak{n})^{1+4\varepsilon}}{N(\mathfrak{q})^{1+3\varepsilon}}\prod_{v\mid\infty}k_v^{-\frac{1}{2}},
\end{equation}
where the implied constant depends at most on $\varepsilon$ and $F$.  

Therefore, the estimate \eqref{6.49} follows from \eqref{6.59} and the expression \eqref{6.28} in Proposition \ref{prop6.6}.
\end{proof}

\section{The Mollified Second Moment}\label{sec8}

\subsection{The $\lambda_{\pi}(\mathfrak{n})$-weighted Second Moment}\label{sec8.1}
\begin{thmx}\label{thm7.1}
Let notation be as before. Let $\mathbf{k}=(k_v)_{v\mid\infty}\in \mathbb{Z}_{>2}^{d_F}$, where $k_v$ is even, $v\mid\infty$. Let $\mathfrak{q}$ be either $\mathcal{O}_F$ or a prime ideal. Let $\mathfrak{n}\subseteq \mathcal{O}_F$ be an integral ideal with $(\mathfrak{n},\mathfrak{q})=1$. Let $\chi_{\mathfrak{q}}$ be the nontrivial unramified quadratic character of $F_{\mathfrak{q}}^{\times}$ if $\mathfrak{q}\subsetneq\mathcal{O}_F$. Let $\varepsilon>0$, and $\mathcal{C}_{\varepsilon}:=\big\{z\in\mathbb{C}:\ |z|=\varepsilon\big\}$. Then 
\begin{align*}
&\frac{1}{\zeta_{\mathfrak{q}}(2)^2}\sum_{\substack{\pi\in \mathcal{F}(\mathbf{k},\mathfrak{q})}}\frac{\lambda_{\pi}(\mathfrak{n})L(1/2,\pi)^2}{L^{(\mathfrak{q})}(1,\pi,\Ad)}+\frac{2V_{\mathfrak{q}}\textbf{1}_{\mathfrak{q}\subsetneq \mathcal{O}_F}}{N(\mathfrak{q})}\sum_{\substack{\pi\in \mathcal{F}(\mathbf{k},\mathcal{O}_F)}}\frac{\lambda_{\pi}(\mathfrak{n})L_{\pi_{\mathfrak{q}}}L(1/2,\pi)^2}{L(1,\pi,\Ad)}\\
=&\frac{(N(\mathfrak{q})+1)\cdot \delta_{\mathbf{k},\mathfrak{q}}}{2\pi i}\oint_{\mathcal{C}_{\varepsilon}}\frac{\zeta_F(1+s)G_{\mathfrak{n},\mathfrak{q}}(s)}{s}ds+\frac{2D_FV_{\mathfrak{q}}}{N(\mathfrak{n})^{\frac{1}{2}}}\prod_{v\mid\infty}\frac{k_v-1}{2\pi}\sum_{\substack{u\in \mathfrak{q}\mathfrak{n}^{-1}\\
u\not\in \{0,-1\}}}\mathcal{P}(u)\prod_{v<\infty}e_{v,\mathfrak{n}}(u),
\end{align*}
where we define $\zeta_{\mathfrak{q}}(2)=1$ and $L^{(\mathfrak{q})}(1,\pi,\Ad)=L(1,\pi,\Ad)$ if $\mathfrak{q}=\mathcal{O}_F$, $V_{\mathfrak{q}}$ is defined by \eqref{1.1}, $\delta_{\mathbf{k},\mathfrak{q}}=\textbf{1}_{\mathfrak{q}\subsetneq \mathcal{O}_F}+\textbf{1}_{\mathfrak{q}=\mathcal{O}_F  \& \sum_{v\mid\infty}k_v\equiv 0\pmod{4}}$,
$L_{\pi_{\mathfrak{q}}}:=L_{\mathfrak{q}}(1/2,\pi_{\mathfrak{q}}\times\chi_{\mathfrak{q}})$, and 
\begin{equation}\label{G}
G_{\mathfrak{n},\mathfrak{q}}(s):=2(1+N(\mathfrak{q})^s)D_F^{\frac{3}{2}+s}\prod_{v\mid\infty}\frac{(k_v-1)\Gamma((k_v+s)/2)^2}{2\cdot (2\pi)^{1+s}\pi\Gamma(k_v/2)^2}\cdot \frac{\tau(\mathfrak{n})}{N(\mathfrak{n})^{(1+s)/2}}.
\end{equation}	
\end{thmx}
\begin{proof}
Utilizing the regularized relative trace formula we obtain 
\begin{align*}
J_{\mathrm{Spec}}(f_{\mathfrak{n},\mathfrak{q}},\textbf{0})=J_{\mathrm{sing}}(f_{\mathfrak{n},\mathfrak{q}},\textbf{0})+J_{\mathrm{reg}}(f_{\mathfrak{n},\mathfrak{q}},\textbf{0}).\tag{\ref{1.13}}
\end{align*}

By Theorem \ref{spec} in \textsection\ref{sec2.3}, we obtain 
\begin{align*}
J_{\mathrm{Spec}}(f_{\mathfrak{n},\mathfrak{q}},\textbf{0})=J_{\mathrm{Spec}}^{\mathrm{new}}(f_{\mathfrak{n},\mathfrak{q}},\textbf{0})+J_{\mathrm{Spec}}^{\mathrm{old}}(f_{\mathfrak{n},\mathfrak{q}},\textbf{0}),\tag{\ref{f2.15}}
\end{align*} 
where  
\begin{align*}
J_{\mathrm{Spec}}^{\mathrm{new}}(f_{\mathfrak{n},\mathfrak{q}},\textbf{0})=&\frac{1}{2D_F^2}\prod_{v\mid\infty}\frac{2^{k_v}\pi\Gamma(k_v/2)^2}{\Gamma(k_v)}\sum_{\substack{\pi\in \mathcal{F}(\mathbf{k},\mathfrak{q})}}\frac{\lambda_{\pi}(\mathfrak{n})L(1/2,\pi)^2}{\zeta_{\mathfrak{q}}(2)^2L^{(\mathfrak{q})}(1,\pi,\Ad)},\\
J_{\mathrm{Spec}}^{\mathrm{old}}(f_{\mathfrak{n},\mathfrak{q}},\textbf{0})=&\frac{1}{2D_F^2}\prod_{v\mid\infty}\frac{2^{k_v}\pi \Gamma(k_v/2)^2}{\Gamma(k_v)}\sum_{\substack{\pi\in \mathcal{F}(\mathbf{k},\mathcal{O}_F)}}\frac{\lambda_{\pi}(\mathfrak{n})C_{\pi_{\mathfrak{q}}}(\textbf{0})L(1/2,\pi)^2}{L(1,\pi,\Ad)}.
\end{align*}
Here $C_{\pi_{\mathfrak{q}}}(\textbf{s})$ is defined as in \eqref{equ2.18} in Lemma \ref{lem2.4}. Hence, 
\begin{align*}
C_{\pi_{\mathfrak{q}}}(\textbf{0})=&1+L_{\mathfrak{q}}(1/2,\pi_{\mathfrak{q}})L_{\mathfrak{q}}(1/2,\pi_{\mathfrak{q}}\times\chi_{\mathfrak{q}})\prod_{j=1}^2(1+N(\mathfrak{q})^{-1}-\lambda_{\pi}(\mathfrak{q})N(\mathfrak{q})^{-1/2})\\
=&1+\frac{L_{\mathfrak{q}}(1/2,\pi_{\mathfrak{q}}\times\chi_{\mathfrak{q}})}{L_{\mathfrak{q}}(1/2,\pi_{\mathfrak{q}})}=1+\frac{1+N(\mathfrak{q})^{-1}-\lambda_{\pi}(\mathfrak{q})N(\mathfrak{q})^{-1/2}}{1+N(\mathfrak{q})^{-1}+\lambda_{\pi}(\mathfrak{q})N(\mathfrak{q})^{-1/2}}.
\end{align*}

Therefore, we derive that
\begin{equation}\label{equ7.1}
C_{\pi_{\mathfrak{q}}}(\textbf{0})=2(1+N(\mathfrak{q})^{-1})L_{\mathfrak{q}}(1/2,\pi_{\mathfrak{q}}\times\chi_{\mathfrak{q}})=2V_{\mathfrak{q}}N(\mathfrak{q})^{-1}L_{\mathfrak{q}}(1/2,\pi_{\mathfrak{q}}\times\chi_{\mathfrak{q}}).	
\end{equation}

Recall the description of in Proposition \ref{prop5.3} in \textsection\ref{sec5}:
\begin{equation}\label{c7.2}
J_{\mathrm{sing}}(f_{\mathfrak{n},\mathfrak{q}},\textbf{0})=\frac{N(\mathfrak{q})+1}{2}\cdot\frac{1}{2\pi i}\oint_{\mathcal{C}_{\varepsilon}}\frac{\zeta_F(1+s)H_{\mathfrak{n}}(s)(1+N(\mathfrak{q})^s)}{s}ds,	
\end{equation}
where $H_{\mathfrak{n}}(s)$ is defined as in \eqref{eq5.2}, namely,
\begin{align*}
H_{\mathfrak{n}}(s):=2\prod_{v\mid\infty}\frac{2^{k_v}(k_v-1)\Gamma((k_v+s)/2)^2}{2\cdot (2\pi)^{1+s}\Gamma(k_v)}\cdot \frac{\tau(\mathfrak{n})}{D_F^{1/2-s}N(\mathfrak{n})^{(1+s)/2}}.
\end{align*}

By Proposition \ref{prop6.6} in \textsection\ref{sec1.3.5}, $J_{\mathrm{reg}}(f_{\mathfrak{n},\mathfrak{q}},\textbf{0})$ is equal to 
\begin{align*}
\frac{V_{\mathfrak{q}}D_F^{-1}}{N(\mathfrak{n})^{\frac{1}{2}}}\prod_{v\mid\infty}\frac{2^{k_v-1}(k_v-1)}{\pi}B\left(\frac{k_v}{2},\frac{k_v}{2}\right)\sum_{\substack{u\in \mathfrak{q}\mathfrak{n}^{-1}\\
u\not\in \{0,-1\}}}\mathcal{P}(u)\prod_{v<\infty}e_{v,\mathfrak{n}}(u),\tag{\ref{6.28}}
\end{align*}
where $\mathcal{P}(u)$ are defined via Legendre functions (or polynomials), and $e_{v,\mathfrak{n}}(u)$ is the evaluation function (cf. Proposition \ref{prop6.6}).

Therefore, Theorem \ref{thm7.1} follows from substituting \eqref{f2.15}, \eqref{equ7.1}, \eqref{c7.2}, and \eqref{6.28} into the relative trace formula \eqref{1.13}.
\end{proof}

\begin{cor}\label{cor7.2}
Let notation be as before. Let $\mathbf{k}=(k_v)_{v\mid\infty}\in \mathbb{Z}_{>2}^{d_F}$, where $k_v$ is even, $v\mid\infty$. Let $\mathfrak{q}$ be either $\mathcal{O}_F$ or a prime ideal. Let $\mathfrak{n}\subseteq \mathcal{O}_F$ be an integral ideal with $(\mathfrak{n},\mathfrak{q})=1$. Let $\chi_{\mathfrak{q}}$ be the nontrivial unramified quadratic character of $F_{\mathfrak{q}}^{\times}$ if $\mathfrak{q}\subsetneq\mathcal{O}_F$. We have
\begin{align*}
&\frac{1}{\zeta_{\mathfrak{q}}(2)^2}\sum_{\substack{\pi\in \mathcal{F}(\mathbf{k},\mathfrak{q})}}\frac{\lambda_{\pi}(\mathfrak{n})L(1/2,\pi)^2}{L^{(\mathfrak{q})}(1,\pi,\Ad)}+\frac{2V_{\mathfrak{q}}\textbf{1}_{\mathfrak{q}\subsetneq \mathcal{O}_F}}{N(\mathfrak{q})}\sum_{\substack{\pi\in \mathcal{F}(\mathbf{k},\mathcal{O}_F)}}\frac{\lambda_{\pi}(\mathfrak{n})L_{\pi_{\mathfrak{q}}}L(1/2,\pi)^2}{L(1,\pi,\Ad)}\\
=&\frac{(N(\mathfrak{q})+1)\cdot \delta_{\mathbf{k},\mathfrak{q}}}{2\pi i}\oint_{\mathcal{C}_{\varepsilon}}\frac{\zeta_F(1+s)G_{\mathfrak{n},\mathfrak{q}}(s)}{s}ds+O(N(\mathfrak{n})^{1/2+\varepsilon}N(\mathfrak{q})^{\varepsilon}\|\mathbf{k}\|^{1/2+\varepsilon}),
\end{align*}
where $\|\mathbf{k}\|:=\prod_{v\mid\infty}k_v$, and the implied constant depends only on $\varepsilon$ and $F$. 
\end{cor}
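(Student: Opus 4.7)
The plan is to deduce the corollary directly from Theorem \ref{thm7.1} by bounding the regular-orbital contribution on the right-hand side. The only term in the statement of Theorem \ref{thm7.1} that does not already appear in Corollary \ref{cor7.2} is the weighted sum
\[
\mathcal{R}:=\frac{2D_FV_{\mathfrak{q}}}{N(\mathfrak{n})^{\frac{1}{2}}}\prod_{v\mid\infty}\frac{k_v-1}{2\pi}\sum_{\substack{u\in \mathfrak{q}\mathfrak{n}^{-1}\\ u\not\in \{0,-1\}}}\mathcal{P}(u)\prod_{v<\infty}e_{v,\mathfrak{n}}(u),
\]
so the task reduces to showing $\mathcal{R}\ll N(\mathfrak{n})^{1/2+\varepsilon}N(\mathfrak{q})^{\varepsilon}\|\mathbf{k}\|^{1/2+\varepsilon}$.

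First I would compare $\mathcal{R}$ with $J_{\mathrm{reg}}(f_{\mathfrak{n},\mathfrak{q}},\mathbf{0})$ using the exact formula \eqref{6.28} from Proposition \ref{prop6.6}. Taking the ratio of the two archimedean and non-archimedean prefactors, I find
\[
\mathcal{R}=2D_F^{2}\cdot J_{\mathrm{reg}}(f_{\mathfrak{n},\mathfrak{q}},\mathbf{0})\cdot\prod_{v\mid\infty}\frac{1}{2^{k_v}B(k_v/2,k_v/2)},
\]
so every ideal-dependent factor agrees and the only remaining work is archimedean bookkeeping.

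Next I would plug in Proposition \ref{prop6.12}, which gives
\[
J_{\mathrm{reg}}(f_{\mathfrak{n},\mathfrak{q}},\mathbf{0})\ll N(\mathfrak{n})^{\frac{1}{2}+\varepsilon}N(\mathfrak{q})^{\varepsilon}\prod_{v\mid\infty}\frac{2^{k_v-1}(k_v-1)}{\pi\sqrt{k_v}}B\left(\frac{k_v}{2},\frac{k_v}{2}\right).
\]
Substituting this into the identity above, the Beta factors cancel completely and one is left with
\[
\mathcal{R}\ll D_F^{2}\cdot N(\mathfrak{n})^{\frac{1}{2}+\varepsilon}N(\mathfrak{q})^{\varepsilon}\prod_{v\mid\infty}\frac{k_v-1}{\pi\sqrt{k_v}}.
\]
Since $(k_v-1)/\sqrt{k_v}\leq \sqrt{k_v}$, the archimedean product is $\leq \pi^{-d_F}\prod_{v\mid\infty}\sqrt{k_v}=\pi^{-d_F}\|\mathbf{k}\|^{1/2}$, and absorbing the $F$-dependent constants $D_F^2$ and $\pi^{-d_F}$ into the implied constant yields the desired bound $\mathcal{R}\ll N(\mathfrak{n})^{1/2+\varepsilon}N(\mathfrak{q})^{\varepsilon}\|\mathbf{k}\|^{1/2+\varepsilon}$.

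This deduction is essentially bookkeeping; the substantive content has already been done in Propositions \ref{prop6.6} and \ref{prop6.12}. The only subtlety, and the one step worth checking carefully, is the archimedean cancellation: the Stirling asymptotic $B(k_v/2,k_v/2)\sim \sqrt{\pi/k_v}\,2^{1-k_v}$ shows $2^{k_v}B(k_v/2,k_v/2)\asymp\sqrt{k_v}^{-1}$, and this factor is precisely what absorbs the $k_v^{-1/2}$ saving from Proposition \ref{prop6.12} (obtained from the uniform bounds on Legendre functions in Lemmas \ref{lem6.8}--\ref{6.10}), so the ostensibly exponentially large weight $\prod_v 2^{k_v-1}(k_v-1)B(k_v/2,k_v/2)$ collapses to a polynomial $\|\mathbf{k}\|^{1/2}$ after cancellation. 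Once this is verified term by term, combining with Theorem \ref{thm7.1} gives the corollary.
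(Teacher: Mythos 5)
Your deduction is correct and is essentially the paper's own proof: the paper simply says to rerun the argument of Theorem \ref{thm7.1} with the exact formula \eqref{6.28} replaced by the bound of Proposition \ref{prop6.12}, which is exactly the substitution you carry out (your explicit verification that $2^{k_v}B(k_v/2,k_v/2)\asymp k_v^{-1/2}$ cancels the Beta-function weights is the right bookkeeping and matches how Proposition \ref{prop6.12} is stated).
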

\begin{proof}
Corollary \ref{cor7.2} follows by a similar argument to the proof of Theorem \ref{thm7.1}, with the replacement of \eqref{6.28} therein by Proposition \ref{prop6.12} in \textsection\ref{sec1.3.5}.
\end{proof}

\subsection{Contribution From Old Forms}\label{sec8.2}
Let $\mathfrak{q}$ be a prime ideal. Let $\mathfrak{n}\subseteq \mathcal{O}_F$ be an integral ideal with $(\mathfrak{n},\mathfrak{q})=1$. Define 
\begin{equation}\label{y7.4}
J_{\mathrm{Spec}}^{\mathrm{old}}(\mathfrak{n}):=\frac{2V_{\mathfrak{q}}}{N(\mathfrak{q})}\sum_{\substack{\pi\in \mathcal{F}(\mathbf{k},\mathcal{O}_F)}}\frac{\lambda_{\pi}(\mathfrak{n})L_{\pi_{\mathfrak{q}}}L(1/2,\pi)^2}{L(1,\pi,\Ad)}.
\end{equation}
where  $L_{\pi_{\mathfrak{q}}}:=L_{\mathfrak{q}}(1/2,\pi_{\mathfrak{q}}\times\chi_{\mathfrak{q}})$. Here $\chi_{\mathfrak{q}}$ is the nontrivial unramified quadratic character of $F_{\mathfrak{q}}^{\times}$.
\begin{lemma}\label{lem7.3}
Let notation be as before. Let $\mathfrak{q}$ be a prime ideal. Let $J_{\mathrm{Spec}}^{\mathrm{old}}(\mathfrak{n})$ be defined as in \eqref{y7.4}. Let $0<\varepsilon<10^{-3}$. Then 
\begin{align*}
J_{\mathrm{Spec}}^{\mathrm{old}}(\mathfrak{n})=\frac{2V_{\mathfrak{q}}}{N(\mathfrak{q})}\cdot\frac{2\delta_{\mathbf{k}}}{2\pi i}\oint_{\mathcal{C}_{\varepsilon}}\frac{\zeta_F(1+s)G_{\mathfrak{n},\mathcal{O}_F}(s)}{s(1+N(\mathfrak{q})^{-1-s/2})^2}ds+O(N(\mathfrak{n})^{1/2+\varepsilon}N(\mathfrak{q})^{\varepsilon}\|\mathbf{k}\|^{1/2+\varepsilon}),
\end{align*}
where $\delta_{\mathbf{k}}:=\textbf{1}_{\sum_{v\mid\infty}k_v\equiv 0\pmod{4}}$, and the implied constant depends only on $\varepsilon$ and $F$. 
\end{lemma}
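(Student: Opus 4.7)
The plan is to reduce Lemma \ref{lem7.3} to the already-established formula Corollary \ref{cor7.2} (applied at the level $\mathcal{O}_F$, with $\mathfrak{n}$ replaced by $\mathfrak{n}\mathfrak{q}^m$) by expanding the local factor $L_{\pi_{\mathfrak{q}}}$ as a Dirichlet series in the Hecke eigenvalues $\lambda_\pi(\mathfrak{q}^m)$. Since $\pi_{\mathfrak{q}}$ is unramified and $\chi_{\mathfrak{q}}(\varpi_{\mathfrak{q}})=-1$, the geometric-series expansion at $s=1/2$ gives
\begin{equation*}
L_{\pi_{\mathfrak{q}}}=L_{\mathfrak{q}}(1/2,\pi_{\mathfrak{q}}\times\chi_{\mathfrak{q}})=\sum_{m\geq 0}(-1)^m\lambda_{\pi}(\mathfrak{q}^m)N(\mathfrak{q})^{-m/2}.
\end{equation*}
Coupling this with the Hecke multiplicativity $\lambda_{\pi}(\mathfrak{n})\lambda_{\pi}(\mathfrak{q}^m)=\lambda_{\pi}(\mathfrak{n}\mathfrak{q}^m)$ (valid since $(\mathfrak{n},\mathfrak{q})=1$) rewrites
\begin{equation*}
J_{\mathrm{Spec}}^{\mathrm{old}}(\mathfrak{n})=\frac{2V_{\mathfrak{q}}}{N(\mathfrak{q})}\sum_{m\geq 0}(-1)^m N(\mathfrak{q})^{-m/2}\sum_{\pi\in\cF(\mathbf{k},\cO_F)}\frac{\lambda_{\pi}(\mathfrak{n}\mathfrak{q}^m)L(1/2,\pi)^2}{L(1,\pi,\Ad)}.
\end{equation*}

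Next, I would truncate the outer sum at $m\leq M$ for a parameter $M$ to be chosen (roughly $M\asymp\log\|\mathbf{k}\|/\log N(\mathfrak{q})$), and apply Corollary \ref{cor7.2} with $\mathfrak{q}$ replaced by $\mathcal{O}_F$ and $\mathfrak{n}$ replaced by $\mathfrak{n}\mathfrak{q}^m$ to each inner sum. Note that when $\mathfrak{q}=\cO_F$ the second term of the LHS in Corollary \ref{cor7.2} vanishes, $\zeta_{\cO_F}(2)=1$, $(N(\cO_F)+1)=2$, and $\delta_{\mathbf{k},\cO_F}=\delta_{\mathbf{k}}$, so the output is directly $\frac{2\delta_{\mathbf{k}}}{2\pi i}\oint_{\cC_\varepsilon}\zeta_F(1+s)G_{\mathfrak{n}\mathfrak{q}^m,\cO_F}(s)s^{-1}ds+O(N(\mathfrak{n}\mathfrak{q}^m)^{1/2+\varepsilon}\|\mathbf{k}\|^{1/2+\varepsilon})$. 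The crucial identity is that, by the definition \eqref{G} and $\tau(\mathfrak{n}\mathfrak{q}^m)=(m+1)\tau(\mathfrak{n})$, one has $G_{\mathfrak{n}\mathfrak{q}^m,\cO_F}(s)=G_{\mathfrak{n},\cO_F}(s)\cdot(m+1)N(\mathfrak{q})^{-m(1+s)/2}$.

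Summing the main terms over all $m\geq 0$ and invoking $\sum_{m\geq 0}(m+1)x^m=(1-x)^{-2}$ with $x=-N(\mathfrak{q})^{-1-s/2}$ collapses the series to $G_{\mathfrak{n},\cO_F}(s)(1+N(\mathfrak{q})^{-1-s/2})^{-2}$, producing exactly the contour integral claimed. The tail of the main-term series (i.e., $m>M$) is absolutely bounded by a geometric factor $O(MN(\mathfrak{q})^{-M}\|\mathbf{k}\|)$ and is harmless. For the error contribution from $m\leq M$, one gets $O(N(\mathfrak{n})^{1/2+\varepsilon}\|\mathbf{k}\|^{1/2+\varepsilon}N(\mathfrak{q})^{M\varepsilon})$ after summing the Corollary \ref{cor7.2} errors weighted by $N(\mathfrak{q})^{-m/2}\cdot N(\mathfrak{q})^{m(1/2+\varepsilon)}$. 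For the tail $m>M$ of the original sum, one uses the Deligne bound $|\lambda_{\pi}(\mathfrak{q}^m)|\leq m+1$ together with the first-moment estimate $\sum_{\pi}L(1/2,\pi)^2/L(1,\pi,\Ad)\ll\|\mathbf{k}\|(\log\|\mathbf{k}\|)^{O(1)}$ (itself a consequence of Corollary \ref{cor7.2} applied at $\mathfrak{n}=\cO_F$), yielding $O(N(\mathfrak{n})^{\varepsilon}\|\mathbf{k}\|MN(\mathfrak{q})^{-M/2}(\log\|\mathbf{k}\|)^{O(1)})$.

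Finally, balancing the two error bounds with $M\asymp\log\|\mathbf{k}\|/\log N(\mathfrak{q})$ produces the combined estimate $O(N(\mathfrak{n})^{1/2+\varepsilon'}N(\mathfrak{q})^{\varepsilon'}\|\mathbf{k}\|^{1/2+\varepsilon'})$ for a slightly enlarged $\varepsilon'$, matching the claim after renaming $\varepsilon'\to\varepsilon$. The main obstacle is the bookkeeping at the truncation step: the $m$-dependence in the Corollary \ref{cor7.2} error (which \emph{grows} like $N(\mathfrak{q})^{m\varepsilon}$) must be carefully balanced against the Dirichlet-series weight $N(\mathfrak{q})^{-m/2}$ (which decays) and against the trivial tail bound, with uniformity of the implied constants (which Corollary \ref{cor7.2} supplies, since its constant depends only on $\varepsilon$ and $F$) being essential for the argument to close.
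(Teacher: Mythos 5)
Your proposal is correct and follows essentially the same route as the paper: expand $L_{\pi_{\mathfrak{q}}}$ as the alternating Dirichlet series in $\lambda_{\pi}(\mathfrak{q}^m)$, truncate at $m_0\asymp\log\|\mathbf{k}\|/\log N(\mathfrak{q})$, apply Corollary \ref{cor7.2} at level $\mathcal{O}_F$ with $\mathfrak{n}\mathfrak{q}^m$, collapse the main terms via $\sum_{m}(m+1)x^m=(1-x)^{-2}$, and bound the tail using a pointwise bound on $\lambda_{\pi}(\mathfrak{q}^m)$ together with the unmollified second moment. The only cosmetic difference is that the paper hedges with $|\lambda_{\pi}(\mathfrak{n}\mathfrak{q}^m)|\ll N(\mathfrak{n}\mathfrak{q}^m)^{\vartheta}$, $\vartheta\le 7/64$, where you invoke the Deligne-type bound, which does not affect the conclusion.
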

\begin{proof}
Let $m_0>1$ be a sufficient large integer. Hence, 
\begin{equation}\label{c7.3}
J_{\mathrm{Spec}}^{\mathrm{old}}(\mathfrak{n})=J_{\mathrm{Spec}}(\mathfrak{n},m_0)^++J_{\mathrm{Spec}}(\mathfrak{n},m_0)^-,
\end{equation}
where 
\begin{align*}
J_{\mathrm{Spec}}(\mathfrak{n},m_0)^+:=&\frac{2V_{\mathfrak{q}}}{N(\mathfrak{q})}\sum_{m>m_0}\frac{(-1)^m}{N(\mathfrak{q})^{m/2}}\sum_{\substack{\pi\in \mathcal{F}(\mathbf{k},\mathcal{O}_F)}}\frac{\lambda_{\pi}(\mathfrak{n}\mathfrak{q}^m)L(1/2,\pi)^2}{L(1,\pi,\Ad)},\\
J_{\mathrm{Spec}}(\mathfrak{n},m_0)^-:=&\frac{2V_{\mathfrak{q}}}{N(\mathfrak{q})}\sum_{0\leq m\leq m_0}\frac{(-1)^m}{N(\mathfrak{q})^{m/2}}\sum_{\substack{\pi\in \mathcal{F}(\mathbf{k},\mathcal{O}_F)}}\frac{\lambda_{\pi}(\mathfrak{n}\mathfrak{q}^m)L(1/2,\pi)^2}{L(1,\pi,\Ad)}.
\end{align*}

By Corollary \ref{cor7.2} with $\mathfrak{q}=\mathcal{O}_F$, and $\mathfrak{n}$ replaced by $\mathfrak{n}\mathfrak{q}^m$, we obtain 
\begin{equation}\label{fc7.3}
\sum_{\substack{\pi\in \mathcal{F}(\mathbf{k},\mathcal{O}_F)}}\frac{\lambda_{\pi}(\mathfrak{n}\mathfrak{q}^m)L(1/2,\pi)^2}{L(1,\pi,\Ad)}
=\frac{2\delta_{\mathbf{k}}}{2\pi i}\oint_{\mathcal{C}_{\varepsilon}}\frac{\zeta_F(1+s)G_{\mathfrak{n}\mathfrak{q}^m,\mathcal{O}_F}(s)}{s}ds+E(m),
\end{equation}
where 
\begin{equation}\label{c7.5}
E(m)\ll N(\mathfrak{n}\mathfrak{q}^m)^{1/2+\varepsilon}\|\mathbf{k}\|^{1/2+\varepsilon},	
\end{equation}
with the implied constant depending only on $F$ and $\varepsilon$. 

By \eqref{fc7.3} and \eqref{c7.5}, we obtain 
\begin{equation}\label{x7.6}
J_{\mathrm{Spec}}(\mathfrak{n},m_0)^-=\mathcal{M}(m_0)+O(m_0N(\mathfrak{n})^{1/2+\varepsilon}N(\mathfrak{q})^{m_0\varepsilon}\|\mathbf{k}\|^{1/2+\varepsilon}),
\end{equation}
where the implied constant depends only on $\varepsilon$ and $F$, and 
\begin{equation}\label{x7.8}
\mathcal{M}(m_0):=\frac{2V_{\mathfrak{q}}}{N(\mathfrak{q})}\sum_{0\leq m\leq m_0}\frac{(-1)^m}{N(\mathfrak{q})^{m/2}}\cdot \frac{2\delta_{\mathbf{k}}}{2\pi i}\oint_{\mathcal{C}_{\varepsilon}}\frac{\zeta_F(1+s)G_{\mathfrak{n}\mathfrak{q}^m,\mathcal{O}_F}(s)}{s}ds.
\end{equation}

By the definition of $G_{\mathfrak{n}\mathfrak{q}^m,\mathcal{O}_F}$ in \eqref{G}, and the fact that $(\mathfrak{n},\mathfrak{q})=1$, we have
\begin{align*}
G_{\mathfrak{n}\mathfrak{q}^m,\mathcal{O}_F}(s)=G_{\mathfrak{n},\mathcal{O}_F}(s)\cdot \frac{\tau(\mathfrak{q}^m)}{N(\mathfrak{q})^{(1+s)m/2}}.
\end{align*}
Plugging this into \eqref{x7.8}, together with $\tau(\mathfrak{q}^m)=m+1$, we deduce  
\begin{equation}\label{x7.9.}
\mathcal{M}(m_0)=\frac{2V_{\mathfrak{q}}}{N(\mathfrak{q})}\cdot\frac{2\delta_{\mathbf{k}}}{2\pi i}\oint_{\mathcal{C}_{\varepsilon}}\sum_{0\leq m\leq m_0} \frac{(-1)^m(m+1)}{N(\mathfrak{q})^{(1+s/2)m}}\cdot \frac{\zeta_F(1+s)G_{\mathfrak{n},\mathcal{O}_F}(s)}{s}ds.
\end{equation}

For $
\Re(s)\geq -\varepsilon$, we have 
\begin{equation}\label{x7.10.}
\sum_{0\leq m\leq m_0} \frac{(-1)^m(m+1)}{N(\mathfrak{q})^{(1+s/2)m}}=\frac{1}{(1+N(\mathfrak{q})^{-1-s/2})^2}+O(m_0N(\mathfrak{q})^{-(1-\varepsilon/2)m_0}).
\end{equation}

Substituting \eqref{x7.10.} into \eqref{x7.9.}, $\mathcal{M}(m_0)$ boils down to 
\begin{equation}\label{n7.11}
\frac{2V_{\mathfrak{q}}}{N(\mathfrak{q})}\cdot\frac{2\delta_{\mathbf{k}}}{2\pi i}\oint_{\mathcal{C}_{\varepsilon}}\frac{\zeta_F(1+s)G_{\mathfrak{n},\mathcal{O}_F}(s)}{s(1+N(\mathfrak{q})^{-1-s/2})^2}ds+O(m_0N(\mathfrak{q})^{-(1-\varepsilon/2)m_0}\|\mathbf{k}\|^{1+\varepsilon}).	
\end{equation}

On the other hand, by $|\lambda_{\pi}(\mathfrak{n}\mathfrak{q}^m)|\ll N(\mathfrak{n}\mathfrak{q}^m)^{\vartheta}$, $0\leq \vartheta\leq 7/64$, we derive 
\begin{equation}\label{x7.8.}
J_{\mathrm{Spec}}(\mathfrak{n},m_0)^+\ll \frac{N(\mathfrak{n})^{\vartheta}}{N(\mathfrak{q})^{(1/2-\vartheta)m_0}}\sum_{\substack{\pi\in \mathcal{F}(\mathbf{k},\mathcal{O}_F)}}\frac{L(1/2,\pi)^2}{L(1,\pi,\Ad)}.
\end{equation}

Making use of Corollary \ref{cor7.2} with $\mathfrak{q}=\mathfrak{n}=\mathcal{O}_F$,  we obtain 
\begin{align*}
\sum_{\substack{\pi\in \mathcal{F}(\mathbf{k},\mathcal{O}_F)}}\frac{L(1/2,\pi)^2}{L(1,\pi,\Ad)}=\frac{2 \delta_{\mathbf{k}}}{2\pi i}\oint_{\mathcal{C}_{\varepsilon}}\frac{\zeta_F(1+s)G_{\mathcal{O}_F}(s)}{s}ds+O(\|\mathbf{k}\|^{\frac{1}{2}+\varepsilon})\ll \|\mathbf{k}\|^{1+\varepsilon},
\end{align*}
Substituting this into \eqref{x7.8.} yields
\begin{equation}\label{x7.9}
J_{\mathrm{Spec}}(\mathfrak{n},m_0)^+\ll \frac{N(\mathfrak{n})^{\vartheta}\|\mathbf{k}\|^{1+\varepsilon}}{N(\mathfrak{q})^{(1/2-\vartheta)m_0}},
\end{equation}
where the implied constant depends only on $F$. 

Combining \eqref{c7.3}, \eqref{x7.6}, \eqref{n7.11}, and \eqref{x7.9}, we conclude that 
\begin{align*}
 J_{\mathrm{Spec}}^{\mathrm{old}}(\mathfrak{n})=&\frac{2V_{\mathfrak{q}}}{N(\mathfrak{q})}\cdot\frac{2\delta_{\mathbf{k}}}{2\pi i}\oint_{\mathcal{C}_{\varepsilon}}\frac{\zeta_F(1+s)G_{\mathfrak{n},\mathcal{O}_F}(s)}{s(1+N(\mathfrak{q})^{-1-s/2})^2}ds+O(N(\mathfrak{n})^{1/2+\varepsilon}N(\mathfrak{q})^{m_0\varepsilon}\|\mathbf{k}\|^{1/2+\varepsilon})\\
 &+O(m_0N(\mathfrak{q})^{-(1-\varepsilon/2)(m_0+1)}\|\mathbf{k}\|^{1+\varepsilon})+O(N(\mathfrak{n})^{\vartheta}\|\mathbf{k}\|^{1+\varepsilon}N(\mathfrak{q})^{-(1/2-\vartheta)m_0}).
 \end{align*}	
 
Consequently, Lemma \ref{lem7.3} follows from taking $m_0=100(1+(\log N(\mathfrak{q}))^{-1}\log \|\mathbf{k}\|)$, i.e., $N(\mathfrak{q})^{m_0}=(\|\mathbf{k}\|N(\mathfrak{q}))^{100}$, into the above expression.
\end{proof}

\subsection{The Mollified Relative Trace Formula}\label{sec8.3}
Let $\rho$ be a multiplicative arithmetic function. Suppose $\rho(\mathfrak{p})\ll 1$ for all prime ideals $\mathfrak{p}$, with the implied constant being absolute. Let $s\in\mathbb{C}$. Define
\begin{equation}\label{eq7.1}
L(s,\rho):=\prod_{\mathfrak{p}}L_{\mathfrak{p}}(s,\rho),\ \ L_{\mathfrak{p}}(s,\rho):=(1-\rho(\mathfrak{p})N(\mathfrak{p})^{-s})^{-1}.
\end{equation}
Then $L(s,\rho)$ converges absolutely in $\Re(s)\gg 1$, namely, when $\Re(s)$ is sufficiently large. In particular, when $\rho\equiv\textbf{1}$, we have $L(s,\rho)=\zeta_F(s)$. 

One special choice of $\rho$ is given by \eqref{rho} as defined in \textsection\ref{sec1.1.6}.

Let $M_{\xi,\rho}(\pi)$ be the mollifier defined as in \textsection\ref{sec1.1.6}, i.e., 
\begin{align*}
M_{\xi,\rho}(\pi)= \frac{1}{\log \xi} \sum_{\substack{\mathfrak{n} \subseteq \mathcal{O}_F \\ (\mathfrak{n},\mathfrak{q}) = 1}} \frac{\lambda_\pi(\mathfrak{n})\mu_F(\mathfrak{n})\rho(\mathfrak{n})}{\sqrt{N(\mathfrak{n})}}\cdot  \frac{1}{2\pi i}\int_{(2)} \frac{\xi^s}{N(\mathfrak{n})^s} \frac{ds}{s^3}.\tag{\ref{M}}
\end{align*}

\begin{defn}\label{defn7.3}
Let notation be as before. Define the mollified second moment by 
\begin{align*}
J_{\mathrm{Spec}}^{\heartsuit,\mathrm{new}}(\xi,\rho):=&\frac{1}{\zeta_{\mathfrak{q}}(2)^2}\sum_{\substack{\pi\in \mathcal{F}(\mathbf{k},\mathfrak{q})}}\frac{L(1/2,\pi)^2M_{\xi,\rho}(\pi)^2}{L^{(\mathfrak{q})}(1,\pi,\Ad)},\\
J_{\mathrm{Spec}}^{\heartsuit,\mathrm{old}}(\xi,\rho):=&\frac{2V_{\mathfrak{q}}\textbf{1}_{\mathfrak{q}\subsetneq \mathcal{O}_F}}{N(\mathfrak{q})}\sum_{\substack{\pi\in \mathcal{F}(\mathbf{k},\mathcal{O}_F)}}\frac{L_{\pi_{\mathfrak{q}}}L(1/2,\pi)^2M_{\xi,\rho}(\pi)^2}{L(1,\pi,\Ad)}.
\end{align*}
\end{defn}

\begin{defn}\label{defn7.4}
Let $0<\varepsilon<10^{-3}$. Let $\rho$ be a  absolutely bounded multiplicative arithmetic function. Let $G(s)$ be a holomorphic function in $\Re(s)>-1$. We define the integral $\mathbb{L}(G,\rho,\mathfrak{q})$ by 
\begin{align*}
-\frac{1}{4\pi^2(\log \xi)^2}\int_{(2)} \int_{(2)}\frac{\xi^{s_1}\xi^{s_2}}{s_1^3s_2^3}\frac{1}{2\pi i}\oint_{\mathcal{C}_{\varepsilon}}\frac{G(s)\zeta_F(1+s)L(s,s_1,s_2;\rho,\mathfrak{q})}{s}dsds_1ds_2,
\end{align*}
where $\mathcal{C}_{\varepsilon}=\{z\in \mathbb{C}:\ |z|=\varepsilon\}$, and     
\begin{align*}
L(s,s_1,s_2;\rho,\mathfrak{q}):=\sum_{\substack{\mathfrak{n}_1 \subset \mathcal{O}_F \\ (\mathfrak{n}_1,\mathfrak{q}) = 1}} \sum_{\substack{\mathfrak{n}_2 \subset \mathcal{O}_F \\ (\mathfrak{n}_2,\mathfrak{q}) = 1}} \frac{\mu_F(\mathfrak{n}_1)\rho(\mathfrak{n}_1)\mu_F(\mathfrak{n}_2)\rho(\mathfrak{n}_2)}{N(\mathfrak{n}_1)^{1/2+s_1}N(\mathfrak{n}_2)^{1/2+s_2}}\sum_{\substack{\mathfrak{m}\mid\mathfrak{c}}}\frac{\tau(\mathfrak{n}_1\mathfrak{n}_2\mathfrak{m}^{-2})}{N(\mathfrak{n}_1\mathfrak{n}_2\mathfrak{m}^{-2})^{\frac{1+s}{2}}}.
\end{align*}
Here $\mathfrak{c}:=\gcd(\mathfrak{n}_1,\mathfrak{n}_2)$.  
\end{defn}

We will show that $L(s,s_1,s_2;\rho,\mathfrak{q})$ admits a holomorphic continuation in $\Re(s_1)> -2\varepsilon$ and $\Re(s_2)> -2\varepsilon$. Hence, $\mathbb{L}(G,\rho,\mathfrak{q})$ is well defined. 

\begin{prop}\label{prop7.6}
Let notation be as before. For an integral ideal $\mathfrak{a}\subseteq\mathcal{O}_F$, we define the meromorphic function 
\begin{equation}\label{eq7.18}
G_{\mathfrak{a}}(s):= (1+N(\mathfrak{a})^s)D_F^{s}\prod_{v\mid\infty}\frac{\Gamma((k_v+s)/2)^2}{(2\pi)^{s}\Gamma(k_v/2)^2},\ \ s\in \mathbb{C}.
\end{equation}
Then we have the following 
\begin{equation}\label{sing7.17}
J_{\mathrm{Spec}}^{\heartsuit,\mathrm{new}}(\xi,\rho)=J_{\mathrm{Geom}}^{\heartsuit}(\xi,\rho)+O(\xi^{2+\varepsilon}N(\mathfrak{q})^{\varepsilon}\|\mathbf{k}\|^{1/2+\varepsilon}),
\end{equation}
where the implied constant in $O(\xi^{2+\varepsilon}N(\mathfrak{q})^{\varepsilon}\|\mathbf{k}\|^{1/2+\varepsilon})$ depends only on $\varepsilon$, $\rho$,  and $F$, and the main term $J_{\mathrm{Geom}}^{\heartsuit}(\xi,\rho)$ is defined by 
\begin{equation}\label{maingeom}
2D_F^{\frac{3}{2}}\prod_{v\mid\infty}\frac{k_v-1}{4\pi^2}\bigg[(N(\mathfrak{q})+1)\delta_{\mathbf{k},\mathfrak{q}}\mathbb{L}(G_{\mathfrak{q}},\rho,\mathfrak{q})-\frac{4\zeta_{\mathfrak{q}}(1)\delta_{\mathbf{k}}\textbf{1}_{\mathfrak{q}\subsetneq \mathcal{O}_F}}{\zeta_{\mathfrak{q}}(2)} \mathbb{L}(G_{\mathfrak{q}}^{\mathrm{old}},\rho,\mathfrak{q})\bigg],
\end{equation}
with $G_{\mathfrak{q}}^{\mathrm{old}}(s):=G_{\mathcal{O}_F}(s)(1+N(\mathfrak{q})^{-1-s/2})^{-2}$. 
\end{prop}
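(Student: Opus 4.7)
The plan is to open the square $M_{\xi,\rho}(\pi)^2$ and reduce the computation to the one-variable $\lambda_\pi(\mathfrak{n})$-weighted second moment. Explicitly, substituting the Mellin representation \eqref{M} into Definition \ref{defn7.3} expresses $J_{\mathrm{Spec}}^{\heartsuit,\mathrm{new}}(\xi,\rho)$ as
\[
\frac{1}{(\log\xi)^2}\sum_{\substack{\mathfrak{n}_1,\mathfrak{n}_2\\(\mathfrak{n}_j,\mathfrak{q})=1}}\frac{\mu_F(\mathfrak{n}_1)\mu_F(\mathfrak{n}_2)\rho(\mathfrak{n}_1)\rho(\mathfrak{n}_2)}{\sqrt{N(\mathfrak{n}_1\mathfrak{n}_2)}}\frac{1}{(2\pi i)^2}\iint_{(2)(2)}\frac{\xi^{s_1+s_2}\,ds_1ds_2}{N(\mathfrak{n}_1)^{s_1}N(\mathfrak{n}_2)^{s_2}\,s_1^3s_2^3}\cdot S(\mathfrak{n}_1,\mathfrak{n}_2),
\]
with $S(\mathfrak{n}_1,\mathfrak{n}_2):=\zeta_\mathfrak{q}(2)^{-2}\sum_{\pi\in\cF(\mathbf{k},\mathfrak{q})}\lambda_\pi(\mathfrak{n}_1)\lambda_\pi(\mathfrak{n}_2)L(1/2,\pi)^2/L^{(\mathfrak{q})}(1,\pi,\Ad)$. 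The Hecke multiplicativity relation $\lambda_\pi(\mathfrak{n}_1)\lambda_\pi(\mathfrak{n}_2)=\sum_{\mathfrak{m}\mid(\mathfrak{n}_1,\mathfrak{n}_2)}\lambda_\pi(\mathfrak{n}_1\mathfrak{n}_2\mathfrak{m}^{-2})$ converts $S(\mathfrak{n}_1,\mathfrak{n}_2)$ into a sum over $\mathfrak{m}$ of the exact quantity handled by Corollary \ref{cor7.2}, and Lemma \ref{lem7.3} replaces the old-form term there by its main-term expression. The net outcome is that $S(\mathfrak{n}_1,\mathfrak{n}_2)$ equals a contour integral main term minus the analogous old-form main term, plus an error $O(N(\mathfrak{n}_1\mathfrak{n}_2\mathfrak{m}^{-2})^{1/2+\varepsilon}N(\mathfrak{q})^\varepsilon\|\mathbf{k}\|^{1/2+\varepsilon})$.

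The next step is to assemble and identify the main term. The key ingredient is the elementary factorization
\[
G_{\mathfrak{n},\mathfrak{q}}(s)=G_\mathfrak{q}(s)\cdot 2D_F^{3/2}\prod_{v\mid\infty}\frac{k_v-1}{4\pi^2}\cdot\frac{\tau(\mathfrak{n})}{N(\mathfrak{n})^{(1+s)/2}},
\]
read off directly from \eqref{G} and \eqref{eq7.18}, together with its obvious $\mathfrak{q}=\mathcal{O}_F$ analogue. Setting $\mathfrak{n}=\mathfrak{n}_1\mathfrak{n}_2\mathfrak{m}^{-2}$, this factorization exhibits the $(\mathfrak{n}_1,\mathfrak{n}_2,\mathfrak{m})$-dependence as $\tau(\mathfrak{n}_1\mathfrak{n}_2\mathfrak{m}^{-2})/N(\mathfrak{n}_1\mathfrak{n}_2\mathfrak{m}^{-2})^{(1+s)/2}$, so that carrying the factor through the outer mollifier sum reproduces verbatim the triple Dirichlet series $L(s,s_1,s_2;\rho,\mathfrak{q})$ of Definition \ref{defn7.4}. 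Using the identity $V_\mathfrak{q}/N(\mathfrak{q})=\zeta_\mathfrak{q}(1)/\zeta_\mathfrak{q}(2)$, the two contributions assemble into the expression
\[
2D_F^{3/2}\prod_{v\mid\infty}\frac{k_v-1}{4\pi^2}\bigg[(N(\mathfrak{q})+1)\delta_{\mathbf{k},\mathfrak{q}}\,\mathbb{L}(G_\mathfrak{q},\rho,\mathfrak{q})-\frac{4\zeta_\mathfrak{q}(1)\delta_\mathbf{k}\textbf{1}_{\mathfrak{q}\subsetneq\mathcal{O}_F}}{\zeta_\mathfrak{q}(2)}\mathbb{L}(G_\mathfrak{q}^{\mathrm{old}},\rho,\mathfrak{q})\bigg],
\]
which is \eqref{maingeom}.

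For the remainder, I shift both $s_j$-contours from $\Re(s_j)=2$ down to $\Re(s_j)=\varepsilon$ without crossing any pole of $s_j^{-3}$. The vertical integrals converge absolutely, $|\xi^{s_j}/N(\mathfrak{n}_j)^{s_j}|=(\xi/N(\mathfrak{n}_j))^\varepsilon$, and the absolute boundedness of $\rho$ together with the support condition coming from $s_j^{-3}\xi^{s_j}$ effectively truncate the outer sums to $N(\mathfrak{n}_j)\ll\xi^{1+o(1)}$. The resulting double sum is estimated trivially:
\[
\sum_{\mathfrak{n}_1,\mathfrak{n}_2}\frac{N(\mathfrak{n}_1\mathfrak{n}_2)^{\varepsilon}}{\sqrt{N(\mathfrak{n}_1\mathfrak{n}_2)}}\sum_{\mathfrak{m}\mid(\mathfrak{n}_1,\mathfrak{n}_2)}N(\mathfrak{n}_1\mathfrak{n}_2\mathfrak{m}^{-2})^{1/2+\varepsilon}\ll\xi^{2+O(\varepsilon)},
\]
after absorbing the harmless divisor sum over $\mathfrak{m}$ and the $(\log\xi)^{-2}$ prefactor, which together with the pointwise error from Corollary \ref{cor7.2} and Lemma \ref{lem7.3} gives the stated bound $O(\xi^{2+\varepsilon}N(\mathfrak{q})^\varepsilon\|\mathbf{k}\|^{1/2+\varepsilon})$.

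The only non-mechanical step is verifying that $L(s,s_1,s_2;\rho,\mathfrak{q})$ extends holomorphically to a neighbourhood of the relevant contour $\{|s|=\varepsilon\}\times\{\Re(s_j)=\varepsilon\}$, so that $\mathbb{L}(G_\mathfrak{q},\rho,\mathfrak{q})$ and $\mathbb{L}(G_\mathfrak{q}^{\mathrm{old}},\rho,\mathfrak{q})$ are well defined. A short local computation at each $\mathfrak{p}\nmid\mathfrak{q}$ (terms at $\mathfrak{p}\mid\mathfrak{q}$ being trivial by the coprimality constraint) exhibits $L(s,s_1,s_2;\rho,\mathfrak{q})$ as $\zeta_F^{(\mathfrak{q})}(1+s_1+s_2)\zeta_F^{(\mathfrak{q})}(1+s_1+s/2)^{-1}\zeta_F^{(\mathfrak{q})}(1+s_2+s/2)^{-1}\cdot H(s,s_1,s_2)$ for an Euler product $H$ absolutely convergent in $\Re(s_j)>-2\varepsilon$, $|\Re(s)|<1-3\varepsilon$; since $\Re(1+s_1+s_2)\geq 1+2\varepsilon$ on the contour, the displayed ratio is holomorphic there, which is the requirement.
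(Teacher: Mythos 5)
Your proposal is correct and follows essentially the same route as the paper: open $M_{\xi,\rho}(\pi)^2$ via the Hecke relation, feed the resulting $\lambda_\pi(\mathfrak{n}_1\mathfrak{n}_2\mathfrak{m}^{-2})$-weighted moments into Corollary \ref{cor7.2} and Lemma \ref{lem7.3}, identify the main terms with $\mathbb{L}(G_{\mathfrak{q}},\rho,\mathfrak{q})$ and $\mathbb{L}(G_{\mathfrak{q}}^{\mathrm{old}},\rho,\mathfrak{q})$ through the factorization of $G_{\mathfrak{n},\mathfrak{q}}(s)$, and truncate the mollifier sums to $N(\mathfrak{n}_j)\leq\xi$ to bound the error by $O(\xi^{2+\varepsilon}N(\mathfrak{q})^{\varepsilon}\|\mathbf{k}\|^{1/2+\varepsilon})$. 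The only cosmetic difference is in the truncation: the paper evaluates $\frac{1}{2\pi i}\int_{(2)}\xi^{s}N(\mathfrak{n})^{-s}s^{-3}\,ds$ exactly via \eqref{c7.8}, whereas merely shifting the $s_j$-contours to $\Re(s_j)=\varepsilon$ would not by itself yield a convergent $\mathfrak{n}_j$-sum against the $N(\mathfrak{n})^{1/2+\varepsilon}$ error, so your appeal to the vanishing of that integral for $N(\mathfrak{n}_j)>\xi$ is the step that actually does the work.
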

\begin{proof}
Let $\mathfrak{n}\subseteq \mathcal{O}_F$ be an integral ideal with $(\mathfrak{n},\mathfrak{q})=1$. Define 
\begin{align*}
J_{\mathrm{reg}}(\mathfrak{n}):=&\sum_{\substack{\pi\in \mathcal{F}(\mathbf{k},\mathfrak{q})}}\frac{\lambda_{\pi}(\mathfrak{n})L(1/2,\pi)^2}{\zeta_{\mathfrak{q}}(2)^2L^{(\mathfrak{q})}(1,\pi,\Ad)}+\frac{2V_{\mathfrak{q}}\textbf{1}_{\mathfrak{q}\subsetneq \mathcal{O}_F}}{N(\mathfrak{q})}\sum_{\substack{\pi\in \mathcal{F}(\mathbf{k},\mathcal{O}_F)}}\frac{\lambda_{\pi}(\mathfrak{n})L_{\pi_{\mathfrak{q}}}L(1/2,\pi)^2}{L(1,\pi,\Ad)}\\
&-\frac{(N(\mathfrak{q})+1)\cdot \delta_{\mathbf{k},\mathfrak{q}}}{2\pi i}\oint_{\mathcal{C}_{\varepsilon}}\frac{\zeta_F(1+s)G_{\mathfrak{n},\mathfrak{q}}(s)}{s}ds.
\end{align*}

By Corollary \ref{cor7.2} we have
\begin{equation}\label{c7.6}
J_{\mathrm{reg}}(\mathfrak{n}) \ll N(\mathfrak{n})^{1/2+\varepsilon}N(\mathfrak{q})^{\varepsilon}\|\mathbf{k}\|^{1/2+\varepsilon},
\end{equation}
where the implied constant depends only on $\varepsilon$ and $F$.

Let $\mathfrak{n}_1$ and $\mathfrak{n}_2$ be integral ideals. Taking advantage of Hecke relations 
\begin{align*}
\lambda_{\pi}(\mathfrak{n}_1)\lambda_{\pi}(\mathfrak{n}_2)=\sum_{\substack{\mathfrak{m}\mid\mathfrak{c}}}\lambda_\pi(\mathfrak{n}_1\mathfrak{n}_2\mathfrak{m}^{-2}),
\end{align*}
where $\mathfrak{c}=\mathrm{gcd}(\mathfrak{n}_1,\mathfrak{n}_2)$, we obtain 
\begin{align*}
M_{\xi,\rho}(\pi)^2=&-\frac{1}{4\pi^2(\log \xi)^2}\sum_{\substack{\mathfrak{n}_1 \subset \mathcal{O}_F \\ (\mathfrak{n}_1,\mathfrak{q}) = 1}} \sum_{\substack{\mathfrak{n}_2 \subset \mathcal{O}_F \\ (\mathfrak{n}_2,\mathfrak{q}) = 1}} \frac{\mu_F(\mathfrak{n}_1)\mu_F(\mathfrak{n}_2)\rho(\mathfrak{n}_1)\rho(\mathfrak{n}_2)}{N(\mathfrak{n}_1)^{1/2}N(\mathfrak{n}_2)^{1/2}}\\
&\int_{(2)} \frac{\xi^{s_1}}{N(\mathfrak{n}_1)^{s_1}} \frac{ds_1}{s_1^3}\int_{(2)} \frac{\xi^{s_2}}{N(\mathfrak{n}_2)^{s_2}} \frac{ds_2}{s_2^3}\sum_{\substack{\mathfrak{m}\mid\mathfrak{c}}}\lambda_\pi(\mathfrak{n}_1\mathfrak{n}_2\mathfrak{m}^{-2})
\end{align*}

By Corollary \ref{cor7.2}, along with the Definition \ref{defn7.4}, we obtain 
\begin{equation}\label{sing7.7}
J_{\mathrm{Spec}}^{\heartsuit,\mathrm{new}}(\xi,\rho)+J_{\mathrm{Spec}}^{\heartsuit,\mathrm{old}}(\xi,\rho)=J_{\mathrm{sing}}^{\heartsuit}(\xi,\rho)+J_{\mathrm{reg}}^{\heartsuit}(\xi,\rho),	
\end{equation}
where 
\begin{align*}
J_{\mathrm{sing}}^{\heartsuit}(\xi,\rho):=&2(N(\mathfrak{q})+1)\delta_{\mathbf{k},\mathfrak{q}}D_F^{\frac{3}{2}}\prod_{v\mid\infty}\frac{k_v-1}{4\pi^2}\cdot \mathbb{L}(G_{\mathfrak{q}},\rho,\mathfrak{q}),\\
J_{\mathrm{reg}}^{\heartsuit}(\xi,\rho):=&-\frac{1}{4\pi^2(\log \xi)^2}\sum_{\substack{\mathfrak{n}_1 \subset \mathcal{O}_F \\ (\mathfrak{n}_1,\mathfrak{q}) = 1}} \sum_{\substack{\mathfrak{n}_2 \subset \mathcal{O}_F \\ (\mathfrak{n}_2,\mathfrak{q}) = 1}} \frac{\mu_F(\mathfrak{n}_1)\mu_F(\mathfrak{n}_2)\rho(\mathfrak{n}_1)\rho(\mathfrak{n}_2)}{N(\mathfrak{n}_1)^{1/2}N(\mathfrak{n}_2)^{1/2}}\\
&\int_{(2)} \frac{\xi^{s_1}}{N(\mathfrak{n}_1)^{s_1}} \frac{ds_1}{s_1^3}\int_{(2)} \frac{\xi^{s_2}}{N(\mathfrak{n}_2)^{s_2}} \frac{ds_2}{s_2^3}\sum_{\substack{\mathfrak{m}\mid\mathrm{gcd}(\mathfrak{n}_1,\mathfrak{n}_2)}}J_{\mathrm{reg}}(\mathfrak{n}_1\mathfrak{n}_2\mathfrak{m}^{-2}).
\end{align*}

Shifting contour we obtain by Cauchy theorem we obtain 
\begin{equation}\label{c7.8}
\frac{1}{2\pi i}\int_{(2)} \frac{\xi^{s_1}}{N(\mathfrak{n}_1)^{s_1}}\frac{ds_1}{s_1^3}=\frac{(\log \xi N(\mathfrak{n}_1)^{-1})^2}{2} \cdot \textbf{1}_{N(\mathfrak{n}_1)\leq \xi}.
\end{equation}

Substituting \eqref{c7.8} into the definition of $J_{\mathrm{reg}}^{\heartsuit}(\xi,\rho)$ leads to 
\begin{align*}
J_{\mathrm{reg}}^{\heartsuit}(\xi,\rho)=&\frac{1}{(\log \xi)^2} \sum_{\substack{\mathfrak{n}_1 \subseteq \mathcal{O}_F \\ (\mathfrak{n}_1,\mathfrak{q}) = 1\\ N(\mathfrak{n}_1)\leq \xi}}\sum_{\substack{\mathfrak{n} _2\subseteq \mathcal{O}_F \\ (\mathfrak{n}_2,\mathfrak{q}) = 1\\ N(\mathfrak{n}_2)\leq \xi}}  \frac{\mu_F(\mathfrak{n}_1)\rho(\mathfrak{n}_1)\mu_F(\mathfrak{n}_2)\rho(\mathfrak{n}_2)}{\sqrt{N(\mathfrak{n}_1)N(\mathfrak{n}_2)}}\\
&\cdot \frac{(\log \xi N(\mathfrak{n}_1)^{-1})^2(\log \xi N(\mathfrak{n}_2)^{-1})^2}{4}\sum_{\substack{\mathfrak{m}\mid\mathrm{gcd}(\mathfrak{n}_1,\mathfrak{n}_2)}}J_{\mathrm{reg}}(\mathfrak{n}_1\mathfrak{n}_2\mathfrak{m}^{-2}).
\end{align*}

Taking advantage of the estimate \eqref{c7.6} into the above expression we derive 
\begin{align*}
J_{\mathrm{reg}}^{\heartsuit}(\xi,\rho)\ll & \xi^{\varepsilon} \sum_{\substack{\mathfrak{n}_1 \subseteq \mathcal{O}_F \\ (\mathfrak{n}_1,\mathfrak{q}) = 1\\ N(\mathfrak{n}_1)\leq \xi}}\sum_{\substack{\mathfrak{n} _2\subseteq \mathcal{O}_F \\ (\mathfrak{n}_2,\mathfrak{q}) = 1\\ N(\mathfrak{n}_2)\leq \xi}}  N(\mathfrak{n}_1)^{\varepsilon}N(\mathfrak{n}_2)^{\varepsilon}\sum_{\substack{\mathfrak{m}\mid\mathrm{gcd}(\mathfrak{n}_1,\mathfrak{n}_2)}}N(\mathfrak{m})^{-1}N(\mathfrak{q})^{\varepsilon}\|\mathbf{k}\|^{\frac{1}{2}+\varepsilon}\\
\ll &\xi^{3\varepsilon}N(\mathfrak{q})^{\varepsilon}\|\mathbf{k}\|^{1/2+\varepsilon}\sum_{\substack{\mathfrak{m} \subseteq \mathcal{O}_F \\ (\mathfrak{m},\mathfrak{q}) = 1\\ N(\mathfrak{m})\leq \xi}}\sum_{\substack{\mathfrak{n}_1 \subseteq \mathcal{O}_F \\ (\mathfrak{n}_1,\mathfrak{q}) = 1\\ N(\mathfrak{n}_1)\leq \xi/N(\mathfrak{m})}}\sum_{\substack{\mathfrak{n} _2\subseteq \mathcal{O}_F \\ (\mathfrak{n}_2,\mathfrak{q}) = 1\\ N(\mathfrak{n}_2)\leq \xi/N(\mathfrak{m})}}  N(\mathfrak{m})^{-1}\\
\ll &\xi^{4\varepsilon}N(\mathfrak{q})^{\varepsilon}\|\mathbf{k}\|^{1/2+\varepsilon}\sum_{\substack{\mathfrak{m} \subseteq \mathcal{O}_F \\ (\mathfrak{m},\mathfrak{q}) = 1\\ N(\mathfrak{m})\leq \xi}}\frac{\xi^2}{N(\mathfrak{m})^3}\ll \xi^{2+5\varepsilon}N(\mathfrak{q})^{\varepsilon}\|\mathbf{k}\|^{1/2+\varepsilon}.
\end{align*}

Substituting this into \eqref{sing7.7} yields 
\begin{equation}\label{sing7.6}
J_{\mathrm{Spec}}^{\heartsuit,\mathrm{new}}(\xi,\rho)+J_{\mathrm{Spec}}^{\heartsuit,\mathrm{old}}(\xi,\rho)=J_{\mathrm{sing}}^{\heartsuit}(\xi,\rho)+O(\xi^{2+\varepsilon}N(\mathfrak{q})^{\varepsilon}\|\mathbf{k}\|^{1/2+\varepsilon}),
\end{equation}
where the implied constant depends only on $\varepsilon$, $\rho$, $G$, and $F$. 

By Definition \ref{defn7.3}  we have
\begin{align*}
J_{\mathrm{Spec}}^{\heartsuit,\mathrm{old}}(\xi,\rho)=&-\frac{\textbf{1}_{\mathfrak{q}\subsetneq \mathcal{O}_F}}{4\pi^2(\log \xi)^2}\sum_{\substack{\mathfrak{n}_1 \subset \mathcal{O}_F \\ (\mathfrak{n}_1,\mathfrak{q}) = 1}} \sum_{\substack{\mathfrak{n}_2 \subset \mathcal{O}_F \\ (\mathfrak{n}_2,\mathfrak{q}) = 1}} \frac{\mu_F(\mathfrak{n}_1)\mu_F(\mathfrak{n}_2)\rho(\mathfrak{n}_1)\rho(\mathfrak{n}_2)}{N(\mathfrak{n}_1)^{1/2}N(\mathfrak{n}_2)^{1/2}}\\
&\int_{(2)} \frac{\xi^{s_1}}{N(\mathfrak{n}_1)^{s_1}} \frac{ds_1}{s_1^3}\int_{(2)} \frac{\xi^{s_2}}{N(\mathfrak{n}_2)^{s_2}} \frac{ds_2}{s_2^3}\sum_{\substack{\mathfrak{m}\mid\mathrm{gcd}(\mathfrak{n}_1,\mathfrak{n}_2)}}J_{\mathrm{Spec}}^{\mathrm{old}}(\mathfrak{n}_1\mathfrak{n}_2\mathfrak{m}^{-2})
\end{align*}
where $J_{\mathrm{Spec}}^{\mathrm{old}}(\cdot)$ is defined as in \eqref{y7.4}. Taking advantage of Lemma \ref{lem7.3}, along with the formula \eqref{c7.8}, we derive
\begin{equation}\label{f7.22}
J_{\mathrm{Spec}}^{\heartsuit,\mathrm{old}}(\xi,\rho)=J_{\mathrm{main}}^{\heartsuit,\mathrm{old}}(\xi,\rho)+J_{\mathrm{error}}^{\heartsuit,\mathrm{old}}(\xi,\rho),
\end{equation}
where $J_{\mathrm{main}}^{\heartsuit,\mathrm{old}}(\xi,\rho)$ is defined by  
\begin{align*}
J_{\mathrm{main}}^{\heartsuit,\mathrm{old}}(\xi,\rho):=&\frac{-\textbf{1}_{\mathfrak{q}\subsetneq \mathcal{O}_F}}{4\pi^2(\log \xi)^2}\sum_{\substack{\mathfrak{n}_1 \subset \mathcal{O}_F \\ (\mathfrak{n}_1,\mathfrak{q}) = 1}} \sum_{\substack{\mathfrak{n}_2 \subset \mathcal{O}_F \\ (\mathfrak{n}_2,\mathfrak{q}) = 1}} \frac{\mu_F(\mathfrak{n}_1)\mu_F(\mathfrak{n}_2)\rho(\mathfrak{n}_1)\rho(\mathfrak{n}_2)}{N(\mathfrak{n}_1)^{1/2}N(\mathfrak{n}_2)^{1/2}}\int_{(2)} \frac{\xi^{s_1}}{N(\mathfrak{n}_1)^{s_1}} \frac{ds_1}{s_1^3}\\
&\int_{(2)} \frac{\xi^{s_2}ds_2}{N(\mathfrak{n}_2)^{s_2}s_2^3} \sum_{\substack{\mathfrak{m}\mid\mathrm{gcd}(\mathfrak{n}_1,\mathfrak{n}_2)}}\frac{2V_{\mathfrak{q}}}{N(\mathfrak{q})}\cdot\frac{2\delta_{\mathbf{k}}}{2\pi i}\oint_{\mathcal{C}_{\varepsilon}}\frac{\zeta_F(1+s)G_{\mathfrak{n}_1\mathfrak{n}_2\mathfrak{m}^{-2},\mathcal{O}_F}(s)}{s(1+N(\mathfrak{q})^{-1-s/2})^2}ds,
\end{align*}
with $\delta_{\mathbf{k}}:=\textbf{1}_{\sum_{v\mid\infty}k_v\equiv 0\pmod{4}}$, and 
\begin{align*}
J_{\mathrm{error}}^{\heartsuit,\mathrm{old}}(\xi,\rho):=&\frac{N(\mathfrak{q})^{\varepsilon}\|\mathbf{k}\|^{1/2+\varepsilon}\cdot \textbf{1}_{\mathfrak{q}\subsetneq \mathcal{O}_F}}{4(\log \xi)^2} \sum_{\substack{\mathfrak{n}_1 \subseteq \mathcal{O}_F \\ (\mathfrak{n}_1,\mathfrak{q}) = 1\\ N(\mathfrak{n}_1)\leq \xi}}\sum_{\substack{\mathfrak{n} _2\subseteq \mathcal{O}_F \\ (\mathfrak{n}_2,\mathfrak{q}) = 1\\ N(\mathfrak{n}_2)\leq \xi}}  \frac{\mu_F(\mathfrak{n}_1)^2|\rho(\mathfrak{n}_1)|\mu_F(\mathfrak{n}_2)^2|\rho(\mathfrak{n}_2)|}{\sqrt{N(\mathfrak{n}_1)N(\mathfrak{n}_2)}}\\
&\cdot \frac{(\log \xi N(\mathfrak{n}_1)^{-1})^2(\log \xi N(\mathfrak{n}_2)^{-1})^2}{4}\sum_{\substack{\mathfrak{m}\mid\mathrm{gcd}(\mathfrak{n}_1,\mathfrak{n}_2)}}N(\mathfrak{n}_1\mathfrak{n}_2\mathfrak{m}^{-2})^{1/2+\varepsilon}.
\end{align*}

Similar to the estimate of $J_{\mathrm{reg}}^{\heartsuit}(\xi,\rho)$ we deduce that 
\begin{equation}\label{eq7.23}
J_{\mathrm{error}}^{\heartsuit,\mathrm{old}}(\xi,\rho)\ll \xi^{2+5\varepsilon}N(\mathfrak{q})^{\varepsilon}\|\mathbf{k}\|^{1/2+\varepsilon}.
\end{equation}

According to the definition of $G_{\mathfrak{n},\mathfrak{q}}(s)$ in \eqref{G}, we have
\begin{equation}\label{eq7.25}
G_{\mathfrak{n}_1\mathfrak{n}_2\mathfrak{m}^{-2},\mathcal{O}_F}(s)=D_F^{\frac{3}{2}}\prod_{v\mid\infty}\frac{k_v-1}{2\pi^2}\cdot G_{\mathcal{O}_F}(s)\cdot \frac{\tau(\mathfrak{n}_1\mathfrak{n}_2\mathfrak{m}^{-2})}{N(\mathfrak{n}_1\mathfrak{n}_2\mathfrak{m}^{-2})^{(1+s)/2}},
\end{equation}
where $G_{\mathcal{O}_F}(s)$ is defined by \eqref{eq7.18} with $\mathfrak{a}=\mathcal{O}_F$.

Substituting \eqref{eq7.25} in the definition of $J_{\mathrm{main}}^{\heartsuit,\mathrm{old}}(\xi,\rho)$ yields 
\begin{equation}\label{eq7.26}
J_{\mathrm{main}}^{\heartsuit,\mathrm{old}}(\xi,\rho)=\frac{4\delta_{\mathbf{k}}V_{\mathfrak{q}}\textbf{1}_{\mathfrak{q}\subsetneq \mathcal{O}_F}}{N(\mathfrak{q})}D_F^{\frac{3}{2}}\prod_{v\mid\infty}\frac{k_v-1}{2\pi^2}\cdot \mathbb{L}(G_{\mathfrak{q}}^{\mathrm{old}},\rho,\mathfrak{q}),
\end{equation}
where $G_{\mathfrak{q}}^{\mathrm{old}}(s):=G_{\mathcal{O}_F}(s)(1+N(\mathfrak{q})^{-1-s/2})^{-2}$. 

Therefore, \eqref{sing7.17} follows from \eqref{sing7.6}, \eqref{f7.22}, \eqref{eq7.23}, and \eqref{eq7.26}, along with the fact that $V_{\mathfrak{q}}N(\mathfrak{q})^{-1}=\zeta_{\mathfrak{q}}(1)\zeta_{\mathfrak{q}}(2)^{-1}$ for $\mathfrak{q}\neq\mathcal{O}_F$. 
\end{proof}

\subsection{The Mollified Singular Orbital Integral} 
\begin{lemma}\label{lem7.1}
Let notation be as before. Let $L(s,s_1,s_2;\rho,\mathfrak{q})$ be defined as in Definition \ref{defn7.4}. We have the following assertions.
\begin{itemize}
\item The series $L(s,s_1,s_2;\rho,\mathfrak{q})$ converges absolutely in the region 
\begin{equation}\label{equation7.1}
\begin{cases}
2\Re(s_1)+\Re(s)>0,\ \ 2\Re(s_1)+\Re(s)>0\\
\Re(s_1)+\Re(s_2)>0.
\end{cases}
\end{equation}
\item In the region \eqref{equation7.1}, we have 
\begin{equation}\label{7.9}
L(s,s_1,s_2;\rho,\mathfrak{q})=\prod_{\mathfrak{p}\nmid\mathfrak{q}}G_{\mathfrak{p}}(s,s_1,s_2),
\end{equation}
where the function $G_{\mathfrak{p}}(s,s_1,s_2)$ is defined by 
\begin{equation}\label{eq7.3}
1-\frac{2\rho(\mathfrak{p})}{N(\mathfrak{p})^{1+s_1+\frac{s}{2}}}-\frac{2\rho(\mathfrak{p})}{N(\mathfrak{p})^{1+s_2+\frac{s}{2}}}+\frac{\rho(\mathfrak{p})^2}{N(\mathfrak{p})^{1+s_1+s_2}}+\frac{3\rho(\mathfrak{p})^2}{N(\mathfrak{p})^{2+s_1+s_2+s}}.
\end{equation}
\item Suppose $\rho(\mathfrak{p})\ll 1$ for all prime ideals $\mathfrak{p}$, with the implied constant being absolute. The function $L(s,s_1,s_2;\rho,\mathfrak{q})$ admits a meromorphic continuation to the region 
\begin{equation}\label{eq7.4}
\begin{cases}
2\Re(s_1)+\Re(s)>-1,\ \ 2\Re(s_2)+\Re(s)>-1\\
\Re(s_1)+\Re(s_2)+\Re(s)>-1\\
2\Re(s_1)+\Re(s_2)+3\Re(s)/2>-2\\
\Re(s_1)+2\Re(s_2)+3\Re(s)/2>-2.
\end{cases}
\end{equation}
Moreover, in the region \eqref{eq7.4} we have 
\begin{equation}\label{eq7.5}
L(s,s_1,s_2;\rho,\mathfrak{q})=\frac{L(1+s_1+s_2,\rho)E(s,s_1,s_2;\rho,\mathfrak{q})}{L(1+s_1+s/2,\rho)^2L(1+s_2+s/2,\rho)^2},
\end{equation}
where $E(s,s_1,s_2;\rho,\mathfrak{q})$ is defined by  
\begin{equation}\label{eq7.6}
\frac{L_{\mathfrak{q}}(1+s_1+s/2,\rho)^2L_{\mathfrak{q}}(1+s_2+s/2,\rho)^2}{L_{\mathfrak{q}}(1+s_1+s_2,\rho)}\prod_{\mathfrak{p}\nmid\mathfrak{q}}\frac{L_{\mathfrak{p}}(1+s_1+s/2,\rho)^2L_{\mathfrak{p}}(1+s_2+s/2,\rho)^2}{L_{\mathfrak{p}}(1+s_1+s_2,\rho)G_{\mathfrak{p}}(s,s_1,s_2)^{-1}}.
\end{equation}
\end{itemize}
\end{lemma}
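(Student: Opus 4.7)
The plan is to argue prime by prime: first identify the local Euler factor by enumerating squarefree configurations, and then extract known local $L$-factors whose product absorbs the slowest-decaying terms, leaving a residual Euler product that converges in a strictly wider region.

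For parts (i) and (ii), since $\mu_F$ is supported on squarefree ideals and each of $\mu_F,\rho,\tau,N(\cdot)^{-s}$ is multiplicative, the double Dirichlet series factors as $\prod_{\mathfrak{p}\nmid\mathfrak{q}}G_\mathfrak{p}(s,s_1,s_2)$. At a fixed $\mathfrak{p}\nmid\mathfrak{q}$, I would enumerate $(e_\mathfrak{p}(\mathfrak{n}_1),e_\mathfrak{p}(\mathfrak{n}_2))\in\{0,1\}^2$; in the case $(1,1)$ one also sums over $\mathfrak{m}_\mathfrak{p}\in\{0,1\}$, using $\tau(\mathfrak{p}^2)=3$ and $\tau(\mathcal{O}_F)=1$. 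Summing the four contributions produces the explicit formula \eqref{eq7.3}, and absolute convergence in the region \eqref{equation7.1} follows by comparing the three ``worst'' terms $-2\rho(\mathfrak{p})N(\mathfrak{p})^{-1-s_i-s/2}$ ($i=1,2$) and $\rho(\mathfrak{p})^2 N(\mathfrak{p})^{-1-s_1-s_2}$ with $\sum_\mathfrak{p} N(\mathfrak{p})^{-c}$; these three terms correspond exactly to the three listed inequalities.

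For part (iii), I would define $E_\mathfrak{p}:=G_\mathfrak{p}\cdot L_\mathfrak{p}(1+s_1+s/2,\rho)^2L_\mathfrak{p}(1+s_2+s/2,\rho)^2/L_\mathfrak{p}(1+s_1+s_2,\rho)$ and expand $\log E_\mathfrak{p}$ as a power series in $\rho(\mathfrak{p})$ and the three base monomials $p_1=N(\mathfrak{p})^{-1-s_1-s/2}$, $p_2=N(\mathfrak{p})^{-1-s_2-s/2}$, $q=N(\mathfrak{p})^{-1-s_1-s_2}$. The linear-in-$\rho(\mathfrak{p})$ terms $\pm 2\rho(\mathfrak{p})p_i$ in $\log G_\mathfrak{p}$ are designed to cancel against the logarithms of the extracted $L$-factors, and a direct bookkeeping check (together with a mild decay assumption on $\rho(\mathfrak{p})^2-\rho(\mathfrak{p})$, which holds for the canonical $\rho$ of \eqref{rho}) shows that the remaining contributions to $\log E_\mathfrak{p}$ are dominated by the monomials $\rho(\mathfrak{p})^2 p_1^2$, $\rho(\mathfrak{p})^2 p_2^2$, $\rho(\mathfrak{p})^2 p_1p_2$, and the cubic cross-terms $\rho(\mathfrak{p})^3 p_1^2 p_2$, $\rho(\mathfrak{p})^3 p_1 p_2^2$. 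Their decay rates $N(\mathfrak{p})^{-(2+2\Re(s_1)+\Re(s))}$, $N(\mathfrak{p})^{-(2+2\Re(s_2)+\Re(s))}$, $N(\mathfrak{p})^{-(2+\Re(s_1)+\Re(s_2)+\Re(s))}$, $N(\mathfrak{p})^{-(3+2\Re(s_1)+\Re(s_2)+3\Re(s)/2)}$, $N(\mathfrak{p})^{-(3+\Re(s_1)+2\Re(s_2)+3\Re(s)/2)}$ match exactly the five inequalities in \eqref{eq7.4}, so $\prod_{\mathfrak{p}\nmid\mathfrak{q}}E_\mathfrak{p}$ converges absolutely in that region. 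Combining this with the standard meromorphic continuation of the three extracted $L(\cdot,\rho)$ factors, and including the ramified local contribution at $\mathfrak{p}\mid\mathfrak{q}$ (where the Euler product defining $L(s,s_1,s_2;\rho,\mathfrak{q})$ omits the factor but those defining $L(\cdot,\rho)$ do not), yields the identity \eqref{eq7.5} with the explicit $\mathfrak{q}$-correction in \eqref{eq7.6}.

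The main obstacle is the bookkeeping in the expansion of $\log E_\mathfrak{p}$: one must carefully verify that every term of order $\rho(\mathfrak{p})N(\mathfrak{p})^{-1-\cdot}$ cancels between $\log G_\mathfrak{p}$ and the extracted $L$-factors, then correctly identify which residual monomials in $(p_1,p_2,q)$ have the slowest decay so that their convergence exponents recover precisely the five inequalities in \eqref{eq7.4}. Secondary care is needed so that the auxiliary terms involving $q$ or $q^2$ (which would naively impose stronger constraints) are suppressed by the fact that $\rho(\mathfrak{p})-1$ and $\rho(\mathfrak{p})^2-\rho(\mathfrak{p})$ decay like $N(\mathfrak{p})^{-1}$ for the relevant choice of $\rho$.
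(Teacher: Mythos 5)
Your route to the Euler factorization in parts (i)--(ii) is cleaner than the paper's. The paper first changes variables to $\mathfrak{c}=\gcd(\mathfrak{n}_1,\mathfrak{n}_2)$, $\mathfrak{n}_j=\mathfrak{m}_j\mathfrak{c}$, introduces the auxiliary multiplicative quantities $\lambda_s(\mathfrak{c})$ and $\nu_{s,s_1,s_2}(\mathfrak{b})$, performs a M\"obius detection of $(\mathfrak{m}_1,\mathfrak{m}_2)=1$, and only after several rounds of factoring out products $D_1,D_2,D_3$ arrives at \eqref{eq7.3}. Your direct local enumeration of $\bigl(e_\mathfrak{p}(\mathfrak{n}_1),e_\mathfrak{p}(\mathfrak{n}_2)\bigr)\in\{0,1\}^2$ with the $\mathfrak{m}$-sum in the $(1,1)$ case (using $\tau(\mathcal{O}_F)=1$ and $\tau(\mathfrak{p}^2)=3$) lands on the same formula in one step; since $\mu_F,\rho,\tau,N$ are all multiplicative, the Euler factorization is immediate. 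For part (iii) the strategy is the same as the paper's (compare the local Euler factor of $L(s,s_1,s_2;\rho,\mathfrak{q})$ with local factors of $L(\cdot,\rho)$, then sum the residual).

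However, your bookkeeping in part (iii) has a genuine gap, which in fact the paper's proof shares but does not acknowledge. Writing $p_1=N(\mathfrak{p})^{-1-s_1-s/2}$, $p_2=N(\mathfrak{p})^{-1-s_2-s/2}$, $q=N(\mathfrak{p})^{-1-s_1-s_2}$, $r=\rho(\mathfrak{p})$, a direct expansion of the local factor of $E$ gives
\[E_\mathfrak{p}=1+(r^2-r)q-r^2p_1^2-r^2p_2^2-r^2p_1p_2-r^3q^2+2r^3p_1q+2r^3p_2q+O(\mathrm{deg}\geq3).\]
Your observation that the \emph{linear} term $(r^2-r)q=-\rho(\mathfrak{p})(1-\rho(\mathfrak{p}))q$ forces an extra decay hypothesis on $1-\rho(\mathfrak{p})$ is exactly right, and this is not stated in the lemma, which only assumes $\rho(\mathfrak{p})\ll1$; the paper's claimed bound $1+O(M(s,s_1,s_2))$ simply omits this contribution. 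But your subsequent claim that the quadratic terms ``involving $q$ or $q^2$'' are also suppressed by the decay of $\rho(\mathfrak{p})-1$ is \emph{not} correct: the coefficients of $q^2$, $p_1q$, $p_2q$ are $-r^3$, $2r^3$, $2r^3$ respectively, and for the canonical $\rho$ one has $r^3\approx1$, so no such suppression occurs. These three monomials decay like $N(\mathfrak{p})^{-2-2\Re(s_1+s_2)}$, $N(\mathfrak{p})^{-2-2\Re(s_1)-\Re(s_2)-\Re(s)/2}$, $N(\mathfrak{p})^{-2-\Re(s_1)-2\Re(s_2)-\Re(s)/2}$, and convergence of their prime sums is not implied by the five inequalities in \eqref{eq7.4} (e.g.\ $\Re(s)=0$, $\Re(s_1)=\Re(s_2)=-0.49$ satisfies all of \eqref{eq7.4} but has $\Re(s_1+s_2)=-0.98<-1/2$, so $\sum_\mathfrak{p}|q^2|$ diverges). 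Thus neither your proposal nor the paper's proof establishes absolute convergence of the Euler product for $E$ on the \emph{entire} stated region \eqref{eq7.4}; the additional constraints coming from $q^2$, $p_1q$, $p_2q$ must be appended (they are all harmless in the paper's actual applications, where $s\in\mathcal{C}_\varepsilon$ and $\Re(s_j)>-\varepsilon/10$). To close the gap you would need to add these cross-term exponents to the list of decay rates, and make the hypothesis $1-\rho(\mathfrak{p})\ll N(\mathfrak{p})^{-\delta}$ explicit rather than folding it into a side remark.
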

\begin{proof}
Let $\mathfrak{c}\subseteq \mathcal{O}_F$ be the greatest common divisor of $\mathfrak{n}_1$ and $\mathfrak{n}_2$. Write $\mathfrak{n}_j=\mathfrak{m}_j\mathfrak{c}$, $j=1,2$. Swapping sums we can rewrite $L(s,s_1,s_2;\rho,\mathfrak{q})$ as 
\begin{align*}
\sum_{\substack{\mathfrak{c} \subset \mathcal{O}_F \\ (\mathfrak{c},\mathfrak{q}) = 1}} \sum_{\substack{\mathfrak{m}_1 \subset \mathcal{O}_F \\ (\mathfrak{m}_1,\mathfrak{c}\mathfrak{q}) = 1}} \sum_{\substack{\mathfrak{m}_2 \subset \mathcal{O}_F \\ (\mathfrak{m}_2,\mathfrak{m}_1\mathfrak{c}\mathfrak{q}) = 1}} \frac{\mu_F(\mathfrak{m}_1)\mu_F(\mathfrak{m}_2)\mu_F(\mathfrak{c})^2\rho(\mathfrak{m}_1)\rho(\mathfrak{m}_2)\rho(\mathfrak{c})^{2}}{N(\mathfrak{m}_1)^{\frac{1}{2}+s_1}N(\mathfrak{m}_2)^{\frac{1}{2}+s_2}N(\mathfrak{c})^{1+s_1+s_2}}\sum_{\substack{\mathfrak{m}\mid\mathfrak{c}}}\frac{\tau(\mathfrak{m}_1\mathfrak{m}_2\mathfrak{m}^2)}{N(\mathfrak{m}_1\mathfrak{m}_2\mathfrak{m}^2)^{\frac{1+s}{2}}}.
\end{align*}
where 
\begin{equation}\label{f7.1}
\lambda_s(\mathfrak{c}):=\sum_{\substack{\mathfrak{m}\mid\mathfrak{c}}}\frac{\tau(\mathfrak{m}^2)}{N(\mathfrak{m})^{1+s}}=\prod_{\mathfrak{p}\mid\mathfrak{c}}\left(1+\tau(\mathfrak{p}^2)N(\mathfrak{p})^{-1-s}\right)=\prod_{\mathfrak{p}\mid\mathfrak{c}}\left(1+3N(\mathfrak{p})^{-1-s}\right).
\end{equation}
In conjunction with the relation 
\begin{align*}
\textbf{1}_{(\mathfrak{m}_1,\mathfrak{m}_2)=1}=\sum_{\mathfrak{a}\mid (\mathfrak{m}_1,\mathfrak{m}_2)}\mu_F(\mathfrak{a}),
\end{align*}
we obtain the following expression:  
\begin{align*}
L(s,s_1,s_2;\rho,\mathfrak{q})=\sum_{\substack{\mathfrak{c} \subset \mathcal{O}_F \\ (\mathfrak{c},\mathfrak{q}) = 1}} &\frac{\lambda_s(\mathfrak{c})\mu_F(\mathfrak{c})^2\rho(\mathfrak{c})^{2}}{N(\mathfrak{c})^{1+s_1+s_2}}\sum_{\substack{\mathfrak{m}_1 \subset \mathcal{O}_F \\ (\mathfrak{m}_1,\mathfrak{c}\mathfrak{q}) = 1}} \frac{\tau(\mathfrak{m}_1)\mu_F(\mathfrak{m}_1)\rho(\mathfrak{m}_1)}{N(\mathfrak{m}_1)^{1+s_1+\frac{s}{2}}}\\
&\sum_{\substack{\mathfrak{m}_2 \subset \mathcal{O}_F \\ (\mathfrak{m}_2,\mathfrak{c}\mathfrak{q}) = 1}} \frac{\tau(\mathfrak{m}_2)\mu_F(\mathfrak{m}_2)\rho(\mathfrak{m}_2)}{N(\mathfrak{m}_2)^{1+s_2+\frac{s}{2}}}\sum_{\mathfrak{a}\mid (\mathfrak{m}_1,\mathfrak{m}_2)}\mu_F(\mathfrak{a}).
\end{align*}

Hence, after the changing of variables $\mathfrak{m}_1\mapsto \mathfrak{m}_1\mathfrak{a}$ and $\mathfrak{m}_2\mapsto \mathfrak{m}_2\mathfrak{a}$, we derive  
\begin{align*}
L(s,s_1,s_2;\rho,\mathfrak{q})=&\sum_{\substack{\mathfrak{c} \subset \mathcal{O}_F \\ (\mathfrak{c},\mathfrak{q}) = 1}} \frac{\lambda_s(\mathfrak{c})\mu_F(\mathfrak{c})^2\rho(\mathfrak{c})^{2}}{N(\mathfrak{c})^{1+s_1+s_2}}\sum_{(\mathfrak{a},\mathfrak{c}\mathfrak{q}) = 1}\frac{\mu_F(\mathfrak{a})\tau(\mathfrak{a})^2\rho(\mathfrak{a})^{2}}{N(\mathfrak{a})^{2+s_1+s_2+s}}\\
&\sum_{\substack{\mathfrak{m}_1\subset \mathcal{O}_F \\ (\mathfrak{m}_1,\mathfrak{a}\mathfrak{c}\mathfrak{q}) = 1}}\sum_{\substack{\mathfrak{m}_2 \subset \mathcal{O}_F \\ 
(\mathfrak{m}_2,\mathfrak{a}\mathfrak{c}\mathfrak{q}) = 1}} \frac{\tau(\mathfrak{m}_1)\tau(\mathfrak{m}_2)\mu_F(\mathfrak{m}_1)\mu_F(\mathfrak{m}_2)\rho(\mathfrak{m}_1)\rho(\mathfrak{m}_2)}{N(\mathfrak{m}_1)^{1+s_1+\frac{s}{2}}N(\mathfrak{m}_2)^{1+s_2+\frac{s}{2}}}.
\end{align*}

Notice that 
\begin{align*}
\sum_{\substack{\mathfrak{m}_2 \subset \mathcal{O}_F \\ (\mathfrak{m}_2,\mathfrak{a}\mathfrak{c}\mathfrak{q}) = 1}} \frac{\tau(\mathfrak{m}_2)\mu_F(\mathfrak{m}_2)\rho(\mathfrak{m}_2)}{N(\mathfrak{m}_2)^{1+s_2+\frac{s}{2}}}=\prod_{\mathfrak{p}\nmid \mathfrak{a}\mathfrak{c}\mathfrak{q}}\left(1+\frac{\tau(\mathfrak{p})\mu_F(\mathfrak{p})\rho(\mathfrak{p})}{N(\mathfrak{p})^{1+s_2+\frac{s}{2}}}\right),
\end{align*}
which is equal to $\prod_{\mathfrak{p}\nmid \mathfrak{a}\mathfrak{c}\mathfrak{q}}\left(1-2\rho(\mathfrak{p})N(\mathfrak{p})^{-1-s_2-\frac{s}{2}}\right).$

Therefore, the function $L(s,s_1,s_2;\rho,\mathfrak{q})$ boils down to  
\begin{equation}\label{7.1}
\sum_{\substack{\mathfrak{c} \subset \mathcal{O}_F \\ (\mathfrak{c},\mathfrak{q}) = 1}} \frac{\lambda_s(\mathfrak{c})\mu_F(\mathfrak{c})^2\rho(\mathfrak{c})^{2}}{N(\mathfrak{c})^{1+s_1+s_2}}\sum_{(\mathfrak{a},\mathfrak{c}\mathfrak{q}) = 1}\frac{\mu_F(\mathfrak{a})\tau(\mathfrak{a})^2\rho(\mathfrak{a})^{2}}{N(\mathfrak{a})^{2+s_1+s_2+s}}\prod_{\mathfrak{p}\nmid \mathfrak{a}\mathfrak{c}\mathfrak{q}}\nu_{s,s_1,s_2}(\mathfrak{p})^{-1},
\end{equation}
where for an integral ideal $\mathfrak{b}$, the arithmetic function $\nu_{s,s_1,s_2}(\mathfrak{b})$ is defined by   
\begin{equation}\label{f7.3}
\nu_{s,s_1,s_2}(\mathfrak{b})=\prod_{\mathfrak{p}\mid \mathfrak{b}}\left(1-2\rho(\mathfrak{p})N(\mathfrak{p})^{-1-s_1-\frac{s}{2}}\right)^{-1}\left(1-2\rho(\mathfrak{p})N(\mathfrak{p})^{-1-s_2-\frac{s}{2}}\right)^{-1}.
\end{equation}

Write $D_1(s,s_1,s_2):=\prod_{\mathfrak{p}}\nu_{s,s_1,s_2}(\mathfrak{p})^{-1}$. Then \eqref{7.1} becomes
\begin{align*}
L(s,s_1,s_2;\rho,\mathfrak{q})=&D_1(s,s_1,s_2)\nu_{s,s_1,s_2}(\mathfrak{q})\sum_{\substack{\mathfrak{c} \subset \mathcal{O}_F \\ (\mathfrak{c},\mathfrak{q}) = 1}} \frac{\lambda_s(\mathfrak{c})\mu_F(\mathfrak{c})^2\rho(\mathfrak{c})^2\nu_{s,s_1,s_2}(\mathfrak{c})}{N(\mathfrak{c})^{1+s_1+s_2}}\\
&\qquad \qquad \sum_{(\mathfrak{a},\mathfrak{c}\mathfrak{q}) = 1}\frac{\mu_F(\mathfrak{a})\tau(\mathfrak{a})^2\rho(\mathfrak{a})^2\nu_{s,s_1,s_2}(\mathfrak{a})}{N(\mathfrak{a})^{2+s_1+s_2+s}}.
\end{align*}

Summing over integral ideals $\mathfrak{a}\subseteq \mathcal{O}_F$ with $(\mathfrak{a},\mathfrak{c}\mathfrak{q}) = 1$ in the above equality, we can rewrite the function $L(s,s_1,s_2;\rho,\mathfrak{q})D_1(s,s_1,s_2)^{-1}\nu_{s,s_1,s_2}(\mathfrak{q})^{-1}$ as
\begin{equation}\label{7.3}
D_2(s,s_1,s_2)\sum_{\substack{\mathfrak{c} \subset \mathcal{O}_F \\ (\mathfrak{c},\mathfrak{q}) = 1}} \frac{\lambda_s(\mathfrak{c})\mu_F(\mathfrak{c})^2\rho(\mathfrak{c})^2\nu_{s,s_1,s_2}(\mathfrak{c})}{N(\mathfrak{c})^{1+s_1+s_2}}\prod_{\mathfrak{p}\mid\mathfrak{c}\mathfrak{q}}\left(1-\frac{4\rho(\mathfrak{p})^2\nu_{s,s_1,s_2}(\mathfrak{p})}{N(\mathfrak{p})^{2+s_1+s_2+s}}\right)^{-1},
\end{equation}
where 
\begin{align*}
D_2(s,s_1,s_2):=\prod_{\mathfrak{p}}\left(1-\frac{4\rho(\mathfrak{p})^2\nu_{s,s_1,s_2}(\mathfrak{p})}{N(\mathfrak{p})^{2+s_1+s_2+s}}\right).
\end{align*}

By a straightforward calculation, we have 
\begin{align*}
&\sum_{\substack{\mathfrak{c} \subset \mathcal{O}_F \\ (\mathfrak{c},\mathfrak{q}) = 1}} \frac{\lambda_s(\mathfrak{c})\mu_F(\mathfrak{c})^2\rho(\mathfrak{c})^2\nu_{s,s_1,s_2}(\mathfrak{c})}{N(\mathfrak{c})^{1+s_1+s_2}}\prod_{\mathfrak{p}\mid\mathfrak{c}\mathfrak{q}}\left(1-\frac{4\rho(\mathfrak{p})^2\nu_{s,s_1,s_2}(\mathfrak{p})}{N(\mathfrak{p})^{2+s_1+s_2+s}}\right)^{-1}\\
=&\prod_{\mathfrak{p}\mid \mathfrak{q}}\left(1-\frac{4\rho(\mathfrak{p})^2\nu_{s,s_1,s_2}(\mathfrak{p})}{N(\mathfrak{p})^{2+s_1+s_2+s}}\right)^{-1}\prod_{\mathfrak{p}\nmid\mathfrak{q}}\left(1+\frac{\lambda_s(\mathfrak{p})\rho(\mathfrak{p})^2\nu_{s,s_1,s_2}(\mathfrak{p})N(\mathfrak{p})^{-1-s_1-s_2}}{1-4\rho(\mathfrak{p})^2\nu_{s,s_1,s_2}(\mathfrak{p})N(\mathfrak{p})^{-2-s_1-s_2-s}}\right).
\end{align*}

Substituting this into \eqref{7.3} we derive that 
\begin{equation}\label{f7.6}
L(s,s_1,s_2;\rho,\mathfrak{q})=D_1(s,s_1,s_2)D_2(s,s_1,s_2)D_3(s,s_1,s_2)\prod_{\mathfrak{p}\mid \mathfrak{q}}G_{\mathfrak{p}}^*(s,s_1,s_2),
\end{equation}
where 
\begin{align*}
D_3(s,s_1,s_2):=&\prod_{\mathfrak{p}}\left(1+\frac{\lambda_s(\mathfrak{p})\rho(\mathfrak{p})^2\nu_{s,s_1,s_2}(\mathfrak{p})}{N(\mathfrak{p})^{1+s_1+s_2}}\left(1-\frac{4\rho(\mathfrak{p})^2\nu_{s,s_1,s_2}(\mathfrak{p})}{N(\mathfrak{p})^{2+s_1+s_2+s}}\right)^{-1}\right),\\
G_{\mathfrak{p}}^*(s,s_1,s_2):=&\left(\nu_{s,s_1,s_2}(\mathfrak{p})^{-1}-\frac{4\rho(\mathfrak{p})^2}{N(\mathfrak{p})^{2+s_1+s_2+s}}+\frac{\lambda_s(\mathfrak{p})\rho(\mathfrak{p})^2}{N(\mathfrak{p})^{1+s_1+s_2}}\right)^{-1}.
\end{align*}

According to the definition of $\lambda_s(\mathfrak{p})$ in \eqref{f7.1} 
and $\nu_{s,s_1,s_2}(\mathfrak{p})$ in \eqref{f7.3}, the function $G_{\mathfrak{p}}^*(s,s_1,s_2)$ is equal to  
\begin{equation}\label{7.6}
\left(1-\frac{2\rho(\mathfrak{p})}{N(\mathfrak{p})^{1+s_1+\frac{s}{2}}}-\frac{2\rho(\mathfrak{p})}{N(\mathfrak{p})^{1+s_2+\frac{s}{2}}}+\frac{\rho(\mathfrak{p})^2}{N(\mathfrak{p})^{1+s_1+s_2}}+\frac{3\rho(\mathfrak{p})^2}{N(\mathfrak{p})^{2+s_1+s_2+s}}\right)^{-1}.
\end{equation}

Moreover, the function $D_1(s,s_1,s_2)D_2(s,s_1,s_2)D_3(s,s_1,s_2)$ is equal to  
\begin{equation}\label{7.7}
\prod_{\mathfrak{p}}\nu_{s,s_1,s_2}(\mathfrak{p})^{-1}\left(1-\frac{4\rho(\mathfrak{p})^2\nu_{s,s_1,s_2}(\mathfrak{p})}{N(\mathfrak{p})^{2+s_1+s_2+s}}+\frac{\lambda_s(\mathfrak{p})\rho(\mathfrak{p})^2\nu_{s,s_1,s_2}(\mathfrak{p})}{N(\mathfrak{p})^{1+s_1+s_2}}\right)=\prod_{\mathfrak{p}}\frac{1}{G_{\mathfrak{p}}^*(s,s_1,s_2)}.
\end{equation}

Notice that $G_{\mathfrak{p}}^*(s,s_1,s_2)^{-1}=G_{\mathfrak{p}}(s,s_1,s_2)$, which is defined by \eqref{eq7.3}. Substituting \eqref{7.6} and \eqref{7.7} into \eqref{f7.6}, we obtain the expression \eqref{7.9}, implying that $L(s,s_1,s_2;\rho,\mathfrak{q})$ converges absolutely in the region \eqref{equation7.1}.  

Let $(s,s_1,s_2)\in \mathbb{C}^3$ satisfy \eqref{equation7.1}. Then 
\begin{align*}
E(s,s_1,s_2;\rho,\mathfrak{q}):=\frac{L(s,s_1,s_2;\rho,\mathfrak{q})\zeta_F(1+s_1+s/2)^2\zeta_F(1+s_2+s/2)^2}{\zeta_F(1+s_1+s_2)} 
\end{align*}
is equal to \eqref{eq7.6}. Moreover, 
\begin{equation}\label{7.15}
\frac{\zeta_{\mathfrak{p}}(1+s_1+s/2)^2\zeta_{\mathfrak{p}}(1+s_2+s/2)^2G_{\mathfrak{p}}(s,s_1,s_2)}{\zeta_{\mathfrak{p}}(1+s_1+s_2)}=1+O(M(s,s_1,s_2)),
\end{equation}
where the implied constant is absolute, and 
\begin{align*}
M(s,s_1,s_2):=&\frac{1}{N(\mathfrak{p})^{2+\Re(s_1+s_2+s)}}+\frac{1}{N(\mathfrak{p})^{2+2\Re(s_1)+\Re(s)}}+\frac{1}{N(\mathfrak{p})^{2+2\Re(s_2)+\Re(s)}}\\
&+\frac{1}{N(\mathfrak{p})^{3+2\Re(s_1)+\Re(s_2)+\frac{3\Re(s)}{2}}}+\frac{1}{N(\mathfrak{p})^{3+\Re(s_1)+2\Re(s_2)+\frac{3\Re(s)}{2}}}.
\end{align*}

As a consequence, the product of \eqref{7.15} over $\mathfrak{p}\nmid\mathfrak{q}$ converges absolutely in the region \eqref{eq7.4}, from which we conclude \eqref{eq7.5}.
\end{proof}

\begin{lemma}
Let notation be as in Lemma \ref{lem7.1}. 
\begin{itemize}
\item We have
\begin{equation}\label{equation7.15}
E(0,0,0;\rho,\mathfrak{q})=L_{\mathfrak{q}}(1,\rho)^3\prod_{\mathfrak{p}\nmid\mathfrak{q}}\bigg[1-\frac{\rho(\mathfrak{p})N(\mathfrak{p})^{-1}(1-\rho(\mathfrak{p})-\rho(\mathfrak{p})^2N(\mathfrak{p})^{-2})}{(1-\rho(\mathfrak{p})N(\mathfrak{p})^{-1})^{3}}\bigg].	
\end{equation} 
\item Suppose $\rho$ is defined by \eqref{rho}. We have
\begin{equation}\label{a7.16}
E(0,0,0;\rho,\mathfrak{q})=\begin{cases}
\zeta_{\mathfrak{q}}(1)^3\zeta_{\mathfrak{q}}(2)^{-2}\zeta(2)^{-1},\ \ &\text{if $\mathfrak{q}\subsetneq \mathcal{O}_F$,}\\
\zeta(2)^{-1},\ \ &\text{if $\mathfrak{q}=\mathcal{O}_F$}.
\end{cases}
\end{equation}
\item Suppose that 
\begin{equation}\label{b7.17}
\sum_{\mathfrak{p}}\frac{|1-\rho(\mathfrak{p})|}{N(\mathfrak{q})}<\infty.
\end{equation}
Then we have 
\begin{equation}\label{a7.18}
R(\rho):=\lim_{s\to 0}\frac{L(1+s,\rho)}{\zeta_F(1+s)}=\prod_{\mathfrak{p}}\bigg[1-\frac{(1-\rho(\mathfrak{p}))N(\mathfrak{p})^{-1}}{1-\rho(\mathfrak{p})N(\mathfrak{p})^{-1}}\bigg].	
\end{equation}

\item Assume \eqref{b7.17}. Then $E(0,0,0;\rho,\mathfrak{q})R(\rho)^{-3}$ is equal to  
\begin{equation}\label{a7.19}
L_{\mathfrak{q}}(1,\rho)^2\zeta_{\mathfrak{q}}(1)\prod_{\mathfrak{p}}\frac{1-(4\rho(\mathfrak{p})-\rho(\mathfrak{p})^2)N(\mathfrak{p})^{-1}+3\rho(\mathfrak{p})^2N(\mathfrak{p})^{-2}}{(1-N(\mathfrak{p})^{-1})^3}
\end{equation}
if $\mathfrak{q}\subsetneq\mathcal{O}_F$, and for $\mathfrak{q}=\mathcal{O}_F$, 
\begin{equation}\label{b7.19}
\frac{E(0,0,0;\rho,\mathfrak{q})}{R(\rho)^3}=\prod_{\mathfrak{p}}\frac{1-(4\rho(\mathfrak{p})-\rho(\mathfrak{p})^2)N(\mathfrak{p})^{-1}+3\rho(\mathfrak{p})^2N(\mathfrak{p})^{-2}}{(1-N(\mathfrak{p})^{-1})^3}.
\end{equation}

\item In particular, taking $\rho$ as defined by \eqref{rho}, \eqref{a7.19} simples to 
\begin{equation}\label{a7.20}
\frac{E(0,0,0;\rho,\mathfrak{q})}{R(\rho)^3}=\zeta_{\mathfrak{q}}(1)L_{\mathfrak{q}}(1,\rho)^2\prod_{\mathfrak{p}}\frac{1}{(1-N(\mathfrak{p})^{-2})^2}=\frac{\zeta_{\mathfrak{q}}(1)^3\zeta_F(2)^2}{\zeta_{\mathfrak{q}}(2)^2}
\end{equation}
if $\mathfrak{q}\subsetneq\mathcal{O}_F$, and for $\mathfrak{q}=\mathcal{O}_F$, 
\begin{equation}\label{a7.22}
\frac{E(0,0,0;\rho,\mathfrak{q})}{R(\rho)^3}=\zeta_F(2)^2.
\end{equation}
\end{itemize}
\end{lemma}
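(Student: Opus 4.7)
The proof is a direct Euler-product computation, organized into four steps. First, setting $s=s_1=s_2=0$ in \eqref{eq7.6}, each local ratio of $L$-factors collapses into $L_{\mathfrak{p}}(1,\rho)^3$, giving
$$E(0,0,0;\rho,\mathfrak{q})=L_{\mathfrak{q}}(1,\rho)^3\prod_{\mathfrak{p}\nmid\mathfrak{q}}L_{\mathfrak{p}}(1,\rho)^3G_{\mathfrak{p}}(0,0,0).$$
To match \eqref{equation7.15}, I verify (writing $a=\rho(\mathfrak{p})$, $N=N(\mathfrak{p})$) the elementary identity $(1-a/N)^3-G_{\mathfrak{p}}(0,0,0)=(a/N)(1-a-a^2/N^2)$ by direct expansion, which rearranges $L_{\mathfrak{p}}(1,\rho)^3G_{\mathfrak{p}}(0,0,0)$ into the claimed bracketed form.

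Second, for the specialization \eqref{a7.16}, I substitute $\rho(\mathfrak{p})=N(\mathfrak{p})/(N(\mathfrak{p})+1)$ from \eqref{rho}. Clearing denominators by $(N(\mathfrak{p})+1)^2$ collapses each factor $L_{\mathfrak{p}}(1,\rho)^3G_{\mathfrak{p}}(0,0,0)$ to the remarkably clean expression $1-N(\mathfrak{p})^{-2}$. The product over $\mathfrak{p}\nmid\mathfrak{q}$ then equals $\zeta_{\mathfrak{q}}(2)\zeta_F(2)^{-1}$ (or $\zeta_F(2)^{-1}$ when $\mathfrak{q}=\mathcal{O}_F$), while the separated $\mathfrak{q}$-factor simplifies via $L_{\mathfrak{q}}(1,\rho)=(N(\mathfrak{q})+1)/N(\mathfrak{q})=\zeta_{\mathfrak{q}}(1)/\zeta_{\mathfrak{q}}(2)$ to $\zeta_{\mathfrak{q}}(1)^3\zeta_{\mathfrak{q}}(2)^{-3}$.

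Third, for \eqref{a7.18} the Euler-product identity
$$\frac{L(1+s,\rho)}{\zeta_F(1+s)}=\prod_{\mathfrak{p}}\left(1-\frac{(1-\rho(\mathfrak{p}))N(\mathfrak{p})^{-1-s}}{1-\rho(\mathfrak{p})N(\mathfrak{p})^{-1-s}}\right)$$
is immediate, and under \eqref{b7.17} the product converges absolutely in a neighborhood of $s=0$, justifying termwise evaluation to obtain the formula for $R(\rho)$. Consequently $R(\rho)^3=\prod_{\mathfrak{p}}(1-N(\mathfrak{p})^{-1})^3L_{\mathfrak{p}}(1,\rho)^3$. Dividing \eqref{equation7.15} by this product, the $L_{\mathfrak{p}}(1,\rho)^3$ cancel at each $\mathfrak{p}\nmid\mathfrak{q}$, yielding $\zeta_{\mathfrak{q}}(1)^3\prod_{\mathfrak{p}\nmid\mathfrak{q}}G_{\mathfrak{p}}(0,0,0)/(1-N(\mathfrak{p})^{-1})^3$. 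To recast this as \eqref{a7.19}, one extends the product to all $\mathfrak{p}$ by incorporating the missing $\mathfrak{p}=\mathfrak{q}$ factor and compensating with the prefactor $L_{\mathfrak{q}}(1,\rho)^2\zeta_{\mathfrak{q}}(1)$; for $\rho$ from \eqref{rho} one checks the numerical identity $L_{\mathfrak{q}}(1,\rho)^2\zeta_{\mathfrak{q}}(1)G_{\mathfrak{q}}(0,0,0)=1$ by substituting $\rho(\mathfrak{q})=N(\mathfrak{q})/(N(\mathfrak{q})+1)$. The case $\mathfrak{q}=\mathcal{O}_F$ is obtained similarly without the $\mathfrak{q}$-factor, producing \eqref{b7.19}.

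Finally, for \eqref{a7.20} and \eqref{a7.22}, substituting this $\rho$ into the generic factor gives $G_{\mathfrak{p}}(0,0,0)/(1-N(\mathfrak{p})^{-1})^3=N^4/(N^2-1)^2=(1-N^{-2})^{-2}$, so the product over all primes collapses to $\zeta_F(2)^2$; combining with the prefactor $L_{\mathfrak{q}}(1,\rho)^2\zeta_{\mathfrak{q}}(1)$ from \eqref{a7.19} and the identity $L_{\mathfrak{q}}(1,\rho)=\zeta_{\mathfrak{q}}(1)/\zeta_{\mathfrak{q}}(2)$ yields $\zeta_{\mathfrak{q}}(1)^3\zeta_F(2)^2/\zeta_{\mathfrak{q}}(2)^2$, as claimed. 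The whole argument is mechanical; the only conceptual steps are the polynomial identity in step one and the numerical coincidence in step three, which together encode the fact that the $\rho$ of \eqref{rho} is the natural choice producing a closed-form Euler product.
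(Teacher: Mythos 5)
Your proof is correct and follows essentially the same route as the paper's: a direct Euler-product evaluation of \eqref{eq7.6} at the origin (the paper instead restricts to $s_1=s_2=s/2$ and lets $s\to0$, which amounts to the same computation), followed by the polynomial identity for $G_{\mathfrak{p}}(0,0,0)$ and the specializations to the $\rho$ of \eqref{rho}. One point worth keeping explicit: your recasting of $E(0,0,0;\rho,\mathfrak{q})R(\rho)^{-3}$ into the form \eqref{a7.19} genuinely requires the local identity $L_{\mathfrak{q}}(1,\rho)^{2}\zeta_{\mathfrak{q}}(1)G_{\mathfrak{q}}(0,0,0)=1$, which holds only when $\rho(\mathfrak{q})=N(\mathfrak{q})(N(\mathfrak{q})+1)^{-1}$ — so, as your phrasing already hints, \eqref{a7.19} is really a statement about the mollifier of \eqref{rho} rather than a consequence of \eqref{b7.17} alone, and the paper's own proof carries the same implicit restriction.
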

\begin{proof}
Recall the definition in \eqref{eq7.6} of  $E(s,s_1,s_2;\rho,\mathfrak{q})$:
\begin{align*}
\frac{L_{\mathfrak{q}}(1+s_1+s/2,\rho)^2L_{\mathfrak{q}}(1+s_2+s/2,\rho)^2}{L_{\mathfrak{q}}(1+s_1+s_2,\rho)}\prod_{\mathfrak{p}\nmid\mathfrak{q}}\frac{L_{\mathfrak{p}}(1+s_1+s/2,\rho)^2L_{\mathfrak{p}}(1+s_2+s/2,\rho)^2}{L_{\mathfrak{p}}(1+s_1+s_2,\rho)G_{\mathfrak{p}}(s,s_1,s_2)^{-1}}.
\end{align*}

Take $s_1=s_2=s/2$, we obtain 
\begin{equation}\label{fc7.16}
E(s,s/2,s/2;\rho,\mathfrak{q})=L_{\mathfrak{q}}(1+s,\rho)^3\prod_{\mathfrak{p}\nmid\mathfrak{q}}\frac{G_{\mathfrak{p}}(s,s/2,s/2)}{(1-\rho(\mathfrak{p})N(\mathfrak{p})^{-1-s})^{3}}.
\end{equation}

By \eqref{eq7.3} we obtain 
\begin{equation}\label{fc7.17}
G_{\mathfrak{p}}(s,s/2,s/2)=1-(4\rho(\mathfrak{p})-\rho(\mathfrak{p})^2)N(\mathfrak{p})^{-1-s}+3\rho(\mathfrak{p})^2N(\mathfrak{p})^{-2-2s}.
\end{equation}

Combining \eqref{fc7.16} and \eqref{fc7.17} leads to  
\begin{align*}
E(s,s/2,s/2;\rho,\mathfrak{q})=L_{\mathfrak{q}}(1,\rho)^3\prod_{\mathfrak{p}\nmid\mathfrak{q}}\frac{1-(4\rho(\mathfrak{p})-\rho(\mathfrak{p})^2)N(\mathfrak{p})^{-1-s}+3\rho(\mathfrak{p})^2N(\mathfrak{p})^{-2-2s}}{(1-\rho(\mathfrak{p})N(\mathfrak{p})^{-1-s})^{3}}.
\end{align*}

Taking the limit $s\to 0$, we obtain 
\begin{align*}
\lim_{s\to 0}E(s,s/2,s/2;\rho,\mathfrak{q})=L_{\mathfrak{q}}(1,\rho)^3\prod_{\mathfrak{p}\nmid\mathfrak{q}}\frac{1-(4\rho(\mathfrak{p})-\rho(\mathfrak{p})^2)N(\mathfrak{p})^{-1}+3\rho(\mathfrak{p})^2N(\mathfrak{p})^{-2}}{(1-\rho(\mathfrak{p})N(\mathfrak{p})^{-1})^{3}},
\end{align*}
which simplifies to \eqref{equation7.15}.

Taking $\rho(\mathfrak{p})=(1+N(\mathfrak{p})^{-1})^{-1}$, the formula \eqref{equation7.15} simplifies to 
\begin{align*}
E(0,0,0;\rho,\mathfrak{q})=(1-\rho(\mathfrak{q})N(\mathfrak{q})^{-1})^{-3}\prod_{\mathfrak{p}\nmid\mathfrak{q}}(1-N(\mathfrak{p})^{-2})=\frac{(1+N(\mathfrak{q})^{-1})^2}{(1-N(\mathfrak{q})^{-1})\zeta_F(2)}
\end{align*}
if $\mathfrak{q}\neq\mathcal{O}_F$, and $E(0,0,0;\rho,\mathfrak{q})=\zeta_F(2)^{-1}$ if $\mathfrak{q}=\mathcal{O}_F$, which yields \eqref{a7.16}. 

The formula \eqref{a7.18} follows formally from \eqref{equation7.15}. As a consequence of \eqref{b7.17}, the infinite product in \eqref{a7.18} converges absolutely. In particular, $R(\rho)$ is well defined. Moreover, \eqref{a7.19} and \eqref{b7.19} follow from a direct calculation utilizing \eqref{equation7.15}, \eqref{a7.16} and \eqref{a7.18}, along with \eqref{b7.17}. As a special case, \eqref{a7.20} and \eqref{a7.22} holds from a direct calculation.
\end{proof}

\begin{prop}\label{prop7.3}
Let notation be as before. Let $G(s)$ be a holomorphic function in $\Re(s)>-1$. Let $0<\varepsilon<10^{-3}$. Suppose 
\begin{equation}\label{7.25}
\sum_{\mathfrak{p}}\frac{|1-\rho(\mathfrak{p})|}{N(\mathfrak{q})^{1-\varepsilon}}<\infty,\ \ |\rho(\mathfrak{p})|<N(\mathfrak{p})^{1-\varepsilon}\ \ \text{for all prime $\mathfrak{p}$}.
\end{equation}
 
\begin{itemize}
\item $\mathbb{L}(G,\rho,\mathfrak{q})$ converges, and  
\begin{equation}\label{main7.15}
\mathbb{L}(G,\rho,\mathfrak{q})=\frac{E(0,0,0;\rho,\mathfrak{q})}{R(\rho)^3(\Res_{s=1}\zeta_F(s))^2}\cdot\bigg[\frac{G'(0)}{\log\xi}+G(0)\bigg]+O(G(0)(\log\xi)^{-1}),
\end{equation}
where the implied constant depends only on $F$ and $\rho$. Here $R(\rho)$ is defined by \eqref{a7.18}.
\item Taking $\rho$ as in \eqref{rho}. Then 
\begin{equation}\label{equ7.28}
\mathbb{L}(G,\rho,\mathfrak{q})=\frac{C_{\mathfrak{q}}\cdot \zeta_F(2)^2}{(\Res_{s=1}\zeta_F(s))^2}\cdot\bigg[\frac{G'(0)}{\log\xi}+G(0)\bigg]+O(G(0)(\log\xi)^{-1}),
\end{equation}
where $C_{\mathfrak{q}}:=\zeta_{\mathfrak{q}}(1)^3\zeta_{\mathfrak{q}}(2)^{-2}\textbf{1}_{\mathfrak{q}\subsetneq \mathcal{O}_F}+\textbf{1}_{\mathfrak{q}=\mathcal{O}_F}$, and the implied constant depends only on $F$.
\end{itemize}
\end{prop}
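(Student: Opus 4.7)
\textbf{Proof proposal for Proposition \ref{prop7.3}.} The plan is to evaluate $\mathbb{L}(G,\rho,\mathfrak{q})$ by first collapsing the inner contour integral via residue calculus, then shifting the two outer contours $s_1, s_2$ to pick up the dominant poles near the origin. Start by substituting the factorization from Lemma \ref{lem7.1},
\[
L(s,s_1,s_2;\rho,\mathfrak{q})=\frac{L(1+s_1+s_2,\rho)\,E(s,s_1,s_2;\rho,\mathfrak{q})}{L(1+s_1+s/2,\rho)^2\,L(1+s_2+s/2,\rho)^2}.
\]
Under the hypothesis \eqref{7.25}, Lemma \ref{lem7.1} ensures that $E(s,s_1,s_2;\rho,\mathfrak{q})$ is holomorphic (with respect to $s, s_1, s_2$) in a neighbourhood of $(0,0,0)$ and that $L(1+s,\rho)=a_{-1}/s+a_0+a_1 s+\cdots$ near $s=0$, where $a_{-1}=R\,R(\rho)$ with $R=\Res_{s=1}\zeta_F(s)$. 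Inside $\mathcal{C}_\varepsilon$, the integrand has a double pole at $s=0$ arising from $\zeta_F(1+s)/s$, while $L(s,s_1,s_2;\rho,\mathfrak{q})$ is holomorphic at $s=0$ for $|s_1|,|s_2|$ small. Computing this double-pole residue yields
\[
I(s_1,s_2):=\tfrac{1}{2\pi i}\!\oint_{\mathcal{C}_\varepsilon}\!\tfrac{G(s)\zeta_F(1+s)L(s,s_1,s_2)}{s}ds=R G(0)\, L_s(0,s_1,s_2)+\bigl[R G'(0)+c_0 G(0)\bigr]L(0,s_1,s_2),
\]
where $L_s=\partial_s L|_{s=0}$ and $c_0$ is the constant term in the Laurent expansion of $\zeta_F(1+s)$ at $s=0$.

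Next, insert the factorization to read off the leading behaviour of $L(0,s_1,s_2)$ and $L_s(0,s_1,s_2)$ near $s_1=s_2=0$. A direct expansion using $L(1+s_j,\rho)^{-2}\sim s_j^2/a_{-1}^2$ and $L(1+s_1+s_2,\rho)\sim a_{-1}/(s_1+s_2)$ gives
\[
L(0,s_1,s_2)=\frac{E(0,0,0;\rho,\mathfrak{q})\,s_1^2 s_2^2}{a_{-1}^3(s_1+s_2)}\bigl(1+O(s_1)+O(s_2)\bigr).
\]
For $L_s(0,s_1,s_2)$, differentiating the factored form and using $L'(1+s_j,\rho)/L(1+s_j,\rho)=-1/s_j+a_0/a_{-1}+O(s_j)$ produces the remarkable cancellation of the $1/(s_1+s_2)$ factor, yielding
\[
L_s(0,s_1,s_2)=\frac{E(0,0,0;\rho,\mathfrak{q})\,s_1 s_2}{a_{-1}^3}\bigl(1+O(s_1)+O(s_2)\bigr).
\]

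Plug these expansions back into $\mathbb{L}(G,\rho,\mathfrak{q})=\frac{1}{(2\pi i)^2(\log\xi)^2}\iint \xi^{s_1+s_2}s_1^{-3}s_2^{-3}I(s_1,s_2)\,ds_1ds_2$, and shift both contours from $(2)$ to $(-\delta)$ for some small $0<\delta<\varepsilon$. The shifted integrals are $O(\xi^{-2\delta}\cdot(\log\xi)^{O(1)})$ by Stirling/convexity bounds on $L(1+s,\rho)$ on vertical lines (using \eqref{7.25}). The residues at $s_1=0$ and $s_2=0$ yield the main contributions via the standard integrals
\[
\tfrac{1}{(2\pi i)^2}\!\iint\!\tfrac{\xi^{s_1+s_2}}{s_1^2 s_2^2}\,ds_1ds_2=(\log\xi)^2,\qquad \tfrac{1}{(2\pi i)^2}\!\iint\!\tfrac{\xi^{s_1+s_2}}{s_1 s_2(s_1+s_2)}\,ds_1ds_2=\log\xi.
\]
The first combines with the $R G(0) L_s$ term to give $R G(0)\cdot E(0,0,0)/a_{-1}^3$, producing the main term $E(0,0,0)G(0)/(R^2 R(\rho)^3)$ after dividing by $(\log\xi)^2$. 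The second combines with the $R G'(0) L(0,s_1,s_2)$ contribution to give $E(0,0,0)G'(0)/(R^2 R(\rho)^3 \log\xi)$. The parallel evaluation of the $c_0 G(0) L(0,s_1,s_2)$ piece yields $O(G(0)/\log\xi)$, which is absorbed into the error term, as are the subleading contributions arising from the $O(s_j)$ corrections in the expansions of $L$ and $L_s$ (these produce smaller powers of $\log\xi$ in the double-integral evaluation).

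This establishes \eqref{main7.15}. The specialization \eqref{equ7.28} then follows from the explicit evaluations \eqref{a7.20}--\eqref{a7.22}, which give $E(0,0,0;\rho,\mathfrak{q})/R(\rho)^3=C_{\mathfrak{q}}\zeta_F(2)^2$ for $\rho$ defined by \eqref{rho}. The main obstacle will be controlling the error uniformly in $\mathfrak{q}$: the shifted contour integrals involve ratios $L(1+s_1+s_2,\rho)/L(1+s_j+s/2,\rho)^2$ whose polynomial growth in $|s_1|,|s_2|$ on vertical lines must be dominated by the exponential decay coming from $\xi^{s_1+s_2}$ and the cubic denominators $s_j^3$. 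Since the hypothesis \eqref{7.25} only provides mild control on $\rho$, some care is required in establishing that the residual integral is indeed $O(\xi^{-2\delta})$ with implied constant depending only on $F$ and $\rho$, and this should follow from standard convexity bounds on Dirichlet series whose Euler factors are absolutely bounded perturbations of those of $\zeta_F$.
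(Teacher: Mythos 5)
Your proposal follows essentially the same route as the paper: collapse the $s$-integral by the double-pole residue at $s=0$ (your formula $I(s_1,s_2)=RG(0)L_s(0,s_1,s_2)+[RG'(0)+c_0G(0)]L(0,s_1,s_2)$ is exactly the paper's four-term Cauchy computation, repackaged), then evaluate the $(s_1,s_2)$-double integral by residues at the origin, with the $L(1+s_j,\rho)^{-2}$ zeros and the $L(1+s_1+s_2,\rho)$ pole producing the $(\log\xi)^2$ and $\log\xi$ contributions in the two pieces. Your local expansions of $L(0,s_1,s_2)$ and $L_s(0,s_1,s_2)$, including the cancellation of the $(s_1+s_2)^{-1}$ factor in $L_s$, are correct, and the specialization to \eqref{equ7.28} via \eqref{a7.20}--\eqref{a7.22} is as in the paper.

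The one step that would fail as written is the contour shift of $s_1,s_2$ from $(2)$ to the vertical line $\Re=-\delta$ with a claimed $O(\xi^{-2\delta})$ error. The integrand contains $L(1+s_j,\rho)^{-2}$, and under \eqref{7.25} this factor is $\zeta_F(1+s_j)^{-2}$ times an absolutely convergent correction; on a fixed line $\Re(s_j)=-\delta<0$ one is inside the critical strip of $\zeta_F$, where the zero-free region narrows as $|\Im(s_j)|\to\infty$, so neither the holomorphy of the integrand nor the bound $\zeta_F(1+s_j)^{-1}\ll\log|\Im(s_j)|$ is available there. This is not a matter of convexity bounds on growth, as your closing remark suggests, but of zeros. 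The repair is what the paper does: shift only to the contour $\mathcal{L}_\varepsilon$ of \eqref{7.29} (the imaginary axis with a small left semicircular detour around $0$), where \eqref{eq7.32} holds; the residual integrals are then merely $O(1)$ rather than power-saving, but after dividing by $(\log\xi)^2$ this is already stronger than the claimed $O(G(0)(\log\xi)^{-1})$ error, so nothing is lost.
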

\begin{proof}
By definition of $L(1+s,\rho)$ in \eqref{eq7.1} and the assumption \eqref{7.25} we have 
\begin{equation}\label{eq7.28}
\frac{L(1+s,\rho)}{\zeta_F(1+s)}=\prod_{\mathfrak{p}}\bigg[1-\frac{(1-\rho(\mathfrak{p}))N(\mathfrak{p})^{-1-s}}{1-\rho(\mathfrak{p})N(\mathfrak{p})^{-1-s}}\bigg],
\end{equation}
implying that $L(1+s,\rho)/\zeta_F(1+s)$ converges absolutely in $\Re(s)>-\varepsilon$. Consequently, $L(1+s,\rho)$ is meromorphic in $\Re(s)>-\varepsilon$ and is holomorphic at $s\neq 0$. Moreover, $L(1+s,\rho)$ has a simple pole at $s=0$, with 
\begin{equation}\label{7.28}
\underset{s=1}{\Res}\ L(s,\rho)=R(\rho)\cdot \underset{s=1}{\Res}\ \zeta_F(s),
\end{equation}
where $R(\rho)$ is defined as in \eqref{a7.18}. Define  
\begin{equation}\label{7.29}
\mathcal{L}_{\varepsilon}:=\big\{z=it:\ \text{$t>\varepsilon$ or $t<-\varepsilon$}\big\}\bigcup \big\{z=\varepsilon e^{i\theta}:\ \pi/2\leq \theta\leq 3\pi/2\big\}.
\end{equation}

Let $\Re(s_1)>-10^{-1}$ and $\Re(s_2)>-10^{-1}$. Denote by 
\begin{align*}
\mathcal{I}(s_1,s_2):=\frac{1}{2\pi i}\oint_{\mathcal{C}_{\varepsilon}}\frac{G(s)\zeta_F(1+s)L(s,s_1,s_2;\rho,\mathfrak{q})}{s}ds.
\end{align*}
Utilizing the formula \eqref{eq7.5} in Lemma \ref{lem7.1}:
\begin{align*}
L(s,s_1,s_2;\rho,\mathfrak{q})=\frac{L(1+s_1+s_2,\rho)E(s,s_1,s_2;\rho,\mathfrak{q})}{L(1+s_1+s/2,\rho)^2L(1+s_2+s/2,\rho)^2},
\end{align*}
we can rewrite $\mathcal{I}(s_1,s_2)$ as 
\begin{align*}
\frac{1}{2\pi i}\oint_{\mathcal{C}_{\varepsilon}}\frac{s\zeta_F(1+s)G(s)E(s,s_1,s_2;\rho,\mathfrak{q})}{s^2}\cdot \frac{L(1+s_1+s_2,\rho)}{L(1+s_1+s/2,\rho)^2L(1+s_2+s/2,\rho)^2}ds.
\end{align*}

By Cauchy integral, we derive  
\begin{align*}
\mathcal{I}(s_1,s_2)=&\frac{d(s\zeta_F(1+s)G(s))}{ds}\bigg|_{s=0}\cdot \frac{E(0,s_1,s_2;\rho,\mathfrak{q})L(1+s_1+s_2,\rho)}{L(1+s_1,\rho)^2L(1+s_2,\rho)^2}\\
&+\underset{s=1}{\Res}\ \zeta_F(s) G(0)\cdot\frac{\partial E(s,s_1,s_2;\rho,\mathfrak{q})}{\partial s}\bigg|_{s=0}\cdot \frac{L(1+s_1+s_2,\rho)}{L(1+s_1,\rho)^2L(1+s_2,\rho)^2}\\
&-\underset{s=1}{\Res}\ \zeta_F(s) G(0)E(0,s_1,s_2;\rho,\mathfrak{q})\cdot \frac{L(1+s_1+s_2,\rho)L'(1+s_1,\rho)}{L(1+s_1,\rho)^3L(1+s_2,\rho)^2}\\
&-\underset{s=1}{\Res}\ \zeta_F(s) G(0)E(0,s_1,s_2;\rho,\mathfrak{q})\cdot \frac{L(1+s_1+s_2,\rho)L'(1+s_2,\rho)}{L(1+s_1,\rho)^2L(1+s_2,\rho)^3}.
\end{align*}

Substituting this into the definition of $\mathbb{L}(G,\rho,\mathfrak{q})$ yields 
\begin{equation}\label{7.30}
\mathbb{L}(G,\rho,\mathfrak{q})=\mathbb{L}_1+\mathbb{L}_2+\mathbb{L}_3+\mathbb{L}_4,	
\end{equation}
where $\mathbb{L}_1$ is defined by 
\begin{align*}
-\frac{d(s\zeta_F(1+s)G(s))}{4\pi^2(\log \xi)^2ds}\bigg|_{s=0}\int_{(2)} \int_{(2)}\frac{\xi^{s_1}\xi^{s_2}E(0,s_1,s_2;\rho,\mathfrak{q})L(1+s_1+s_2,\rho)}{s_1^3s_2^3L(1+s_1,\rho)^2L(1+s_2,\rho)^2}
ds_1ds_2,
\end{align*}
and $\mathbb{L}_2$ is defined by 
\begin{align*}
-\underset{s=1}{\Res}\ \zeta_F(s) G(0)\int_{(2)} \int_{(2)}\frac{\partial E(s,s_1,s_2;\rho,\mathfrak{q})}{4\pi^2(\log \xi)^2\partial s}\bigg|_{s=0} \frac{s_1^{-3}\xi^{s_1}\xi^{s_2}L(1+s_1+s_2,\rho)}{s_2^3L(1+s_1,\rho)^2L(1+s_2,\rho)^2}
ds_1ds_2,
\end{align*}
and $\mathbb{L}_3$ is defined by 
\begin{align*}
\frac{\Res_{s=1}\zeta_F(s) G(0)}{4\pi^2(\log \xi)^2}\int_{(2)} \int_{(2)}\frac{\xi^{s_1}\xi^{s_2}E(0,s_1,s_2;\rho,\mathfrak{q})L(1+s_1+s_2,\rho)L'(1+s_1,\rho)}{s_1^3s_2^3L(1+s_1,\rho)^3L(1+s_2,\rho)^2}
ds_1ds_2,
\end{align*}
and $\mathbb{L}_4$ is defined by 
\begin{align*}
\frac{\Res_{s=1}\zeta_F(s) G(0)}{4\pi^2(\log \xi)^2}\int_{(2)} \int_{(2)}\frac{\xi^{s_1}\xi^{s_2}E(0,s_1,s_2;\rho,\mathfrak{q})L(1+s_1+s_2,\rho)L'(1+s_2,\rho)}{s_1^3s_2^3L(1+s_1,\rho)^2L(1+s_2,\rho)^3}
ds_1ds_2.
\end{align*}

Now we proceed to compute or estimate each $\mathbb{L}_j$, $1\leq j\leq 4$. 
\begin{itemize}
\item We consider the asymptotic behavior of $\mathbb{L}_1$. Shifting contour, the integral 
\begin{align*}
-\frac{1}{4\pi^2}\int_{(2)} \int_{(2)}\frac{\xi^{s_1}\xi^{s_2}}{s_1^3s_2^3}\cdot \frac{E(0,s_1,s_2;\rho,\mathfrak{q})L(1+s_1+s_2,\rho)}{L(1+s_1,\rho)^2L(1+s_2,\rho)^2}
ds_1ds_2
\end{align*}
is equal to 
\begin{align*}
&\frac{1}{(\Res_{s=1}L(s,\rho))^2}\cdot \frac{d(E(0,0,s_2;\rho,\mathfrak{q})(s_2L(1+s_2,\rho))^{-1}\xi^{s_2})}{d s_2}\bigg|_{s_2=0}
\\
&+\frac{1}{2\pi i}\int_{\mathcal{C}} \frac{E(0,0,s_2;\rho,\mathfrak{q})}{(\Res_{s=1}L(s,\rho))^2L(1+s_2,\rho)}\cdot \frac{\xi^{s_2}}{s_2^3}
ds_2\\
&+\frac{1}{2\pi i}\int_{\mathcal{C}}\frac{E(0,s_1,0;\rho,\mathfrak{q})}{(\Res_{s=1}L(s,\rho))^2L(1+s_1,\rho)}\cdot \frac{\xi^{s_1}}{s_1^3}ds_1\\
&-\frac{1}{4\pi^2}\int_{\mathcal{C}}\int_{\mathcal{C}} \frac{\xi^{s_1}\xi^{s_2}}{s_1^3s_2^3}\cdot \frac{E(0,s_1,s_2;\rho,\mathfrak{q})L(1+s_1+s_2,\rho)}{L(1+s_1,\rho)^2\L(1+s_2,\rho)^2}
ds_2ds_1.
\end{align*}

Notice that the first term 
\begin{align*}
\frac{1}{(\Res_{s=1}L(s,\rho))^2}\cdot \frac{d(E(0,0,s_2;\rho,\mathfrak{q})(s_2L(1+s_2,\rho))^{-1}\xi^{s_2})}{d s_2}\bigg|_{s_2=0}
\end{align*}
is equal to 
\begin{align*}
&\frac{1}{(\Res_{s=1}L(s,\rho))^3}\cdot \frac{d(E(0,0,s_2;\rho,\mathfrak{q}))}{d s_2}\bigg|_{s_2=0}+\frac{E(0,0,0;\rho,\mathfrak{q})\log\xi}{(\Res_{s=1}L(s,\rho))^3}\\
&+\frac{E(0,0,0;\rho,\mathfrak{q})}{(\Res_{s=1}L(s,\rho))^2}\cdot \frac{d((s_2L(1+s_2,\rho))^{-1})}{d s_2}\bigg|_{s_2=0}.
\end{align*}

In conjunction with the well known bound 
\begin{equation}\label{eq7.32}
\frac{\zeta_F'(s)}{\zeta_F(s)}\ll \log(\Im(s)),\ \ \zeta_F(s)^{-1}\ll \log(\Im(s))
\end{equation}
on the contour $\mathcal{L}_{\varepsilon}$, and the absolute convergence of $L(1+s,\rho)\zeta_F(1+s)^{-1}$ (cf. \eqref{eq7.28}) in the region $\Re(s)>-\varepsilon$, we derive that 
\begin{equation}\label{7.32}
\mathbb{L}_1=\frac{d(s\zeta_F(1+s)G(s))}{(\log \xi)^2ds}\bigg|_{s=0}\bigg[\frac{E(0,0,0;\rho,\mathfrak{q})\log\xi}{(\Res_{s=1}L(s,\rho))^3}+O(1)\bigg],
\end{equation}
where the implied constant in $O(1)$ depends only on $F$ and $\rho$.  

\item We estimate the term $\mathbb{L}_2$ as follows. The inner integral 
\begin{align*}
-\frac{1}{4\pi^2}\int_{(2)} \int_{(2)}\frac{\xi^{s_1}\xi^{s_2}}{s_1^3s_2^3}\cdot \frac{\partial E(s,s_1,s_2;\rho,\mathfrak{q})}{\partial s}\bigg|_{s=0}\cdot \frac{L(1+s_1+s_2,\rho)}{L(1+s_1,\rho)^2L(1+s_2,\rho)^2}
ds_1ds_2
\end{align*}
in the definition of $\mathbb{L}_2$ is equal to 
\begin{align*}
& \frac{1}{(\Res_{s=1}L(s,\rho))^2}\cdot \frac{d}{ds_2}\bigg[\frac{\partial E(s,0,s_2;\rho,\mathfrak{q})}{\partial s}\bigg|_{s=0}\cdot \xi^{s_2}(s_2L(1+s_2,\rho))^{-1}\bigg]\bigg|_{s_2=0}
\\
& \frac{1}{2\pi i}\int_{\mathcal{C}}\frac{1}{(\Res_{s=1}L(s,\rho))^2}\cdot \frac{\partial E(s,0,s_2;\rho,\mathfrak{q})}{\partial s}\bigg|_{s=0}\cdot \frac{\xi^{s_2}}{s_2^3L(1+s_2,\rho)}
ds_2\\
&+\frac{1}{2\pi i}\int_{\mathcal{C}}\frac{1}{(\Res_{s=1}L(s,\rho))^2}\cdot \frac{\partial E(s,s_1,0;\rho,\mathfrak{q})}{\partial s}\bigg|_{s=0}\cdot \frac{\xi^{s_1}}{s_1^3L(1+s_1,\rho)}
ds_1\\
&-\frac{1}{4\pi^2}\int_{\mathcal{C}} \int_{\mathcal{C}}\frac{\xi^{s_1}\xi^{s_2}}{s_1^3s_2^3}\cdot \frac{\partial E(s,s_1,s_2;\rho,\mathfrak{q})}{\partial s}\bigg|_{s=0}\cdot \frac{L(1+s_1+s_2,\rho)}{L(1+s_1,\rho)^2L(1+s_2,\rho)^2}
ds_1ds_2.
\end{align*}

The first term in the above expression is 
\begin{align*}
&\frac{1}{(\Res_{s=1}L(s,\rho))^2}\cdot \frac{d}{ds_2}\bigg[\frac{\partial E(s,0,s_2;\rho,\mathfrak{q})}{\partial s}\bigg|_{s=0}\cdot \xi^{s_2}(s_2L(1+s_2,\rho))^{-1}\bigg]\bigg|_{s_2=0}\\
=&\frac{1}{(\Res_{s=1}L(s,\rho))^3}\cdot \frac{\partial^2 E(s,0,s_2;\rho,\mathfrak{q})}{\partial s_2\partial s}\bigg|_{\substack{s=0\\
s_2=0}}+\frac{\log\xi}{(\Res_{s=1}L(s,\rho))^3}\cdot \frac{\partial E(s,0,0;\rho,\mathfrak{q})}{\partial s}\\
&+\frac{1}{(\Res_{s=1}L(s,\rho))^2}\cdot \frac{\partial E(s,0,0;\rho,\mathfrak{q})}{\partial s}\bigg|_{s=0}\cdot \frac{d(s_2L(1+s_2,\rho))^{-1}}{ds_2}\bigg|_{s_2=0}.
\end{align*}

Substituting the above calculation into the definition of $\mathbb{L}_2$, along with the estimate \eqref{eq7.32} we derive 
\begin{equation}\label{7.34}
\mathbb{L}_2=\frac{\Res_{s=1}\zeta_F(s) G(0)}{(\log \xi)^2}\cdot\bigg[\frac{\log\xi}{(\Res_{s=1}L(s,\rho))^3}\cdot \frac{\partial E(s,0,0;\rho,\mathfrak{q})}{\partial s}+O(1)\bigg].
\end{equation}

\item The term $\mathbb{L}_3$ can be computed similarly by shifting contours. By Cauchy formula, the inner integral 
\begin{align*}
-\frac{1}{4\pi^2}\int_{(2)} \int_{(2)}\frac{\xi^{s_1}\xi^{s_2}}{s_1^3s_2^3}\cdot \frac{E(0,s_1,s_2;\rho,\mathfrak{q})L(1+s_1+s_2,\rho)L'(1+s_1,\rho)}{L(1+s_1,\rho)^3L(1+s_2,\rho)^2}
ds_1ds_2
\end{align*}
in the definition of $\mathbb{L}_3$ is equal to 
\begin{align*}
&\frac{1}{(\Res_{s=1}L(s,\rho))^2}\cdot \frac{1}{2}\frac{d^2}{ds_1^2}\bigg[\frac{E(0,s_1,0;\rho,\mathfrak{q})L'(1+s_1,\rho)\xi^{s_1}}{L(1+s_1,\rho)^2}\bigg]\bigg|_{s_1=0}\\
&+\frac{1}{(\Res_{s=1}L(s,\rho))^2}\cdot \frac{1}{2\pi i}\int_{\mathcal{C}} \frac{E(0,s_1,0;\rho,\mathfrak{q})L'(1+s_1,\rho)}{L(1+s_1,\rho)^2}
\frac{\xi^{s_1}}{s_1^3}ds_1\\
&+\frac{1}{(\Res_{s=1}L(s,\rho))^2}\cdot \frac{1}{2\pi i}\int_{\mathcal{C}} \frac{E(0,0,s_2;\rho,\mathfrak{q})L'(1+s_2,\rho)}{L(1+s_2,\rho)^2}
\frac{\xi^{s_2}}{s_2^3}ds_2\\
&-\frac{1}{4\pi^2} \int_{\mathcal{C}}\int_{\mathcal{C}}\frac{\xi^{s_1}\xi^{s_2}}{s_1^3s_2^3}\cdot \frac{E(0,s_1,s_2;\rho,\mathfrak{q})L(1+s_1+s_2,\rho)L'(1+s_1,\rho)}{L(1+s_1,\rho)^3L(1+s_2,\rho)^2}
ds_1ds_2.
\end{align*}

The first term in the above expression is 
\begin{align*}
&-\frac{1}{2(\Res_{s=1}L(s,\rho))^2}\cdot \frac{d^2}{ds_1^2}\bigg[\frac{E(0,s_1,0;\rho,\mathfrak{q})L'(1+s_1,\rho)\xi^{s_1}}{L(1+s_1,\rho)^2}\bigg]\bigg|_{s_1=0}\\
=&-\frac{(\log\xi)^2}{2\cdot (\Res_{s=1}L(s,\rho))^2}\cdot \lim_{s_1=0}\frac{E(0,0,0;\rho,\mathfrak{q})L'(1+s_1,\rho)}{L(1+s_1,\rho)^2}+O((\log\xi))\\
=&\frac{(\log\xi)^2E(0,0,0;\rho,\mathfrak{q})}{2\cdot  (\Res_{s=1}L(s,\rho))^3}+O((\log\xi)).
\end{align*}

Substituting these calculations into the definition of $\mathbb{L}_3$ yields 
\begin{equation}\label{7.35}
\mathbb{L}_3=\frac{\Res_{s=1}\zeta_F(s)G(0)E(0,0,0;\rho,\mathfrak{q})}{2\cdot  (\Res_{s=1}L(s,\rho))^3}+O(G(0)(\log\xi)^{-1}).
\end{equation}

\item By the symmetry between $\mathbb{L}_3$ and $\mathbb{L}_4$, we obtain 
\begin{equation}\label{7.36}
\mathbb{L}_4=\frac{\Res_{s=1}\zeta_F(s)G(0)E(0,0,0;\rho,\mathfrak{q})}{2\cdot  (\Res_{s=1}L(s,\rho))^3}+O(G(0)(\log\xi)^{-1}).
\end{equation}
\end{itemize}

Therefore, combining the estimates \eqref{7.32}, \eqref{7.34}, \eqref{7.35} and \eqref{7.36}, we deduce that $\mathbb{L}(G,\rho,\mathfrak{q})$ is equal to 
\begin{align*}
\frac{\Res_{s=1}\zeta_F(s)E(0,0,0;\rho,\mathfrak{q})G'(0)}{\log\xi\cdot (\Res_{s=1}L(s,\rho))^3}+\frac{\Res_{s=1}\zeta_F(s)G(0)E(0,0,0;\rho,\mathfrak{q})}{2\cdot  (\Res_{s=1}L(s,\rho))^3}+O\left(\frac{G(0)}{(\log\xi)}\right).	
\end{align*}

Therefore, \eqref{main7.15}  follows from the above expression and  the fact that
\begin{align*}
\Res_{s=1}L(s,\rho)=R(\rho)\Res_{s=1}\zeta_F(s).
\end{align*} 

Substituting \eqref{a7.20} and \eqref{a7.22} into \eqref{main7.15}, we derive \eqref{equ7.28}.
\end{proof}

\subsection{The Mollified Second Moment: New Forms}
\begin{thmx}\label{thmC}
Let notation be as before. Let $\rho$ be defined by \eqref{rho}. Let $\xi>1$ and $0<\varepsilon<10^{-3}$. Let $\mathfrak{q}\subseteq \mathcal{O}_F$ be an integral ideal. Let $\zeta_{\mathfrak{q}}(s)=(1-N(\mathfrak{q})^{-s})^{-1}$ if $\mathfrak{q}$ is a prime ideal, and $\zeta_{\mathfrak{q}}(s)\equiv 1$ if $\mathfrak{q}=\mathcal{O}_F$. Then 
\begin{equation}\label{eq7.65}
\sum_{\substack{\pi\in \mathcal{F}(\mathbf{k},\mathfrak{q})}}\frac{L(1/2,\pi)^2M_{\xi,\rho}(\pi)^2}{L(1,\pi,\Ad)}=\frac{4\zeta_F(2)^2D_F^{\frac{3}{2}}}{(\Res_{s=1}\zeta_F(s))^2}\prod_{v\mid\infty}\frac{k_v-1}{4\pi^2}\cdot \mathcal{M}_{\mathfrak{q},\mathbf{k}}+\mathcal{E}_{\mathfrak{q},\mathbf{k}},
\end{equation}
where 
\begin{align*}
\mathcal{M}_{\mathfrak{q},\mathbf{k}}:=&c_{\mathfrak{q}}\cdot (N(\mathfrak{q})+1)\cdot\bigg[\frac{\log N(\mathfrak{q})^{1/2}\|\mathbf{k}\|}{\log\xi}+1\bigg]- \frac{4\zeta_{\mathfrak{q}}(2)\cdot\delta_{\mathbf{k}}\cdot\textbf{1}_{\mathfrak{q}\subsetneq \mathcal{O}_F}}{1+N(\mathfrak{q})^{-1}}\bigg[\frac{\log \|\mathbf{k}\|}{\log\xi}+1\bigg],
\end{align*}
with $\delta_{\mathbf{k}}:=\textbf{1}_{\sum_{v\mid\infty}k_v\equiv 0\pmod{4}}$, $c_{\mathfrak{q}}:=\zeta_{\mathfrak{q}}(1)^3\zeta_{\mathfrak{q}}(2)^{-1}\textbf{1}_{\mathfrak{q}\subsetneq \mathcal{O}_F}+\delta_{\mathbf{k}}\textbf{1}_{\mathfrak{q}=\mathcal{O}_F}$, and 
\begin{align*}
\mathcal{E}_{\mathfrak{q},\mathbf{k}}\ll (\log\xi)^{-1}N(\mathfrak{q})\|\mathbf{k}\|+\xi^{2+\varepsilon}N(\mathfrak{q})^{\varepsilon}\|\mathbf{k}\|^{1/2+\varepsilon},
\end{align*}
with the implied constant depending only on $\varepsilon$ and $F$. 
\end{thmx}
\begin{proof}
Consider the following two scenarios according to $\mathfrak{q}=\mathcal{O}_F$ and $\mathfrak{q}\subsetneq \mathcal{O}_F$.
\begin{itemize}
\item Suppose $\mathfrak{q}=\mathcal{O}_F$. Then by \eqref{sing7.17} and \eqref{maingeom} in Proposition \ref{prop7.6}, we obtain  
\begin{equation}\label{eq7.66}
\sum_{\substack{\pi\in \mathcal{F}(\mathbf{k},\mathcal{O}_F)}}\frac{L(1/2,\pi)^2M_{\xi,\rho}(\pi)^2}{L(1,\pi,\Ad)}=2\delta_{\mathbf{k}}D_F^{\frac{3}{2}}\prod_{v\mid\infty}\frac{k_v-1}{2\pi^2}\cdot \mathbb{L}+O(\xi^{2+\varepsilon}\|\mathbf{k}\|^{1/2+\varepsilon}),
\end{equation}
where $\mathbb{L}:=\mathbb{L}(G_{\mathcal{O}_F},\rho,\mathcal{O}_F)$ is defined as in Definition \ref{defn7.4}, and the implied constant depends only on $\varepsilon$ and $F$.  By Proposition \ref{prop7.3}, 
\begin{align*}
\mathbb{L}(G_{\mathcal{O}_F},\rho,\mathcal{O}_F)=\frac{ \zeta_F(2)^2}{(\Res_{s=1}\zeta_F(s))^2}\cdot\bigg[\frac{G_{\mathcal{O}_F}'(0)}{\log\xi}+G_{\mathcal{O}_F}(0)\bigg]+O(G_{\mathcal{O}_F}(0)(\log\xi)^{-1}).
\end{align*}

Recall the definition, for an integral ideal $\mathfrak{a}\subseteq\mathcal{O}_F$, 
\begin{align*}
G_{\mathfrak{a}}(s):= (1+N(\mathfrak{a})^s)D_F^{s}\prod_{v\mid\infty}\frac{\Gamma((k_v+s)/2)^2}{(2\pi)^{s}\Gamma(k_v/2)^2}.\tag{\ref{eq7.18}}
\end{align*}
Hence, $G_{\mathfrak{a}}(0)=2$. Moreover, 
taking the derivative in \eqref{eq7.18}, along with the asymptotic behavior of digamma function
\begin{align*}
\Gamma(z)'/\Gamma(z)=\log z-(2z)^{-1}-(12z^2)^{-1}+O(z^{-6}),
\end{align*} 
we obtain 
\begin{equation}\label{7.66}
G_{\mathfrak{a}}'(s)=\log (N(\mathfrak{a}) D_F^2)+4\frac{d}{ds}\prod_{v\mid\infty}\frac{\Gamma((k_v+s)/2)^2}{2\cdot (2\pi)^{s}\Gamma(k_v/2)^2}\bigg|_{s=0}=\log N(\mathfrak{a})\|\mathbf{k}\|^2+O(1).
\end{equation}

Therefore, it follows from \eqref{eq7.66} and \eqref{7.66} that
\begin{equation}\label{7.64}
\sum_{\substack{\pi\in \mathcal{F}(\mathbf{k},\mathcal{O}_F)}}\frac{L(1/2,\pi)^2M_{\xi,\rho}(\pi)^2}{L(1,\pi,\Ad)}=\frac{8\delta_{\mathbf{k}}D_F^{\frac{3}{2}}\zeta_F(2)^2}{(\Res_{s=1}\zeta_F(s))^2}\prod_{v\mid\infty}\frac{k_v-1}{4\pi^2}\cdot\bigg[\frac{\log \|\mathbf{k}\|}{\log\xi}+1\bigg]+\mathcal{E}_1,
\end{equation}
where 
\begin{equation}\label{equ7.68}
\mathcal{E}_1\ll (\log\xi)^{-1}N(\mathfrak{q})\|\mathbf{k}\|+\xi^{2+\varepsilon}\|\mathbf{k}\|^{1/2+\varepsilon}	
\end{equation}
with the implied constant depending only on $\varepsilon$ and $F$.

\item Suppose $\mathfrak{q}\subsetneq \mathcal{O}_F$. By \eqref{sing7.17} and \eqref{maingeom} in Proposition \ref{prop7.6}, 
\begin{equation}\label{7.68}
\sum_{\substack{\pi\in \mathcal{F}(\mathbf{k},\mathfrak{q})}}\frac{L(1/2,\pi)^2M_{\xi,\rho}(\pi)^2}{\zeta_{\mathfrak{q}}(2)L^{(\mathfrak{q})}(1,\pi,\Ad)}=2\mathcal{M}'D_F^{\frac{3}{2}}\prod_{v\mid\infty}\frac{k_v-1}{4\pi^2}+O(\xi^{2+\varepsilon}N(\mathfrak{q})^{\varepsilon}\|\mathbf{k}\|^{\frac{1}{2}+\varepsilon}),
\end{equation}
where 
\begin{equation}\label{eq7.69}
\mathcal{M}':=\zeta_{\mathfrak{q}}(2)\cdot \bigg[(N(\mathfrak{q})+1)\mathbb{L}(G_{\mathfrak{q}},\rho,\mathfrak{q})-\frac{4\zeta_{\mathfrak{q}}(1)\delta_{\mathbf{k}}}{\zeta_{\mathfrak{q}}(2)}\cdot \mathbb{L}(G_{\mathfrak{q}}^{\mathrm{old}},\rho,\mathfrak{q})\bigg].
\end{equation}

For $\pi=\otimes_{v\leq\infty}\pi_v\in\mathcal{F}(\mathbf{k},\mathfrak{q})$, the local representation $\pi_{\mathfrak{q}}$ is of the form $\mathrm{St}\otimes \chi$, where $\mathrm{St}$ is the Steinberg representation, and $\chi$ is a unramified quadratic character. The local $L$-factor is 
\begin{align*}
L_{\mathfrak{q}}(1,\pi,\Ad)=\frac{L(1,\pi_{\mathfrak{q}}\times\widetilde{\pi}_{\mathfrak{q}})}{\zeta_{\mathfrak{q}}(1)}=\zeta_{\mathfrak{q}}(2).
\end{align*}
Therefore, the formula \eqref{7.68} amounts to  
\begin{equation}\label{eq7.70}
\sum_{\substack{\pi\in \mathcal{F}(\mathbf{k},\mathfrak{q})}}\frac{L(1/2,\pi)^2M_{\xi,\rho}(\pi)^2}{L(1,\pi,\Ad)}=2\mathcal{M}'D_F^{\frac{3}{2}}\prod_{v\mid\infty}\frac{k_v-1}{4\pi^2}+O(\xi^{2+\varepsilon}N(\mathfrak{q})^{\varepsilon}\|\mathbf{k}\|^{\frac{1}{2}+\varepsilon}).
\end{equation}

Making use of Proposition \ref{prop7.3}, we have
\begin{align*}
&\mathbb{L}(G_{\mathfrak{q}},\rho,\mathfrak{q})=\frac{\zeta_{\mathfrak{q}}(1)^3\cdot \zeta_F(2)^2}{\zeta_{\mathfrak{q}}(2)^{2}(\Res_{s=1}\zeta_F(s))^2}\cdot\bigg[\frac{G_{\mathfrak{q}}'(0)}{\log\xi}+G_{\mathfrak{q}}(0)\bigg]+O(G_{\mathfrak{q}}(0)(\log\xi)^{-1}),\\
&\mathbb{L}(G_{\mathfrak{q}}^{\mathrm{old}},\rho,\mathfrak{q})=\frac{\zeta_F(2)^2}{(\Res_{s=1}\zeta_F(s))^2}\bigg[\frac{1}{\log\xi}\frac{dG_{\mathfrak{q}}^{\mathrm{old}}(s)}{ds}\bigg|_{s=0}+G_{\mathfrak{q}}^{\mathrm{old}}(0)\bigg]+O\left(\frac{G_{\mathfrak{q}}^{\mathrm{old}}(0)}{\log\xi}\right),
\end{align*}
where $G_{\mathfrak{q}}^{\mathrm{old}}(s):=G_{\mathcal{O}_F}(s)(1+N(\mathfrak{q})^{-1-s/2})^{-2}$. Utilizing \eqref{7.66},
\begin{equation}\label{7.69}
\mathbb{L}(G_{\mathfrak{q}},\rho,\mathfrak{q})=\frac{2\zeta_{\mathfrak{q}}(1)^3\cdot \zeta_F(2)^2}{\zeta_{\mathfrak{q}}(2)^{2}(\Res_{s=1}\zeta_F(s))^2}\cdot\bigg[\frac{\log N(\mathfrak{q})^{\frac{1}{2}}\|\mathbf{k}\|}{\log\xi}+1\bigg]+O\left(\frac{1}{\log\xi}\right),
\end{equation}
and $\mathbb{L}(G_{\mathfrak{q}}^{\mathrm{old}},\rho,\mathfrak{q})$ boils down to  
\begin{equation}\label{7.70}
\frac{2\zeta_F(2)^2(\Res_{s=1}\zeta_F(s))^{-2}}{(1+N(\mathfrak{q})^{-1})^2}\bigg[\frac{N(\mathfrak{q})^{-1}\log N(\mathfrak{q})}{(1+N(\mathfrak{q})^{-1})\log\xi}+\frac{\log \|\mathbf{k}\|}{\log\xi}+1\bigg]+O\left(\frac{1}{\log\xi}\right).
\end{equation}

Combining \eqref{7.69} with \eqref{7.70} into \eqref{eq7.69}, we obtain 
\begin{align*}
\mathcal{M}'=&\frac{\zeta_{\mathfrak{q}}(1)^3}{\zeta_{\mathfrak{q}}(2)}\cdot\frac{2(N(\mathfrak{q})+1)\cdot  \zeta_F(2)^2}{(\Res_{s=1}\zeta_F(s))^2}\cdot\bigg[\frac{\log N(\mathfrak{q})^{1/2}\|\mathbf{k}\|}{\log\xi}+1\bigg]+O\left(\frac{N(\mathfrak{q})}{\log\xi}\right)\\
&- \frac{8\zeta_{\mathfrak{q}}(2)\cdot\delta_{\mathbf{k}}\zeta_F(2)^2}{(1+N(\mathfrak{q})^{-1})(\Res_{s=1}\zeta_F(s))^{2}}\bigg[\frac{\log N(\mathfrak{q})}{(1+N(\mathfrak{q}))\log\xi}+\frac{\log \|\mathbf{k}\|}{\log\xi}+1\bigg].
\end{align*}
\end{itemize}

Therefore, \eqref{eq7.65} follows from \eqref{7.64}, \eqref{equ7.68}, \eqref{eq7.70}, and the above asymptotic behavior of $\mathcal{M}'$. 
\end{proof}

\section{The First Moment via a Relative Trace Formula}\label{sec9}
Let $f_{\mathfrak{n},\mathfrak{q}}:=\otimes_{v\leq \infty}f_v\in L^1(\overline{G}(\mathbb{A}_F))$ be the test function defined as in \textsection\ref{sec1.2}. Let $y=(y_v)\in \mathbb{A}_F^{\times}$ with $y_v=1$ if $v\mid\infty$, and $y_v=\varpi_v^{-d_v}$, where $d_v$ is the valuation of the local different (cf. \textsection\ref{sec2.1.2}). 

Let $\Re(s)\gg 1$. Consider the function 
\begin{equation}\label{9.1}
I(f_{\mathfrak{n},\mathfrak{q}},s):=\int_{F^{\times}\backslash\mathbb{A}_F^{\times}}\int_{F\backslash\mathbb{A}_F}\K\left(\begin{pmatrix}
a\\
& 1
\end{pmatrix},\begin{pmatrix}
1& b\\
& 1
\end{pmatrix}\begin{pmatrix}
y\\
&1
\end{pmatrix}\right)\psi(b)|a|^{s}dbd^{\times}a.
\end{equation}

We introduce the translation by $\diag(y,1)$ in \eqref{9.1} to be consistent with with the normalization \eqref{eq3.10}. By \eqref{eq1.6} and the rapid decay of cusp forms, the function $I(f_{\mathfrak{n},\mathfrak{q}},s)$ converges absolutely for all $s\in \mathbb{C}$. 

Similar to the second moment case, we denote $I_{\mathrm{Spec}}(f_{\mathfrak{n},\mathfrak{q}},s)$ for $I(f_{\mathfrak{n},\mathfrak{q}},s)$ when we substitute the spectral decomposition  \eqref{eq1.6} for $\K(\cdot,\cdot)$ into \eqref{9.1} referring to it as the spectral side. Likewise, we denote  
$I_{\mathrm{Geom}}(f_{\mathfrak{n},\mathfrak{q}},s)$ for $I(f_{\mathfrak{n},\mathfrak{q}},s)$ when we substitute the geometric expansion  \eqref{f1.5} for $\K(\cdot,\cdot)$ into \eqref{9.1}, referring to it as the geometric side. By \eqref{f1.5} and \eqref{eq1.6} we obtain the relative trace formula 
\begin{align*}
I_{\mathrm{Spec}}(f_{\mathfrak{n},\mathfrak{q}},s)=I_{\mathrm{Geom}}(f_{\mathfrak{n},\mathfrak{q}},s).
\end{align*}

\subsection{The Spectral Side}\label{sec9.1}
Let $\Re(s)\gg 1$.  Substituting the spectral decomposition \eqref{eq1.6} into \eqref{9.1} yields
\begin{align*}
I_{\mathrm{Spec}}(f_{\mathfrak{n},\mathfrak{q}},s)=\sum_{\pi\in \Pi_{\mathbf{k}}(\mathfrak{q})}\sum_{\phi\in\mathfrak{B}_{\pi}}\mathcal{P}(s,\pi(f_{\mathfrak{n},\mathfrak{q}})\phi)\overline{W_{\phi}(\diag(y,1))},
\end{align*}
where $\mathcal{P}(s,\pi(f_{\mathfrak{n},\mathfrak{q}})\phi)$ is defined by \eqref{eq2.2}, and 
\begin{align*}
W_{\phi}(\diag(y,1)):=\int_{F\backslash\mathbb{A}_F}\phi\left(\begin{pmatrix}
1& b\\
& 1
\end{pmatrix}\begin{pmatrix}
y\\
&1
\end{pmatrix}\right)\overline{\psi}(b)db
\end{align*}
is the Whittaker function of $\phi$ relative to the additive character $\psi$.

Parallel to Theorems \ref{thm3.5} and \ref{spec}, we obtain the following.  
\begin{prop}\label{prop9.1}
Let notation be as before. Then $I_{\mathrm{Spec}}(f_{\mathfrak{n},\mathfrak{q}},s)$ admits a meromorphic continuation to $s\in \mathbb{C}$. Moreover, 
\begin{itemize}
\item If $\mathfrak{q}=\mathcal{O}_F$, then 
\begin{equation}\label{equ9.2}
I_{\mathrm{Spec}}(f_{\mathfrak{n},\mathfrak{q}},s)=\prod_{v\mid\infty}\frac{2^{k_v}(2\pi)^{\frac{k_v}{2}-s}\Gamma(k_v/2+s)}{\pi^{-1}e^{2\pi}\Gamma(k_v)}\sum_{\substack{\pi\in \mathcal{F}(\mathbf{k},\mathcal{O}_F)}}\frac{\lambda_{\pi}(\mathfrak{n})L(s+1/2,\pi)}{2L(1,\pi,\Ad)D_F^{3/2-s}}.
\end{equation}

\item If $\mathfrak{q}\subsetneq \mathcal{O}_F$, then
\begin{equation}\label{eq9.2}
I_{\mathrm{Spec}}(f_{\mathfrak{n},\mathfrak{q}},s)=I_{\mathrm{Spec}}^{\mathrm{new}}(f_{\mathfrak{n},\mathfrak{q}},s)+I_{\mathrm{Spec}}^{\mathrm{old}}(f_{\mathfrak{n},\mathfrak{q}},s),
\end{equation}
where
\begin{align*}
I_{\mathrm{Spec}}^{\mathrm{new}}(f_{\mathfrak{n},\mathfrak{q}},s):=&\prod_{v\mid\infty}\frac{2^{k_v}(2\pi)^{\frac{k_v}{2}-s}\Gamma(k_v/2+s)}{\pi^{-1}e^{2\pi}\Gamma(k_v)}\sum_{\substack{\pi\in \mathcal{F}(\mathbf{k},\mathfrak{q})}}\frac{\lambda_{\pi}(\mathfrak{n})L(s+1/2,\pi)}{2\zeta_{\mathfrak{q}}(2)^2L^{(\mathfrak{q})}(1,\pi,\Ad)D_F^{3/2-s}},\\
I_{\mathrm{Spec}}^{\mathrm{old}}(f_{\mathfrak{n},\mathfrak{q}},s):=&\prod_{v\mid\infty}\frac{2^{k_v}(2\pi)^{\frac{k_v}{2}-s} \Gamma(k_v/2+s)}{\pi^{-1}e^{2\pi}\Gamma(k_v)}\sum_{\substack{\pi\in \mathcal{F}(\mathbf{k},\mathcal{O}_F)}}\frac{\lambda_{\pi}(\mathfrak{n})C_{\pi_{\mathfrak{q}}}(s)L(s+1/2,\pi)}{2L(1,\pi,\Ad)D_F^{3/2-s}}.
\end{align*}
Here $C_{\pi_{\mathfrak{q}}}(s)$ is defined by 
\begin{align*}
1-\frac{\lambda_{\pi}(\mathfrak{q})((1+N(\mathfrak{q})^{-1})N(\mathfrak{q})^{-s}-\lambda_{\pi}(\mathfrak{q})N(\mathfrak{q})^{-1/2})L_{\mathfrak{q}}(1/2,\pi_{\mathfrak{q}})L_{\mathfrak{q}}(1/2,\pi_{\mathfrak{q}}\times\chi_{\mathfrak{q}})}{N(\mathfrak{q})^{1/2}}.
\end{align*}
\item When $s=0$, we have $C_{\pi_{\mathfrak{q}}}(0)=(1+N(\mathfrak{q})^{-1})L_{\mathfrak{q}}(1/2,\pi_{\mathfrak{q}}\times\chi_{\mathfrak{q}})$.
\end{itemize}
\end{prop}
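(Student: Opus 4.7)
The plan is to adapt the Rankin--Selberg / Whittaker factorization used in Lemmas 2.1--2.4 (and Theorems 3.5, 3.7) to the linear setting here, in which one of the two periods $\mathcal{P}(s_j,\cdot)$ of the second-moment spectral side is replaced by a single Whittaker value. Starting from
\[
I_{\mathrm{Spec}}(f_{\mathfrak n,\mathfrak q},s)=\sum_{\pi\in\Pi_{\mathbf k}(\mathfrak q)}\sum_{\phi\in\mathfrak B_\pi}\mathcal P\bigl(s,\pi(f_{\mathfrak n,\mathfrak q})\phi\bigr)\,\overline{W_\phi(\diag(y,1))},
\]
an \emph{a priori} finite sum since $f_{\mathfrak n,\mathfrak q}$ annihilates every $\pi\notin\Pi_{\mathbf k}(\mathfrak q)$ and every vector not right-invariant by the relevant local compact subgroups, the argument splits as $I_{\mathrm{Spec}}=I_{\mathrm{Spec}}^{\mathrm{new}}+\mathbf 1_{\mathfrak q\subsetneq\mathcal O_F}\,I_{\mathrm{Spec}}^{\mathrm{old}}$ in the style of Lemmas 2.3 and 2.4. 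On each relevant basis vector one has $\pi(f_{\mathfrak n,\mathfrak q})\phi=\lambda_\pi(\mathfrak n)D_F^{-3/2}\phi$, so the task reduces to computing, place by place, the local factors of
\[
\frac{\mathcal P(s,\phi)\,\overline{W_\phi(\diag(y,1))}}{\langle\phi,\phi\rangle}=\frac{1}{2\Lambda(1,\pi,\mathrm{Ad})}\prod_v\frac{L_v(1,\pi_v\times\widetilde\pi_v)}{\zeta_v(2)}\cdot\frac{\mathcal P_v(s,W_v)\,\overline{W_v(\diag(y_v,1))}}{\langle W_v,W_v\rangle_v}
\]
produced by the Fourier--Whittaker / Rankin--Selberg unfolding.

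For the new-form piece the local factors are computed in the style of Lemma 2.1: the choice $y_v=\varpi_v^{-d_v}$ combined with the normalization $W_v(\diag(\varpi_v^{-d_v},1))=1$ trivializes the Whittaker factor at every finite unramified place, producing $q_v^{d_v s}L_v(s+1/2,\pi_v)$; at $v=\mathfrak q$ the Steinberg calculation outputs $\zeta_{\mathfrak q}(1)L_{\mathfrak q}(s+1/2,\pi_{\mathfrak q})/\zeta_{\mathfrak q}(2)$; and at $v\mid\infty$ the cancellation of $W_v(I_2)$ against $|W_v(I_2)|^2$ in $\langle W_v,W_v\rangle_v$ yields the asymmetric gamma factor $2^{k_v}(2\pi)^{k_v/2-s}\Gamma(k_v/2+s)/(\pi^{-1}e^{2\pi}\Gamma(k_v))$, essentially the ``square root'' of the symmetric gamma of Lemma 2.1. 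Assembling everything, using $\prod_{v<\infty}q_v^{d_v}=D_F$, yields $I_{\mathrm{Spec}}^{\mathrm{new}}(f_{\mathfrak n,\mathfrak q},s)$ in the stated form; meromorphic continuation in $s$ is inherited from that of $L(s+1/2,\pi)$ and the gamma factors. For the old-form piece, the Atkin--Lehner basis $\mathfrak B_\pi^{K_{\mathfrak q}[1]}=\{\phi^\circ,\,\alpha\phi^\circ+\beta\,\pi_{\mathfrak q}(\diag(1,\varpi_{\mathfrak q}))\phi^\circ\}$ of Lemma 2.2 is used, and the key observation (absent in the symmetric setting of Lemma 2.4) is that
\[
W_{\pi_{\mathfrak q}(\diag(1,\varpi_{\mathfrak q}))\phi^\circ}(\diag(y,1))=W_{\phi^\circ}\bigl(\diag(y,1)\diag(1,\varpi_{\mathfrak q})\bigr)=0,
\]
because at the $\mathfrak q$-component this evaluates the spherical Whittaker function at $\diag(\varpi_{\mathfrak q}^{-1},1)$, which lies outside its support (using trivial central character to absorb the uniformizer). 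Thus only $\mathcal P(s,\cdot)$ reacts to the second basis vector, and using $\mathcal P(s,\pi_{\mathfrak q}(\diag(1,\varpi_{\mathfrak q}))\phi^\circ)=N(\mathfrak q)^{-s}\mathcal P(s,\phi^\circ)$ together with $\alpha=-\beta\gamma$ and $\beta^2(1-\gamma^2)=1$ from the proof of Lemma 2.2, the combined old-form weight collapses to $(1-\gamma N(\mathfrak q)^{-s})/(1-\gamma^2)$ with $\gamma=\lambda_\pi(\mathfrak q)N(\mathfrak q)^{-1/2}/(1+N(\mathfrak q)^{-1})$.

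The hardest step is the algebraic identification of this weight with the stated $C_{\pi_{\mathfrak q}}(s)$. Invoking the factorization $(1-\gamma^2)^{-1}=(1+N(\mathfrak q)^{-1})^2L_{\mathfrak q}(1/2,\pi_{\mathfrak q})L_{\mathfrak q}(1/2,\pi_{\mathfrak q}\times\chi_{\mathfrak q})$ (the content of equation (2.11) recast via the local Euler factors) and expanding
\[
\frac{1-\gamma N(\mathfrak q)^{-s}}{1-\gamma^2}=1+\frac{\gamma^2-\gamma N(\mathfrak q)^{-s}}{1-\gamma^2},
\]
then substituting the value of $\gamma$, produces the claimed closed form for $C_{\pi_{\mathfrak q}}(s)$ after one line of algebra. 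At $s=0$ the weight simplifies to $(1-\gamma)/(1-\gamma^2)=(1+\gamma)^{-1}$, and the identity $(1+\gamma)^{-1}=(1+N(\mathfrak q)^{-1})L_{\mathfrak q}(1/2,\pi_{\mathfrak q}\times\chi_{\mathfrak q})$ (by direct inspection of the Euler factor, using $\chi_{\mathfrak q}(\varpi_{\mathfrak q})=-1$) yields $C_{\pi_{\mathfrak q}}(0)=(1+N(\mathfrak q)^{-1})L_{\mathfrak q}(1/2,\pi_{\mathfrak q}\times\chi_{\mathfrak q})$, completing the proof.
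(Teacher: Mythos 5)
Your proposal is correct and tracks the paper's own proof of Proposition~9.1 essentially step for step: the same split into new/old pieces, the same Rankin--Selberg unfolding into local Whittaker factors, the same observation that $W_{\pi_{\mathfrak q}(\diag(1,\varpi_{\mathfrak q}))\phi^\circ}(\diag(y,1))=0$ (so only the $\mathcal P(s,\cdot)$ factor responds to the second Atkin--Lehner basis vector), and the same algebraic collapse of the old-form weight using $\alpha=-\beta\gamma$, $\beta^2(1-\gamma^2)=1$, and Macdonald's formula for $\gamma$. Your intermediate form $(1-\gamma N(\mathfrak q)^{-s})/(1-\gamma^2)$ is a slightly cleaner presentation of the paper's $1+\alpha(\alpha+\beta N(\mathfrak q)^{-s})$, but there is no substantive difference in method.
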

\begin{proof}
When $\mathfrak{q}=\mathcal{O}_F$, following the arguments in \textsection\ref{sec3.1}, we derive \eqref{equ9.2}. Notice that the extra factor $e^{2\pi}$ follows from the explicit calculation of Whittaker functions in \eqref{2.4} and \eqref{2.5} (cf. the proof of Lemma \ref{lem2.1}), and we also make use of the fact that $W_{v}(\diag(y,1))=1$ for $v<\infty$, according to \eqref{eq3.10}. 

Now we suppose $\mathfrak{q}\subsetneq \mathcal{O}_F$. Parallel to \eqref{f2.15} we obtain \eqref{eq9.2}, where 
\begin{align*}
I_{\mathrm{Spec}}^{\mathrm{new}}(f_{\mathfrak{n},\mathfrak{q}},s):=&\sum_{\substack{\pi\in \mathcal{F}(\mathbf{k},\mathfrak{q}),\ \phi\in \mathcal{B}_{\pi}^{\mathrm{new}}}}\frac{\mathcal{P}(s,\pi(f_{\mathfrak{n},\mathfrak{q}})\phi)\overline{W_{\phi}(\diag(y,1))}}{\langle\phi,\phi\rangle},\\
I_{\mathrm{Spec}}^{\mathrm{old}}(f_{\mathfrak{n},\mathfrak{q}},s):=&\sum_{\substack{\pi\in \mathcal{F}(\mathbf{k},\mathcal{O}_F),\ \phi\in \mathfrak{B}_{\pi}^{K_{\mathfrak{q}}[1]}}}\frac{\mathcal{P}(s,\pi(f_{\mathfrak{n},\mathfrak{q}})\phi)\overline{W_{\phi}(\diag(y,1))}}{\langle\phi,\phi\rangle}.
\end{align*}

Let $\pi=\otimes_{v\leq\infty}\pi_v\in \mathcal{F}(\mathbf{k},\mathcal{O}_F)$. Let $\phi^{\circ}$ be the new form in $\pi$ such that $\langle \phi^{\circ},\phi^{\circ}\rangle=1$. Let $\alpha_{\pi_{\mathfrak{q}}}$ and $\beta_{\pi_{\mathfrak{q}}}$ be the coefficients in Lemma \ref{lem2.2}. Then   
\begin{align*}
\mathfrak{B}_{\pi}^{K_{\mathfrak{q}}[1]}=\mathrm{Span}\{\phi^{\circ}, \alpha_{\pi_{\mathfrak{q}}}\phi^{\circ}+\beta_{\pi_{\mathfrak{q}}}\pi_{\mathfrak{q}}(\diag(1,\varpi_{\mathfrak{q}}))\phi^{\circ}\}.\tag{\ref{2.17}}
\end{align*}

Let $\phi\in \mathfrak{B}_{\pi}^{K_{\mathfrak{q}}[1]}$. Parallel to \eqref{2.14} we have $\pi(f_{\mathfrak{n},\mathfrak{q}})\phi=\lambda_{\pi}(\mathfrak{n})\phi$. Hence, for $\Re(s_1)\gg 1$ and $\Re(s_2)\gg 1$, we deduce
\begin{equation}\label{9.2}
I_{\mathrm{Spec}}^{\mathrm{old}}(f_{\mathfrak{n},\mathfrak{q}},s)=\sum_{\substack{\pi\in \mathcal{F}(\mathbf{k},\mathcal{O}_F),\ \phi\in \mathfrak{B}_{\pi}^{K_{\mathfrak{q}}[1]}}}\frac{\lambda_{\pi}(\mathfrak{n})\cdot \mathcal{P}(s,\phi)\overline{W_{\phi}(\diag(y,1))}}{\langle\phi,\phi\rangle}.
\end{equation}

By a change of variable, $\mathcal{P}(s,\pi_{\mathfrak{q}}(\diag(1,\varpi_{\mathfrak{q}}))\phi^{\circ})=N(\mathfrak{q})^{-s}\mathcal{P}(s,\phi^{\circ})$. Moreover, $W_{\pi_{\mathfrak{q}}(\diag(1,\varpi_{\mathfrak{q}}))\phi^{\circ}}(I_2)=0$. Therefore, for $\phi=\alpha_{\pi_{\mathfrak{q}}}\phi^{\circ}+\beta_{\pi_{\mathfrak{q}}}\pi_{\mathfrak{q}}(\diag(1,\varpi_{\mathfrak{q}}))\phi^{\circ}$, 
\begin{align*}
\mathcal{P}(s,\phi)\overline{W_{\phi}(\diag(y,1))}= \alpha_{\pi_{\mathfrak{q}}}(\alpha_{\pi_{\mathfrak{q}}}+\beta_{\pi_{\mathfrak{q}}}N(\mathfrak{q})^{-s})\cdot \mathcal{P}(s,\phi^{\circ})\overline{W_{\phi^{\circ}}(I_2)}.
\end{align*}

Substituting this into \eqref{9.2}, we can thus rewrite $I_{\mathrm{Spec}}^{\mathrm{old}}(f_{\mathfrak{n},\mathfrak{q}},s)$ as  
\begin{equation}\label{9.3}
\sum_{\substack{\pi\in \mathcal{F}(\mathbf{k},\mathcal{O}_F),\ \phi^{\circ}\in \mathfrak{B}_{\pi}^{\mathrm{new}}}}\bigg[1+\alpha_{\pi_{\mathfrak{q}}}(\alpha_{\pi_{\mathfrak{q}}}+\beta_{\pi_{\mathfrak{q}}}N(\mathfrak{q})^{-s})\bigg]\cdot \frac{\lambda_{\pi}(\mathfrak{n})\cdot \mathcal{P}(s,\phi^{\circ})\overline{W_{\phi^{\circ}}(I_2)}}{\langle\phi^{\circ},\phi^{\circ}\rangle}.
\end{equation}

By \eqref{eq2.11} in Lemma \ref{lem2.2}, $1+\alpha_{\pi_{\mathfrak{q}}}(\alpha_{\pi_{\mathfrak{q}}}+\beta_{\pi_{\mathfrak{q}}}N(\mathfrak{q})^{-s})$ boils down to 
\begin{align*}
&1+(\gamma^2-\gamma  N(\mathfrak{q})^{-s})\zeta_{\mathfrak{q}}(1)^2\zeta_{\mathfrak{q}}(2)^{-2}L_{\mathfrak{q}}(1/2,\pi_{\mathfrak{q}})L_{\mathfrak{q}}(1/2,\pi_{\mathfrak{q}}\times\chi_{\mathfrak{q}}),
\end{align*}
which, after a straightforward calculation, is equal to $C_{\pi_{\mathfrak{q}}}(s)$. 

Substituting $s=0$ into the definition of $C_{\pi_{\mathfrak{q}}}(s)$ yields 
\begin{align*}
C_{\pi_{\mathfrak{q}}}(0)=1-\lambda_{\pi}(\mathfrak{q})N(\mathfrak{q})^{-1/2}L_{\mathfrak{q}}(1/2,\pi_{\mathfrak{q}}\times\chi_{\mathfrak{q}})=(1+N(\mathfrak{q})^{-1})L_{\mathfrak{q}}(1/2,\pi_{\mathfrak{q}}\times\chi_{\mathfrak{q}}).
\end{align*}

Consequently, Proposition \ref{prop9.1} follows. 
\end{proof}

\subsection{The Geometric Side}\label{sec9.2}
 Substituting Bruhat decomposition 
\begin{align*}
\mathrm{GL}_2(F)=B(F)\bigsqcup T(F)\begin{pmatrix}
& -1\\
1
\end{pmatrix}N(F)\bigsqcup \bigsqcup _{t\in F^{\times}}T(F)\begin{pmatrix}
-t&-1\\
1&
\end{pmatrix}
N(F)
\end{align*}
into the definition of $I(f_{\mathfrak{n},\mathfrak{q}},s)$, we obtain, parallel to \eqref{fc2.14}, the decomposition 
\begin{align*}
I_{\mathrm{Geom}}(f_{\mathfrak{n},\mathfrak{q}},s)=\sum_{\delta\in \{I_2,w\}}I_{\mathrm{small}}^{\delta}(f_{\mathfrak{n},\mathfrak{q}},s)+I_{\mathrm{reg}}(f_{\mathfrak{n},\mathfrak{q}},s),\ \ \Re(s)\gg 1,
\end{align*}
where 
\begin{align*}
&I_{\mathrm{small}}^{I_2}(f_{\mathfrak{n},\mathfrak{q}},s):=\int_{\mathbb{A}_F^{\times}}\int_{\mathbb{A}_F}f_{\mathfrak{n},\mathfrak{q}}\left(\begin{pmatrix}
a\\
& 1
\end{pmatrix}^{-1}\begin{pmatrix}
y& b\\
& 1
\end{pmatrix}\right)\psi(b)|a|^{s}dbd^{\times}a,\\
&I_{\mathrm{small}}^{w}(f_{\mathfrak{n},\mathfrak{q}},s):=\int_{\mathbb{A}_F^{\times}}\int_{\mathbb{A}_F}f_{\mathfrak{n},\mathfrak{q}}\left(\begin{pmatrix}
a\\
& 1
\end{pmatrix}^{-1}\begin{pmatrix}
& -1\\
1
\end{pmatrix} \begin{pmatrix}
y& b\\
& 1
\end{pmatrix}\right)\psi(b)|a|^{s}dbd^{\times}a,\\
&I_{\mathrm{reg}}(f_{\mathfrak{n},\mathfrak{q}},s):=\sum_{t\in F^{\times}}\int_{\mathbb{A}_F^{\times}}\int_{\mathbb{A}_F}\left(\begin{pmatrix}
a\\
& 1
\end{pmatrix}^{-1}\begin{pmatrix}
-t& -1\\
1&
\end{pmatrix}\begin{pmatrix}
y& b\\
& 1
\end{pmatrix}\right)\psi(b)|a|^{s}dbd^{\times}a.
\end{align*}

This is a generalization of \cite{KL10} from $F=\mathbb{Q}$ to totally real fields. 

\subsection{The Orbital Integral $I_{\mathrm{small}}^{I_2}(f_{\mathfrak{n},\mathfrak{q}},s)$}
By definition we have
\begin{align*}
I_{\mathrm{small}}^{I_2}(f_{\mathfrak{n},\mathfrak{q}},s)=\prod_vI_{\mathrm{small},v}^{I_2}(s),
\end{align*}
where 
\begin{equation}\label{eq9.6}
I_{\mathrm{small},v}^{I_2}(s)=\int_{F_v^{\times}}\int_{F_v}f_v\left(\begin{pmatrix}
a_v^{-1}y_v& a_v^{-1}b_v\\
& 1
\end{pmatrix}\right)\psi_v(b_v)|a_v|_v^{s}db_vd^{\times}a_v.
\end{equation}

\subsubsection{Archimedean integrals}
Let $v\mid\infty$. By definition, and a change of variable $b_v\mapsto -b_v$, we obtain 
\begin{align*}
I_{\mathrm{small},v}^{I_2}(s)=\frac{(k_v-1)\cdot (2i)^{k_v}}{4\pi}\cdot \int_{0}^{\infty}\int_{F_v}\frac{a_v^{k_v/2}e^{-2\pi i b_v}}{(b_v+i(a_v+1))^{k_v}}|a_v|_v^{s}db_vd^{\times}a_v.
\end{align*}

Utilizing Lemma \ref{lem0.4} we then derive 
\begin{equation}\label{9.6}
I_{\mathrm{small},v}^{I_2}(s)=\frac{(k_v-1)\cdot 2^{k_v}(2\pi)^{k_v/2-s}}{4\pi e^{2\pi}\Gamma(k_v)}\cdot \Gamma(k_v/2+s).
\end{equation}

\subsubsection{Non-Archimedean integrals}
Let $v<\infty$. Let $r=\max\{e_v(\mathfrak{n}),0\}$. By definition of $f_{\mathfrak{n},\mathfrak{q}}$ in \eqref{sec1.2}, 
\begin{align*}
f_v\left(\begin{pmatrix}
a_v^{-1}y_v& a_v^{-1}b_v\\
& 1
\end{pmatrix}\right)\equiv 0
\end{align*} 
unless there exists some $l\in \mathbb{Z}$ such that 
\begin{equation}\label{9.7}
\varpi_v^l\begin{pmatrix}
a_v^{-1}\varpi_v^{-d_v}& a_v^{-1}b_v\\
& 1
\end{pmatrix}\in \bigsqcup_{\substack{i+j=r\\
i\geq j\geq 0}}K_v[e_v(\mathfrak{q})]\begin{pmatrix}
\varpi_v^i\\
&\varpi_v^j
\end{pmatrix}K_v[e_v(\mathfrak{q})].
\end{equation}

We consider the following scenarios according to the range of $r$.
\begin{itemize}
\item Suppose $r=0$. Then \eqref{9.7} boils down to 
\begin{align*}
\begin{cases}
2l-e_v(a_v)-d_v=0\\
l-e_v(a_v)-d_v\geq 0,\ \ l\geq 0\\
l-e_v(a_v)+e_v(b_v)\geq 0
\end{cases}\ \ \Leftrightarrow\ \ \begin{cases}
e_v(a_v)=-d_v,\ \ l=0\\
e_v(b_v)\geq -d_v.
\end{cases}
\end{align*} 

Substituting this into the definition of $I_{\mathrm{small},v}^{I_2}(s)$ yields 
\begin{equation}\label{9.8}
I_{\mathrm{small},v}^{I_2}(s)=V_{\mathfrak{q}}\cdot q_v^{d_v(1+s)}\cdot \Vol(\mathcal{O}_v)\Vol(\mathcal{O}_v^{\times})=V_{\mathfrak{q}}\cdot q_v^{d_vs}.
\end{equation}

\item Suppose $r\geq 1$, namely, $v\mid\mathfrak{n}$. Then $r=e_v(\mathfrak{n})$, $e_v(\mathfrak{q})=d_v=0$. Therefore, the constraint  \eqref{9.7} amounts to 
\begin{align*}
\begin{cases}
2l-e_v(a_v)=r\\
l-e_v(a_v)\geq 0,\ \ l\geq 0,\ \ l-e_v(a_v)+e_v(b_v)\geq 0.
\end{cases}
\end{align*}

Plugging these constraints into the definition \eqref{eq9.6}, we obtain 
\begin{align*}
I_{\mathrm{small},v}^{I_2}(s)=q_v^{-\frac{e_v(\mathfrak{n})}{2}}\sum_{l=0}^r\int_{\varpi_v^{2l-r}\mathcal{O}_v^{\times}}\int_{F_v}\psi_v(b_v)\textbf{1}_{l-e_v(a_v)+e_v(b_v)\geq 0}db_v|a_v|_v^{s}d^{\times}a_v.
\end{align*}

Notice that 
\begin{align*}
\int_{F_v}\psi_v(b_v)\textbf{1}_{l-e_v(a_v)+e_v(b_v)\geq 0}db_v=\textbf{1}_{e_v(a_v)-l\geq 0}.
\end{align*}
Plugging this into the expression for $I_{\mathrm{small},v}^{I_2}(s)$ yields 
\begin{equation}\label{9.9}
I_{\mathrm{small},v}^{I_2}(s)=q_v^{-\frac{e_v(\mathfrak{n})}{2}}\int_{\varpi_v^{r}\mathcal{O}_v^{\times}}|a_v|_v^{s}d^{\times}a_v=q_v^{-(1/2+s)e_v(\mathfrak{n})}.
\end{equation}
\end{itemize}

\subsubsection{The global integral}
By combining \eqref{9.6}, \eqref{9.8} and \eqref{9.9} we arrive at the following result. 
\begin{prop}\label{prop9.2}
Let notation be as before. 
\begin{itemize}
\item $I_{\mathrm{small}}^{I_2}(f_{\mathfrak{n},\mathfrak{q}},s)$ converges absolutely in $\Re(s)>-2^{-1}\max_{v\mid\infty}\{k_v\}.$
\item The function $I_{\mathrm{small}}^{I_2}(f_{\mathfrak{n},\mathfrak{q}},s)$ admits a meromorphic continuation to $s\in \mathbb{C}$ explicitly given by 
\begin{align*}
I_{\mathrm{small}}^{I_2}(f_{\mathfrak{n},\mathfrak{q}},s)=\frac{V_{\mathfrak{q}}\cdot D_F^s}{N(\mathfrak{n})^{1/2+s}}\prod_{v\mid\infty}\frac{(k_v-1)\cdot 2^{k_v}(2\pi)^{k_v/2-s}\Gamma(k_v/2+s)}{4\pi e^{2\pi}\Gamma(k_v)}.
\end{align*}	
\end{itemize}
\end{prop}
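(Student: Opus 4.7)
The plan is to reduce the proposition to the three local computations already carried out in the excerpt, namely \eqref{9.6}, \eqref{9.8}, and \eqref{9.9}. First I would justify the Fubini-type factorization
\[
I_{\mathrm{small}}^{I_2}(f_{\mathfrak{n},\mathfrak{q}},s) = \prod_{v\leq\infty} I_{\mathrm{small},v}^{I_2}(s),
\]
valid initially for $\Re(s)\gg 1$. At non-archimedean places, absolute convergence of the local integrals is immediate because $f_v$ is compactly supported modulo the center; at archimedean places it follows from the integrability of $|f_v|$ (for $k_v>2$) together with a crude estimate on $|a_v|^{s}$ in the right half-plane. Essentially all but finitely many local factors are trivial, so the global integral factors cleanly.

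Next I would combine the non-archimedean factors. By \eqref{9.8}, the factor at every finite place $v$ with $v\nmid\mathfrak{n}$ equals $V_{\mathfrak{q},v}\cdot q_v^{d_v s}$, with $V_{\mathfrak{q},v}$ contributing the factor $N(\mathfrak{q})+1$ only at the single place $v=\mathfrak{q}$ (when $\mathfrak{q}$ is prime) and $1$ otherwise; by \eqref{9.9}, the factor at each $v\mid\mathfrak{n}$ equals $q_v^{-(1/2+s)e_v(\mathfrak{n})}$. Using the product formulas
\[
\prod_{v<\infty}q_v^{d_v} = D_F,\qquad \prod_{v\mid\mathfrak{n}}q_v^{e_v(\mathfrak{n})} = N(\mathfrak{n}),
\]
the non-archimedean product collapses to $V_{\mathfrak{q}} D_F^{s} N(\mathfrak{n})^{-1/2-s}$. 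This expression is entire in $s$, so the meromorphic structure is entirely determined by the archimedean factors.

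For the archimedean product, \eqref{9.6} yields a factor $c_v\,\Gamma(k_v/2+s)$ at each $v\mid\infty$, where $c_v$ is an entire function of $s$. Multiplying over $v\mid\infty$ gives the claimed formula. The gamma integral underlying \eqref{9.6}, namely $\int_0^\infty a^{k_v/2+s-1}e^{-2\pi a}\,da$, converges precisely when $\Re(s) > -k_v/2$, so the original integral converges absolutely simultaneously at all $v\mid\infty$ in the stated region (read as $\Re(s) > -\min_{v\mid\infty}\{k_v\}/2$). The explicit formula on the right is then meromorphic on $\mathbb{C}$ via the analytic continuation of $\Gamma$, supplying the desired meromorphic continuation by uniqueness.

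There is no real analytic obstacle here: the local building blocks are already in hand, and the only task is careful bookkeeping. The one place that merits attention is verifying that the constraint \eqref{9.7} partitions correctly into the three cases ($v\nmid\mathfrak{n}\mathfrak{q}$, $v=\mathfrak{q}$, and $v\mid\mathfrak{n}$), so that the factor $V_{\mathfrak{q}}$ arises exactly once and so that the apparent dependence on $e_v(\mathfrak{q})$ and $d_v$ combines into the clean global prefactor $V_{\mathfrak{q}} D_F^s N(\mathfrak{n})^{-1/2-s}$. Once this is checked, the proposition follows.
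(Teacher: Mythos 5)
Your approach is essentially the paper's own: factor the global integral into local pieces, appeal to the local computations \eqref{9.6}, \eqref{9.8}, \eqref{9.9}, and collapse the non-archimedean product via $\prod_{v<\infty}q_v^{d_v}=D_F$ and $\prod_{v\mid\mathfrak{n}}q_v^{e_v(\mathfrak{n})}=N(\mathfrak{n})$. Two small points worth flagging explicitly. First, you are right that the $V_{\mathfrak{q}}$ appearing in the formula \eqref{9.8} is a global prefactor contributed only at $v=\mathfrak{q}$ (coming from $\Vol(K_0(\mathfrak{p}_v))^{-1}$); the displayed formula \eqref{9.8} is written loosely, and it is exactly the clean-up you describe that produces a single factor of $V_{\mathfrak{q}}$ rather than one at every finite place. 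Second, you correctly note that the true region of absolute convergence is $\Re(s)>-\tfrac12\min_{v\mid\infty}\{k_v\}$ (each archimedean factor $\int_0^\infty a^{k_v/2+s-1}e^{-2\pi a}\,da$ must converge), so the $\max$ in the statement of Proposition \ref{prop9.2} should be read as $\min$; since $\Gamma(k_v/2+s)$ has a pole at $s=-k_v/2$, the wider region claimed with $\max$ would be neither a region of convergence nor even a pole-free region when the $k_v$ are not all equal. Otherwise the bookkeeping is sound and matches the paper.
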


\subsection{The Orbital Integral $I_{\mathrm{small}}^{w}(f_{\mathfrak{n},\mathfrak{q}},s)$}
By the definition in \textsection\ref{sec9.2}, we can write 
\begin{align*}
I_{\mathrm{small}}^{w}(f_{\mathfrak{n},\mathfrak{q}},s)=\prod_{v}I_{\mathrm{small},v}^{w}(s),
\end{align*}
where 
\begin{equation}\label{9.11}
I_{\mathrm{small},v}^{w}(s):=\int_{F_v^{\times}}\int_{F_v}f_v\left(\begin{pmatrix}
& -a_v^{-1}\\
y_v& b_v
\end{pmatrix}\right)\psi_v(b_v)|a_v|_v^{s}db_vd^{\times}a_v.
\end{equation}

\subsubsection{Archimedean integrals}
Let $v\mid\infty$. By the definition \eqref{9.11}, and a change of variable $b_v\mapsto -b_v$, we obtain 
\begin{align*}
I_{\mathrm{small},v}^{w}(s)=\frac{(k_v-1)\cdot 2^{k_v}}{4\pi}\cdot \int_{0}^{\infty}\int_{F_v}\frac{a_v^{-k_v/2}e^{-2\pi i b_v}}{(b_v+i(a_v^{-1}+1))^{k_v}}|a_v|_v^{s}db_vd^{\times}a_v.
\end{align*}

Utilizing Lemma \ref{lem0.4} we then derive 
\begin{equation}\label{9.10}
I_{\mathrm{small},v}^{I_2}(s)=\frac{i^{k_v}\cdot (k_v-1)\cdot 2^{k_v}(2\pi)^{k_v/2+s}}{4\pi e^{2\pi}\Gamma(k_v)}\cdot \Gamma(k_v/2-s).
\end{equation}

\subsubsection{Non-Archimedean integrals}
Let $v<\infty$. Let $r=\max\{e_v(\mathfrak{n}),0\}$. By definition of $f_{\mathfrak{n},\mathfrak{q}}$ in \eqref{sec1.2}, 
\begin{align*}
f_v\left(\begin{pmatrix}
& -a_v^{-1}\\
y_v& b_v
\end{pmatrix}\right)\equiv 0
\end{align*} 
unless there exists some $l\in \mathbb{Z}$ such that 
\begin{equation}\label{9.13}
\varpi_v^l\begin{pmatrix}
& -a_v^{-1}\\
\varpi_v^{-d_v}& b_v
\end{pmatrix}\in \bigsqcup_{\substack{i+j=r\\
i\geq j\geq 0}}K_v[e_v(\mathfrak{q})]\begin{pmatrix}
\varpi_v^i\\
&\varpi_v^j
\end{pmatrix}K_v[e_v(\mathfrak{q})].
\end{equation}

We consider the following scenarios according to the range of $r$.
\begin{itemize}
\item Suppose $r=0$. Then \eqref{9.13} boils down to
\begin{equation}\label{eq9.14}
\begin{cases}
2l-e_v(a_v)-d_v=0\\
l-e_v(a_v)\geq 0,\ \ l-d_v\geq e_v(\mathfrak{q})\\
l+e_v(b_v)\geq 0.
\end{cases}
\end{equation}

Notice that the constraints \eqref{eq9.14} is empty if $e_v(\mathfrak{q})\geq 1$ for some $v<\infty$, namely, $\mathfrak{q}\subsetneq \mathcal{O}_F$. If $\mathfrak{q}= \mathcal{O}_F$, then \eqref{eq9.14} simplifies to  
\begin{align*}
\begin{cases}
e_v(a_v)=d_v=l\\
e_v(b_v)\geq -d_v.
\end{cases}
\end{align*}

Substituting this into the definition of $I_{\mathrm{small},v}^{I_2}(s)$ yields 
\begin{equation}\label{9.14}
I_{\mathrm{small},v}^{I_2}(s)=\textbf{1}_{\mathfrak{q}=\mathcal{O}_F}\cdot q_v^{d_v(1-s)}\cdot \Vol(\mathcal{O}_v)\Vol(\mathcal{O}_v^{\times})=\textbf{1}_{\mathfrak{q}=\mathcal{O}_F}\cdot  q_v^{-d_vs}.
\end{equation}

\item Suppose $r\geq 1$, namely, $v\mid\mathfrak{n}$. Then $r=e_v(\mathfrak{n})$ and $e_v(\mathfrak{q})=0$ and thus \eqref{9.13} amounts to 
\begin{align*}
\begin{cases}
2l-e_v(a_v)=r\\
l-e_v(a_v)\geq 0,\ \ l\geq 0,\ \ l+e_v(b_v)\geq 0.
\end{cases}
\end{align*}

Plugging these constraints into the definition \eqref{9.11}, we obtain 
\begin{align*}
I_{\mathrm{small},v}^{w}(s)=q_v^{-\frac{e_v(\mathfrak{n})}{2}}\sum_{l=0}^r\int_{\varpi_v^{2l-r}\mathcal{O}_v^{\times}}\int_{F_v}\psi_v(b_v)\textbf{1}_{l+e_v(b_v)\geq 0}db_v|a_v|_v^{s}d^{\times}a_v.
\end{align*}

Notice that 
\begin{align*}
\int_{F_v}\psi_v(b_v)\textbf{1}_{l+e_v(b_v)\geq 0}db_v=\textbf{1}_{l\leq  0}.
\end{align*}
Plugging this into the expression for $I_{\mathrm{small},v}^{I_2}(s)$ yields 
\begin{equation}\label{9.15}
I_{\mathrm{small},v}^{w}(s)=q_v^{-\frac{e_v(\mathfrak{n})}{2}}\int_{\varpi_v^{-e_v(\mathfrak{n})}\mathcal{O}_v^{\times}}|a_v|_v^{s}d^{\times}a_v=q_v^{-(1/2-s)e_v(\mathfrak{n})}.
\end{equation}
\end{itemize}

\subsubsection{The global integral}
By combining \eqref{9.10}, \eqref{9.14} and \eqref{9.15} we arrive at the following result. 
\begin{prop}\label{prop9.3}
Let notation be as before. 
\begin{itemize}
\item $I_{\mathrm{small}}^{w}(f_{\mathfrak{n},\mathfrak{q}},s)$ converges absolutely in $\Re(s)<2^{-1}\min_{v\mid\infty}\{k_v\}.$
\item The function $I_{\mathrm{small}}^{w}(f_{\mathfrak{n},\mathfrak{q}},s)$ admits a meromorphic continuation to $s\in \mathbb{C}$ explicitly given by 
\begin{align*}
I_{\mathrm{small}}^{w}(f_{\mathfrak{n},\mathfrak{q}},s)=\frac{\textbf{1}_{\mathfrak{q}=\mathcal{O}_F}\cdot  q_v^{-d_vs}}{N(\mathfrak{n})^{1/2-s}}\prod_{v\mid\infty}\frac{i^{k_v}\cdot (k_v-1)\cdot 2^{k_v}(2\pi)^{k_v/2+s}\Gamma(k_v/2-s)}{4\pi e^{2\pi}\Gamma(k_v)}.
\end{align*}
\end{itemize}
\end{prop}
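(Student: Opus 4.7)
The plan is to follow the same path as in the proof of Proposition~\ref{prop9.2}, combining the local computations that have already been carried out in the three cases $v\mid\infty$, $v<\infty$ with $r=0$, and $v\mid\mathfrak{n}$. Since $f_{\mathfrak{n},\mathfrak{q}}=\otimes_{v\leq\infty}f_v$ is a pure tensor and $\psi=\prod_v\psi_v$, the global integral defining $I_{\mathrm{small}}^{w}(f_{\mathfrak{n},\mathfrak{q}},s)$ factors as $\prod_v I_{\mathrm{small},v}^{w}(s)$ whenever the product makes sense, so it suffices to multiply the already-computed local factors.

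First I would assemble the archimedean contribution: the evaluation \eqref{9.10} gives the factor $\prod_{v\mid\infty}\frac{i^{k_v}(k_v-1)2^{k_v}(2\pi)^{k_v/2+s}\Gamma(k_v/2-s)}{4\pi e^{2\pi}\Gamma(k_v)}$, which is entire in $s$ except for poles of the shifted Gamma functions, and defines an absolutely convergent integral precisely when $\Re(s)<k_v/2$ for every $v\mid\infty$, i.e.\ $\Re(s)<\tfrac12\min_{v\mid\infty}\{k_v\}$. Next I would combine the non-archimedean factors: at places with $v\nmid\mathfrak{n}\mathfrak{q}$ formula \eqref{9.14} contributes $q_v^{-d_v s}$, which globally multiplies to $D_F^{-s}$; at the places dividing $\mathfrak{n}$ formula \eqref{9.15} contributes $q_v^{-(1/2-s)e_v(\mathfrak{n})}$, whose product over $v\mid\mathfrak{n}$ is $N(\mathfrak{n})^{-(1/2-s)}=N(\mathfrak{n})^{s-1/2}$; finally, when $\mathfrak{q}\subsetneq\mathcal{O}_F$ the local integral at $v=\mathfrak{q}$ vanishes identically by the analysis of \eqref{eq9.14}, producing the indicator $\textbf{1}_{\mathfrak{q}=\mathcal{O}_F}$.

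Multiplying these together, in the joint region of absolute convergence $\Re(s)<\tfrac12\min_{v\mid\infty}\{k_v\}$, yields the closed form
\begin{equation*}
I_{\mathrm{small}}^{w}(f_{\mathfrak{n},\mathfrak{q}},s)=\frac{\textbf{1}_{\mathfrak{q}=\mathcal{O}_F}\cdot D_F^{-s}}{N(\mathfrak{n})^{1/2-s}}\prod_{v\mid\infty}\frac{i^{k_v}(k_v-1)2^{k_v}(2\pi)^{k_v/2+s}\Gamma(k_v/2-s)}{4\pi e^{2\pi}\Gamma(k_v)},
\end{equation*}
and the right-hand side, viewed as a product of Gamma functions times an explicit exponential polynomial in $s$, is manifestly meromorphic on all of $\mathbb{C}$, giving the stated continuation.

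The routine part is bookkeeping the valuations, and the one mildly delicate point is the vanishing at $v=\mathfrak{q}$ when $\mathfrak{q}\subsetneq\mathcal{O}_F$: the third condition $l-d_v\geq e_v(\mathfrak{q})=1$ in \eqref{eq9.14} forces $l\geq 1$, which is incompatible with the determinant equation $2l-e_v(a_v)-d_v=0$ combined with $l-e_v(a_v)\geq 0$ at an unramified $\mathfrak{q}$ (where $d_v=0$), giving the identity $I_{\mathrm{small},\mathfrak{q}}^{w}(s)\equiv 0$. (Note this is the mild asymmetry with $I_{\mathrm{small}}^{I_2}$, which survives for $\mathfrak{q}\subsetneq\mathcal{O}_F$ because the Weyl element $w$ is not in $K_0(\mathfrak{p}_v)$.) Once that case is handled, everything else is a direct product calculation, and the convergence and meromorphic continuation statements follow from the corresponding properties of the archimedean Gamma factors.
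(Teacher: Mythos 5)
Your proposal is correct and follows essentially the same route as the paper: the paper's proof of Proposition~\ref{prop9.3} consists of one sentence ("By combining \eqref{9.10}, \eqref{9.14} and \eqref{9.15} we arrive at the following result"), and you supply exactly the factorization and bookkeeping that this sentence elides. Your treatment of the vanishing at $v=\mathfrak{q}$ when $\mathfrak{q}\subsetneq\mathcal{O}_F$ is right: with $d_v=0$ and $e_v(\mathfrak{q})=1$, the constraints in \eqref{eq9.14} give $l\geq 1$ from $l-d_v\geq e_v(\mathfrak{q})$ while the determinant relation $e_v(a_v)=2l$ combined with $l-e_v(a_v)\geq 0$ forces $l\leq 0$, a contradiction. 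You have also (correctly) repaired a typo in the paper's stated formula: the factor written as $q_v^{-d_vs}$ in Proposition~\ref{prop9.3} has an unbound $v$ and should read $D_F^{-s}$, i.e.\ the product $\prod_{v<\infty}q_v^{-d_vs}$; this is consistent with the analogous factor $D_F^{s}$ in Proposition~\ref{prop9.2}.
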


\begin{remark}
Notice that, when $\mathfrak{q}=\mathcal{O}_F$, we have, by Propositions \ref{prop9.2} and \ref{prop9.3}, 
\begin{align*}
I_{\mathrm{small}}^{I_2}(f_{\mathfrak{n},\mathcal{O}_F},s)=i^{\sum_{v\mid\infty}k_v}\cdot I_{\mathrm{small}}^{w}(f_{\mathfrak{n},\mathcal{O}_F},-s).
\end{align*}	
Hence, $I_{\mathrm{small}}^{I_2}(f_{\mathfrak{n},\mathcal{O}_F},0)+I_{\mathrm{small}}^{w}(f_{\mathfrak{n},\mathcal{O}_F},0)=0$ unless $\sum_{v\mid\infty}k_v\equiv 0\pmod{4}$.
\end{remark}

\subsection{The Regular Orbital Integral}\label{sec9.5}
Recall the regular orbital integral (cf. \textsection\ref{sec9.2}):
\begin{align*}
I_{\mathrm{reg}}(f_{\mathfrak{n},\mathfrak{q}},s):=\sum_{t\in F^{\times}}\prod_{v}I_v(t,s),
\end{align*} 
where 
\begin{equation}\label{eq9.16}
I_v(t,s):=\int_{F_v^{\times}}\int_{F_v}f_v\left(\begin{pmatrix}
-a_v^{-1}y_vt& -a_v^{-1}(b_vt+1)\\
y_v& b_v
\end{pmatrix}\right)\psi_v(b_v)|a_v|_v^{s}db_vd^{\times}a_v.
\end{equation}
For simplicity, we denote by $I_v(t)=I_v(t,0)$. We will study $I_v(t)$ at each local place $v$ in the following subsections.

\subsubsection{Calculation of $I_v(t)$ at $v<\infty$ and $v\nmid\mathfrak{n}\mathfrak{q}$}
Let $v\nmid \mathfrak{n}\mathfrak{q}$ be a non-Archimedean place. By definition, 
\begin{align*}
f_v\left(\begin{pmatrix}
-a_v^{-1}y_vt& -a_v^{-1}(b_vt+1)\\
y_v& b_v
\end{pmatrix}\right)=0
\end{align*} 
unless there exists some $l\in \mathbb{Z}$ such that 
\begin{align*}
\varpi_v^l\begin{pmatrix}
-a_v^{-1}\varpi_v^{-d_v}t& -a_v^{-1}(b_vt+1)\\
\varpi_v^{-d_v}& b_v
\end{pmatrix}\in K_v,
\end{align*}
which is equivalent to 
\begin{equation}\label{9.16}
\begin{cases}
2l-e_v(a_v)=d_v\\
l-e_v(a_v)+e_v(t)\geq d_v\\
l\geq d_v\\
l+e_v(b_v)\geq 0\\
l-e_v(a_v)+e_v(b_vt+1)\geq 0
\end{cases}\ \ \Leftrightarrow\ \ \begin{cases}
2l-e_v(a_v)=d_v\\
e_v(t)\geq l \geq d_v\\
l+e_v(b_v)\geq 0\\
e_v(b_vt+1)\geq l-d_v.
\end{cases}
\end{equation}

Substituting this into \eqref{eq9.16} leads to the simplification 
\begin{equation}\label{9.18}
I_v(t,s)=\Vol(\mathcal{O}_v^{\times})\cdot \sum_{l=d_v}^{e_v(t)}q_v^{-(2l-d_v)s}\int_{\varpi_v^{-l}\mathcal{O}_v}\psi_v(b_v)\textbf{1}_{e_v(b_vt+1)\geq l-d_v}db_v.	
\end{equation}

Notice that for $l-d_v\geq 1$, we have $e_v(b_vt+1)\geq l-d_v$ if and only if $b_vt+1=\beta_v\varpi_v^{l-d_v}$ for some $\beta_v\in \mathcal{O}_v$, i.e., $b_v=-t^{-1}+\beta_vt^{-1}\varpi_v^{l-d_v}$, which implies that $e_v(b_v)+e_v(t)=0$. Along with the constraints $e_v(t)\geq l $ and $
l+e_v(b_v)\geq 0$ in \eqref{9.16}, we derive $e_v(b_v)=-l$ and $e_v(t)=l\geq d_v+1$. Hence, it follows from \eqref{9.18} that 
\begin{align*}
\frac{I_v(t,s)}{\Vol(\mathcal{O}_v^{\times})}=&q_v^{-d_vs-d_v}\textbf{1}_{e_v(t)\geq d_v}+\textbf{1}_{\substack{l=e_v(t)\\
l\geq d_v+1}}q_v^{-(2l-d_v)s}\int_{\varpi_v^{-l}\mathcal{O}_v^{\times}}\psi_v(b_v)\textbf{1}_{e_v(b_vt+1)\geq l-d_v}db_v.
\end{align*}

After a straightforward calculation, we derive 
\begin{equation}\label{eq9.20}
I_v(t,s)=q_v^{-2d_v}\cdot \bigg[q_v^{-d_vs}\textbf{1}_{e_v(t)\geq d_v}+\psi_v(-t^{-1})q_v^{d_v-(2e_v(t)-d_v)s}\cdot \textbf{1}_{e_v(t)\geq d_v+1}\bigg].
\end{equation}

In particular, when $e_v(t)=0$, then $I_v(t,s)=q_v^{-2d_v}\cdot q_v^{-d_vs}$, which is equal to $1$ if $d_v=0$, i.e., $v$ is not a ramified place. Hence, we conclude that $I_v(t,s)\equiv 1$ for all but finitely many places $v$'s. 

\subsubsection{Calculation of $I_v(t)$ at $v\mid \mathfrak{q}$}
Let $v\mid \mathfrak{q}$. Then $d_v=0$, implying that $y_v=1$. By definition of $f_v$ in \eqref{t1.7}, we have  
\begin{align*}
f_v\left(\begin{pmatrix}
-a_v^{-1}y_vt& -a_v^{-1}(b_vt+1)\\
y_v& b_v
\end{pmatrix}\right)=0
\end{align*} 
unless there exists some $l\in \mathbb{Z}$ such that 
\begin{align*}
\varpi_v^l\begin{pmatrix}
-a_v^{-1}t& -a_v^{-1}(b_vt+1)\\
1& b_v
\end{pmatrix}\in K_v[1],
\end{align*}
which is equivalent to 
\begin{align*}
\begin{cases}
2l-e_v(a_v)=0,\ \ l\geq 1\\ 
l-e_v(a_v)+e_v(t)=0\\
l+e_v(b_v)=0\\
l-e_v(a_v)+e_v(b_vt+1)\geq 0
\end{cases}\ \ \Leftrightarrow\ \ \begin{cases}
e_v(a_v)=2l\\
e_v(t)=l\geq 1\\
e_v(b_vt+1)\geq l\geq 1.
\end{cases}
\end{align*}

From $e_v(b_vt+1)\geq l\geq 1$ we may write $b_v=-t^{-1}+\beta_vt^{-1}\varpi_v^{l}$, where $\beta_v\in \mathcal{O}_v$. Substituting this parametrization into \eqref{eq9.16} leads to 
\begin{equation}\label{9.21}
I_v(t,s)=\frac{V_{\mathfrak{q}}\textbf{1}_{e_v(t)\geq 1}}{q_v^{2e_v(t)s}}\int_{\mathcal{O}_v}\overline{\psi}_v(t^{-1}-\beta_vt^{-1}\varpi_v^{l})d\beta_v=\overline{\psi}_v(t^{-1})|t|_v^{2s}V_{\mathfrak{q}} \textbf{1}_{e_v(t)\geq 1}.
\end{equation}

\subsubsection{Calculation of $I_v(t)$ at $v\mid \mathfrak{n}$}
Let $v\mid \mathfrak{n}$. Then $d_v=0$, implying that $y_v=1$. Write $r=e_v(\mathfrak{n})$. By definition of $f_v$ in \eqref{t1.7}, we have  
\begin{align*}
f_v\left(\begin{pmatrix}
-a_v^{-1}y_vt& -a_v^{-1}(b_vt+1)\\
y_v& b_v
\end{pmatrix}\right)=0
\end{align*} 
unless there exists some $l\in \mathbb{Z}$ such that 
\begin{equation}\label{eq9.22}
\varpi_v^l\begin{pmatrix}
-a_v^{-1}t& -a_v^{-1}(b_vt+1)\\
1& b_v
\end{pmatrix}\in \bigsqcup_{\substack{i+j=r\\
i\geq j\geq 0}}K_v\begin{pmatrix}
\varpi_v^i\\
&\varpi_v^j
\end{pmatrix}K_v.
\end{equation}

Notice that \eqref{eq9.22} is equivalent to 
\begin{align*}
\varpi_v^l\begin{pmatrix}
t& b_vt+1\\
a_v& a_vb_v
\end{pmatrix}\in \bigsqcup_{\substack{i+j=r\\
i\geq j\geq 0}}K_v\begin{pmatrix}
\varpi_v^i\\
&\varpi_v^j
\end{pmatrix}K_v,
\end{align*}
which is equivalent to 
\begin{equation}\label{9.22}
\begin{cases}
2l+e_v(a_v)=r\geq 1\\
l+e_v(t)\geq 0\\
l+e_v(a_v)\geq 0\\ 
l+e_v(a_v)+e_v(b_v)\geq 0\\
l+e_v(b_vt+1)\geq 0
\end{cases}\ \ \Leftrightarrow\ \ \begin{cases}
e_v(a_v)=r-2l\\
-e_v(t)\leq l\leq r\\
e_v(b_v)\geq l-r\\
e_v(b_vt+1)\geq -l.
\end{cases}
\end{equation}

Substituting \eqref{9.22} into the definition \eqref{eq9.16} leads to 
\begin{equation}\label{eq9.23}
I_v(t,s)=q_v^{-r/2}\cdot \textbf{1}_{e_v(t)\geq -r}\sum_{l=-e_v(t)}^{r}q_v^{(2l-r)s}\int_{\mathfrak{p}_v^{l-r}}\psi_v(b_v)\textbf{1}_{e_v(b_vt+1)\geq -l}db_v.
\end{equation}

\begin{itemize}
\item Suppose $l\leq -1$. We can write $b_v=-t^{-1}+\beta_vt^{-1}\varpi_v^{-l}$, where $\beta_v\in \mathcal{O}_v$. In this case we have $e_v(b_v)=e_v(-t^{-1}+\beta_vt^{-1}\varpi_v^{-l})=-e_v(t)\geq l-r$, i.e., $e_v(t)\leq r-l$. In this case, we have
\begin{align*}
\textbf{1}_{l\leq -1}\int_{\mathfrak{p}_v^{l-r}}\psi_v(b_v)\textbf{1}_{e_v(b_vt+1)\geq -l}db_v=\psi_v(-t^{-1})q_v^{e_v(t)+l}\textbf{1}_{l\leq -1}\cdot \textbf{1}_{\substack{e_v(t)\leq r-l\\ -l-e_v(t)\geq 0}}.
\end{align*}
In conjunction with the constraint $-e_v(t)\leq l$ in \eqref{9.22} we obtain
\begin{equation}\label{9.24}
\textbf{1}_{l\leq -1}\int_{\mathfrak{p}_v^{l-r}}\psi_v(b_v)\textbf{1}_{e_v(b_vt+1)\geq -l}db_v=\psi_v(-t^{-1})\textbf{1}_{l\leq -1}\cdot\textbf{1}_{\substack{l=-e_v(t)}}.
\end{equation}

\item Suppose $l\geq 0$. Then $e_v(b_vt)\geq -l$. Along with $e_v(b_v)\geq l-r$ in \eqref{9.22} we derive that $e_v(b_v)\geq \max\{l-r,-l-e_v(t)\}$. As a consequence, we have, when $l\geq 0$, that
\begin{equation}\label{9.26}
\int_{\mathfrak{p}_v^{l-r}}\psi_v(b_v)\textbf{1}_{e_v(b_vt+1)\geq -l}db_v=\textbf{1}_{\max\{l-r,-l-e_v(t)\}\geq 0}\cdot q_v^{-\max\{l-r,-l-e_v(t)\}}.
\end{equation}

Combining the condition $\max\{l-r,-l-e_v(t)\}\geq 0$ with \eqref{9.22} yields 
\begin{equation}\label{9.27}
\begin{cases}
e_v(a_v)=r-2l\\
-e_v(t)\leq l\leq r\\
l\geq 0\\
\max\{l-r,-l-e_v(t)\}\geq 0
\end{cases}\ \ \Leftrightarrow\ \ \begin{cases}
e_v(a_v)=r-2l\\
r\geq -e_v(t)\\
l=r\ \ \text{or}\ \ 0\leq l=-e_v(t).
\end{cases}
\end{equation}

\end{itemize}

Substituting \eqref{9.24}, \eqref{9.26} and \eqref{9.27} into  \eqref{eq9.23} we obtain 
\begin{align*}
I_v(t,s)=&q_v^{-r/2}q_v^{-(2e_v(t)+r)s}\psi_v(-t^{-1})\cdot \textbf{1}_{e_v(t)\geq 1}\\
&+q_v^{-r/2}\cdot \textbf{1}_{e_v(t)\geq -r}\sum_{l=0}^{r}q_v^{(2l-r)s}\cdot \textbf{1}_{l=r\ \text{or}\ l=-e_v(t)},
\end{align*}
which simplifies to 
\begin{equation}\label{9.28}
I_v(t,s)=q_v^{-r/2}q_v^{-(2e_v(t)+r)s}\psi_v(-t^{-1})\cdot \textbf{1}_{e_v(t)>-r}+q_v^{-r/2}q_v^{rs}\cdot \textbf{1}_{e_v(t)\geq -r}.
\end{equation}

\subsubsection{Calculation of $I_v(t)$ at $v\mid \infty$}
Let $v\mid\infty$. Recall the definition 
\begin{align*}
I_v(t):=\int_{F_v^{\times}}\int_{F_v}f_v\left(\begin{pmatrix}
-a_v^{-1}y_vt& -a_v^{-1}(b_vt+1)\\
y_v& b_v
\end{pmatrix}\right)\psi_v(b_v)|a_v|_v^{s}db_vd^{\times}a_v.
\end{align*}

Substituting the definition of $f_v$ (cf. \eqref{t1.5} in \textsection\ref{sec1.2}) into the above definition yields 
\begin{align*}
I_v(t)=\frac{(k_v-1)\cdot (2i)^{k_v}}{4\pi}\cdot \int_{F_v^{\times}}\int_{F_v}\frac{a_v^{s-k_v/2}\psi_v(b_v)\textbf{1}_{a_v>0}}{(1+a_v^{-1}(b_vt+1)+i(-a_v^{-1}t+b_v))^{k_v}}
db_vd^{\times}a_v.
\end{align*}

By a straightforward simplification, we derive 
\begin{align*}
I_v(t)=&\frac{(k_v-1)\cdot 2^{k_v}}{4\pi}\cdot \int_{0}^{\infty}\frac{a_v^{s-k_v/2}}{(1-ia_v^{-1}t)^{k_v}}\int_{-\infty}^{\infty}\frac{e^{-2\pi i b_v}}{(b_v+\frac{ia_v^{-1}}{1-ia_v^{-1}t}+i)^{k_v}}
db_vd^{\times}a_v.
\end{align*}

Shifting the contour and utilizing Cauchy's formula, we obtain 
\begin{align*}
I_v(t)=\frac{(k_v-1)}{4\pi e^{2\pi}}\cdot \frac{(4\pi i)^{k_v}}{\Gamma(k_v)}
\cdot \int_{0}^{\infty}\frac{a_v^{s+\frac{k_v}{2}-1}}{(a_v-it)^{k_v}}e^{-\frac{2\pi }{a_v-it}}da_v.
\end{align*}

Notice that 
\begin{equation}\label{eq9.29}
\overline{I_v(t)}=I_v(-t),\ \ \text{if $t<0$ in $F_v$}.
\end{equation}

As a consequence, we may assume $t>0$ in $F_v$. Moreover, by Lemma \ref{lem0.4},
\begin{align*}
\int_{0}^{\infty}\frac{a_v^{s+\frac{k_v}{2}-1}}{(a_v-i)^{k_v}}e^{-\frac{2\pi t^{-1}}{a_v-i}}da_v=\frac{i^{k_v}e^{-2\pi i t^{-1}}}{e^{i\pi (s+k_v/2)/2}}\int_0^1e^{2\pi i t^{-1}a}a^{s+\frac{k_v}{2}-1}(1-a)^{\frac{k_v}{2}-s-1}da.
\end{align*}

As a result, when $t_v>0$, the function $I_v(t,s)$ is equal to 
\begin{equation}\label{9.30}
\frac{(k_v-1)}{4\pi e^{2\pi}}\cdot \frac{(4\pi )^{k_v}|t|_v^{s-\frac{k_v}{2}}\Gamma(k_v/2)^2}{\Gamma(k_v)^2}
\cdot \frac{e^{-2\pi i t^{-1}}}{e^{i\pi (s+k_v/2)/2}}\cdot  {}_{1}F_{1}(k_v/2+s; k_v; 2\pi i t^{-1}),
\end{equation}
where 
\begin{equation}\label{eq9.31}
{}_{1}F_{1}(b; c; z):=\frac{\Gamma(b)}{\Gamma(a)\Gamma(b-a)}\cdot\int_0^1e^{az}a^{b-1}(1-a)^{c-b-1}da
\end{equation}
is the confluent hypergeometric function.

As a consequence, taking $s=0$ in \eqref{9.30} leads to 
\begin{equation}\label{9.31}
I_v(t)=\frac{(k_v-1)}{4\pi e^{2\pi}}\cdot \frac{(-i)^{\frac{k_v}{2}}(4\pi )^{k_v}|t|_v^{-\frac{k_v}{2}}}{\Gamma(k_v)}
\cdot \frac{\Gamma(k_v/2)^2}{\Gamma(k_v)}\cdot  {}_{1}F_{1}(k_v/2; k_v; -2\pi i t^{-1}).
\end{equation}

\subsubsection{The global integral}
By \eqref{eq9.20}, \eqref{9.21}, \eqref{9.28}, and \eqref{9.31}, in conjunction with the fact that $\prod_{v<\infty}I_{v}(t,s)\equiv 0$ unless $t\in \mathfrak{q}\mathfrak{n}^{-1}-\{0\}$, we conclude the following consequence.
\begin{prop}\label{prop9.5}
Let notation be as before. 
\begin{itemize}
\item The function $I_{\mathrm{reg}}(f_{\mathfrak{n},\mathfrak{q}},s)$ converges absolutely in the strip  
\begin{align*}
-2^{-1}\min_{v\mid\infty}\{k_v\}<\Re(s)<2^{-1}\min_{v\mid\infty}\{k_v\}.
\end{align*}
\item The function $I_{\mathrm{reg}}(f_{\mathfrak{n},\mathfrak{q}},s)$ admits a meromorphic continuation to $s\in \mathbb{C}$, given explicitly by 
\begin{align*}
I_{\mathrm{reg}}(f_{\mathfrak{n},\mathfrak{q}},s)=\sum_{t\in \mathfrak{q}\mathfrak{n}^{-1}-\{0\}}I_{\infty}(t,s)I_{\mathrm{fin}}(t,s),
\end{align*}
where $I_{\infty}(t,s)$ is defined by 
\begin{align*}
\prod_{v\mid\infty}\frac{(k_v-1)}{4\pi e^{2\pi}}\cdot \frac{(-i)^{\frac{k_v}{2}}(4\pi )^{k_v}|t|_v^{-\frac{k_v}{2}}}{\Gamma(k_v)}
\cdot \frac{\Gamma(k_v/2)^2}{\Gamma(k_v)}\cdot  {}_{1}F_{1}(k_v/2; k_v; -2\pi i t^{-1}),
\end{align*}
and $I_{\mathrm{fin}}(t,s)$ is defined by 
\begin{align*}
&\frac{V_{\mathfrak{q}}}{N(\mathfrak{n})^{\frac{1}{2}+s}}\prod_{v\nmid \mathfrak{n}\mathfrak{q}}q_v^{-2d_v} \bigg[q_v^{-d_vs}\textbf{1}_{e_v(t)\geq d_v}+\overline{\psi}_v(t^{-1})q_v^{d_v(1+s)}|t|_v^{2s}\textbf{1}_{e_v(t)\geq d_v+1}\bigg]\\
&\prod_{v\mid\mathfrak{q}}\overline{\psi}_v(t^{-1})|t|_v^{2s} \textbf{1}_{e_v(t)\geq 1}\prod_{v\mid\mathfrak{n}}\bigg[\overline{\psi}_v(t^{-1})|t|_v^{2s}\textbf{1}_{e_v(t)>-e_v(\mathfrak{n})}+q_v^{2e_v(\mathfrak{n})s}\textbf{1}_{e_v(t)\geq -e_v(\mathfrak{n})}\bigg].
\end{align*}
\item Moreover, at $s=0$, we have
\begin{equation}\label{9.32}
I_{\mathrm{reg}}(f_{\mathfrak{n},\mathfrak{q}},0)=\sum_{t\in \mathfrak{q}\mathfrak{n}^{-1}-\{0\}}I_{\infty}(t)I_{\mathrm{fin}}(t),
\end{equation}
where 
\begin{equation}\label{9.33}
I_{\infty}(t):=\prod_{v\mid\infty}\frac{(k_v-1)}{4\pi e^{2\pi}i^{\frac{k_v}{2}}}\cdot \frac{(4\pi )^{k_v}\Gamma(k_v/2)^2}{|t|_v^{\frac{k_v}{2}}\Gamma(k_v)^2}
\cdot {}_{1}F_{1}(k_v/2; k_v; -2\pi i t^{-1}),
\end{equation}
and 
\begin{align}
I_{\mathrm{fin}}(t):=&\frac{V_{\mathfrak{q}}}{N(\mathfrak{n})^{\frac{1}{2}}D_F^2}\prod_{v\nmid \mathfrak{n}\mathfrak{q}}\bigg[\textbf{1}_{e_v(t)\geq d_v}+\overline{\psi}_v(t^{-1})q_v^{d_v}\textbf{1}_{e_v(t)\geq d_v+1}\bigg]\nonumber \\
&\quad \prod_{v\mid\mathfrak{q}}\overline{\psi}_v(t^{-1})\textbf{1}_{e_v(t)\geq 1}\prod_{v\mid\mathfrak{n}}\bigg[\overline{\psi}_v(t^{-1})\textbf{1}_{e_v(t)>-e_v(\mathfrak{n})}+\textbf{1}_{e_v(t)\geq -e_v(\mathfrak{n})}\bigg].\label{9.34}
\end{align}
\end{itemize}
\end{prop}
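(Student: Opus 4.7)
The proof proposal is to assemble the local computations carried out in \textsection\ref{sec9.5}.1--\textsection\ref{sec9.5}.4 into a global statement. Starting from the factorization $I_{\mathrm{reg}}(f_{\mathfrak{n},\mathfrak{q}},s)=\sum_{t\in F^{\times}}\prod_v I_v(t,s)$, I would substitute the explicit formulas \eqref{eq9.20}, \eqref{9.21}, \eqref{9.28}, and \eqref{9.30} at the four types of places ($v\nmid\mathfrak{n}\mathfrak{q}$ finite, $v\mid\mathfrak{q}$, $v\mid\mathfrak{n}$, and $v\mid\infty$ respectively), and use \eqref{eq9.29} to extend the formula \eqref{9.30} obtained under the assumption $t_v>0$ to all $t_v\in F_v^{\times}$. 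The finite-place support conditions read $e_v(t)\ge d_v$ for $v\nmid\mathfrak{n}\mathfrak{q}$, $e_v(t)\ge 1$ for $v\mid\mathfrak{q}$, and $e_v(t)\ge -e_v(\mathfrak{n})$ for $v\mid\mathfrak{n}$, which together with the fact that $d_v=0$ at unramified places cut the summation down to $t\in\mathfrak{q}\mathfrak{n}^{-1}-\{0\}$; this is precisely the index set appearing in the claimed formula.

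Once the explicit factorization is in hand, the meromorphic continuation assertion is automatic: the finite-place factor $I_{\mathrm{fin}}(t,s)$ is a finite product of polynomials in $q_v^{\pm s}$ times $|t|_v^{2s}$, hence entire in $s$, while each Archimedean factor in $I_\infty(t,s)$ is $|t|_v^{s-k_v/2}$ times the entire function $\Gamma(k_v/2+s)\,{}_1F_1(k_v/2+s;k_v;-2\pi i t^{-1})/\Gamma(k_v)$. Thus each summand is entire in $s$, and specializing to $s=0$ yields the formulas \eqref{9.32}--\eqref{9.34} after collecting the constants (note in particular that $\prod_{v\nmid\mathfrak n\mathfrak q}q_v^{-2d_v}=D_F^{-2}$ modulo the convention that $d_v=0$ for $v\mid\mathfrak n\mathfrak q$).

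The one genuine analytic point is absolute convergence of the sum over $t\in\mathfrak{q}\mathfrak{n}^{-1}-\{0\}$ in the claimed strip. For this, I would use Kummer's transformation ${}_1F_1(a;c;z)=e^z\,{}_1F_1(c-a;c;-z)$ at each $v\mid\infty$ to rewrite the oscillatory factor $e^{-2\pi it^{-1}}\,{}_1F_1(k_v/2+s;k_v;2\pi it^{-1})$ as $\,{}_1F_1(k_v/2-s;k_v;-2\pi it^{-1})$, which is bounded uniformly as $|t|_v\to\infty$ and grows like $|t|_v^{(k_v-1)/2-\Re(s)}$ as $|t|_v\to 0$ (see the asymptotics via the Bessel representation in \cite{MOS66}). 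Combining with the prefactor $|t|_v^{\Re(s)-k_v/2}$, one gets $|I_\infty(t,s)|\ll\prod_{v\mid\infty}\min\{|t|_v^{\Re(s)-k_v/2},|t|_v^{-1/2}\}$, while the finite part satisfies $|I_{\mathrm{fin}}(t,s)|\ll N(t)^{2\Re(s)}\cdot N(\mathfrak{n})^{\varepsilon}$. Summing a lattice-point geometric series over $\mathfrak{q}\mathfrak{n}^{-1}-\{0\}$ by a standard zeta-integral argument then yields convergence provided $k_v/2-\Re(s)>1$ and $k_v/2+\Re(s)>1$ for every $v\mid\infty$, which is the claimed strip up to the endpoints $\pm\tfrac12\min_v\{k_v\}$.

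The main obstacle is the convergence argument: a naive triangle inequality on the integral representation \eqref{eq9.31} only yields the weaker bound $|{}_1F_1(k_v/2+s;k_v;2\pi it^{-1})|\ll e^{2\pi|t|_v^{-1}}$, which destroys summability when $|t|_v$ is small. The key is to exploit cancellation via Kummer's transformation (or equivalently, the Bessel-function identity used in \cite{KL10}) so that the exponential growth is absorbed into the factor $e^{-2\pi it^{-1}}$ already present in $I_\infty(t,s)$; this is also what permits the contour-shift computation that brought us from the $a_v$-integral to \eqref{9.30} in the first place. Once cancellation is properly accounted for, the rest of the argument is bookkeeping.
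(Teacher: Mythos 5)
Your assembly of the local computations \eqref{eq9.20}, \eqref{9.21}, \eqref{9.28}, and \eqref{9.30} (extended to $t_v<0$ via \eqref{eq9.29}), together with the observation that the finite-place support conditions cut the sum to $t\in\mathfrak{q}\mathfrak{n}^{-1}-\{0\}$, is exactly how the paper obtains the explicit formula; the paper states the proposition as an immediate consequence of those local lemmas and gives no further argument, in particular no proof of the convergence claim. So on the substance of the proposition you and the paper coincide, and your convergence discussion is an addition rather than a replacement.

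Two points in that added discussion need correction. First, your diagnosis of the difficulty is off: since the argument $-2\pi i t^{-1}$ of the confluent hypergeometric function is purely imaginary, the Euler integral \eqref{eq9.31} has a unimodular exponential and the triangle inequality already gives $|{}_{1}F_{1}(k_v/2+s;k_v;2\pi i t^{-1})|\ll 1$ uniformly in $t$ (this is precisely the bound $\leq 1$ invoked in Remark \ref{rmk9.8}); there is no exponential growth to cancel. The genuine obstruction at small $|t|_v$ is the prefactor $|t|_v^{\Re(s)-k_v/2}$, and your remedy — the large-argument asymptotics of ${}_1F_1$ via Kummer's transformation or the Bessel representation, as in Lemma \ref{lem9.8} — is indeed what tames it, so the mechanism of your argument survives the misdiagnosis. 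Second, the conditions you actually derive, $k_v/2\pm\Re(s)>1$ for all $v\mid\infty$, give absolute convergence only in the strip $|\Re(s)|<\tfrac12\min_v\{k_v\}-1$, which is a full unit narrower than the strip asserted in the proposition, not merely "up to the endpoints." Since $k_v\geq 4$, this smaller strip still contains a neighborhood of $s=0$ and is all that the rest of the paper uses, but as written your argument does not establish the stated range. Finally, note that summing $\prod_{v\mid\infty}\min\{|t|_v^{\Re(s)-k_v/2},|t|_v^{-1/2}\}$ over the fractional ideal requires the unit-counting argument of Lemma \ref{lem6.11} (individual $|t|_v$ can be small while $N(t)$ is bounded below), so that step should be made explicit rather than dismissed as a "standard zeta-integral argument."
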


\subsection{The $\lambda_{\pi}(\mathfrak{n})$-weighted First Moment}\label{sec9.6}
Combining Propositions \ref{prop9.1}, \ref{prop9.2}, \ref{prop9.3}, and \ref{prop9.5}, we derive the following calculation of the first moment. 
\begin{thmx}\label{thmD}
Let notation be as before. Let $\mathbf{k}=(k_v)_{v\mid\infty}\in \mathbb{Z}_{>2}^{d_F}$, where $k_v$ is even, $v\mid\infty$. Let $\mathfrak{q}$ be either $\mathcal{O}_F$ or a prime ideal. Let $\mathfrak{n}\subseteq \mathcal{O}_F$ be an integral ideal with $(\mathfrak{n},\mathfrak{q})=1$. Let $\chi_{\mathfrak{q}}$ be the nontrivial unramified quadratic character of $F_{\mathfrak{q}}^{\times}$ if $\mathfrak{q}\subsetneq\mathcal{O}_F$. Let $V_{\mathfrak{q}}$ be defined as in \eqref{1.1}, and $\delta_{\mathbf{k},\mathfrak{q}}=\textbf{1}_{\mathfrak{q}\subsetneq \mathcal{O}_F}+\textbf{1}_{\mathfrak{q}=\mathcal{O}_F  \& \sum_{v\mid\infty}k_v\equiv 0\pmod{4}}$. Then 
\begin{align*}
&\frac{1}{\zeta_{\mathfrak{q}}(2)^2}\sum_{\substack{\pi\in \mathcal{F}(\mathbf{k},\mathfrak{q})}}\frac{\lambda_{\pi}(\mathfrak{n})L(1/2,\pi)}{L^{(\mathfrak{q})}(1,\pi,\Ad)}+\frac{V_{\mathfrak{q}}\textbf{1}_{\mathfrak{q}\subsetneq \mathcal{O}_F}}{N(\mathfrak{q})}\sum_{\substack{\pi\in \mathcal{F}(\mathbf{k},\mathcal{O}_F)}}\frac{\lambda_{\pi}(\mathfrak{n})L_{\pi_{\mathfrak{q}}}L(1/2,\pi)}{L(1,\pi,\Ad)}\\
=&\frac{2\delta_{\mathbf{k},\mathfrak{q}}(N(\mathfrak{q})+1)\cdot D_F^{\frac{3}{2}}}{N(\mathfrak{n})^{1/2}}\prod_{v\mid\infty}\frac{(k_v-1)}{4\pi ^2}+\mathcal{E},
\end{align*}
where we define $\zeta_{\mathfrak{q}}(2)=1$ and $L^{(\mathfrak{q})}(1,\pi,\Ad)=L(1,\pi,\Ad)$ if $\mathfrak{q}=\mathcal{O}_F$,
$L_{\pi_{\mathfrak{q}}}:=L_{\mathfrak{q}}(1/2,\pi_{\mathfrak{q}}\times\chi_{\mathfrak{q}})$, $I_{\mathrm{fin}}(t)$ is defined by \eqref{9.34}, and the term $\mathcal{E}$ is defined by 
\begin{align*}
\mathcal{E}:=D_F^{\frac{3}{2}}\sum_{t\in \mathfrak{q}\mathfrak{n}^{-1}-\{0\}}I_{\mathrm{fin}}(t)\prod_{v\mid\infty}\frac{(k_v-1)}{4\pi^2i^{\frac{k_v}{2}}}\cdot \frac{(2\pi )^{\frac{k_v}{2}}\Gamma(k_v/2)}{|t|_v^{\frac{k_v}{2}}\Gamma(k_v)}
\cdot {}_{1}F_{1}(k_v/2; k_v; -2\pi i t^{-1}),
\end{align*}
with ${}_{1}F_{1}(k_v/2; k_v; 2\pi i t^{-1})$ being the confluent hypergeometric function (cf. \eqref{eq9.31}).
\end{thmx}

Theorem \ref{thmD} generalizes \cite[Theorem 1.1]{KL10} in the case of Hilbert modular forms, though without the character twist. Obtaining a good bound for the tail $\mathcal{E}$ is crucial. We will employ a different strategy than that of \textit{loc. cit.}, as discussed in Remark \ref{rmk9.8} below. The key result is the following estimate. 
\begin{lemma}\label{lem9.8}
Let notation be as before. Suppose $k_v>2$ is even  for all $k_v\mid\infty$. Let $\varepsilon>0$. Then 
\begin{equation}\label{equa9.36}
\mathcal{E}\ll \|\mathbf{k}\|^{\varepsilon}N(\mathfrak{q})^{\varepsilon}N(\mathfrak{n})^{\frac{1}{2}+\varepsilon},
\end{equation}	
where the implied constant depends on $F$ and $\varepsilon$. 
\end{lemma}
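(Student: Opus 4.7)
The plan is to convert the Kummer confluent hypergeometric function to a Bessel function, extract tight uniform bounds via a Hankel-type integral representation, and then count lattice points in $\mathfrak{q}\mathfrak{n}^{-1}-\{0\}$ in the spirit of Lemma \ref{lem6.11}.

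First I would invoke the classical identity (DLMF 13.6.9)
\begin{equation*}
{}_{1}F_{1}\bigl(\nu+\tfrac{1}{2};\, 2\nu+1;\, 2iz\bigr) = \Gamma(\nu+1)\,e^{iz}(z/2)^{-\nu}\,J_\nu(z)
\end{equation*}
with $\nu = (k_v-1)/2$ and $z = -\pi/t_v$, combined with the Legendre duplication $\Gamma(k_v/2)\Gamma((k_v+1)/2) = \sqrt{\pi}\,2^{1-k_v}\Gamma(k_v)$. A direct calculation then reduces the archimedean factor at each place $v\mid\infty$ to
\begin{equation*}
|H_v(t)| = \frac{(k_v-1)\sqrt{2}}{4\pi}\cdot |t_v|^{-1/2}\cdot\bigl|J_{(k_v-1)/2}\bigl(\pi/|t_v|\bigr)\bigr|.
\end{equation*}
Next I would stratify the analysis according to the ratio of the Bessel argument $\pi/|t_v|$ to the index $\nu=(k_v-1)/2$. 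In the \emph{large-argument} regime $|t_v|\ll 1/k_v$, the asymptotic $|J_\nu(x)|\ll x^{-1/2}$ gives $|H_v(t)|\ll k_v$. In the \emph{small-argument} regime $|t_v|\gg 1/k_v$, the Hankel integral $J_\nu(x) = \frac{(x/2)^\nu}{\sqrt{\pi}\,\Gamma(\nu+1/2)}\int_{-1}^1\cos(xu)(1-u^2)^{\nu-1/2}\,du$ together with Stirling yields $|H_v(t)| \ll k_v^{3/2}\bigl(\pi e/(k_v|t_v|)\bigr)^{k_v/2}$, which decays super-polynomially in $|t_v|$. In the \emph{transition region} $|t_v|\asymp 1/k_v$, a stationary-phase analysis of the Hankel integral (the technique alluded to in \textsection\ref{sec1.2.2}) is needed to obtain $|H_v(t)|\ll k_v^{1+\varepsilon}$ without the otherwise-naive $k_v^{7/6}$ loss from Landau's bound. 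Taken together, these estimates yield $\prod_{v\mid\infty}|H_v(t)| \ll \|\mathbf{k}\|^\varepsilon$ uniformly, with the product effectively supported on the bounded range $\max_v|t_v|\ll 1$.

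For the finite-place factor, expanding the product in \eqref{9.34} shows that $|I_{\mathrm{fin}}(t)| \ll V_{\mathfrak{q}}\,D_F^{-2}N(\mathfrak{n})^{-1/2}\cdot \tau_F((t)\mathfrak{n}\mathfrak{q}^{-1})$, and the standard divisor bound $\tau_F(\mathfrak{b})\ll_\varepsilon N(\mathfrak{b})^\varepsilon$ then gives $|I_{\mathrm{fin}}(t)|\ll_\varepsilon V_{\mathfrak{q}}N(\mathfrak{n})^{-1/2}N(\mathfrak{n})^\varepsilon N(\mathfrak{q})^\varepsilon N(t)^\varepsilon$. Finally, a Dirichlet-unit-theorem count, exactly parallel to Lemma \ref{lem6.11}, shows that the number of $t\in\mathfrak{q}\mathfrak{n}^{-1}-\{0\}$ with $\max_v|t_v|\leq R$ is $\ll_F R^{d_F}N(\mathfrak{n})/N(\mathfrak{q})$ up to logarithmic factors. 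Combining everything and using the archimedean decay to truncate at $R=O(1)$:
\begin{equation*}
\mathcal{E}\ll V_{\mathfrak{q}}\,N(\mathfrak{n})^{-1/2}\cdot\frac{N(\mathfrak{n})}{N(\mathfrak{q})}\cdot N(\mathfrak{n})^\varepsilon N(\mathfrak{q})^\varepsilon \|\mathbf{k}\|^\varepsilon \ll \|\mathbf{k}\|^\varepsilon N(\mathfrak{q})^\varepsilon N(\mathfrak{n})^{1/2+\varepsilon},
\end{equation*}
using $V_{\mathfrak{q}}\ll N(\mathfrak{q})$, as claimed.

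The main obstacle is the uniformity in $\mathbf{k}$ in the second step. A crude application of Landau's bound $|J_\nu(\nu)|\ll\nu^{-1/3}$ at the transition $|t_v|\asymp 1/k_v$ produces $|H_v(t)|\ll k_v^{7/6}$ and hence a $\|\mathbf{k}\|^{7/6}$ loss overall, which nothing in the arithmetic counting can offset. The quality $\|\mathbf{k}\|^\varepsilon$ required by the lemma therefore forces the finer Hankel-transform/stationary-phase analysis, which exploits oscillation in the integrand rather than taking absolute values inside, producing the correct uniform estimate in the transition range.
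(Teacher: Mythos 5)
Your reduction of the archimedean factor to $|H_v(t)| \asymp k_v\,|t_v|^{-1/2}\,|J_{(k_v-1)/2}(\pi/|t_v|)|$ is correct, and the route (direct Kummer--Bessel identity plus a regime analysis in argument versus order) is genuinely different from the paper's, which instead writes ${}_{1}F_{1}(k_v/2;k_v;-2\pi i t^{-1})$ as a Laplace-type integral of $J_{k_v-1}(2\sqrt{-2\pi i t^{-1}u})$ against $e^{-u}u^{-1/2}$ and then applies a Mellin--Barnes/Stirling bound $J_{k_v-1}(z)\ll |z|^{1+2\varepsilon}k_v^{-1+2\varepsilon}$. The crucial difference is that the paper's route produces a \emph{pointwise} bound $|H_v(t)|\ll k_v^{2\varepsilon}|t_v|^{-1-\varepsilon}$ at each archimedean place, which is both $\|\mathbf{k}\|^{\varepsilon}$-quality and carries polynomial decay in each coordinate $|t_v|$, so the sum over $t$ can be closed directly by the unit-counting argument of Proposition \ref{prop6.12}.

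The gap in your version is the sentence ``Taken together, these estimates yield $\prod_{v\mid\infty}|H_v(t)|\ll\|\mathbf{k}\|^{\varepsilon}$ uniformly.'' This does not follow from the three bounds you state: in the regime $|t_v|\ll 1/k_v$ your own estimate is $|H_v(t)|\ll k_v$ (the $|t_v|^{-1/2}$ exactly cancels against $J_\nu(x)\ll x^{-1/2}$, leaving no decay in $|t_v|$), and in the transition region it is $k_v^{1+\varepsilon}$; so the worst case of your pointwise bounds is $\prod_v|H_v(t)|\ll\|\mathbf{k}\|^{1+\varepsilon}$, attained when every $|t_v|\lesssim 1/k_v$. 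Your final display then silently drops this factor: truncating at $R=O(1)$ and multiplying the count $\ll R^{d_F}N(\mathfrak{n})/N(\mathfrak{q})$ by your actual archimedean bound gives $\mathcal{E}\ll\|\mathbf{k}\|^{1+\varepsilon}N(\mathfrak{q})^{\varepsilon}N(\mathfrak{n})^{1/2+\varepsilon}$, off by a full power of $\|\mathbf{k}\|$. The missing power can only be recovered from the \emph{measure} of the region where $|H_v|\asymp k_v$, namely $|t_v|\lesssim 1/k_v$: one must truncate at $R_v\asymp 1/k_v$ rather than $R=O(1)$, and then count lattice points of $\mathfrak{q}\mathfrak{n}^{-1}$ in boxes of volume $\asymp 1/\|\mathbf{k}\|$ subject to the norm constraint $N(t)\geq N(\mathfrak{q})/N(\mathfrak{n})$ (so that such $t$ exist only when $N(\mathfrak{n})\gg N(\mathfrak{q})\|\mathbf{k}\|$, and a Lemma \ref{lem6.11}-type unit count gives $\ll (N(\mathfrak{n})/(N(\mathfrak{q})\|\mathbf{k}\|))^{1+o(1)}$ of them). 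This balancing does close the argument, but it is an essential step, not a formality, and as written your proof asserts a false intermediate inequality in place of it.
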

\begin{proof}
Taking advantage of \cite[\textsection 6.5, p.276]{MOS66}, along with a meromorphic continuation, we obtain 
\begin{align*}
{}_{1}F_{1}(k_v/2; k_v; -2\pi i t^{-1})=\frac{\Gamma(k_v)(-2\pi i t^{-1})^{\frac{1-k_v}{2}}}{\Gamma(k_v/2)e^{2\pi i t^{-1}}}\int_0^{\infty}\frac{J_{k_v-1}(2\sqrt{-2\pi i t^{-1}u})}{e^{u}u^{\frac{1}{2}}}du,
\end{align*}
where we fix a branch for the square-root $\sqrt{-2\pi i t^{-1}u}$. Consequently,  
\begin{align*}
\frac{(2\pi )^{\frac{k_v}{2}}\Gamma(k_v/2){}_{1}F_{1}(k_v/2; k_v; -2\pi i t^{-1})}{i^{\frac{k_v}{2}}|t|_v^{\frac{k_v}{2}}\Gamma(k_v)}
=\frac{(-2\pi i t^{-1})^{\frac{1}{2}}}{e^{2\pi i t^{-1}}}\int_0^{\infty}\frac{J_{k_v-1}(2\sqrt{-2\pi i t^{-1}u})}{e^{u}u^{\frac{1}{2}}}du.
\end{align*}

Therefore, $\mathcal{E}$ boils down to  
\begin{equation}\label{equ9.36}
D_F^{\frac{3}{2}}\sum_{t\in \mathfrak{q}\mathfrak{n}^{-1}-\{0\}}I_{\mathrm{fin}}(t)\prod_{v\mid\infty}\frac{(k_v-1)(-2\pi i t^{-1})^{\frac{1}{2}}}{4\pi^2e^{2\pi i t^{-1}}}\int_0^{\infty}\frac{J_{k_v-1}(2\sqrt{-2\pi i t^{-1}u})}{e^{u}u^{\frac{1}{2}}}du.
\end{equation}

Let $0<\varepsilon<10^{-3}$. Making use of the Mellin-Barnes integral representation
\begin{align*}
J_{k_v-1}(z)=\frac{1}{2\pi i}\int_{(-2-2\varepsilon)}2^sz^{-s-1}\Gamma\left(\frac{k_v+s}{2}\right)\Gamma\left(\frac{k_v-s}{2}\right)^{-1}ds,
\end{align*}
in conjunction with the Stirling formula, we obtain  
\begin{align*}
J_{k_v-1}(z)\ll |z|^{1+2\varepsilon}_vk_v^{-1+2\varepsilon},
\end{align*}
where the implied constant depends only on $\varepsilon$. In particular, 
\begin{equation}\label{9.36}
J_{k_v-1}(2\sqrt{-2\pi i t^{-1}u})\ll |t|_v^{-\frac{1}{2}-\varepsilon}u^{\frac{1}{2}+\varepsilon}k_v^{-1+2\varepsilon}.
\end{equation}

Together with the triangle inequality, we deduce from \eqref{9.36} that 
\begin{align*}
\int_0^{\infty}\frac{J_{k_v-1}(2\sqrt{-2\pi i t^{-1}u})}{e^{u}u^{\frac{1}{2}}}du\ll |t|_v^{-\frac{1}{2}-\varepsilon}k_v^{-1+2\varepsilon}\int_0^{\infty}e^{-u}u^{\varepsilon}du\ll |t|_v^{-\frac{1}{2}-\varepsilon}k_v^{-1+2\varepsilon}.
\end{align*}

Substituting this into \eqref{equ9.36} yields 
\begin{equation}\label{9.38}
|\mathcal{E}|\ll \|\mathbf{k}\|^{\varepsilon}\sum_{t\in \mathfrak{q}\mathfrak{n}^{-1}-\{0\}}N(t)^{-1-\varepsilon}\cdot |I_{\mathrm{fin}}(t)|. 
\end{equation}

According to the definition of $I_{\mathrm{fin}}(t)$ in \eqref{9.34}, we have
\begin{equation}\label{9.39}
|I_{\mathrm{fin}}(t)|\ll \frac{V_{\mathfrak{q}}}{N(\mathfrak{n})^{\frac{1}{2}}}\prod_{v\nmid \mathfrak{n}\mathfrak{q}}\textbf{1}_{e_v(t)\geq d_v}\prod_{v\mid\mathfrak{n}}2\cdot \textbf{1}_{e_v(t)\geq -e_v(\mathfrak{n})}\ll 2^{\omega(\mathfrak{n})}V_{\mathfrak{q}}N(\mathfrak{n})^{-1/2},
\end{equation}
where the implied constant depends on $F$. Here $\omega(\mathfrak{n})$ refers to the number of prime factors of $\mathfrak{n}$ in its primary decomposition.

Therefore, \eqref{equa9.36} follows from \eqref{9.38} and \eqref{9.39}, using the arguments in the proof of Proposition \ref{prop6.12} (cf. \textsection\ref{sec7.3.3}).
\end{proof}

\begin{remark}\label{rmk9.8}
Applying the bound $|{}_{1}F_{1}(k_v/2; k_v; 2\pi i t^{-1})|\leq 1$ as in \cite{KL10} yields 
\begin{equation}\label{eq9.36}
|\mathcal{E}|\leq C\cdot \sum_{t\in \mathfrak{q}\mathfrak{n}^{-1}-\{0\}}\prod_{v\mid\infty}|t|_v^{-\frac{k_v}{2}}\prod_{v\nmid \mathfrak{n}\mathfrak{q}}\bigg[\textbf{1}_{e_v(t)\geq d_v}+q_v^{d_v}\textbf{1}_{e_v(t)\geq d_v+1}\bigg],
\end{equation}
where 
\begin{align*}
C:=\frac{2^{\omega(\mathfrak{n})}V_{\mathfrak{q}}}{N(\mathfrak{n})^{\frac{1}{2}}D_F^{\frac{1}{2}}}\prod_{v\mid\infty}\frac{(k_v-1)(2\pi )^{\frac{k_v}{2}}\Gamma(k_v/2)}{4\pi^2\Gamma(k_v)}.
\end{align*} 

However, handling the term $\prod_{v \mid \infty} |t|_v^{-\frac{k_v}{2}}$ on the right-hand side of \eqref{eq9.36} becomes challenging in the non-parallel weight case, i.e., when the $k_v$'s are not all equal.
\end{remark}

Combining Theorem \ref{thmD} and Lemma \ref{lem9.8}, we conclude the following. 
\begin{cor}\label{cor9.9}
Let notation be as in Theorem \ref{thmD}. Let $0<\varepsilon<10^{-2}$.  Then 
\begin{align*}
&\frac{1}{\zeta_{\mathfrak{q}}(2)^2}\sum_{\substack{\pi\in \mathcal{F}(\mathbf{k},\mathfrak{q})}}\frac{\lambda_{\pi}(\mathfrak{n})L(1/2,\pi)}{L^{(\mathfrak{q})}(1,\pi,\Ad)}+\frac{V_{\mathfrak{q}}\textbf{1}_{\mathfrak{q}\subsetneq \mathcal{O}_F}}{N(\mathfrak{q})}\sum_{\substack{\pi\in \mathcal{F}(\mathbf{k},\mathcal{O}_F)}}\frac{\lambda_{\pi}(\mathfrak{n})L_{\pi_{\mathfrak{q}}}L(1/2,\pi)}{L(1,\pi,\Ad)}\\
=&\frac{2\delta_{\mathbf{k},\mathfrak{q}}(N(\mathfrak{q})+1)\cdot D_F^{\frac{3}{2}}}{N(\mathfrak{n})^{1/2}}\prod_{v\mid\infty}\frac{(k_v-1)}{4\pi ^2}+O(\|\mathbf{k}\|^{\varepsilon}N(\mathfrak{q})^{\varepsilon}N(\mathfrak{n})^{\frac{1}{2}+\varepsilon}),
\end{align*}
where the implied constant depends on $F$ and $\varepsilon$. 
\end{cor}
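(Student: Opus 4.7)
The proof of Corollary \ref{cor9.9} is essentially a direct combination of Theorem \ref{thmD} and Lemma \ref{lem9.8}: Theorem \ref{thmD} gives the exact identity expressing the weighted first moment (plus the old-form contribution) as the anticipated main term
\[
\frac{2\delta_{\mathbf{k},\mathfrak{q}}(N(\mathfrak{q})+1)\cdot D_F^{\frac{3}{2}}}{N(\mathfrak{n})^{1/2}}\prod_{v\mid\infty}\frac{(k_v-1)}{4\pi ^2}
\]
plus the regular orbital error $\mathcal{E}$, and Lemma \ref{lem9.8} shows $\mathcal{E}\ll \|\mathbf{k}\|^{\varepsilon}N(\mathfrak{q})^{\varepsilon}N(\mathfrak{n})^{1/2+\varepsilon}$. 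The plan is simply to invoke these two results in sequence and absorb the resulting $O(\cdot)$-term into the error on the right-hand side of the corollary.

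More concretely, I would first write down the identity supplied by Theorem \ref{thmD}, keeping the definitions of $V_{\mathfrak{q}}$, $\delta_{\mathbf{k},\mathfrak{q}}$, $L_{\pi_{\mathfrak{q}}}$, and the conventions $\zeta_{\mathfrak{q}}(2)=1$, $L^{(\mathfrak{q})}(1,\pi,\Ad)=L(1,\pi,\Ad)$ in the case $\mathfrak{q}=\mathcal{O}_F$ exactly as stated. Then I would substitute the bound from Lemma \ref{lem9.8} for the error term $\mathcal{E}$, which immediately yields the desired asymptotic formula with implied constant depending only on $F$ and $\varepsilon$.

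The substantive content, of course, is entirely contained in the earlier steps: the precise geometric/spectral identity in Theorem \ref{thmD} (which required the local computations of Propositions \ref{prop9.2}, \ref{prop9.3} and \ref{prop9.5}) and the sharp estimate for $\mathcal{E}$ in Lemma \ref{lem9.8}. There is no additional obstacle at the level of Corollary \ref{cor9.9}; the only point worth noting in the write-up is that the new technique bounding the Kummer confluent hypergeometric function ${}_1F_1(k_v/2;k_v;-2\pi i t^{-1})$ via its Hankel-type integral representation (used in the proof of Lemma \ref{lem9.8}) is what makes the $N(\mathfrak{n})^{1/2+\varepsilon}$-dependence uniform in $\mathbf{k}$, and this is precisely what allows Corollary \ref{cor9.9} to be stated without any restriction on the relative size of $\|\mathbf{k}\|$ and $N(\mathfrak{q})$. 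Hence the corollary follows in a single line of algebra once Theorem \ref{thmD} and Lemma \ref{lem9.8} are in hand.
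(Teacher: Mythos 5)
Your proposal is correct and matches the paper's argument exactly: the corollary is stated in the paper as an immediate consequence of Theorem~\ref{thmD} and Lemma~\ref{lem9.8}, obtained by substituting the bound $\mathcal{E}\ll \|\mathbf{k}\|^{\varepsilon}N(\mathfrak{q})^{\varepsilon}N(\mathfrak{n})^{1/2+\varepsilon}$ into the identity of Theorem~\ref{thmD}. No further comment is needed.
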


\subsection{Contribution From Old Forms}\label{sec9.7}
Let $\mathfrak{q}$ be a prime ideal. Let $\mathfrak{n}\subseteq \mathcal{O}_F$ be an integral ideal with $(\mathfrak{n},\mathfrak{q})=1$. Parallel to \eqref{y7.4} we define 
\begin{equation}\label{9.42}
I_{\mathrm{Spec}}^{\mathrm{old}}(\mathfrak{n}):=\frac{V_{\mathfrak{q}}}{N(\mathfrak{q})}\sum_{\substack{\pi\in \mathcal{F}(\mathbf{k},\mathcal{O}_F)}}\frac{\lambda_{\pi}(\mathfrak{n})L_{\pi_{\mathfrak{q}}}L(1/2,\pi)^2}{L(1,\pi,\Ad)},
\end{equation}
where  $L_{\pi_{\mathfrak{q}}}:=L_{\mathfrak{q}}(1/2,\pi_{\mathfrak{q}}\times\chi_{\mathfrak{q}})$. Here $\chi_{\mathfrak{q}}$ is the nontrivial unramified quadratic character of $F_{\mathfrak{q}}^{\times}$.
\begin{lemma}\label{lem9.10}
Let notation be as before. Let $\mathfrak{q}$ be a prime ideal. Let $I_{\mathrm{Spec}}^{\mathrm{old}}(\mathfrak{n})$ be defined as in \eqref{9.42}. Let $0<\varepsilon<10^{-3}$. Then 
\begin{equation}\label{eq9.43}
I_{\mathrm{Spec}}^{\mathrm{old}}(\mathfrak{n})=\frac{4\delta_{\mathbf{k}}\cdot D_F^{\frac{3}{2}}}{N(\mathfrak{n})^{1/2}}\prod_{v\mid\infty}\frac{(k_v-1)}{4\pi ^2}+O(N(\mathfrak{n})^{1/2+\varepsilon}N(\mathfrak{q})^{\varepsilon}\|\mathbf{k}\|^{\varepsilon}),
\end{equation}
where $\delta_{\mathbf{k}}:=\textbf{1}_{\sum_{v\mid\infty}k_v\equiv 0\pmod{4}}$, and the implied constant depends only on $\varepsilon$ and $F$. 
\end{lemma}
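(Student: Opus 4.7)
The plan is to mimic the structure of the proof of Lemma \ref{lem7.3} in \textsection\ref{sec8.2}, replacing the second moment asymptotic (Corollary \ref{cor7.2}) with its first moment counterpart (Corollary \ref{cor9.9}). The starting point is to expand the unramified local $L$-factor as a Dirichlet series: since $\chi_{\mathfrak{q}}(\varpi_{\mathfrak{q}})=-1$, we have
\begin{equation*}
L_{\pi_{\mathfrak{q}}}=L_{\mathfrak{q}}(1/2,\pi_{\mathfrak{q}}\times\chi_{\mathfrak{q}})=\sum_{m\geq 0}\frac{(-1)^m\lambda_{\pi}(\mathfrak{q}^m)}{N(\mathfrak{q})^{m/2}}.
\end{equation*}
Using the Hecke relation $\lambda_{\pi}(\mathfrak{n})\lambda_{\pi}(\mathfrak{q}^m)=\lambda_{\pi}(\mathfrak{n}\mathfrak{q}^m)$ (valid since $(\mathfrak{n},\mathfrak{q})=1$) and interchanging the two summations, I would write
\begin{equation*}
I_{\mathrm{Spec}}^{\mathrm{old}}(\mathfrak{n})=\frac{V_{\mathfrak{q}}}{N(\mathfrak{q})}\sum_{m\geq 0}\frac{(-1)^m}{N(\mathfrak{q})^{m/2}}\sum_{\pi\in\mathcal{F}(\mathbf{k},\mathcal{O}_F)}\frac{\lambda_{\pi}(\mathfrak{n}\mathfrak{q}^m)L(1/2,\pi)}{L(1,\pi,\Ad)}.
\end{equation*}
(Here the definition in \eqref{9.42} should be read with a single $L(1/2,\pi)$, matching $C_{\pi_{\mathfrak{q}}}(0)=(1+N(\mathfrak{q})^{-1})L_{\pi_{\mathfrak{q}}}$ from Proposition \ref{prop9.1}.)

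Next I would cut the $m$-sum at a parameter $m_0$ to be chosen later. For the head $0\leq m\leq m_0$, Corollary \ref{cor9.9} applied with level $\mathcal{O}_F$ and ideal $\mathfrak{n}\mathfrak{q}^m$ yields
\begin{equation*}
\sum_{\pi\in\mathcal{F}(\mathbf{k},\mathcal{O}_F)}\frac{\lambda_{\pi}(\mathfrak{n}\mathfrak{q}^m)L(1/2,\pi)}{L(1,\pi,\Ad)}=\frac{4\delta_{\mathbf{k}}\,D_F^{3/2}}{N(\mathfrak{n}\mathfrak{q}^m)^{1/2}}\prod_{v\mid\infty}\frac{k_v-1}{4\pi^2}+O\!\bigl(N(\mathfrak{n}\mathfrak{q}^m)^{1/2+\varepsilon}\|\mathbf{k}\|^{\varepsilon}\bigr).
\end{equation*}
Assembling the geometric series on the main term,
\begin{equation*}
\frac{V_{\mathfrak{q}}}{N(\mathfrak{q})}\sum_{m=0}^{m_0}\frac{(-1)^m}{N(\mathfrak{q})^{m}}=\frac{V_{\mathfrak{q}}}{N(\mathfrak{q})}\cdot\frac{1}{1+N(\mathfrak{q})^{-1}}+O\!\bigl(m_0 N(\mathfrak{q})^{-m_0}\bigr)=1+O\!\bigl(m_0 N(\mathfrak{q})^{-m_0}\bigr),
\end{equation*}
since $V_{\mathfrak{q}}=N(\mathfrak{q})+1$. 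This is precisely the factor needed to produce the claimed main term $\frac{4\delta_{\mathbf{k}}D_F^{3/2}}{N(\mathfrak{n})^{1/2}}\prod_{v\mid\infty}\frac{k_v-1}{4\pi^2}$. The $O$-term contribution from the head is bounded by $\ll m_0 N(\mathfrak{n})^{1/2+\varepsilon}N(\mathfrak{q})^{m_0\varepsilon}\|\mathbf{k}\|^{\varepsilon}$.

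For the tail $m>m_0$, I would use the Ramanujan-type bound $|\lambda_{\pi}(\mathfrak{n}\mathfrak{q}^m)|\ll N(\mathfrak{n}\mathfrak{q}^m)^{\vartheta}$ with $\vartheta\leq 7/64$, together with the first-moment bound (obtained by applying Corollary \ref{cor9.9} with $\mathfrak{n}=\mathfrak{q}=\mathcal{O}_F$)
\begin{equation*}
\sum_{\pi\in\mathcal{F}(\mathbf{k},\mathcal{O}_F)}\frac{L(1/2,\pi)}{L(1,\pi,\Ad)}\ll \|\mathbf{k}\|^{1+\varepsilon},
\end{equation*}
giving a contribution $\ll N(\mathfrak{n})^{\vartheta}\|\mathbf{k}\|^{1+\varepsilon}/N(\mathfrak{q})^{(1/2-\vartheta)m_0}$. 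Finally I would balance the head error, the geometric-series remainder, and this tail by taking $N(\mathfrak{q})^{m_0}=(\|\mathbf{k}\|N(\mathfrak{q}))^{100}$, exactly as in the proof of Lemma \ref{lem7.3}, to absorb everything into $O(N(\mathfrak{n})^{1/2+\varepsilon}N(\mathfrak{q})^{\varepsilon}\|\mathbf{k}\|^{\varepsilon})$. The argument is essentially routine modulo this bookkeeping; the only delicate point is ensuring that the $L(1,\pi,\Ad)$ in the denominator (rather than $L^{(\mathfrak{q})}(1,\pi,\Ad)$) correctly cancels with $V_{\mathfrak{q}}N(\mathfrak{q})^{-1}/(1+N(\mathfrak{q})^{-1})=1$, which is what produces the clean coefficient $4\delta_{\mathbf{k}}$ independent of $N(\mathfrak{q})$.
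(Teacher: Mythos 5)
Your proposal matches the paper's proof essentially line for line: the paper also expands $L_{\pi_{\mathfrak{q}}}=\sum_{m\geq 0}(-1)^m\lambda_{\pi}(\mathfrak{q}^m)N(\mathfrak{q})^{-m/2}$, folds $\lambda_\pi(\mathfrak{n})\lambda_\pi(\mathfrak{q}^m)=\lambda_\pi(\mathfrak{n}\mathfrak{q}^m)$, truncates at $m_0$, applies Corollary~\ref{cor9.9} at level $\mathcal{O}_F$ to each term of the head, telescopes the resulting geometric series against $V_{\mathfrak{q}}/N(\mathfrak{q})$ to produce the coefficient $1+O(N(\mathfrak{q})^{-m_0+1})$, bounds the tail via $|\lambda_\pi(\mathfrak{n}\mathfrak{q}^m)|\ll N(\mathfrak{n}\mathfrak{q}^m)^{\vartheta}$ together with the unmollified first-moment bound, and finally chooses $N(\mathfrak{q})^{m_0}=(\|\mathbf{k}\|N(\mathfrak{q}))^{100}$. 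You also correctly noted the typo in \eqref{9.42} (the exponent on $L(1/2,\pi)$ should be $1$, not $2$).
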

\begin{proof}
Let $m_0>1$ be a sufficient large integer. Hence, 
\begin{equation}\label{9.43}
I_{\mathrm{Spec}}^{\mathrm{old}}(\mathfrak{n})=I_{\mathrm{Spec}}(\mathfrak{n},m_0)^++I_{\mathrm{Spec}}(\mathfrak{n},m_0)^-,
\end{equation}
where 
\begin{align*}
I_{\mathrm{Spec}}(\mathfrak{n},m_0)^+:=&\frac{V_{\mathfrak{q}}}{N(\mathfrak{q})}\sum_{m>m_0}\frac{(-1)^m}{N(\mathfrak{q})^{m/2}}\sum_{\substack{\pi\in \mathcal{F}(\mathbf{k},\mathcal{O}_F)}}\frac{\lambda_{\pi}(\mathfrak{n}\mathfrak{q}^m)L(1/2,\pi)}{L(1,\pi,\Ad)},\\
I_{\mathrm{Spec}}(\mathfrak{n},m_0)^-:=&\frac{V_{\mathfrak{q}}}{N(\mathfrak{q})}\sum_{0\leq m\leq m_0}\frac{(-1)^m}{N(\mathfrak{q})^{m/2}}\sum_{\substack{\pi\in \mathcal{F}(\mathbf{k},\mathcal{O}_F)}}\frac{\lambda_{\pi}(\mathfrak{n}\mathfrak{q}^m)L(1/2,\pi)}{L(1,\pi,\Ad)}.
\end{align*}

By Corollary \ref{cor9.9} with $\mathfrak{q}=\mathcal{O}_F$, and $\mathfrak{n}$ replaced by $\mathfrak{n}\mathfrak{q}^m$, we obtain 
\begin{equation}\label{9.44}
\sum_{\substack{\pi\in \mathcal{F}(\mathbf{k},\mathcal{O}_F)}}\frac{\lambda_{\pi}(\mathfrak{n}\mathfrak{q}^m)L(1/2,\pi)}{L(1,\pi,\Ad)}
=\frac{4\delta_{\mathbf{k},\mathfrak{q}}\cdot D_F^{\frac{3}{2}}}{N(\mathfrak{n}\mathfrak{q}^m)^{1/2}}\prod_{v\mid\infty}\frac{(k_v-1)}{4\pi ^2}+E(m),
\end{equation}
where $\delta_{\mathbf{k},\mathfrak{q}}:=\textbf{1}_{\mathfrak{q}\subsetneq \mathcal{O}_F}+\textbf{1}_{\mathfrak{q}=\mathcal{O}_F  \& \sum_{v\mid\infty}k_v\equiv 0\pmod{4}}$, and 
\begin{equation}\label{9.45}
E(m)\ll \|\mathbf{k}\|^{\varepsilon}N(\mathfrak{q})^{\varepsilon}N(\mathfrak{n}\mathfrak{q}^m)^{\frac{1}{2}+\varepsilon},	
\end{equation}
with the implied constant depending only on $F$ and $\varepsilon$. 

By \eqref{9.44} and \eqref{9.45}, we obtain 
\begin{equation}\label{9.46}
I_{\mathrm{Spec}}(\mathfrak{n},m_0)^-=\mathcal{M}(m_0)+O(m_0N(\mathfrak{n})^{1/2+\varepsilon}N(\mathfrak{q})^{m_0\varepsilon}\|\mathbf{k}\|^{-1+\varepsilon}),
\end{equation}
where the implied constant depends only on $\varepsilon$ and $F$, and 
\begin{align*}
\mathcal{M}(m_0):=\frac{V_{\mathfrak{q}}}{N(\mathfrak{q})}\sum_{0\leq m\leq m_0}\frac{(-1)^m}{N(\mathfrak{q})^{m}}\cdot \frac{4\delta_{\mathbf{k},\mathfrak{q}}\cdot D_F^{\frac{3}{2}}}{N(\mathfrak{n})^{1/2}}\prod_{v\mid\infty}\frac{(k_v-1)}{4\pi ^2}.
\end{align*}

Computing the inner sum over $m$ we deduce 
\begin{equation}\label{9.47}
\mathcal{M}(m_0)=\frac{4\delta_{\mathbf{k},\mathfrak{q}}\cdot D_F^{\frac{3}{2}}}{N(\mathfrak{n})^{1/2}}\prod_{v\mid\infty}\frac{(k_v-1)}{4\pi ^2}\cdot (1+O(N(\mathfrak{q})^{-m_0+1})).
\end{equation}

On the other hand, by $|\lambda_{\pi}(\mathfrak{n}\mathfrak{q}^m)|\ll N(\mathfrak{n}\mathfrak{q}^m)^{\vartheta}$, $0\leq \vartheta\leq 7/64$, we derive 
\begin{equation}\label{9.48}
I_{\mathrm{Spec}}(\mathfrak{n},m_0)^+\ll \frac{N(\mathfrak{n})^{\vartheta}}{N(\mathfrak{q})^{(1/2-\vartheta)m_0}}\sum_{\substack{\pi\in \mathcal{F}(\mathbf{k},\mathcal{O}_F)}}\frac{L(1/2,\pi)}{L(1,\pi,\Ad)}.
\end{equation}

Making use of Corollary \ref{cor9.9} with $\mathfrak{q}=\mathfrak{n}=\mathcal{O}_F$,  we obtain 
\begin{align*}
\sum_{\substack{\pi\in \mathcal{F}(\mathbf{k},\mathcal{O}_F)}}\frac{L(1/2,\pi)}{L(1,\pi,\Ad)}\ll N(\mathfrak{n})^{1/2}\|\mathbf{k}\|^{1+\varepsilon}+\|\mathbf{k}\|^{\varepsilon}N(\mathfrak{q})^{\varepsilon}N(\mathfrak{n})^{\frac{1}{2}+\varepsilon}.
\end{align*}
Substituting this into \eqref{9.48} yields
\begin{equation}\label{9.49}
J_{\mathrm{Spec}}(\mathfrak{n},m_0)^+\ll \frac{N(\mathfrak{n})^{1/2}\|\mathbf{k}\|^{1+\varepsilon}+\|\mathbf{k}\|^{\varepsilon}N(\mathfrak{q})^{\varepsilon}N(\mathfrak{n})^{\frac{1}{2}+\varepsilon}}{N(\mathfrak{q})^{(1/2-\vartheta)m_0}},
\end{equation}
where the implied constant depends only on $F$. 

Take $m_0=100(1+(\log N(\mathfrak{q}))^{-1}\log \|\mathbf{k}\|)$, i.e., $N(\mathfrak{q})^{m_0}=(\|\mathbf{k}\|N(\mathfrak{q}))^{100}$. Combining \eqref{9.43}, \eqref{9.46}, \eqref{9.47}, and \eqref{9.49}, we conclude \eqref{eq9.43}. 
\end{proof}

\subsection{The Mollified First Moment: New Forms}\label{sec9.8}
Let $\rho$ be a multiplicative arithmetic function defined as in \textsection\ref{sec8.3}. Suppose $\rho(\mathfrak{p})\ll 1$ for all prime ideals $\mathfrak{p}$, with the implied constant being absolute. For $\xi>1$, we denote by $M_{\xi,\rho}(\pi)$ the mollifier defined as in \textsection\ref{sec1.1.6}, i.e., 
\begin{align*}
M_{\xi,\rho}(\pi)= \frac{1}{\log \xi} \sum_{\substack{\mathfrak{n} \subseteq \mathcal{O}_F \\ (\mathfrak{n},\mathfrak{q}) = 1}} \frac{\lambda_\pi(\mathfrak{n})\mu_F(\mathfrak{n})\rho(\mathfrak{n})}{\sqrt{N(\mathfrak{n})}}\cdot  \frac{1}{2\pi i}\int_{(2)} \frac{\xi^s}{N(\mathfrak{n})^s} \frac{ds}{s^3}.\tag{\ref{M}}
\end{align*}

\begin{defn}\label{defn9.11}
Let notation be as before. Define the mollified first moment by 
\begin{align*}
I_{\mathrm{Spec}}^{\heartsuit,\mathrm{new}}(\xi,\rho):=&\frac{1}{\zeta_{\mathfrak{q}}(2)^2}\sum_{\substack{\pi\in \mathcal{F}(\mathbf{k},\mathfrak{q})}}\frac{L(1/2,\pi)M_{\xi,\rho}(\pi)}{L^{(\mathfrak{q})}(1,\pi,\Ad)},\\
I_{\mathrm{Spec}}^{\heartsuit,\mathrm{old}}(\xi,\rho):=&\frac{V_{\mathfrak{q}}\textbf{1}_{\mathfrak{q}\subsetneq \mathcal{O}_F}}{N(\mathfrak{q})}\sum_{\substack{\pi\in \mathcal{F}(\mathbf{k},\mathcal{O}_F)}}\frac{L_{\pi_{\mathfrak{q}}}L(1/2,\pi)M_{\xi,\rho}(\pi)}{L(1,\pi,\Ad)}.
\end{align*}
\end{defn}

\begin{lemma}\label{lem9.12}
Let notation be as before. Let $L_{\mathfrak{q}}(1,\rho)\equiv 1$ if $\mathfrak{q}=\mathcal{O}_F$, and $L_{\mathfrak{q}}(1,\rho)=(1-\rho(\mathfrak{q})N(\mathfrak{q})^{-1})^{-1}$ if $\mathfrak{q}\subsetneq \mathcal{O}_F$.  
\begin{itemize}
\item We have the following asymptotic expansion:
\begin{align*}
I_{\mathrm{Spec}}^{\heartsuit,\mathrm{old}}(\xi,\rho)=&\frac{4\delta_{\mathbf{k}}\cdot D_F^{\frac{3}{2}}\cdot \textbf{1}_{\mathfrak{q}\subsetneq \mathcal{O}_F}}{\log \xi}\prod_{v\mid\infty}\frac{(k_v-1)}{4\pi ^2}\cdot\bigg[\frac{L_{\mathfrak{q}}(1,\rho)\cdot \log \xi}{R(\rho)\cdot \Res_{s=1}\zeta_F(s)}+O(1)\bigg]\\
&+O(N(\mathfrak{q})^{\varepsilon}\|\mathbf{k}\|^{\varepsilon}\xi^{1+2\varepsilon}\textbf{1}_{\mathfrak{q}\subsetneq \mathcal{O}_F}),
\end{align*}
where the implied constants depend only on $F$ and $\varepsilon$. 

\item Suppose $\rho(\mathfrak{n}):=\textbf{1}_{\mathfrak{n}=\mathcal{O}_F}+\textbf{1}_{\mathfrak{n}\subsetneq \mathcal{O}_F}\prod_{\substack{\mathfrak{p}\mid\mathfrak{n}\\ \text{$\mathfrak{p}$ prime}}}(1+N(\mathfrak{p})^{-1})^{-1}$ as defined in  \eqref{rho}. Then 
\begin{align*}
I_{\mathrm{Spec}}^{\heartsuit,\mathrm{old}}(\xi,\rho)=&\frac{4\zeta_F(2)\delta_{\mathbf{k}}\cdot L_{\mathfrak{q}}(1,\rho)\cdot D_F^{\frac{3}{2}}\cdot \textbf{1}_{\mathfrak{q}\subsetneq \mathcal{O}_F}}{\Res_{s=1}\zeta_F(s)}\prod_{v\mid\infty}\frac{(k_v-1)}{4\pi ^2}\cdot (1+O((\log\xi)^{-1}))\\
&+O(N(\mathfrak{q})^{\varepsilon}\|\mathbf{k}\|^{\varepsilon}\xi^{1+2\varepsilon}\textbf{1}_{\mathfrak{q}\subsetneq \mathcal{O}_F}),
\end{align*}
where the implied constants depend only on $F$ and $\varepsilon$. 
\end{itemize}
\end{lemma}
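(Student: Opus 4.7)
The plan is to unfold the mollifier in the definition of $I_{\mathrm{Spec}}^{\heartsuit,\mathrm{old}}(\xi,\rho)$, interchange the sums over $\pi$ and $\mathfrak{n}$ (justified by absolute convergence after shifting the $s^{-3}$ contour, which makes the $\mathfrak{n}$-sum essentially finite), and rewrite
\begin{align*}
I_{\mathrm{Spec}}^{\heartsuit,\mathrm{old}}(\xi,\rho)=\frac{1}{\log\xi}\sum_{\substack{\mathfrak{n}\subseteq\mathcal{O}_F\\(\mathfrak{n},\mathfrak{q})=1}}\frac{\mu_F(\mathfrak{n})\rho(\mathfrak{n})}{\sqrt{N(\mathfrak{n})}}\cdot\frac{1}{2\pi i}\int_{(2)}\frac{\xi^s}{N(\mathfrak{n})^s}\frac{ds}{s^3}\cdot I_{\mathrm{Spec}}^{\mathrm{old}}(\mathfrak{n}),
\end{align*}
where $I_{\mathrm{Spec}}^{\mathrm{old}}(\mathfrak{n})$ is the quantity of \eqref{9.42}. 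Substituting the asymptotic expansion from Lemma \ref{lem9.10} splits the right side into a main contribution, coming from the leading term $4\delta_{\mathbf{k}}D_F^{3/2}N(\mathfrak{n})^{-1/2}\prod_{v\mid\infty}(k_v-1)/(4\pi^2)$, and an error contribution from the tail $O(N(\mathfrak{n})^{1/2+\varepsilon}N(\mathfrak{q})^{\varepsilon}\|\mathbf{k}\|^{\varepsilon})$, which I treat separately.

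The error contribution is handled by replacing the $s^{-3}$ integral via \eqref{c7.8} with $\tfrac{1}{2}(\log\xi/N(\mathfrak{n}))^2\textbf{1}_{N(\mathfrak{n})\leq\xi}$, bounding the sum by
\begin{align*}
\frac{N(\mathfrak{q})^{\varepsilon}\|\mathbf{k}\|^{\varepsilon}}{\log\xi}\sum_{N(\mathfrak{n})\leq\xi}N(\mathfrak{n})^{\varepsilon}(\log\xi/N(\mathfrak{n}))^2\ll N(\mathfrak{q})^{\varepsilon}\|\mathbf{k}\|^{\varepsilon}\xi^{1+2\varepsilon},
\end{align*}
which matches the error term in the lemma. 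For the main contribution, swapping the $\mathfrak{n}$-sum with the $s$-integral (justified on $\Re(s)=2$) yields the Dirichlet series
\begin{align*}
\sum_{(\mathfrak{n},\mathfrak{q})=1}\frac{\mu_F(\mathfrak{n})\rho(\mathfrak{n})}{N(\mathfrak{n})^{1+s}}=\prod_{\mathfrak{p}\nmid\mathfrak{q}}\bigl(1-\rho(\mathfrak{p})N(\mathfrak{p})^{-1-s}\bigr)=\frac{L_{\mathfrak{q}}(1+s,\rho)}{L(1+s,\rho)},
\end{align*}
so the main term reduces to a single contour integral of $\xi^s L_{\mathfrak{q}}(1+s,\rho)/(s^3 L(1+s,\rho))$ on $\Re(s)=2$. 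By \eqref{eq7.28} and assumption \eqref{7.25}, the ratio $L(1+s,\rho)/\zeta_F(1+s)$ is holomorphic and nonvanishing in $\Re(s)>-\varepsilon$, so $L(1+s,\rho)$ inherits the simple pole of $\zeta_F(1+s)$ at $s=0$ with residue $R(\rho)\cdot\Res_{s=1}\zeta_F(s)$ (cf. \eqref{7.28}). Consequently $L(1+s,\rho)^{-1}$ has a simple zero at $s=0$, collapsing the triple pole from $s^{-3}$ to an effective double pole; shifting the contour to $\Re(s)=-\delta$ for some small $\delta>0$ and computing the residue by a direct Taylor expansion gives $L_{\mathfrak{q}}(1,\rho)\log\xi/(R(\rho)\Res_{s=1}\zeta_F(s))+O(1)$, while the residual contour integral is absorbed into $O(1)$ via polynomial-growth bounds on $L(1+s,\rho)^{-1}$ in vertical strips. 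This yields the first bullet of the lemma.

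The second bullet follows by specializing $\rho$ to the function in \eqref{rho} and evaluating $R(\rho)$. A pointwise local check using $1-\rho(\mathfrak{p})=1/(N(\mathfrak{p})+1)$ and $1-\rho(\mathfrak{p})N(\mathfrak{p})^{-1}=N(\mathfrak{p})/(N(\mathfrak{p})+1)$ gives
\begin{align*}
\frac{(1-\rho(\mathfrak{p}))N(\mathfrak{p})^{-1}}{1-\rho(\mathfrak{p})N(\mathfrak{p})^{-1}}=\frac{1}{N(\mathfrak{p})^2},
\end{align*}
whence $R(\rho)=\prod_{\mathfrak{p}}(1-N(\mathfrak{p})^{-2})=\zeta_F(2)^{-1}$ by \eqref{a7.18}; substituting this into the first bullet collapses the constant to $4\zeta_F(2)\delta_{\mathbf{k}}L_{\mathfrak{q}}(1,\rho)D_F^{3/2}/\Res_{s=1}\zeta_F(s)$ as stated. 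The principal technical obstacle is justifying the contour shift uniformly in $\mathfrak{q}$ and $\mathbf{k}$: one needs polynomial-growth bounds on $L(1+s,\rho)^{-1}$ on $\Re(s)=-\delta$ that are independent of $\mathfrak{q}$, which should follow from the absolute convergence of $L(1+s,\rho)/\zeta_F(1+s)$ in that region under \eqref{7.25} combined with standard convexity bounds for $\zeta_F(1+s)^{-1}$ in the zero-free region of $\zeta_F$.
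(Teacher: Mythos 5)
Your argument is correct and follows essentially the same route as the paper: substitute the asymptotic from Lemma \ref{lem9.10} into the unfolded mollifier, bound the error contribution directly via \eqref{c7.8}, express the main-term $\mathfrak{n}$-sum as $L_{\mathfrak{q}}(1+s,\rho)L(1+s,\rho)^{-1}=L_{\mathfrak{q}}(1+s,\rho)(\zeta_F(1+s)R(s,\rho))^{-1}$, and extract the residue at $s=0$. Your closing worry about uniform growth bounds for $L(1+s,\rho)^{-1}$ on the shifted line is slightly overcautious: the contour is moved only to $\Re(s)=-\varepsilon/10$, where $R(s,\rho)^{-1}$ is bounded by the absolute convergence from \eqref{eq7.28}, the $\mathfrak{q}$-dependence is confined to the bounded Euler factor $L_{\mathfrak{q}}(1+s,\rho)$, and the $|s|^{-3}$ decay makes the shifted integral trivially $O(1)$ — this is implicit in the paper and does not require zero-free-region input.
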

\begin{proof}
Let $0<\varepsilon<10^{-3}$.  Define 
\begin{align*}
I_{\mathrm{Spec},1}^{\heartsuit,\mathrm{old}}(\xi,\rho):=&\frac{4\delta_{\mathbf{k}}\cdot D_F^{\frac{3}{2}}\cdot \textbf{1}_{\mathfrak{q}\subsetneq \mathcal{O}_F}}{\log \xi}\prod_{v\mid\infty}\frac{(k_v-1)}{4\pi ^2}\cdot \frac{1}{2\pi i}\int_{(2)}\xi^s \sum_{\substack{\mathfrak{n} \subseteq \mathcal{O}_F \\ (\mathfrak{n},\mathfrak{q}) = 1}} \frac{\mu_F(\mathfrak{n})\rho(\mathfrak{n})}{N(\mathfrak{n})^{1+s}}\frac{ds}{s^3},\\
I_{\mathrm{Spec},2}^{\heartsuit,\mathrm{old}}(\xi,\rho):=&\frac{N(\mathfrak{q})^{\varepsilon}\|\mathbf{k}\|^{\varepsilon}\textbf{1}_{\mathfrak{q}\subsetneq \mathcal{O}_F}}{\log \xi}\sum_{\substack{\mathfrak{n} \subseteq \mathcal{O}_F \\ (\mathfrak{n},\mathfrak{q}) = 1\\ N(\mathfrak{q})\leq \xi}} \frac{|\mu_F(\mathfrak{n})\rho(\mathfrak{n})|}{\sqrt{N(\mathfrak{n})}}\cdot  \frac{(\log \xi N(\mathfrak{n}_1)^{-1})^2}{2}\cdot N(\mathfrak{n})^{1/2+\varepsilon}.
\end{align*}

By \eqref{c7.8} and Lemma \ref{lem9.10}, we have
\begin{equation}\label{9.52}
\big|I_{\mathrm{Spec}}^{\heartsuit,\mathrm{old}}(\xi,\rho)-I_{\mathrm{Spec},1}^{\heartsuit,\mathrm{old}}(\xi,\rho)\big|\ll I_{\mathrm{Spec},2}^{\heartsuit,\mathrm{old}}(\xi,\rho),
\end{equation}
where the implied constant depends only on $F$ and $\varepsilon$. Since $|\rho(\mathfrak{n})|\ll 1$, then 
\begin{equation}\label{eq9.52}
I_{\mathrm{Spec},2}^{\heartsuit,\mathrm{old}}(\xi,\rho)\ll \frac{(N(\mathfrak{q})\|\mathbf{k}\|)^{\varepsilon}\textbf{1}_{\mathfrak{q}\subsetneq \mathcal{O}_F}}{(\log \xi)^{-1}}\sum_{\substack{\mathfrak{n} \subseteq \mathcal{O}_F \\ (\mathfrak{n},\mathfrak{q}) = 1\\ N(\mathfrak{q})\leq \xi}} N(\mathfrak{n})^{\varepsilon}\ll (N(\mathfrak{q})\|\mathbf{k}\|)^{\varepsilon}\xi^{1+2\varepsilon}\textbf{1}_{\mathfrak{q}\subsetneq \mathcal{O}_F},
\end{equation}
where the implied constant depends only on $F$ and $\varepsilon$. 

Let $L(s,\rho)$ be defined as in \eqref{eq7.1}. For $\Re(s)>0$, we have 
\begin{equation}\label{9.54}
\sum_{\substack{\mathfrak{n} \subseteq \mathcal{O}_F \\ (\mathfrak{n},\mathfrak{q}) = 1}} \frac{\mu_F(\mathfrak{n})\rho(\mathfrak{n})}{N(\mathfrak{n})^{1+s}}=(1-\rho(\mathfrak{q})N(\mathfrak{q})^{-1-s})^{-1}L(1+s,\rho)^{-1}.
\end{equation}

Recall that (cf. \eqref{eq7.28}) $R(s,\rho):=L(1+s,\rho)/\zeta_F(1+s)$ converges absolutely in $\Re(s)>-\varepsilon$. Therefore, by \eqref{9.54},  
\begin{align*}
\frac{1}{2\pi i}\int_{(2)}\xi^s \sum_{\substack{\mathfrak{n} \subseteq \mathcal{O}_F \\ (\mathfrak{n},\mathfrak{q}) = 1}} \frac{\mu_F(\mathfrak{n})\rho(\mathfrak{n})}{N(\mathfrak{n})^{1+s}}\frac{ds}{s^3}=\frac{1}{2\pi i}\int_{(2)} \frac{(1-\rho(\mathfrak{q})N(\mathfrak{q})^{-1-s})^{-1}\cdot\xi^s}{ s\zeta_F(1+s)R(s,\rho)}\frac{ds}{s^2},
\end{align*}
which, by Cauchy formula, is equal to 
\begin{align*}
\frac{d}{ds}\frac{(1-\rho(\mathfrak{q})N(\mathfrak{q})^{-1-s})^{-1}\cdot\xi^s}{ s\zeta_F(1+s)R(s,\rho)}\Big|_{s=0}+\frac{1}{2\pi i}\int_{(-\varepsilon/10)} \frac{(1-\rho(\mathfrak{q})N(\mathfrak{q})^{-1-s})^{-1}\cdot\xi^s}{ s\zeta_F(1+s)R(s,\rho)}\frac{ds}{s^2}.
\end{align*}

Combining the above calculations we derive that 
\begin{equation}\label{9.55}
\frac{1}{2\pi i}\int_{(2)}\xi^s \sum_{\substack{\mathfrak{n} \subseteq \mathcal{O}_F \\ (\mathfrak{n},\mathfrak{q}) = 1}} \frac{\mu_F(\mathfrak{n})\rho(\mathfrak{n})}{N(\mathfrak{n})^{1+s}}\frac{ds}{s^3}=\frac{(1-\rho(\mathfrak{q})N(\mathfrak{q})^{-1})^{-1}\cdot \log \xi}{R(0,\rho)\cdot \Res_{s=1}\zeta_F(s)}+O(1),
\end{equation}
where the implied constant depends on $F$ and $\varepsilon$.

Then the first part of Lemma \ref{lem9.12} follows from \eqref{9.52}, \eqref{eq9.52}, and \eqref{9.55}. Moreover, a straightforward calculation leads to 
\begin{equation}\label{eq9.55}
R(0,\rho)=\lim_{s\to 0}\frac{L(1+s,\rho)}{\zeta_F(1+s)}=\prod_{\mathfrak{p}}\bigg[1-N(\mathfrak{p})^{-2}\bigg]=\zeta_F(2)^{-1}
\end{equation}
if $\rho(\mathfrak{q})=(1+N(\mathfrak{p})^{-1})^{-1}$. As a conseqeunce, the second part of Lemma \ref{lem9.12} follows from \eqref{eq9.55}. 
\end{proof}

\begin{thmx}\label{prop9.13}
Let notation be as before. Let $L_{\mathfrak{q}}(1,\rho)\equiv 1$ if $\mathfrak{q}=\mathcal{O}_F$, and $L_{\mathfrak{q}}(1,\rho)=(1-\rho(\mathfrak{q})N(\mathfrak{q})^{-1})^{-1}$ if $\mathfrak{q}\subsetneq \mathcal{O}_F$. Then 
\begin{itemize}
\item The spectral side $I_{\mathrm{Spec}}^{\heartsuit,\mathrm{new}}(\xi,\rho)$ is equal to 
\begin{align*}
&\frac{2\delta_{\mathbf{k},\mathfrak{q}}(N(\mathfrak{q})+1)-4\delta_{\mathbf{k}} \textbf{1}_{\mathfrak{q}\subsetneq \mathcal{O}_F}}{D_F^{-3/2}\log \xi}\prod_{v\mid\infty}\frac{(k_v-1)}{4\pi ^2}\bigg[\frac{L_{\mathfrak{q}}(1,\rho)\log \xi}{R(\rho)\cdot \Res_{s=1}\zeta_F(s)}+O(1)\bigg]\\
&+O(N(\mathfrak{q})^{\varepsilon}\|\mathbf{k}\|^{\varepsilon}\xi^{1+2\varepsilon}),
\end{align*}
where $\delta_{\mathbf{k},\mathfrak{q}}:=\textbf{1}_{\mathfrak{q}\subsetneq \mathcal{O}_F}+\textbf{1}_{\mathfrak{q}=\mathcal{O}_F  \& \sum_{v\mid\infty}k_v\equiv 0\pmod{4}}$, and the implied constants depend only on $F$ and $\varepsilon$. 

\item Let $\rho(\mathfrak{n}):=\textbf{1}_{\mathfrak{n}=\mathcal{O}_F}+\textbf{1}_{\mathfrak{n}\subsetneq \mathcal{O}_F}\prod_{\substack{\mathfrak{p}\mid\mathfrak{n}\\ \text{$\mathfrak{p}$ prime}}}(1+N(\mathfrak{p})^{-1})^{-1}$ be the arithmetic function defined in  \eqref{rho}. Then  
\begin{align*}
I_{\mathrm{Spec}}^{\heartsuit,\mathrm{new}}(\xi,\rho)=&\frac{(2\delta_{\mathbf{k},\mathfrak{q}}(N(\mathfrak{q})+1)-4\delta_{\mathbf{k}}\textbf{1}_{\mathfrak{q}\subsetneq \mathcal{O}_F})\zeta_F(2)\cdot L_{\mathfrak{q}}(1,\rho)}{D_F^{-3/2} \cdot  \Res_{s=1}\zeta_F(s)}\prod_{v\mid\infty}\frac{(k_v-1)}{4\pi ^2}\\
&\qquad  \qquad +O((\log\xi)^{-1}N(\mathfrak{q})\|\mathbf{k}\|)+O(N(\mathfrak{q})^{\varepsilon}\|\mathbf{k}\|^{\varepsilon}\xi^{1+2\varepsilon}),
\end{align*}
where the implied constants depend only on $F$ and $\varepsilon$. 
\end{itemize}
\end{thmx}
\begin{proof}
By Corollary \ref{cor9.9},
\begin{equation}\label{9.56}
|I_{\mathrm{Spec}}^{\heartsuit,\mathrm{new}}(\xi,\rho)+I_{\mathrm{Spec}}^{\heartsuit,\mathrm{old}}(\xi,\rho)-\mathcal{I}_1|\ll \mathcal{I}_2,
\end{equation}
where 
\begin{align*}
\mathcal{I}_1:=\frac{2\delta_{\mathbf{k},\mathfrak{q}}(N(\mathfrak{q})+1)\cdot D_F^{\frac{3}{2}}}{\log \xi}\prod_{v\mid\infty}\frac{(k_v-1)}{4\pi ^2}\cdot \frac{1}{2\pi i}\int_{(2)}\xi^s \sum_{\substack{\mathfrak{n} \subseteq \mathcal{O}_F \\ (\mathfrak{n},\mathfrak{q}) = 1}} \frac{\mu_F(\mathfrak{n})\rho(\mathfrak{n})}{N(\mathfrak{n})^{1+s}}\frac{ds}{s^3},
\end{align*}
and 
\begin{align*}
\mathcal{I}_2:=\frac{N(\mathfrak{q})^{\varepsilon}\|\mathbf{k}\|^{\varepsilon}}{\log \xi}\sum_{\substack{\mathfrak{n} \subseteq \mathcal{O}_F \\ (\mathfrak{n},\mathfrak{q}) = 1\\ N(\mathfrak{q})\leq \xi}} \frac{|\mu_F(\mathfrak{n})\rho(\mathfrak{n})|}{\sqrt{N(\mathfrak{n})}}\cdot  \frac{(\log \xi N(\mathfrak{n}_1)^{-1})^2}{2}\cdot N(\mathfrak{n})^{1/2+\varepsilon}.
\end{align*}

Similar to \eqref{eq9.52} we have
\begin{equation}\label{9.57}
\mathcal{I}_2\ll \frac{(N(\mathfrak{q})\|\mathbf{k}\|)^{\varepsilon}}{(\log \xi)^{-1}}\sum_{\substack{\mathfrak{n} \subseteq \mathcal{O}_F \\ (\mathfrak{n},\mathfrak{q}) = 1\\ N(\mathfrak{q})\leq \xi}} N(\mathfrak{n})^{\varepsilon}\ll (N(\mathfrak{q})\|\mathbf{k}\|)^{\varepsilon}\xi^{1+2\varepsilon},
\end{equation}
where the implied constant depends only on $F$ and $\varepsilon$. 

Similar to the calculation of $I_{\mathrm{Spec},1}^{\heartsuit,\mathrm{old}}(\xi,\rho)$ in Lemma \ref{lem9.12}, we have
\begin{align*}
\mathcal{I}_1=&\frac{2\delta_{\mathbf{k},\mathfrak{q}}(N(\mathfrak{q})+1)\cdot D_F^{\frac{3}{2}}}{\log \xi}\prod_{v\mid\infty}\frac{(k_v-1)}{4\pi ^2}\cdot\bigg[\frac{(1-\rho(\mathfrak{q})N(\mathfrak{q})^{-1})^{-1}\cdot \log \xi}{R(\rho)\cdot \Res_{s=1}\zeta_F(s)}+O(1)\bigg]\\
&+O(N(\mathfrak{q})^{\varepsilon}\|\mathbf{k}\|^{\varepsilon}\xi^{1+2\varepsilon}).
\end{align*}

Therefore, Theorem \ref{prop9.13} follows from Lemma \ref{lem9.12}, \eqref{9.56}, \eqref{9.57}, the above asymptotic behavior of $\mathcal{I}_1$. 
\end{proof}

\section{Uniform non-vanishing in Harmonic Average}\label{sec10}
\begin{lemma}\label{lem10.1}
Let notation be as before. Then 
\begin{equation}\label{10.1}
\sum_{\substack{\pi\in \mathcal{F}(\mathbf{k},\mathfrak{q})}}\frac{1}{L(1,\pi,\Ad)}\sim 2(V_{\mathfrak{q}}-\textbf{1}_{\mathfrak{q}\subsetneq \mathcal{O}_F})\zeta_{\mathfrak{q}}(2)\cdot D_F^{\frac{3}{2}}\cdot \prod_{v\mid\infty}\frac{(k_v-1)}{4\pi ^2}.
\end{equation}	
\end{lemma}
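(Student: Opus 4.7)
The plan is to establish (10.1) by extracting $\sum_\pi 1/L(1,\pi,\Ad)$ from a Petersson-style trace formula parallel to Theorem \ref{thm7.1} and Theorem \ref{thmD}, rather than via a period/Hecke-$L$ value pairing. Concretely, I would work with the integral
\begin{equation*}
J_{\mathrm{Pet}}(f_{\mathcal{O}_F,\mathfrak{q}}):=\iint_{(F\backslash\mathbb{A}_F)^2}\K\!\left(\begin{pmatrix}1&u\\&1\end{pmatrix}\!\begin{pmatrix}y\\&1\end{pmatrix},\ \begin{pmatrix}1&v\\&1\end{pmatrix}\!\begin{pmatrix}y\\&1\end{pmatrix}\right)\psi(u)\overline{\psi(v)}\,du\,dv,
\end{equation*}
where $y$ is the idele from \textsection\ref{sec9}. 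Substituting the spectral expansion \eqref{eq1.6} and using the Hecke eigenvalue relation (cf.\ \eqref{2.14}) $\pi(f_{\mathcal{O}_F,\mathfrak{q}})\phi=D_F^{-3/2}\phi$ on relevant vectors yields
\begin{equation*}
J_{\mathrm{Pet}}^{\mathrm{Spec}}=D_F^{-3/2}\sum_{\pi\in\Pi_\mathbf{k}(\mathfrak{q})}\sum_{\phi\in\mathfrak{B}_\pi}\frac{|W_\phi(\diag(y,1))|^2}{\langle\phi,\phi\rangle}.
\end{equation*}
The Rankin--Selberg identity \eqref{2.1} together with the local Whittaker computations already carried out in the proof of Lemma \ref{lem2.1} (explicitly giving $\langle W_v,W_v\rangle_v$ at $v\mid\infty$ and at $v\mid\mathfrak{q}$, and the normalization $W_v(\diag(\varpi_v^{-d_v},1))=1$) then reduces the new-vector contribution of each $\pi$ to an explicit constant multiple of $1/L(1,\pi,\Ad)$.

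Geometrically, a Bruhat decomposition analogous to \eqref{fc2.14} splits $J_{\mathrm{Pet}}$ into small-cell terms (indexed by $\{I_2,w\}$) and regular orbital integrals. The small-cell integrals factor into products of the local calculations performed in \textsection\ref{section4} (with the additive twist by $\psi(u)\overline{\psi(v)}$ replacing the multiplicative twist $|x|^{s_1}|y|^{s_2}$), producing the main term proportional to $V_\mathfrak{q}\zeta_\mathfrak{q}(2)D_F^{3/2}\prod_v(k_v-1)/(4\pi^2)$. The regular orbital integrals decompose as in Proposition \ref{prop9.5}, and their contribution is absorbed by the Archimedean decay of the confluent hypergeometric (equivalently Bessel) functions appearing there, exactly as in the proof of Lemma \ref{lem9.8}, producing an error $O(N(\mathfrak{q})^\varepsilon\|\mathbf{k}\|^\varepsilon)$ that is negligible against the main term of size $V_\mathfrak{q}\prod(k_v-1)$.

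Finally, the newform/oldform separation is handled exactly as in \textsection\ref{sec8.2} and \textsection\ref{sec9.7}: for prime $\mathfrak{q}$, invoking the Atkin--Lehner decomposition of Lemma \ref{lem2.2} and summing the geometric series in $N(\mathfrak{q})^{-m/2}$ that controls the $\mathfrak{q}^m$-Hecke contribution of oldforms, one obtains the subtraction responsible for $V_\mathfrak{q}-\mathbf{1}_{\mathfrak{q}\subsetneq\mathcal{O}_F}$ in \eqref{10.1}; in the $\mathfrak{q}=\mathcal{O}_F$ case there is no correction. Assembling these yields the asymptotic.

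The chief technical obstacle is the bookkeeping of local constants so that the main term assembles into the precise coefficient $2(V_\mathfrak{q}-\mathbf{1}_{\mathfrak{q}\subsetneq\mathcal{O}_F})\zeta_\mathfrak{q}(2)D_F^{3/2}\prod_v(k_v-1)/(4\pi^2)$: the Archimedean factor is sensitive to both the matrix coefficient normalization of $f_v$ in \eqref{t1.5} and to the Whittaker-norm computation \eqref{2.5}, while at $v=\mathfrak{q}$ one must combine $\Vol(K_0(\mathfrak{p}_v))^{-1}$ with the Atkin--Lehner correction of Lemma \ref{lem2.2} to produce the clean $-\mathbf{1}_{\mathfrak{q}\subsetneq\mathcal{O}_F}$.
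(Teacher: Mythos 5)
Your proposal is mathematically sound and reaches the correct asymptotic, but the route is considerably more elaborate than the paper's. The paper's proof is essentially a two-line argument: it invokes a Petersson (Kuznetsov) trace formula giving
\begin{equation*}
\frac{1}{\zeta_{\mathfrak{q}}(2)^2}\sum_{\pi\in \mathcal{F}(\mathbf{k},\mathfrak{q})}\frac{1}{L^{(\mathfrak{q})}(1,\pi,\Ad)}+\mathbf{1}_{\mathfrak{q}\subsetneq\mathcal{O}_F}\sum_{\pi\in \mathcal{F}(\mathbf{k},\mathcal{O}_F)}\frac{1}{L(1,\pi,\Ad)}\sim 2V_{\mathfrak{q}}D_F^{3/2}\prod_{v\mid\infty}\frac{(k_v-1)}{4\pi^2},
\end{equation*}
notes that the oldform sum at level $\mathcal{O}_F$ is the $\mathfrak{q}=\mathcal{O}_F$ specialization of the very same asymptotic, and subtracts. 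The newform asymptotic then falls out, and the passage from $L^{(\mathfrak{q})}$ to $L$ produces the $\zeta_{\mathfrak{q}}(2)$ via $L_v(1,\pi_v,\Ad)=\zeta_v(2)$ at $v\mid\mathfrak{q}$. Your proposal instead rebuilds the Petersson formula from scratch via the double $N\backslash N$ integral of the kernel, and handles the oldform removal through the Atkin--Lehner decomposition of Lemma \ref{lem2.2} and a geometric-series expansion in $N(\mathfrak{q})^{-m/2}$ as in \textsection\ref{sec8.2}, \textsection\ref{sec9.7}. That machinery is not actually needed here: the self-subtraction between levels $\mathfrak{q}$ and $\mathcal{O}_F$ sidesteps any expansion of $\alpha_{\pi_\mathfrak{q}}^2$ or of $L_\mathfrak{q}(1/2,\pi_\mathfrak{q}\times\chi_\mathfrak{q})$ entirely, which is both shorter and avoids the need to average out the Hecke-eigenvalue fluctuations on the oldform side. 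Your route is therefore correct but does substantially more work than the paper; it would be valuable as a from-first-principles derivation of the Petersson input that the paper cites as known, but as a proof of the lemma it recomputes what the self-subtraction trick gets for free. One minor inaccuracy: the $\zeta_\mathfrak{q}(2)$ factor in your asserted main term does not come out of the small-cell geometric integrals; it arises purely on the spectral side from the Rankin--Selberg normalization at $v=\mathfrak{q}$ when converting $L^{(\mathfrak{q})}(1,\pi,\Ad)$ into $L(1,\pi,\Ad)$.
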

\begin{proof}
Utilizing Petersson formula we have 
\begin{align*}
&\frac{1}{\zeta_{\mathfrak{q}}(2)^2}\sum_{\substack{\pi\in \mathcal{F}(\mathbf{k},\mathfrak{q})}}\frac{1}{L^{(\mathfrak{q})}(1,\pi,\Ad)}+\sum_{\substack{\pi\in \mathcal{F}(\mathbf{k},\mathcal{O}_F)}}\frac{\textbf{1}_{\mathfrak{q}\subsetneq \mathcal{O}_F}}{L(1,\pi,\Ad)}\sim 2V_{\mathfrak{q}}\cdot D_F^{\frac{3}{2}}\cdot \prod_{v\mid\infty}\frac{(k_v-1)}{4\pi ^2}.
\end{align*}

Taking $\mathfrak{q}=\mathcal{O}_F$ in the above formula yields 
\begin{align*}
\sum_{\substack{\pi\in \mathcal{F}(\mathbf{k},\mathcal{O}_F)}}\frac{1}{L(1,\pi,\Ad)}\sim 2 D_F^{\frac{3}{2}}\cdot \prod_{v\mid\infty}\frac{(k_v-1)}{4\pi ^2}.
\end{align*}

Therefore, \eqref{10.1} follows from the fact that $L_v(s,\pi_v,\Ad)=\zeta_v(s+1)$ at $v\mid\mathfrak{q}$. 
\end{proof}

\begin{thm}\label{thm10.2}
Let notation be as before. Let $0<\varepsilon<10^{-3}$, $A>(\log\xi)^{3/2+\varepsilon}$, and $(\log N(\mathfrak{q})\|\mathbf{k}\|)^{\varepsilon}\leq \xi\leq N(\mathfrak{q})^{1/2-\varepsilon}\|\mathbf{k}\|^{1/4-\varepsilon}$. Then 
\begin{align*}
\sum_{\substack{\pi\in \mathcal{F}(\mathbf{k},\mathfrak{q})\\
L(1/2,\pi)> A^{-1}}}\frac{1}{L(1,\pi,\Ad)}\geq (1-\varepsilon)\Big[\frac{\log\xi\cdot \textbf{1}_{\mathfrak{q}=\mathcal{O}_F}}{\log \xi\|\mathbf{k}\|}+\mathcal{R}_{\mathfrak{q},\mathbf{k}}(\xi)\Big]\cdot \sum_{\substack{\pi\in \mathcal{F}(\mathbf{k},\mathfrak{q})}}\frac{1}{L(1,\pi,\Ad)},
\end{align*}	
where the function $\mathcal{R}_{\mathfrak{q},\mathbf{k}}(\xi)$, for $\mathfrak{q}\subsetneq \mathcal{O}_F$, is defined by  
\begin{align*}
\frac{(1-N(\mathfrak{q})^{-2})^3\log\xi\cdot \textbf{1}_{\mathfrak{q}\subsetneq\mathcal{O}_F}}{(1+N(\mathfrak{q})^{-1})^4\log N(\mathfrak{q})+2\cdot (1+10N(\mathfrak{q})^{-2}+4N(\mathfrak{q})^{-3}+N(\mathfrak{q})^{-4})\cdot\log \xi\|\mathbf{k}\|}.
\end{align*}
\end{thm}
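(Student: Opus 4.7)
The strategy is to implement the Cauchy–Schwarz inequality \eqref{equ1.9} on the sub-family cut out by $L(1/2,\pi)>A^{-1}$ and then show that the truncation loses essentially nothing from the mollified first moment. Since $L(1/2,\pi)\ge 0$ for every $\pi\in\mathcal{F}(\mathbf{k},\mathfrak{q})$, Cauchy–Schwarz yields
\begin{equation*}
\bigg(\sum_{\substack{\pi\\ L(1/2,\pi)>A^{-1}}}\frac{L(1/2,\pi)M_{\xi}(\pi)}{L(1,\pi,\Ad)}\bigg)^{2}\le \bigg(\sum_{\substack{L(1/2,\pi)>A^{-1}}}\frac{1}{L(1,\pi,\Ad)}\bigg)\bigg(\sum_{\pi}\frac{L(1/2,\pi)^{2}M_{\xi}(\pi)^{2}}{L(1,\pi,\Ad)}\bigg).
\end{equation*}
The second factor on the right is controlled by Theorem \ref{thmC}, and under the range $(\log N(\mathfrak{q})\|\mathbf{k}\|)^{\varepsilon}\le\xi\le N(\mathfrak{q})^{1/2-\varepsilon}\|\mathbf{k}\|^{1/4-\varepsilon}$ the error $\mathcal{E}_{\mathfrak{q},\mathbf{k}}^{(2)}$ is a lower-order perturbation of the main term; the error $\mathcal{E}_{\mathfrak{q},\mathbf{k}}^{(1)}$ of Theorem \ref{prop9.13} is absorbed into an overall factor $(1-\varepsilon)$.

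The crucial step is to compare the truncated first moment to the total first moment supplied by Theorem \ref{prop9.13}. Splitting and using $0\le L(1/2,\pi)\le A^{-1}$ on the complement gives
\begin{equation*}
\bigg|\sum_{\substack{\pi\\ 0\le L(1/2,\pi)\le A^{-1}}}\frac{L(1/2,\pi)M_{\xi}(\pi)}{L(1,\pi,\Ad)}\bigg|\le A^{-1}\sum_{\pi}\frac{|M_{\xi}(\pi)|}{L(1,\pi,\Ad)},
\end{equation*}
and a further Cauchy–Schwarz combined with Lemma \ref{lem10.1} reduces matters to
\begin{equation*}
\sum_{\pi}\frac{|M_{\xi}(\pi)|}{L(1,\pi,\Ad)}\le \bigg(\sum_{\pi}\frac{1}{L(1,\pi,\Ad)}\bigg)^{1/2}\bigg(\sum_{\pi}\frac{M_{\xi}(\pi)^{2}}{L(1,\pi,\Ad)}\bigg)^{1/2}.
\end{equation*}
The unweighted mollifier second moment is controlled by specializing the relative-trace-formula machinery of Section \ref{sec8.3} to $\mathfrak{n}=\mathcal{O}_{F}$ (together with the oldform decomposition of \S\ref{sec8.2}): expanding $M_{\xi}^{2}$ via Hecke relations, the diagonal contribution is weighted by $\sum_{\pi}1/L(1,\pi,\Ad)\asymp N(\mathfrak{q})\|\mathbf{k}\|$ and is of order $(\log\xi)^{-2}\sum_{N(\mathfrak{n})\le\xi}\mu_{F}(\mathfrak{n})^{2}\rho(\mathfrak{n})^{2}N(\mathfrak{n})^{-1}(\log(\xi/N(\mathfrak{n})))^{4}\asymp (\log\xi)^{3}$, yielding $\sum_{\pi}M_{\xi}(\pi)^{2}/L(1,\pi,\Ad)\ll N(\mathfrak{q})\|\mathbf{k}\|(\log\xi)^{3}$. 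The tail is therefore $\ll A^{-1}N(\mathfrak{q})\|\mathbf{k}\|(\log\xi)^{3/2}$, which the hypothesis $A>(\log\xi)^{3/2+\varepsilon}$ makes negligible against the main term $\asymp N(\mathfrak{q})\|\mathbf{k}\|$ of Theorem \ref{prop9.13}.

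Combining the previous steps gives
\begin{equation*}
\sum_{\substack{L(1/2,\pi)>A^{-1}}}\frac{1}{L(1,\pi,\Ad)}\ge (1-\varepsilon)\cdot \frac{(\text{main term of Theorem \ref{prop9.13}})^{2}}{\text{main term of Theorem \ref{thmC}}}.
\end{equation*}
Substituting the explicit constants $\mathcal{M}^{(1)}_{\mathfrak{q},\mathbf{k}}$ and $\mathcal{M}^{(2)}_{\mathfrak{q},\mathbf{k}}$ and dividing by the asymptotic of Lemma \ref{lem10.1} produces exactly the bracket $\log\xi\cdot\textbf{1}_{\mathfrak{q}=\mathcal{O}_{F}}/\log\xi\|\mathbf{k}\|+\mathcal{R}_{\mathfrak{q},\mathbf{k}}(\xi)$. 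For $\mathfrak{q}=\mathcal{O}_{F}$ this is immediate from $\mathcal{M}^{(1)}_{\mathcal{O}_F,\mathbf{k}}=4\delta_{\mathbf{k}}$ and $\mathcal{M}^{(2)}_{\mathcal{O}_F,\mathbf{k}}\sim 8\delta_{\mathbf{k}}\log\xi\|\mathbf{k}\|/\log\xi$; the prime-level case requires routine bookkeeping of the oldform correction terms tracked in both Theorems \ref{thmC} and \ref{prop9.13}, where the $\zeta_{\mathfrak{q}}$-factors and $L_{\mathfrak{q}}(1,\rho)$ conspire to give the rational function of $N(\mathfrak{q})^{-1}$ appearing in $\mathcal{R}_{\mathfrak{q},\mathbf{k}}(\xi)$.

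The main obstacle is establishing the unweighted mollifier second moment bound $\sum_{\pi}M_{\xi}(\pi)^{2}/L(1,\pi,\Ad)\ll N(\mathfrak{q})\|\mathbf{k}\|(\log\xi)^{3}$ uniformly in $\mathfrak{q}$ and $\mathbf{k}$, since this estimate is not packaged as a standalone result in Sections \ref{sec8}--\ref{sec9}. One must invoke a Petersson/Kuznetsov-type formula (or a stripped-down specialization of the regularized $\mathbf{RTF}$ from Section \ref{sec2}) and bound the off-diagonal contributions uniformly, in analogy with the regular-orbital bound of Proposition \ref{prop6.12}. Without such uniformity the tail estimate would fail to match the threshold $A>(\log\xi)^{3/2+\varepsilon}$ and the $(1-\varepsilon)$ conclusion of the theorem could not be maintained.
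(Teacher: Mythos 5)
Your proposal follows essentially the same route as the paper's proof: a Cauchy--Schwarz comparison between the truncated subfamily and the mollified first/second moments from Theorems~\ref{prop9.13} and~\ref{thmC}, a tail bound driven by the estimate $\sum_\pi M_{\xi}(\pi)^2/L(1,\pi,\operatorname{Ad})\ll V_{\mathfrak{q}}\|\mathbf{k}\|(\log\xi)^3$, and division by the harmonic density from Lemma~\ref{lem10.1}. The only cosmetic difference is in the tail: you peel off the bound $L(1/2,\pi)\le A^{-1}$ before applying Cauchy--Schwarz, whereas the paper applies Cauchy--Schwarz first (splitting $L(1/2,\pi)^2$ against $M_\xi(\pi)^2$) and then inserts the $A^{-1}$ bound on the resulting first factor; both yield the same order $A^{-1}V_{\mathfrak{q}}\|\mathbf{k}\|(\log\xi)^{3/2}$, which is negligible under the hypothesis $A>(\log\xi)^{3/2+\varepsilon}$. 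You are also right that the input $\sum_\pi M_\xi(\pi)^2/L(1,\pi,\operatorname{Ad})\ll V_{\mathfrak{q}}\|\mathbf{k}\|(\log\xi)^3$ is not packaged as a standalone result: the paper records it as equation~(10.4) and invokes it with only a citation to the analogous Lemma~8.1 of \cite{BF21}, so your sketch of the diagonal analysis is the expected fill-in rather than a genuine gap in your argument.
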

\begin{proof}
Let $\rho(\mathfrak{n}):=\textbf{1}_{\mathfrak{n}=\mathcal{O}_F}+\textbf{1}_{\mathfrak{n}\subsetneq \mathcal{O}_F}\prod_{\substack{\mathfrak{p}\mid\mathfrak{n}\\ \text{$\mathfrak{p}$ prime}}}(1+N(\mathfrak{p})^{-1})^{-1}$ be defined as in  \eqref{rho}. Take $\xi=N(\mathfrak{q})^{1/2-\varepsilon}\|\mathbf{k}\|^{1/4-\varepsilon}$. By \eqref{eq7.65} in Theorem \ref{thmC} we obtain 
\begin{equation}\label{10.2}
\mathcal{S}_2:=\sum_{\substack{\pi\in \mathcal{F}(\mathbf{k},\mathfrak{q})}}\frac{L(1/2,\pi)^2M_{\xi,\rho}(\pi)^2}{L(1,\pi,\Ad)}\sim \frac{\zeta_F(2)^2D_F^{\frac{3}{2}}}{(\Res_{s=1}\zeta_F(s))^2}\prod_{v\mid\infty}\frac{k_v-1}{4\pi^2}\cdot \mathcal{M}_{\mathfrak{q},\mathbf{k}}^{(2)},	
\end{equation}
where 
\begin{align*}
\mathcal{M}_{\mathfrak{q},\mathbf{k}}^{(2)}:=&4c_{\mathfrak{q}}(N(\mathfrak{q})+1)\cdot\bigg[\frac{\log N(\mathfrak{q})^{1/2}\|\mathbf{k}\|}{\log\xi}+1\bigg]- \frac{16\zeta_{\mathfrak{q}}(2)\delta_{\mathbf{k}}\cdot\textbf{1}_{\mathfrak{q}\subsetneq \mathcal{O}_F}}{1+N(\mathfrak{q})^{-1}}\bigg[\frac{\log \|\mathbf{k}\|}{\log\xi}+1\bigg].
\end{align*}

By Theorem \ref{prop9.13} we obtain 
\begin{equation}\label{10.3}
\mathcal{S}_1:=\sum_{\substack{\pi\in \mathcal{F}(\mathbf{k},\mathfrak{q})}}\frac{L(1/2,\pi)M_{\xi,\rho}(\pi)}{L(1,\pi,\Ad)}\sim \frac{\zeta_F(2)D_F^{\frac{3}{2}}}{\Res_{s=1}\zeta_F(s)}\prod_{v\mid\infty}\frac{k_v-1}{4\pi ^2}\cdot \mathcal{M}_{\mathfrak{q},\mathbf{k}}^{(1)},
\end{equation}
where 
\begin{align*}
\mathcal{M}_{\mathfrak{q},\mathbf{k}}^{(1)}:= \zeta_{\mathfrak{q}}(2)\cdot(2\delta_{\mathbf{k},\mathfrak{q}}(N(\mathfrak{q})+1)-4\delta_{\mathbf{k}}\textbf{1}_{\mathfrak{q}\subsetneq \mathcal{O}_F})\cdot L_{\mathfrak{q}}(1,\rho),
\end{align*}
with $\delta_{\mathbf{k},\mathfrak{q}}:=\textbf{1}_{\mathfrak{q}\subsetneq \mathcal{O}_F}+\textbf{1}_{\mathfrak{q}=\mathcal{O}_F  \& \sum_{v\mid\infty}k_v\equiv 0\pmod{4}}$. Notice that $L_{\mathfrak{q}}(1,\rho)\equiv 1$ if $\mathfrak{q}=\mathcal{O}_F$, and $L_{\mathfrak{q}}(1,\rho)=(1-\rho(\mathfrak{q})N(\mathfrak{q})^{-1})^{-1}=1+N(\mathfrak{q})^{-1}$ if $\mathfrak{q}\subsetneq \mathcal{O}_F$.

Similar to the argument in \cite[Lemma 8.1]{BF21} we have 
\begin{equation}\label{10.4}
\sum_{\substack{\pi\in \mathcal{F}(\mathbf{k},\mathfrak{q})}}\frac{|M_{\xi,\rho}(\pi)|^2}{L^{(\mathfrak{q})}(1,\pi,\Ad)}\ll V_{\mathfrak{q}}\|\mathbf{k}\|D_F^{\frac{3}{2}}(\log \xi)^3,
\end{equation}
where the implied constant depends on $F$ and $\varepsilon$. By Cauchy inequality and \eqref{10.4}, 
\begin{align*}
\sum_{\substack{\pi\in \mathcal{F}(\mathbf{k},\mathfrak{q})\\ L(1/2,\pi)< A^{-1}}}\frac{L(1/2,\pi)M_{\xi,\rho}(\pi)}{L^{(\mathfrak{q})}(1,\pi,\Ad)}\ll \Bigg[\sum_{\substack{\pi\in \mathcal{F}(\mathbf{k},\mathfrak{q})\\ L(1/2,\pi)< A^{-1}}}\frac{|L(1/2,\pi)|^2}{L^{(\mathfrak{q})}(1,\pi,\Ad)}\Bigg]^{\frac{1}{2}}(V_{\mathfrak{q}}\|\mathbf{k}\|D_F^{\frac{3}{2}}(\log \xi)^3)^{\frac{1}{2}},
\end{align*}
from which we deduce that 
\begin{align*}
\sum_{\substack{\pi\in \mathcal{F}(\mathbf{k},\mathfrak{q})\\ L(1/2,\pi)< A^{-1}}}\frac{L(1/2,\pi)M_{\xi,\rho}(\pi)}{L^{(\mathfrak{q})}(1,\pi,\Ad)}\ll A^{-1}\Bigg[\sum_{\substack{\pi\in \mathcal{F}(\mathbf{k},\mathfrak{q})}}\frac{1}{L^{(\mathfrak{q})}(1,\pi,\Ad)}\Bigg]^{\frac{1}{2}}(V_{\mathfrak{q}}\|\mathbf{k}\|D_F^{\frac{3}{2}}(\log \xi)^3)^{\frac{1}{2}}.
\end{align*}

It then follows from Lemma \ref{lem10.1} that 
\begin{align*}
\mathcal{S}_1':=\sum_{\substack{\pi\in \mathcal{F}(\mathbf{k},\mathfrak{q})\\ L(1/2,\pi)< A^{-1}}}\frac{L(1/2,\pi)M_{\xi,\rho}(\pi)}{L^{(\mathfrak{q})}(1,\pi,\Ad)}\ll A^{-1}\cdot V_{\mathfrak{q}}\|\mathbf{k}\|D_F^{\frac{3}{2}}\cdot (\log \xi)^{\frac{3}{2}}.
\end{align*}

Taking advantage of Cauchy inequality we derive 
\begin{align*}
\sum_{\substack{\pi\in \mathcal{F}(\mathbf{k},\mathfrak{q})}}\frac{\textbf{1}_{L(1/2,\pi)> A^{-1}}}{L(1,\pi,\Ad)}\geq \frac{(\mathcal{S}_1-\mathcal{S}_1')^2}{\mathcal{S}_2}.
\end{align*}
In conjunction with \eqref{10.2}, \eqref{10.3}, the above inequality boils down to 
\begin{equation}\label{10.5}
\sum_{\substack{\pi\in \mathcal{F}(\mathbf{k},\mathfrak{q})}}\frac{\textbf{1}_{L(1/2,\pi)> A^{-1}}}{L(1,\pi,\Ad)}\geq  D_F^{\frac{3}{2}}\cdot \prod_{v\mid\infty}\frac{k_v-1}{4\pi ^2}\cdot \frac{(\mathcal{M}_{\mathfrak{q},\mathbf{k}}^{(1)})^2}{\mathcal{M}_{\mathfrak{q},\mathbf{k}}^{(2)}}\cdot (1+O(A^{-1}(\log \xi)^{\frac{3}{2}})).
\end{equation}

\begin{itemize}
\item Suppose $\mathfrak{q}=\mathcal{O}_F$ and $\sum_{v\mid\infty}k_v\equiv 0\pmod{4}$. Then 
\begin{align*}
\frac{\mathcal{S}_1^2}{\mathcal{S}_2}=\frac{\left(\frac{\zeta_F(2)D_F^{\frac{3}{2}}}{\Res_{s=1}\zeta_F(s)}\prod_{v\mid\infty}\frac{k_v-1}{4\pi ^2}\cdot \mathcal{M}_{\mathfrak{q},\mathbf{k}}^{(1)}\right)^2}{\frac{\zeta_F(2)^2D_F^{\frac{3}{2}}}{(\Res_{s=1}\zeta_F(s))^2}\prod_{v\mid\infty}\frac{k_v-1}{4\pi^2}\cdot \mathcal{M}_{\mathfrak{q},\mathbf{k}}^{(2)}}=D_F^{\frac{3}{2}}\cdot \prod_{v\mid\infty}\frac{k_v-1}{4\pi ^2}\cdot \frac{(\mathcal{M}_{\mathfrak{q},\mathbf{k}}^{(1)})^2}{\mathcal{M}_{\mathfrak{q},\mathbf{k}}^{(2)}}.
\end{align*} 

Since $\mathfrak{q}=\mathcal{O}_F$, then 
\begin{equation}\label{10.6}
\frac{(\mathcal{M}_{\mathfrak{q},\mathbf{k}}^{(1)})^2}{\mathcal{M}_{\mathfrak{q},\mathbf{k}}^{(2)}}=\frac{4(N(\mathfrak{q})+1)^2}{4c_{\mathfrak{q}}\cdot (N(\mathfrak{q})+1)\cdot\Big[\frac{\log N(\mathfrak{q})^{1/2}\|\mathbf{k}\|}{\log\xi}+1\Big]}=\frac{2}{\frac{\log \|\mathbf{k}\|}{\log\xi}+1}.
\end{equation}

Combining \eqref{10.5}, \eqref{10.6}, and Lemma \ref{lem10.1}, we derive that 
\begin{equation}\label{10.7}
\frac{\sum_{\substack{\pi\in \mathcal{F}(\mathbf{k},\mathfrak{q})}}L(1,\pi,\Ad)^{-1}\textbf{1}_{L(1/2,\pi)> A^{-1}}}{\sum_{\substack{\pi\in \mathcal{F}(\mathbf{k},\mathfrak{q})}}L(1,\pi,\Ad)^{-1}}\geq \frac{\log\xi}{\log \xi\|\mathbf{k}\|}\cdot (1+O(A^{-1}(\log \xi)^{\frac{1}{2}})), 
\end{equation}
where the implied constant depends on $\varepsilon$ and $F$. 

\item Suppose $\mathfrak{q}\subsetneq \mathcal{O}_F$. Denote by 
\begin{align*}
\mathcal{R}:=\frac{(\mathcal{M}_{\mathfrak{q},\mathbf{k}}^{(1)})^2}{2(V_{\mathfrak{q}}-\textbf{1}_{\mathfrak{q}\subsetneq \mathcal{O}_F})\zeta_{\mathfrak{q}}(2)\cdot \mathcal{M}_{\mathfrak{q},\mathbf{k}}^{(2)}}.
\end{align*}
By a straightforward calculation we obtain 
\begin{align*}
\mathcal{R}=\frac{1}{2\zeta_{\mathfrak{q}}(1)^3(1+N(\mathfrak{q})^{-1})\Big[\frac{\log N(\mathfrak{q})^{1/2}\|\mathbf{k}\|}{\log\xi}+1\Big]- \frac{8\zeta_{\mathfrak{q}}(2)^2N(\mathfrak{q})^{-1}}{1+N(\mathfrak{q})^{-1}}\Big[\frac{\log \|\mathbf{k}\|}{\log\xi}+1\Big]}.
\end{align*}
 
After a simplification, the above term reduces to 
\begin{align*}
\mathcal{R}=\frac{(1-N(\mathfrak{q})^{-1})^2}{\frac{1+N(\mathfrak{q})^{-1}}{1-N(\mathfrak{q})^{-1}}\cdot\frac{\log N(\mathfrak{q})}{\log\xi}+2\cdot \frac{1+10N(\mathfrak{q})^{-2}+4N(\mathfrak{q})^{-3}+N(\mathfrak{q})^{-4}}{(1-N(\mathfrak{q})^{-1})(1+N(\mathfrak{q})^{-1})^3}\cdot \Big[\frac{\log \|\mathbf{k}\|}{\log\xi}+1\Big]}.
\end{align*}

In conjunction with Lemma \ref{lem10.1}, we derive that 
\begin{align*}
\frac{\sum_{\substack{\pi\in \mathcal{F}(\mathbf{k},\mathfrak{q})}}L(1,\pi,\Ad)^{-1}\textbf{1}_{L(1/2,\pi)> A^{-1}}}{\sum_{\substack{\pi\in \mathcal{F}(\mathbf{k},\mathfrak{q})}}L(1,\pi,\Ad)^{-1}}\geq (1-\varepsilon)\mathcal{R},
\end{align*}
which yields Theorem \ref{thm10.2} in the case that $\mathfrak{q}\subsetneq \mathcal{O}_F$.  
\end{itemize}

Therefore, Theorem \ref{thm10.2} holds. 
\end{proof}

\begin{cor}\label{cor10.3}
Let notation be as before. Let $0<\varepsilon<10^{-3}$, and $A>(\log\xi)^{3/2+\varepsilon}$. As $N(\mathfrak{q})\|\mathbf{k}\|\to\infty$, we have  
\begin{align*}
\frac{\sum_{\substack{\pi\in \mathcal{F}(\mathbf{k},\mathfrak{q})}}L(1,\pi,\Ad)^{-1}\textbf{1}_{L(1/2,\pi)> A^{-1}}}{\sum_{\substack{\pi\in \mathcal{F}(\mathbf{k},\mathfrak{q})}}L(1,\pi,\Ad)^{-1}}\geq \begin{cases}
\frac{1}{5}-\varepsilon,\ \ \text{if $\mathfrak{q}=\mathcal{O}_F$}\\
\frac{1}{4}-\varepsilon,\ \ \text{if $\mathbf{k}$ is fixed},
\end{cases}
\end{align*}
where the implied constant depends only on $F$ and $\varepsilon$. Moreover, 
\begin{align*}
\frac{\sum_{\substack{\pi\in \mathcal{F}(\mathbf{k},\mathfrak{q})}}L(1,\pi,\Ad)^{-1}\textbf{1}_{L(1/2,\pi)> A^{-1}}}{\sum_{\substack{\pi\in \mathcal{F}(\mathbf{k},\mathfrak{q})}}L(1,\pi,\Ad)^{-1}}\geq\frac{(1-\varepsilon)(1-N(\mathfrak{q})^{-2})^3}{10 (1+10N(\mathfrak{q})^{-2}+4N(\mathfrak{q})^{-3}+N(\mathfrak{q})^{-4})}
\end{align*}
if $\mathfrak{q}\subsetneq \mathcal{O}_F$ is fixed and $\|\mathbf{k}\|\to\infty$. 
\end{cor}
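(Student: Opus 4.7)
The plan is to deduce Corollary \ref{cor10.3} directly from Theorem \ref{thm10.2} by optimizing the mollifier length $\xi$ within the admissible range $(\log N(\mathfrak{q})\|\mathbf{k}\|)^{\varepsilon}\leq \xi\leq N(\mathfrak{q})^{1/2-\varepsilon}\|\mathbf{k}\|^{1/4-\varepsilon}$ in each of the three asymptotic regimes, and then letting the relevant parameter tend to infinity so that the remaining rational expression in $\log\xi$, $\log\|\mathbf{k}\|$, and $\log N(\mathfrak{q})$ simplifies to a constant.

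First, in the case $\mathfrak{q}=\mathcal{O}_F$, Theorem \ref{thm10.2} gives the lower bound $(1-\varepsilon)\log\xi/\log \xi\|\mathbf{k}\|$. I would choose $\xi=\|\mathbf{k}\|^{1/4-\varepsilon}$, the largest admissible value, so that $\log\xi=(1/4-\varepsilon)\log\|\mathbf{k}\|$ and hence
\[
\frac{\log\xi}{\log\xi\|\mathbf{k}\|}=\frac{(1/4-\varepsilon)\log\|\mathbf{k}\|}{(5/4-\varepsilon)\log\|\mathbf{k}\|}\xrightarrow[\|\mathbf{k}\|\to\infty]{}\frac{1/4}{5/4}=\frac{1}{5}.
\]
Adjusting $\varepsilon$ yields the bound $\frac{1}{5}-\varepsilon$.

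Second, when $\mathbf{k}$ is fixed and $N(\mathfrak{q})\to+\infty$, I use the bound $(1-\varepsilon)\mathcal{R}_{\mathfrak{q},\mathbf{k}}(\xi)$ with $\xi=N(\mathfrak{q})^{1/2-\varepsilon}$. Since $\|\mathbf{k}\|$ is fixed and the Archimedean factor $(1+N(\mathfrak{q})^{-1})^4$, $(1-N(\mathfrak{q})^{-2})^3$, and $(1+10N(\mathfrak{q})^{-2}+4N(\mathfrak{q})^{-3}+N(\mathfrak{q})^{-4})$ all tend to $1$, the denominator of $\mathcal{R}_{\mathfrak{q},\mathbf{k}}$ is asymptotic to $\log N(\mathfrak{q})+2\cdot(1/2-\varepsilon)\log N(\mathfrak{q})=(2-2\varepsilon)\log N(\mathfrak{q})$, while the numerator is asymptotic to $(1/2-\varepsilon)\log N(\mathfrak{q})$; the quotient tends to $1/4$, giving $\frac{1}{4}-\varepsilon$.

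Third, when $\mathfrak{q}\subsetneq\mathcal{O}_F$ is fixed and $\|\mathbf{k}\|\to+\infty$, the factor $(1-N(\mathfrak{q})^{-2})^3$ and the quantity $K_{\mathfrak{q}}:=1+10N(\mathfrak{q})^{-2}+4N(\mathfrak{q})^{-3}+N(\mathfrak{q})^{-4}$ are fixed constants. Taking $\xi=\|\mathbf{k}\|^{1/4-\varepsilon}$, so that $\log\xi=(1/4-\varepsilon)\log\|\mathbf{k}\|$, the contribution of $(1+N(\mathfrak{q})^{-1})^4\log N(\mathfrak{q})$ in the denominator of $\mathcal{R}_{\mathfrak{q},\mathbf{k}}(\xi)$ is $O(1)$ as $\|\mathbf{k}\|\to\infty$, while $2K_{\mathfrak{q}}\log\xi\|\mathbf{k}\|\sim 2K_{\mathfrak{q}}\cdot(5/4)\log\|\mathbf{k}\|$. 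Thus
\[
\mathcal{R}_{\mathfrak{q},\mathbf{k}}(\xi)\longrightarrow\frac{(1-N(\mathfrak{q})^{-2})^3\cdot(1/4)}{2K_{\mathfrak{q}}\cdot(5/4)}=\frac{(1-N(\mathfrak{q})^{-2})^3}{10\,K_{\mathfrak{q}}},
\]
which is exactly the claimed lower bound after absorbing the $(1-\varepsilon)$ factor.

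The only verification needed in each case is that $A>(\log\xi)^{3/2+\varepsilon}$ remains compatible with the assumption $A>(\log\xi)^{3/2+\varepsilon}$ of Theorem \ref{thm10.2}, which is automatic for the chosen $\xi$. There is no real obstacle here: the statement is a pure optimization/limiting computation on the explicit rational functions produced by Theorem \ref{thm10.2}, and the only minor care is to check that the ``error'' contributions of $(1-N(\mathfrak{q})^{-k})$ and of $\log N(\mathfrak{q})$ behave as claimed in each regime (negligible when the parameter tends to infinity, harmless when $\mathfrak{q}$ is fixed).
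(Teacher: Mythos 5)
Your proposal is correct and follows exactly the argument the paper intends: the paper states Corollary \ref{cor10.3} with no separate proof because it is precisely this optimization of $\xi$ within the admissible window of Theorem \ref{thm10.2}, followed by the limiting computation of the rational expressions in $\log\xi$, $\log\|\mathbf{k}\|$, $\log N(\mathfrak{q})$ (and the fact that the $N(\mathfrak{q})^{-1}$ corrections disappear when $N(\mathfrak{q})\to\infty$ or are constant when $\mathfrak{q}$ is fixed). The three case computations are accurate, and the only minor imprecision is labeling the $N(\mathfrak{q})$-factors ``Archimedean''; they are of course non-Archimedean, but this does not affect the argument.
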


\section{Uniform non-vanishing in Natural Average}\label{sec11}
In this section we aim to remove the harmonic weight $L(1,\pi,\Ad)$ in Theorem \ref{thm10.2}, proving uniform non-vanishing of central $L$-values in the natural density.

In this section, we will follow the method in \cite{KowalskiMichel1999} to remove the harmonic weight in previous sections.

\subsection{Some Preparations} For $\pi\in \cF(\mathbf{k},\fq)$, by \cite{KowalskiMichel1999}, we have the decomposition
\[L(1,\pi,\Ad)=\omega_{\pi}(x)+\omega_{\pi}(x,y)+O((\|\mathbf{k}\|N(\fq))^{-2+\epsilon}),\]
for $1\leq x<y$. Here:
\[\omega_{\pi}(x,y)=\sum_{\substack{x<N(\fl\ff^2)\leq y\\(\ff,\fq)=1}}\frac{\lambda_{\pi}(\fl^2)}{N(\fl\ff^2)}\]
and $\omega_{\pi}(x)=\omega_{\pi}(0,x),$ provided $y\geq (\|\mathbf{k}\|N(\fq)))^{10}.$

Let $\alpha_{\pi}$ be a complex number. Then we can write
\[\sum_{\pi\in\cF(\mathbf{k},\fq)}\alpha_{\pi}=\sum_{\pi\in\cF(\mathbf{k},\fq)}\frac{\alpha_{\pi}L(1,\pi,\Ad)}{L(1,\pi,\Ad)}=\sum_{\pi\in\cF(\mathbf{k},\fq)}\frac{\omega_{\pi}(x)\alpha_{\pi}}{L(1,\pi,\Ad)}+\sum_{\pi\in\cF(\mathbf{k},\fq)}\frac{\omega_{\pi}(x,y)\alpha_{\pi}}{L(1,\pi,\Ad)}+O(1).\]

By the method in \cite[Section~3]{KowalskiMichel1999}, if we can show
\begin{align}\label{conditions to estimate the middle sum}
\begin{split}
\sum_{\pi\in\cF(\mathbf{k},\fq)}\frac{|\alpha_{\pi}|}{L(1,\pi,\Ad)}&\ll (\|\mathbf{k}\|N(\fq))\log^A(\|\mathbf{k}\|N(\fq)) \\
\max_{\pi\in\cF(\mathbf{k},\fq)}\alpha_{\pi}&\ll (\|\mathbf{k}\|N(\fq))^{1-\delta}  \\
\end{split}
\end{align}
for arbitrary $A>0$ and some $\delta>0,$ then 
\begin{equation}\label{the estimate for the middle sum in adjoint}
\sum_{\pi\in\cF(\mathbf{k},\fq)}\frac{\omega_{\pi}(x,y)\alpha_{\pi}}{L(1,\pi,\Ad)}\ll (\|\mathbf{k}\|N(\fq))^{1-\delta'}.
\end{equation}
For sufficiently small $\epsilon$, we take $x=(\|\mathbf{k}\|N(\fq))^{\epsilon}.$ Then $\delta'$ is dependent on $\delta$ and $\epsilon$. Therefore, one has
\[\sum_{\pi\in\cF(\mathbf{k},\fq)}\alpha_{\pi}=\sum_{\pi\in\cF(\mathbf{k},\fq)}\frac{\omega_{\pi}(x)\alpha_{\pi}}{L(1,\pi,\Ad)}+O\left(\frac{\|\mathbf{k}\|N(\fq)}{\log(\|\mathbf{k}\|N(\fq))}\right).\]

\begin{remark}
    To prove \eqref{the estimate for the middle sum in adjoint}, the key ingredient is a large sieve inequality for the adjoint $L$-function. This is known for the $F=\bQ$ case (see \cite{Luo1999} and \cite{DukeKowalski2000}). For the general totally real number field case, the method still works or one can follow \cite{ThornerZaman2021} to consider the adjoint $L$-functions.
\end{remark}

We prove the following lemma, which is an analogue of Lemma \ref{lem10.1}.
\begin{lemma}\label{dimension formula}
    For $\mathbf{k}\geq\textbf{4}$ and $\fq$ a prime ideal, one has
    \[\#\cF(\mathbf{k},\fq)\sim 2N(\fq)D_F^{3/2}\zeta_F(2)\prod_{v|\infty}\frac{(k_v-1)}{4\pi^2}\]
\end{lemma}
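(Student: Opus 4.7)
The plan is to apply the Kowalski--Michel weight-removal strategy summarized at the start of this section to $\alpha_\pi = 1$. The hypotheses in \eqref{conditions to estimate the middle sum} are immediate: $|\alpha_\pi| = 1$ trivially satisfies the second, and Lemma \ref{lem10.1} (applied with $\mathfrak{q}$ prime, so $V_\mathfrak{q} - \textbf{1}_{\mathfrak{q}\subsetneq\mathcal{O}_F} = N(\mathfrak{q})$) gives the first. Consequently, with $x = (\|\mathbf{k}\|N(\mathfrak{q}))^\varepsilon$ for some small $\varepsilon > 0$, I obtain
\[
\#\mathcal{F}(\mathbf{k},\mathfrak{q}) = \sum_{\pi \in \mathcal{F}(\mathbf{k},\mathfrak{q})} \frac{\omega_\pi(x)}{L(1,\pi,\Ad)} + O\!\left(\frac{\|\mathbf{k}\|N(\mathfrak{q})}{\log(\|\mathbf{k}\|N(\mathfrak{q}))}\right),
\]
which reduces the task to analyzing the main sum.

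After interchanging summations, the main sum becomes
\[
\sum_{\substack{N(\mathfrak{l}\mathfrak{f}^2) \leq x \\ (\mathfrak{f},\mathfrak{q})=1}} \frac{\mathcal{T}(\mathfrak{l}^2)}{N(\mathfrak{l}\mathfrak{f}^2)}, \qquad \mathcal{T}(\mathfrak{n}) := \sum_{\pi \in \mathcal{F}(\mathbf{k},\mathfrak{q})} \frac{\lambda_\pi(\mathfrak{n})}{L(1,\pi,\Ad)}.
\]
The key input is the Petersson-type evaluation of $\mathcal{T}(\mathfrak{n})$ for $\mathfrak{n}$ coprime to $\mathfrak{q}$. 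This is obtained exactly as in the proof of Lemma \ref{lem10.1}: one applies the standard Petersson formula for holomorphic Hilbert cusp forms on $\Gamma_0(\mathfrak{q})$ and peels off the old spectrum via the induction-over-level argument of \textsection\ref{sec8.2}. For $\mathfrak{l} = \mathcal{O}_F$ the diagonal contributes
\[
\mathcal{T}(\mathcal{O}_F) \sim 2N(\mathfrak{q})\,\zeta_\mathfrak{q}(2)\, D_F^{3/2} \prod_{v\mid\infty}\frac{k_v-1}{4\pi^2},
\]
recovering Lemma \ref{lem10.1}, while for $\mathfrak{l} \neq \mathcal{O}_F$ only Kloosterman--Bessel terms remain.

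The main term then follows from the elementary identity
\[
\sum_{\substack{(\mathfrak{f},\mathfrak{q})=1 \\ N(\mathfrak{f}^2) \leq x}} \frac{1}{N(\mathfrak{f}^2)} = \zeta_F^{(\mathfrak{q})}(2) + O(x^{-1/2}) = \frac{\zeta_F(2)}{\zeta_\mathfrak{q}(2)} + O(x^{-1/2}),
\]
whose factor $\zeta_\mathfrak{q}(2)^{-1}$ cancels precisely the $\zeta_\mathfrak{q}(2)$ from $\mathcal{T}(\mathcal{O}_F)$, yielding the claimed asymptotic $2N(\mathfrak{q}) D_F^{3/2}\zeta_F(2) \prod_{v\mid\infty} (k_v-1)/(4\pi^2)$. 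The principal obstacle will be controlling the $\mathfrak{l} \neq \mathcal{O}_F$ contributions to $\mathcal{T}(\mathfrak{l}^2)$: since $N(\mathfrak{l}) \leq x^{1/2} = (\|\mathbf{k}\|N(\mathfrak{q}))^{\varepsilon/2}$ is tiny, I expect that the Bessel bound $J_{k_v-1}(z) \ll |z|^{1+2\varepsilon} k_v^{-1+2\varepsilon}$ of \textsection\ref{sec9.5} together with the Weil bound on Kloosterman sums will render these terms negligible compared to the main term; summing over $\mathfrak{l},\mathfrak{f}$ then completes the proof.
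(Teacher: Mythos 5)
Your proposal is correct and follows essentially the same route as the paper: apply the Kowalski--Michel weight removal with $\alpha_\pi=1$, expand $\omega_\pi(x)$, evaluate the resulting Hecke-twisted harmonic sums by the Petersson formula with the old spectrum peeled off by the level-lowering induction, and observe that only $\mathfrak{l}=\mathcal{O}_F$ contributes a diagonal, whose product with $\sum_{(\mathfrak{f},\mathfrak{q})=1}N(\mathfrak{f})^{-2}=\zeta_F(2)/\zeta_{\mathfrak{q}}(2)$ cancels the $\zeta_{\mathfrak{q}}(2)$ from Lemma \ref{lem10.1} and yields the stated constant. The paper organizes the oldform subtraction as an explicit two-term identity (with a second truncation point $x'=\|\mathbf{k}\|^{\varepsilon}$ for the level-$\mathcal{O}_F$ sum) rather than defining $\mathcal{T}(\mathfrak{n})$ on newforms directly, but this is a presentational difference, not a mathematical one.
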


\begin{proof}
    First, we have
    \begin{flalign*}
\#\cF(\mathbf{k},\fq)&=\sum_{\pi\in\cF(\mathbf{k},\fq)}1=\sum_{\pi\in\cF(\mathbf{k},\fq)}\frac{\omega_{\pi}(x)}{L(1,\pi,\Ad)}+O\left(\frac{\|\mathbf{k}\|N(\fq)}{\log(\|\mathbf{k}\|N(\fq))}\right)\\
&=\sum_{\substack{1\leq N(\fl\ff^2)\leq (\|\mathbf{k}\|N(\fq))^{\epsilon}\\(\ff,\fq)=1}}\frac{1}{N(\fl\ff^2)}\sum_{\pi\in\cF(\mathbf{k},\fq)}\frac{\lambda_{\pi}(\fl^2)}{L(1,\pi,\Ad)}+O\left(\frac{\|\mathbf{k}\|N(\fq)}{\log(\|\mathbf{k}\|N(\fq))}\right)
\end{flalign*}
    since $\alpha_{\pi}=1$ obviously satisfies the conditions \ref{conditions to estimate the middle sum}.
    
    Utilizing Petersson formula, we have, for $N(\fl)\ll (\|\mathbf{k}\|N(\fq))^{\epsilon}.$
    \[\frac{1}{\zeta_{\fq}(2)}\sum_{\pi\in\cF(\mathbf{k},\fq)}\frac{\lambda_{\pi}(\fl^2)}{L(1,\pi,\Ad)}+\sum_{\pi\in\cF(\mathbf{k},\cO_F)}\frac{\lambda_{\pi}(\fl^2)\bm{1}_{\fq\subsetneq\cO_F}}{L(1,\pi,\Ad)}\sim2V_{\fq}D_F^{3/2}\delta_{\fl^2=\cO_F}\prod_{v|\infty}\frac{(k_v-1)}{4\pi^2}.\]
(Notice that $L_v(1,\pi,\Ad)=\zeta_v(2)$ when $v|\fq.$) Then we sum over $1\leq N(\fl\ff^2)\ll (\|\mathbf{k}\|N(\fq))^{\epsilon}$ with $(\ff,\fq)=1$ and insert it into the expression for $\#\cF(\mathbf{k},\fq)$, one obtains:
\[\frac{1}{\zeta_{\fq}(2)}\sum_{\pi\in\cF(\mathbf{k},\fq)}\frac{\omega_{\pi}(x)}{L(1,\pi,\Ad)}+\bm{1}_{\fq\subsetneq\cO_F}\sum_{\pi\in\cF(\mathbf{k},\cO_F)}\frac{\omega_{\pi}(x)}{L(1,\pi,\Ad)}\sim2V_{\fq}D_F^{3/2}\zeta_F^{(\fq)}(2)\prod_{v|\infty}\frac{(k_v-1)}{4\pi^2}.\]

Next we investigate the sum over full level case:
\[\sum_{\pi\in\cF(\mathbf{k},\cO_F)}\frac{\omega_{\pi}(x)}{L(1,\pi,\Ad)}=\sum_{\pi\in\cF(\mathbf{k},\cO_F)}\frac{\omega_{\pi}(x')}{L(1,\pi,\Ad)}+\sum_{\pi\in\cF(\mathbf{k},\cO_F)}\frac{\omega_{\pi}(x',x)}{L(1,\pi,\Ad)}\]
where $x'=\|\mathbf{k}\|^{\epsilon}$. For $\omega(x',x)$ part, we again apply the method in \cite[Section 3]{KowalskiMichel1999} and this will only contribute the error term. Then a similar argument will show:
\[\sum_{\pi\in\cF(\mathbf{k},\cO_F)}\frac{\omega_{\pi}(x')}{L(1,\pi,\Ad)}\sim 2D_F^{3/2}\zeta_F^{(\fq)}(2)\prod_{v|\infty}\frac{(k_v-1)}{4\pi^2}.\]
Therefore, we showed:
\[\sum_{\pi\in\cF(\mathbf{k},\fq)}\frac{\omega_{\pi}(x)}{L(1,\pi,\Ad)}\sim2(V_{\fq}-\bm{1}_{\fq\subsetneq\cO_F})D_F^{3/2}\zeta_F(2)\prod_{v|\infty}\frac{(k_v-1)}{4\pi^2}=2N(\fq)D_F^{3/2}\zeta_F(2)\prod_{v|\infty}\frac{(k_v-1)}{4\pi^2}\]
since $V_{\fq}-\bm{1}_{\fq\subsetneq\cO_F}=N(\fq)$. This finishes the proof.
\end{proof}

Next, we remove the harmonic weight in the mollified first and second moment.

Taking $\alpha_{\pi}$ to be $M_{\xi,\rho}(\pi)^2L(1/2,\pi)^2$ or $M_{\xi,\rho}(\pi)L(1/2,\pi),$ we again write:
\[\sum_{\pi\in\cF(\mathbf{k},\fq)}\alpha_{\pi}=\sum_{\pi\in\cF(\mathbf{k},\fq)}\frac{\alpha_{\pi}L(1,\pi,\Ad)}{L(1,\pi,\Ad)}=\sum_{\pi\in\cF(\mathbf{k},\fq)}\frac{\omega_{\pi}(x)\alpha_{\pi}}{L(1,\pi,\Ad)}+\sum_{\pi\in\cF(\mathbf{k},\fq)}\frac{\omega_{\pi}(x,y)\alpha_{\pi}}{L(1,\pi,\Ad)}+O(1).\]

Then Theorem \ref{thm11.1} and the method in \cite[Section 3]{KowalskiMichel1999} will show:

\[\sum_{\pi\in\cF(\mathbf{k},\fq)}\alpha_{\pi}=\sum_{\pi\in\cF(\mathbf{k},\fq)}\frac{\omega_{\pi}(x)\alpha_{\pi}}{L(1,\pi,\Ad)}+O\left(\frac{\|\mathbf{k}\|N(\fq)}{\log(\|\mathbf{k}\|N(\fq))}\right).\]

In the left of this section, we will prove Theorem \ref{thm11.1} and then sketch the proof for the uniform non-vanishing result in the natural weight.

\subsection{Amplification of the Mollified Second Moment}\label{sec11.2}

Let $\rho$ be a multiplicative arithmetic function. Suppose $\rho(\mathfrak{p})\ll 1$ for all prime ideals $\mathfrak{p}$, with the implied constant being absolute. Let $\xi>1$ be a parameter to be determined. For $\pi\in \Pi_{\mathbf{k}}(\mathfrak{q})$, let $M_{\xi,\rho}(\pi)$ be the mollifier defined by \eqref{M}. 

A crucial condition to remove the harmonic weight $L(1,\pi,\Ad)$ (cf. \cite[\textsection 3.3]{KowalskiMichel1999}) is the bound
\begin{equation}\label{11.1}
M_{\xi,\rho}(\pi)L(1/2,\pi)\ll (N(\mathfrak{q})\|\mathbf{k}\|)^{\frac{1-\delta}{2}}
\end{equation}
for some $\delta>0$. For $\xi\leq N(\mathfrak{q})^{1/2-\varepsilon}\|\mathbf{k}\|^{1/4-\varepsilon}$, we have, by \eqref{c7.8}, that  
\begin{align*}
M_{\xi,\rho}(\pi)=\sum_{\substack{\mathfrak{n} \subseteq \mathcal{O}_F \\ (\mathfrak{n},\mathfrak{q}) = 1}} \frac{\lambda_\pi(\mathfrak{n})\mu_F(\mathfrak{n})\rho(\mathfrak{n})}{\sqrt{N(\mathfrak{n})}}\cdot \frac{(\log \xi N(\mathfrak{n})^{-1})^2}{2\log \xi} \cdot \textbf{1}_{N(\mathfrak{n})\leq \xi}\ll N(\mathfrak{q})^{\frac{1-\varepsilon}{4}}\|\mathbf{k}\|^{\frac{1-\varepsilon}{8}}.
\end{align*}

As a result, we obtain \eqref{11.1} if an explicit subconvexity bound is available:
\begin{equation}\label{11.2}
L(1/2,\pi)\ll N(\mathfrak{q})^{\frac{1}{4}+\eta}\|\mathbf{k}\|^{\frac{3}{8}+\eta}
\end{equation} 
for any tiny $\eta>0$. In particular, the estimate \eqref{11.2} with $\eta=\varepsilon/10$ implies \eqref{11.1} with $\delta=\varepsilon/10$. However, the bound in \eqref{11.2} seems beyond the reach of current results. 

In this section we will prove \eqref{11.1} utilizing the amplification method. The main result is the following.
\begin{thm}\label{thm11.1}
Let notation be as before. Let $\pi\in \Pi_{\mathbf{k}}(\mathfrak{q})$, and $M_{\xi,\rho}(\pi)$ be the mollifier defined by \eqref{M}. Let $0<\varepsilon<10^{-3}$ and  $1\leq \xi<N(\mathfrak{q})^{\frac{1}{2}}\|\mathbf{k}\|^{\frac{1}{4}}$. Then  
\begin{equation}\label{11.3}
M_{\xi,\rho}(\pi)L(1/2,\pi)\ll (\xi N(\mathfrak{q})\|\mathbf{k}\|)^{\varepsilon}\cdot \xi^{\frac{1}{3}}N(\mathfrak{q})^{\frac{1}{3}}\|\mathbf{k}\|^{\frac{5}{12}},
\end{equation}
where the implied constant depends only on $F$ and $\varepsilon$. In particular, for $0<\delta<1/4$, and $\xi\leq N(\mathfrak{q})^{1/2-\delta}\|\mathbf{k}\|^{1/4-\delta}$, we have 
\begin{equation}\label{11.4}
M_{\xi,\rho}(\pi)L(1/2,\pi)\ll (N(\mathfrak{q})\|\mathbf{k}\|)^{\frac{1}{2}-\frac{\delta}{3}+\varepsilon},	
\end{equation}
where the implied constant depends only on $F$ and $\varepsilon$.
\end{thm}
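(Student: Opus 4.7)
The plan is to establish \eqref{11.3} by the amplification method, using the $\lambda_{\pi}(\mathfrak{n})$-weighted second moment (Corollary \ref{cor7.2}) as the analytic input. Let $L$ be a parameter to be optimized later. I would build an amplifier of the form
\[
A(\pi'):=\sum_{\substack{L\leq N(\mathfrak{p})\leq 2L\\ \mathfrak{p}\nmid\mathfrak{q}}}x_{\mathfrak{p}}\lambda_{\pi'}(\mathfrak{p})+\sum_{\substack{L\leq N(\mathfrak{p})\leq 2L\\ \mathfrak{p}\nmid\mathfrak{q}}}y_{\mathfrak{p}}\lambda_{\pi'}(\mathfrak{p}^2),
\]
with coefficients depending on $\pi$ and satisfying $|x_\mathfrak{p}|,|y_\mathfrak{p}|\leq 1$. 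The Hecke relation $\lambda_\pi(\mathfrak{p})^2-\lambda_\pi(\mathfrak{p}^2)=1$ at $\mathfrak{p}\nmid\mathfrak{q}$ guarantees $\max\{|\lambda_\pi(\mathfrak{p})|,|\lambda_\pi(\mathfrak{p}^2)|\}\geq 1/2$, so one may choose $(x_\mathfrak{p},y_\mathfrak{p})$ so that $|A(\pi)|\gg L^{1-\varepsilon}$ by the prime ideal theorem for $F$.

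By positivity,
\[
|A(\pi)|^2\frac{L(1/2,\pi)^2M_{\xi,\rho}(\pi)^2}{L(1,\pi,\operatorname{Ad})}\leq \sum_{\pi'\in\mathcal{F}(\mathbf{k},\mathfrak{q})}|A(\pi')|^2\frac{L(1/2,\pi')^2M_{\xi,\rho}(\pi')^2}{L(1,\pi',\operatorname{Ad})}.
\]
Expanding $|A(\pi')|^2$ using Hecke multiplicativity at good primes rewrites the right-hand side as a linear combination
\[
\sum_{\substack{(\mathfrak{n},\mathfrak{q})=1\\ N(\mathfrak{n})\ll L^4}}c_{\mathfrak{n}}\sum_{\pi'\in\mathcal{F}(\mathbf{k},\mathfrak{q})}\lambda_{\pi'}(\mathfrak{n})\frac{L(1/2,\pi')^2 M_{\xi,\rho}(\pi')^2}{L(1,\pi',\operatorname{Ad})},
\]
with absolutely bounded $c_\mathfrak{n}$ concentrated on ideals of the form $\mathcal{O}_F$, $\mathfrak{p}_1\mathfrak{p}_2$, $\mathfrak{p}^k$ with $k\leq 4$, and $\mathfrak{p}_1^i\mathfrak{p}_2^j$ with $i,j\leq 2$.

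The next step is to establish a $\lambda_{\pi'}(\mathfrak{n})$-weighted mollified second moment asymptotic. Expanding $\lambda_{\pi'}(\mathfrak{n})M_{\xi,\rho}(\pi')^2$ via Hecke relations once more collapses the inner sum into a combination of $\sum_{\pi'}\lambda_{\pi'}(\mathfrak{M})L(1/2,\pi')^2/L(1,\pi',\operatorname{Ad})$ with $N(\mathfrak{M})\leq \xi^2 N(\mathfrak{n})$. Applying Corollary~\ref{cor7.2} term by term and repeating the contour-integration argument of Propositions~\ref{prop7.3} and~\ref{prop7.6}, with the Dirichlet series $L(s,s_1,s_2;\rho,\mathfrak{q})$ of Lemma~\ref{lem7.1} adjusted by a $\lambda(\mathfrak{n})$-twist, I expect to obtain an estimate of shape
\[
\sum_{\pi'}\lambda_{\pi'}(\mathfrak{n})\frac{L(1/2,\pi')^2M_{\xi,\rho}(\pi')^2}{L(1,\pi',\operatorname{Ad})}\ll \frac{N(\mathfrak{q})\|\mathbf{k}\|^{1+\varepsilon}}{\sqrt{N(\mathfrak{n})}}+\xi^{2+\varepsilon}N(\mathfrak{n})^{1/2+\varepsilon}N(\mathfrak{q})^{\varepsilon}\|\mathbf{k}\|^{1/2+\varepsilon}.
\]

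Summing against $c_\mathfrak{n}$, dividing by $|A(\pi)|^2\gg L^{2-\varepsilon}$, and using $L(1,\pi,\operatorname{Ad})\ll (N(\mathfrak{q})\|\mathbf{k}\|)^{\varepsilon}$ produces a bound of the form $L(1/2,\pi)^2M_{\xi,\rho}(\pi)^2\ll N(\mathfrak{q})\|\mathbf{k}\|/L+L\xi^{2}\|\mathbf{k}\|^{1/2}$ (up to $(\xi N(\mathfrak{q})\|\mathbf{k}\|)^\varepsilon$). Balancing the two terms gives the optimum at $L\asymp\xi^{-1}N(\mathfrak{q})^{1/2}\|\mathbf{k}\|^{1/4}$ (the hypothesis $\xi<N(\mathfrak{q})^{1/2}\|\mathbf{k}\|^{1/4}$ ensures $L\geq 1$), and substituting back yields \eqref{11.3}. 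Specializing to $\xi\leq N(\mathfrak{q})^{1/2-\delta}\|\mathbf{k}\|^{1/4-\delta}$ and computing exponents then gives \eqref{11.4}. The main obstacle is the rigorous verification of the $\lambda_{\pi'}(\mathfrak{n})$-weighted mollified second moment estimate: one must carefully track the contour shifts and residues to ensure that the mollifier indeed absorbs the principal contribution to the shift-convolution main term, leaving only the stated $N(\mathfrak{n})^{-1/2}$-decay—any off-diagonal main term growing with $N(\mathfrak{n})$ would overwhelm the subconvex savings sought.
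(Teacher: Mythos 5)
Your proposal follows the same strategy as the paper: an amplifier built from the dichotomy $\max\{|\lambda_\pi(\mathfrak{p})|,|\lambda_\pi(\mathfrak{p}^2)|\}\geq 1/2$, positivity to isolate $\pi$, and a $\lambda_{\pi'}(\mathfrak{l})$-weighted mollified second moment with main term decaying like $N(\mathfrak{l})^{-1/2}$ (this is exactly the paper's Proposition \ref{prop11.2}, proved by bounding the twisted Dirichlet series $L(s,s_1,s_2;\rho,\mathfrak{q},\mathfrak{l})\ll N(\mathfrak{l})^{-1/2+2\varepsilon}$). You correctly flag that estimate as the crux.

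However, the final optimization contains an arithmetic error that makes the conclusion inconsistent. Your amplifier contains $\lambda_{\pi'}(\mathfrak{p}^2)$ terms, so expanding $|A(\pi')|^2$ produces Hecke eigenvalues at ideals of norm up to $L^4$ (from $\mathfrak{p}_1^2\mathfrak{p}_2^2$), with about $L^2$ such ideals. Feeding the postulated bound $\xi^{2+\varepsilon}N(\mathfrak{n})^{1/2+\varepsilon}\|\mathbf{k}\|^{1/2+\varepsilon}$ into these terms gives an off-diagonal contribution of size $\xi^2 L^4\|\mathbf{k}\|^{1/2}$, which after dividing by $|A(\pi)|^2\gg L^{2-\varepsilon}$ yields $\xi^2 L^2\|\mathbf{k}\|^{1/2}$ --- not the $L\xi^2\|\mathbf{k}\|^{1/2}$ you wrote (that would be correct only for an amplifier supported on ideals of norm $\leq L^2$, which the $\mathfrak{p}^2$ terms preclude). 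Consequently your balance point $L\asymp\xi^{-1}N(\mathfrak{q})^{1/2}\|\mathbf{k}\|^{1/4}$ is wrong, and the bound it would produce, $M_{\xi,\rho}(\pi)L(1/2,\pi)\ll\xi^{1/2}N(\mathfrak{q})^{1/4}\|\mathbf{k}\|^{3/8}$, is both unjustified and not what you claim: it does not equal \eqref{11.3}. The correct balance $N(\mathfrak{q})\|\mathbf{k}\|L^{-1}=\xi^2L^2\|\mathbf{k}\|^{1/2}$ gives $L=\xi^{-2/3}N(\mathfrak{q})^{1/3}\|\mathbf{k}\|^{1/6}$ and recovers exactly the exponents $\xi^{1/3}N(\mathfrak{q})^{1/3}\|\mathbf{k}\|^{5/12}$ of \eqref{11.3}, and then \eqref{11.4}. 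So the method is sound, but the last two displayed assertions of your optimization step need to be redone.
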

\begin{remark}
By taking $\xi=1$ in \eqref{11.3} we obtain the subconvexity $L(1/2,\pi)\ll ( N(\mathfrak{q})\|\mathbf{k}\|)^{\varepsilon}\cdot N(\mathfrak{q})^{\frac{1}{3}}\|\mathbf{k}\|^{\frac{5}{12}}$ in the weight aspect. 
\end{remark}

\subsubsection{Weighted mollified second moment}\label{sec11.1.1}
Let $\mathfrak{l}\subseteq\mathcal{O}_F$ be an integral ideal such that $\mathfrak{q}\nmid\mathfrak{l}$. Define 
\begin{align*}
\mathcal{J}_{\xi,\rho}^{\mathrm{new}}(\mathfrak{l}):=&\frac{1}{\zeta_{\mathfrak{q}}(2)}\sum_{\substack{\pi\in \mathcal{F}(\mathbf{k},\mathfrak{q})}}\frac{\lambda_\pi(\mathfrak{l})L(1/2,\pi)^2M_{\xi,\rho}(\pi)^2}{L(1,\pi,\Ad)},\\
\mathcal{J}_{\xi,\rho}^{\mathrm{old}}(\mathfrak{l}):=&\frac{2V_{\mathfrak{q}}\textbf{1}_{\mathfrak{q}\subsetneq \mathcal{O}_F}}{N(\mathfrak{q})}\sum_{\substack{\pi\in \mathcal{F}(\mathbf{k},\mathcal{O}_F)}}\frac{\lambda_{\pi}(\mathfrak{l})L_{\pi_{\mathfrak{q}}}L(1/2,\pi)^2M_{\xi,\rho}(\pi)^2}{L(1,\pi,\Ad)}.
\end{align*}

\begin{prop}\label{prop11.2}
Let notation be as before. Then 
\begin{equation}\label{11.11}
\big|\mathcal{J}_{\xi,\rho}^{\mathrm{new}}(\mathfrak{l})+\mathcal{J}_{\xi,\rho}^{\mathrm{old}}(\mathfrak{l})\big|\ll \xi^{10\varepsilon}(N(\mathfrak{q}\mathfrak{l})\|\mathbf{k}\|)^{\varepsilon}\cdot \Big[N(\mathfrak{q})\|\mathbf{k}\|N(\mathfrak{l})^{-\frac{1}{2}}+\xi^{2}\|\mathbf{k}\|^{\frac{1}{2}}N(\mathfrak{l})^{\frac{1}{2}}\Big],
\end{equation}
where the implied constant depending on $F$ and $\varepsilon$. 
\end{prop}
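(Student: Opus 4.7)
The plan is to reduce Proposition \ref{prop11.2} to Corollary \ref{cor7.2} via Hecke multiplicativity, mirroring the Dirichlet-series machinery of Section \ref{sec8.3}. Expanding $M_{\xi,\rho}(\pi)^2$ via \eqref{M} and applying Hecke relations twice gives
\[
\lambda_\pi(\mathfrak{l})\lambda_\pi(\mathfrak{n}_1)\lambda_\pi(\mathfrak{n}_2)=\sum_{\mathfrak{m}_1\mid\gcd(\mathfrak{n}_1,\mathfrak{n}_2)}\sum_{\mathfrak{m}_2\mid\gcd(\mathfrak{l},\,\mathfrak{n}_1\mathfrak{n}_2\mathfrak{m}_1^{-2})}\lambda_\pi(\mathfrak{N}),\qquad \mathfrak{N}:=\mathfrak{l}\mathfrak{n}_1\mathfrak{n}_2\mathfrak{m}_1^{-2}\mathfrak{m}_2^{-2}.
\]
Since $(\mathfrak{l},\mathfrak{q})=(\mathfrak{n}_j,\mathfrak{q})=1$, every $\mathfrak{N}$ is coprime to $\mathfrak{q}$, and Corollary \ref{cor7.2} applies termwise, splitting $\mathcal{J}_{\xi,\rho}^{\mathrm{new}}(\mathfrak{l})+\mathcal{J}_{\xi,\rho}^{\mathrm{old}}(\mathfrak{l})=\mathrm{MT}(\mathfrak{l})+\mathrm{Err}(\mathfrak{l})$, where $\mathrm{MT}(\mathfrak{l})$ aggregates the contour-integral main terms and $\mathrm{Err}(\mathfrak{l})$ the error terms.

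The error bound is the easier half. Writing $N(\mathfrak{N})^{1/2}=N(\mathfrak{l})^{1/2}N(\mathfrak{n}_1\mathfrak{n}_2)^{1/2}N(\mathfrak{m}_1\mathfrak{m}_2)^{-1}$ and noting that the normalized mollifier coefficients $c_{\mathfrak{n}_j}$ from \eqref{M} satisfy $|c_{\mathfrak{n}_j}|\ll N(\mathfrak{n}_j)^{-1/2}\log\xi$ with support in $N(\mathfrak{n}_j)\leq\xi$, the sums over $\mathfrak{m}_1,\mathfrak{m}_2$ produce only $\tau$-factors, whence
\[
|\mathrm{Err}(\mathfrak{l})|\ll \xi^{2+O(\varepsilon)}(N(\mathfrak{q}\mathfrak{l})\|\mathbf{k}\|)^{\varepsilon}\cdot N(\mathfrak{l})^{1/2}\|\mathbf{k}\|^{1/2},
\]
which is the second summand in \eqref{11.11}.

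For the main term, I substitute the Mellin-Barnes representation of $c_{\mathfrak{n}_j}$ from \eqref{M}, interchange sums and integrals, and obtain a triple contour integral analogous to $\mathbb{L}(G,\rho,\mathfrak{q})$ of Definition \ref{defn7.4}, with the Dirichlet series $L(s,s_1,s_2;\rho,\mathfrak{q})$ of Lemma \ref{lem7.1} replaced by its $\mathfrak{l}$-twisted analogue $L_\mathfrak{l}$ incorporating the $\mathfrak{m}_2$-summation and the factor $\tau(\mathfrak{N})/N(\mathfrak{N})^{(1+s)/2}$. Pulling $N(\mathfrak{l})^{-(1+s)/2}$ out of $N(\mathfrak{N})^{-(1+s)/2}$ exposes the expected $N(\mathfrak{l})^{-1/2}$ at $s=0$; the residue admits an Euler product whose local factor at each $\mathfrak{p}\nmid\mathfrak{l}$ coincides with $G_{\mathfrak{p}}(s,s_1,s_2)$ of \eqref{eq7.3}, while at each $\mathfrak{p}\mid\mathfrak{l}$ a modified factor arises that is bounded absolutely by $O_\varepsilon(1)$. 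Shifting $s_1,s_2$ to $-\varepsilon$ and $s$ to the circle $|s|=\varepsilon$, and invoking \eqref{eq7.32} exactly as in Proposition \ref{prop7.3}, gives
\[
\mathrm{MT}(\mathfrak{l})\ll (N(\mathfrak{q}\mathfrak{l})\|\mathbf{k}\|)^{\varepsilon}\cdot N(\mathfrak{q})\|\mathbf{k}\|N(\mathfrak{l})^{-1/2},
\]
matching the first summand of \eqref{11.11}. The principal obstacle is the prime-by-prime analysis of the modified Euler factors at $\mathfrak{p}\mid\mathfrak{l}$: the constraint $\mathfrak{m}_2\mid\gcd(\mathfrak{l},\mathfrak{n}_1\mathfrak{n}_2\mathfrak{m}_1^{-2})$ couples the $\mathfrak{l}$-parameter nontrivially with the inner sums, and one must verify both the holomorphy of the resulting product in the shifted region and its polynomial growth, paralleling but extending the factorization \eqref{7.9} from Lemma \ref{lem7.1}.
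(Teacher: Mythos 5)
Your overall reduction — expand $M_{\xi,\rho}(\pi)^2$ via \eqref{M}, apply Hecke multiplicativity to convert $\lambda_\pi(\mathfrak{l})\lambda_\pi(\mathfrak{n}_1)\lambda_\pi(\mathfrak{n}_2)$ into a double divisor sum, feed each resulting $\lambda_\pi(\mathfrak{N})$ into Corollary \ref{cor7.2}, and separate the contribution into a contour-integral main term and a tail — is exactly what the paper does, and your treatment of the tail (triangle inequality, support $N(\mathfrak{n}_j)\leq\xi$, harmless $\tau$-factors) gives the second summand of \eqref{11.11} just as the paper's estimate \eqref{eq11.5} does.

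Where you diverge is the main term, and here your route is substantially heavier than necessary. You propose to mimic Lemma \ref{lem7.1} and Proposition \ref{prop7.3}: build the Euler-product factorization of the $\mathfrak{l}$-twisted Dirichlet series $L(s,s_1,s_2;\rho,\mathfrak{q},\mathfrak{l})$, then \emph{shift} the $s_1,s_2$-contours into $\Re(s_j)<0$ and pick up residues at $s_j=0$. Two issues: first, shifting leftward requires a meromorphic continuation of $L_\mathfrak{l}$ across $\Re(s_j)=0$, which (as in \eqref{eq7.5}) produces inverse $L(1+s_j+s/2,\rho)$-factors whose control near the edge of the critical strip needs the growth bounds \eqref{eq7.32} and a careful choice of contour $\mathcal{L}_\varepsilon$; second, you yourself flag the prime-by-prime analysis of the modified Euler factors at $\mathfrak{p}\mid\mathfrak{l}$ as the principal obstacle. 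Neither is needed. Because the target is merely an \emph{upper bound} allowing a lossy $\xi^{10\varepsilon}$, the paper keeps $\Re(s_1)=\Re(s_2)=2\varepsilon>0$ throughout (so the triple integral is absolutely convergent with $|\xi^{s_j}|=\xi^{2\varepsilon}$) and bounds $L(s,s_1,s_2;\rho,\mathfrak{q},\mathfrak{l})$ by a direct triangle inequality followed by elementary reparametrizations of the $\mathfrak{n}_1,\mathfrak{n}_2,\mathfrak{m},\mathfrak{c}$-sums: writing $\mathfrak{a}=\mathfrak{m}\mathfrak{b}$, $\mathfrak{m}_j=\mathfrak{m}\mathfrak{n}_j$, and using $\sum_{\mathfrak{c}\mid\gcd(\mathfrak{l},\cdot)}N(\mathfrak{c})^{-1+2\varepsilon}\ll N(\gcd)^\varepsilon$, it extracts $L(s,s_1,s_2;\rho,\mathfrak{q},\mathfrak{l})\ll N(\mathfrak{l})^{-1/2+2\varepsilon}$ without any Euler-product or continuation machinery, and the prefactor $(N(\mathfrak{q})+1)\cdot\sup_{|s|=\varepsilon}|G_{\mathcal{O}_F,\mathfrak{q}}(s)|\asymp N(\mathfrak{q})\|\mathbf{k}\|^{1+O(\varepsilon)}$ supplies the $N(\mathfrak{q})\|\mathbf{k}\|$. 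So your approach is not incorrect in spirit, but it takes the asymptotic route of Proposition \ref{prop7.3} (designed to produce the leading constant) when only an upper bound is required, and in doing so it creates precisely the technical difficulty — the modified local factors at $\mathfrak{p}\mid\mathfrak{l}$ — that the paper's elementary estimate is designed to sidestep.
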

\begin{proof}
By \eqref{c7.8} and the Hecke relation
\begin{align*}
\lambda_\pi(\mathfrak{l})\lambda_\pi(\mathfrak{n}_1\mathfrak{n}_2\mathfrak{m}^{-2})=\sum_{\substack{\mathfrak{c}\mid\gcd(\mathfrak{n}_1\mathfrak{n}_2\mathfrak{m}^{-2},\mathfrak{l})}}\lambda_\pi(\mathfrak{l}\mathfrak{n}_1\mathfrak{n}_2\mathfrak{m}^{-2}\mathfrak{c}^{-2}),
\end{align*}
we can rewrite $\mathcal{J}_{\xi,\rho}^{\mathrm{new}}(\mathfrak{l})+\mathcal{J}_{\xi,\rho}^{\mathrm{old}}(\mathfrak{l})$ as 
\begin{align*}
\mathcal{J}_{\xi,\rho}^{\mathrm{new}}(\mathfrak{l})+\mathcal{J}_{\xi,\rho}^{\mathrm{old}}(\mathfrak{l})=&\sum_{\substack{\mathfrak{n}_1 \subseteq \mathcal{O}_F \\ (\mathfrak{n}_1,\mathfrak{q}) = 1\\ N(\mathfrak{n}_1)\leq \xi}}\sum_{\substack{\mathfrak{n} _2\subseteq \mathcal{O}_F \\ (\mathfrak{n}_2,\mathfrak{q}) = 1\\ N(\mathfrak{n}_2)\leq \xi}}  \frac{\mu_F(\mathfrak{n}_1)\rho(\mathfrak{n}_1)\mu_F(\mathfrak{n}_2)\rho(\mathfrak{n}_2)}{\sqrt{N(\mathfrak{n}_1)N(\mathfrak{n}_2)}}\cdot \frac{(\log \xi N(\mathfrak{n}_1)^{-1})^2}{4(\log \xi)^2}\\
&\quad\cdot (\log \xi N(\mathfrak{n}_2)^{-1})^2\sum_{\substack{\mathfrak{m}\mid\gcd(\mathfrak{n}_1,\mathfrak{n}_2)\\ \mathfrak{c}\mid\gcd(\mathfrak{n}_1\mathfrak{n}_2\mathfrak{m}^{-2},\mathfrak{l})}}J_{\mathrm{Spec}}(f_{\mathfrak{l}\mathfrak{n}_1\mathfrak{n}_2\mathfrak{m}^{-2}\mathfrak{c}^{-2},\mathfrak{q}},\textbf{s}).
\end{align*}

Let $\mathfrak{n}=\mathfrak{l}\mathfrak{n}_1\mathfrak{n}_2\mathfrak{m}^{-2}\mathfrak{c}^{-2}$. Taking advantage of Corollary \ref{cor7.2}, i.e.,  
\begin{align*}
&\frac{1}{\zeta_{\mathfrak{q}}(2)}\sum_{\substack{\pi\in \mathcal{F}(\mathbf{k},\mathfrak{q})}}\frac{\lambda_{\pi}(\mathfrak{n})L(1/2,\pi)^2}{L(1,\pi,\Ad)}+\frac{2V_{\mathfrak{q}}\textbf{1}_{\mathfrak{q}\subsetneq \mathcal{O}_F}}{N(\mathfrak{q})}\sum_{\substack{\pi\in \mathcal{F}(\mathbf{k},\mathcal{O}_F)}}\frac{\lambda_{\pi}(\mathfrak{n})L_{\pi_{\mathfrak{q}}}L(1/2,\pi)^2}{L(1,\pi,\Ad)}\\
=&\frac{(N(\mathfrak{q})+1)\cdot \delta_{\mathbf{k},\mathfrak{q}}}{2\pi i}\oint_{\mathcal{C}_{\varepsilon}}\frac{\zeta_F(1+s)G_{\mathfrak{n},\mathfrak{q}}(s)}{s}ds+O(N(\mathfrak{n})^{1/2+\varepsilon}N(\mathfrak{q})^{\varepsilon}\|\mathbf{k}\|^{1/2+\varepsilon}),
\end{align*}
we derive that 
\begin{equation}\label{eq11.4}
\big|\mathcal{J}_{\xi,\rho}^{\mathrm{new}}(\mathfrak{l})+\mathcal{J}_{\xi,\rho}^{\mathrm{old}}(\mathfrak{l})-\mathcal{J}_{\xi,\rho}^{\mathrm{main}}(\mathfrak{l})\big|\ll \mathcal{J}_{\xi,\rho}^{\mathrm{tail}}(\mathfrak{l}),
\end{equation}
where the implied constant depends on $\varepsilon$ and $F$. Here $\mathcal{J}_{\xi,\rho}^{\mathrm{main}}(\mathfrak{l})$ is defined by 
\begin{align*}
&-\frac{1}{4\pi^2(\log \xi)^2}\sum_{\substack{\mathfrak{n}_1 \subset \mathcal{O}_F \\ (\mathfrak{n}_1,\mathfrak{q}) = 1}} \sum_{\substack{\mathfrak{n}_2 \subset \mathcal{O}_F \\ (\mathfrak{n}_2,\mathfrak{q}) = 1}} \frac{\mu_F(\mathfrak{n}_1)\mu_F(\mathfrak{n}_2)\rho(\mathfrak{n}_1)\rho(\mathfrak{n}_2)}{N(\mathfrak{n}_1)^{1/2}N(\mathfrak{n}_2)^{1/2}}\int_{(2)} \frac{\xi^{s_1}}{N(\mathfrak{n}_1)^{s_1}} \frac{ds_1}{s_1^3}\\
&\int_{(2)} \frac{\xi^{s_2}ds_2}{N(\mathfrak{n}_2)^{s_2}s_2^3}\sum_{\substack{\mathfrak{m}\mid\gcd(\mathfrak{n}_1,\mathfrak{n}_2)\\ \mathfrak{c}\mid\gcd(\mathfrak{n}_1\mathfrak{n}_2\mathfrak{m}^{-2},\mathfrak{l})}}\frac{(N(\mathfrak{q})+1) \delta_{\mathbf{k},\mathfrak{q}}}{2\pi i}\oint_{\mathcal{C}_{\varepsilon}}\frac{\zeta_F(1+s)G_{\mathfrak{l}\mathfrak{n}_1\mathfrak{n}_2\mathfrak{m}^{-2}\mathfrak{c}^{-2},\mathfrak{q}}(s)}{s}ds,
\end{align*}
and $\mathcal{J}_{\xi,\rho}^{\mathrm{tail}}(\mathfrak{l})$ is defined by 
\begin{align*}
\sum_{\substack{\mathfrak{n}_1 \subseteq \mathcal{O}_F \\ (\mathfrak{n}_1,\mathfrak{q}) = 1\\ N(\mathfrak{n}_1)\leq \xi}}\sum_{\substack{\mathfrak{n} _2\subseteq \mathcal{O}_F \\ (\mathfrak{n}_2,\mathfrak{q}) = 1\\ N(\mathfrak{n}_2)\leq \xi}}  \frac{1}{\sqrt{N(\mathfrak{n}_1)N(\mathfrak{n}_2)}}\sum_{\substack{\mathfrak{m}\mid\gcd(\mathfrak{n}_1,\mathfrak{n}_2)\\ \mathfrak{c}\mid\gcd(\mathfrak{n}_1\mathfrak{n}_2\mathfrak{m}^{-2},\mathfrak{l})}}N(\mathfrak{l}\mathfrak{n}_1\mathfrak{n}_2\mathfrak{m}^{-2}\mathfrak{c}^{-2})^{\frac{1}{2}+\varepsilon}N(\mathfrak{q})^{\varepsilon}\|\mathbf{k}\|^{\frac{1}{2}+\varepsilon},
\end{align*}
where the implied constant depends on $\varepsilon$ and $F$. By a direct estimate, $\mathcal{J}_{\xi,\rho}^{\mathrm{tail}}(\mathfrak{l})$ is 
\begin{align*}
\ll \xi^{2\varepsilon}N(\mathfrak{q})^{\varepsilon}(\|\mathbf{k}\|N(\mathfrak{l}))^{\frac{1}{2}+\varepsilon}\sum_{\substack{\mathfrak{n}_1 \subseteq \mathcal{O}_F \\ (\mathfrak{n}_1,\mathfrak{q}) = 1\\ N(\mathfrak{n}_1)\leq \xi}}\sum_{\substack{\mathfrak{n} _2\subseteq \mathcal{O}_F \\ (\mathfrak{n}_2,\mathfrak{q}) = 1\\ N(\mathfrak{n}_2)\leq \xi}}\sum_{\substack{\mathfrak{m}\mid\gcd(\mathfrak{n}_1,\mathfrak{n}_2)\\ \mathfrak{c}\mid\gcd(\mathfrak{n}_1\mathfrak{n}_2\mathfrak{m}^{-2},\mathfrak{l})}}\frac{1}{N(\mathfrak{m})^{1+2\varepsilon}N(\mathfrak{c})^{1+2\varepsilon}},
\end{align*}
where the implied constant depends on $F$ and $\varepsilon$. As a consequence, 
\begin{equation}\label{eq11.5}
\mathcal{J}_{\xi,\rho}^{\mathrm{tail}}(\mathfrak{l})\ll \xi^{2+2\varepsilon}N(\mathfrak{q})^{\varepsilon}\|\mathbf{k}\|^{\frac{1}{2}+\varepsilon}N(\mathfrak{l})^{\frac{1}{2}+\varepsilon}.
\end{equation}

According to the definition \eqref{G} we have
\begin{align*}
G_{\mathfrak{l}\mathfrak{n}_1\mathfrak{n}_2\mathfrak{m}^{-2}\mathfrak{c}^{-2},\mathfrak{q}}(s)=\frac{\tau(\mathfrak{l}\mathfrak{n}_1\mathfrak{n}_2\mathfrak{m}^{-2}\mathfrak{c}^{-2})}{N(\mathfrak{l}\mathfrak{n}_1\mathfrak{n}_2\mathfrak{m}^{-2}\mathfrak{c}^{-2})^{(1+s)/2}}\cdot G_{\mathcal{O}_F,\mathfrak{q}}(s).
\end{align*}

Substituting this into the definition of $\mathcal{J}_{\xi,\rho}^{\mathrm{main}}(\mathfrak{l})$, we can rewrite $\mathcal{J}_{\xi,\rho}^{\mathrm{main}}(\mathfrak{l})$ as 
\begin{align*}
-\frac{(N(\mathfrak{q})+1)\delta_{\mathbf{k},\mathfrak{q}}}{4\pi^2(\log \xi)^2}\int_{(2\varepsilon)} \frac{\xi^{s_1}}{s_1^3} \int_{(2\varepsilon)} \frac{\xi^{s_1}}{s_2^3} \frac{1}{2\pi i}\oint_{\mathcal{C}_{\varepsilon}}\frac{G_{\mathcal{O}_F,\mathfrak{q}}(s)L(s,s_1,s_2;\rho,\mathfrak{q},\mathfrak{l})}{s\zeta_F(1+s)^{-1}}dsds_1ds_2,
\end{align*}
where $L(s,s_1,s_2;\rho,\mathfrak{q},\mathfrak{l})$ is defined by 
\begin{align*}
\sum_{\substack{\mathfrak{n}_1 \subset \mathcal{O}_F \\ (\mathfrak{n}_1,\mathfrak{q}) = 1}} \sum_{\substack{\mathfrak{n}_2 \subset \mathcal{O}_F \\ (\mathfrak{n}_2,\mathfrak{q}) = 1}} \frac{\mu_F(\mathfrak{n}_1)\mu_F(\mathfrak{n}_2)\rho(\mathfrak{n}_1)\rho(\mathfrak{n}_2)}{N(\mathfrak{n}_1)^{1/2+s_1}N(\mathfrak{n}_2)^{1/2+s_2}}\sum_{\substack{\mathfrak{m}\mid\gcd(\mathfrak{n}_1,\mathfrak{n}_2)\\ \mathfrak{c}\mid\gcd(\mathfrak{n}_1\mathfrak{n}_2\mathfrak{m}^{-2},\mathfrak{l})}}\frac{\tau(\mathfrak{l}\mathfrak{n}_1\mathfrak{n}_2\mathfrak{m}^{-2}\mathfrak{c}^{-2})}{N(\mathfrak{l}\mathfrak{n}_1\mathfrak{n}_2\mathfrak{m}^{-2}\mathfrak{c}^{-2})^{(1+s)/2}}.
\end{align*}

Applying the triangle inequality we obtain 
\begin{align*}
L(s,s_1,s_2;\rho,\mathfrak{q},\mathfrak{l})\ll \sum_{\substack{\mathfrak{n}_1,\mathfrak{n}_2 \subset \mathcal{O}_F \\ (\mathfrak{n}_1\mathfrak{n}_2,\mathfrak{q}) = 1}}  \frac{1}{N(\mathfrak{n}_1\mathfrak{n}_2)^{\frac{1}{2}+2\varepsilon}}\sum_{\substack{\mathfrak{m}\mid\gcd(\mathfrak{n}_1,\mathfrak{n}_2)\\ \mathfrak{c}\mid\gcd(\mathfrak{n}_1\mathfrak{n}_2\mathfrak{m}^{-2},\mathfrak{l})}}\frac{1}{N(\mathfrak{l}\mathfrak{n}_1\mathfrak{n}_2\mathfrak{m}^{-2}\mathfrak{c}^{-2})^{\frac{1}{2}-\varepsilon}},
\end{align*}
where the right hand side boils down to 
\begin{align*}
\sum_{\substack{\mathfrak{a} \subset \mathcal{O}_F \\ (\mathfrak{a},\mathfrak{q}) = 1}}\sum_{\substack{\mathfrak{n}_1,\mathfrak{n}_2 \subset \mathcal{O}_F \\ (\mathfrak{n}_1\mathfrak{n}_2,\mathfrak{q}) = 1\\
(\mathfrak{n}_1,\mathfrak{n}_2) = 1}} \frac{1}{N(\mathfrak{a}^2\mathfrak{n}_1\mathfrak{n}_2)^{1/2+2\varepsilon}}\sum_{\substack{\mathfrak{m}\mid\mathfrak{a}\\ \mathfrak{c}\mid\gcd(\mathfrak{n}_1\mathfrak{n}_2\mathfrak{a}^2\mathfrak{m}^{-2},\mathfrak{l})}}\frac{1}{N(\mathfrak{l}\mathfrak{n}_1\mathfrak{n}_2\mathfrak{a}^2\mathfrak{m}^{-2}\mathfrak{c}^{-2})^{1/2-\varepsilon}}. 
\end{align*}

By changing the variable $\mathfrak{m}\mapsto \mathfrak{a}\mathfrak{m}^{-1}$ and $\mathfrak{c}\mapsto \gcd(\mathfrak{l},\mathfrak{n}_1\mathfrak{n}_2\mathfrak{m}^{2})$, we can thus bound $L(s,s_1,s_2;\rho,\mathfrak{q},\mathfrak{l})$ by 
\begin{align*}
\sum_{\substack{\mathfrak{a} \subset \mathcal{O}_F \\ (\mathfrak{a},\mathfrak{q}) = 1}}\sum_{\substack{\mathfrak{n}_1,\mathfrak{n}_2 \subset \mathcal{O}_F \\ (\mathfrak{n}_1\mathfrak{n}_2,\mathfrak{q}) = 1\\
(\mathfrak{n}_1,\mathfrak{n}_2) = 1}} \frac{1}{N(\mathfrak{a}^2\mathfrak{n}_1\mathfrak{n}_2)^{\frac{1}{2}+2\varepsilon}}\sum_{\substack{\mathfrak{m}\mid\mathfrak{a}}}\frac{N(\gcd(\mathfrak{l},\mathfrak{n}_1\mathfrak{n}_2\mathfrak{m}^{2}))^{1-\varepsilon}}{N(\mathfrak{l}\mathfrak{n}_1\mathfrak{n}_2\mathfrak{m}^{2})^{1/2-\varepsilon}}\sum_{\mathfrak{c}\mid \gcd(\mathfrak{l},\mathfrak{n}_1\mathfrak{n}_2\mathfrak{m}^{2})}\frac{1}{N(\mathfrak{c})^{1-2\varepsilon}}.
\end{align*}

Notice that $\sum_{\mathfrak{c}\mid \gcd(\mathfrak{l},\mathfrak{n}_1\mathfrak{n}_2\mathfrak{m}^{2})}N(\mathfrak{c})^{-1+2\varepsilon}\ll \gcd(\mathfrak{l},\mathfrak{n}_1\mathfrak{n}_2\mathfrak{m}^{2})^{\varepsilon}$. Hence 
\begin{equation}\label{eq11.6}
L(s,s_1,s_2;\rho,\mathfrak{q},\mathfrak{l})\ll \mathcal{I}(\mathfrak{l}),	
\end{equation}
where 
\begin{align*}
\mathcal{I}(\mathfrak{l}):=\sum_{\substack{\mathfrak{a} \subset \mathcal{O}_F \\ (\mathfrak{a},\mathfrak{q}) = 1}}\sum_{\substack{\mathfrak{n}_1,\mathfrak{n}_2 \subset \mathcal{O}_F \\ (\mathfrak{n}_1\mathfrak{n}_2,\mathfrak{q}) = 1\\
(\mathfrak{n}_1,\mathfrak{n}_2) = 1}}\frac{1}{N(\mathfrak{a}^2\mathfrak{n}_1\mathfrak{n}_2)^{1/2+2\varepsilon}}\sum_{\substack{\mathfrak{m}\mid\mathfrak{a}}}\frac{N(\gcd(\mathfrak{l},\mathfrak{n}_1\mathfrak{n}_2\mathfrak{m}^{2}))^{1-\varepsilon}}{N(\mathfrak{l}\mathfrak{n}_1\mathfrak{n}_2\mathfrak{m}^{2})^{1/2-\varepsilon}}.
\end{align*}

Writing $\mathfrak{a}=\mathfrak{m}\mathfrak{b}$ we obtain 
\begin{align*}
\mathcal{I}(\mathfrak{l})=\sum_{\substack{\mathfrak{b} \subset \mathcal{O}_F \\ (\mathfrak{b},\mathfrak{q}) = 1}}\sum_{\substack{\mathfrak{m} \subset \mathcal{O}_F \\ (\mathfrak{m},\mathfrak{q}) = 1}}\sum_{\substack{\mathfrak{n}_1,\mathfrak{n}_2 \subset \mathcal{O}_F \\ (\mathfrak{n}_1\mathfrak{n}_2,\mathfrak{q}) = 1\\
(\mathfrak{n}_1,\mathfrak{n}_2) = 1}}\frac{1}{N(\mathfrak{b}^2\mathfrak{m}^2\mathfrak{n}_1\mathfrak{n}_2)^{1/2+2\varepsilon}}\cdot\frac{N(\gcd(\mathfrak{l},\mathfrak{n}_1\mathfrak{n}_2\mathfrak{m}^{2}))^{1-\varepsilon}}{N(\mathfrak{l}\mathfrak{n}_1\mathfrak{n}_2\mathfrak{m}^{2})^{1/2-\varepsilon}}.
\end{align*}

Summing over $\mathfrak{b}$, and write $\mathfrak{m}_1=\mathfrak{m}\mathfrak{n}_1$, and $\mathfrak{m}_2=\mathfrak{m}\mathfrak{n}_2$, we derive  
\begin{align*}
\mathcal{I}(\mathfrak{l})\ll \frac{\zeta_F(1+4\varepsilon)}{N(\mathfrak{l})^{1/2-\varepsilon}}\sum_{\substack{\mathfrak{m}_1,\mathfrak{m}_2 \subset \mathcal{O}_F \\ (\mathfrak{m}_1\mathfrak{m}_2,\mathfrak{q}) = 1}}\frac{N(\gcd(\mathfrak{l},\mathfrak{m}_1\mathfrak{m}_2))^{1-\varepsilon}}{N(\mathfrak{m}_1\mathfrak{m}_2)^{1+\varepsilon}}.
\end{align*}

As a consequence, we obtain 
\begin{align*}
\mathcal{I}(\mathfrak{l})\ll\frac{\zeta_F(1+4\varepsilon)}{N(\mathfrak{l})^{1/2-\varepsilon}}\sum_{\mathfrak{b}\mid\mathfrak{l}}\sum_{\substack{\mathfrak{n} \subset \mathcal{O}_F \\ 
(\mathfrak{n},\mathfrak{q}) = 1\\
(\mathfrak{n},\mathfrak{b}^{-1}\mathfrak{l}) = 1}}\frac{N(\mathfrak{b})^{1-\varepsilon}}{N(\mathfrak{n}\mathfrak{b})^{1+\varepsilon}} \sum_{\substack{\mathfrak{m}_1,\mathfrak{m}_2 \subset \mathcal{O}_F \\ \mathfrak{n}=\mathfrak{m}_1\mathfrak{m}_2\\
(\mathfrak{m}_1\mathfrak{m}_2,\mathfrak{q}) = 1}}1\ll \frac{\zeta_F(1+\varepsilon)^3}{N(\mathfrak{l})^{1/2-\varepsilon}}\sum_{\mathfrak{b}\mid\mathfrak{l}}1.
\end{align*}

In conjunction with 
\eqref{eq11.6}, we conclude that  
\begin{equation}\label{11.9}
L(s,s_1,s_2;\rho,\mathfrak{q},\mathfrak{l})\ll N(\mathfrak{l})^{-\frac{1}{2}+2\varepsilon},
\end{equation}
where the implied constant depends on $F$ and $\varepsilon$.

For $s\in \mathcal{C}_{\varepsilon}$, we have $|\zeta_F(1+s)|\ll \varepsilon^{-1}$, with the implied constant depending on $F$. In conjunction with \eqref{11.9} we derive 
\begin{equation}\label{11.10}
\mathcal{J}_{\xi,\rho}^{\mathrm{main}}(\mathfrak{l})\ll \xi^{10\varepsilon}N(\mathfrak{q})^{1+\varepsilon}\|\mathbf{k}\|^{1+\varepsilon}N(\mathfrak{l})^{-\frac{1}{2}+\varepsilon}.
\end{equation}

Therefore, \eqref{11.11} follows from substituting \eqref{eq11.5} and \eqref{11.10} into \eqref{eq11.4}.
\end{proof}

\subsubsection{Proof of Theorem \ref{thm11.1}}
Let $\pi\in \Pi_{\mathbf{k}}(\mathfrak{q})$. For a prime $\mathfrak{p}\neq \mathfrak{q}$, by Hecke relation $\lambda_{\pi}(\mathfrak{l})^2=\lambda_{\pi}(\mathfrak{l}^2)+1$, we have
\begin{align*}
\min\{|\lambda_{\pi}(\mathfrak{l})|,|\lambda_{\pi}(\mathfrak{l}^2)|\}\geq 1/2.
\end{align*}
Hence, there exists some $r_{\mathfrak{p}}\in \{1,2\}$ such that $N(\mathfrak{p}^{r_{\mathfrak{p}}})\geq 1/2$. 

Let $L>1$ and $\mathcal{L}:=\{\mathfrak{l}=\mathfrak{p}^{r_{\mathfrak{p}}}:\ L<N(\mathfrak{p})\leq 2L,\ \ \mathfrak{p}\neq\mathfrak{q},\ N(\mathfrak{p}^{r_{\mathfrak{p}}})\geq 1/2,\ r_{\mathfrak{p}}\in \{1,2\}\}$. For $\mathfrak{l}\in \mathcal{L}$, let $\alpha_{\mathfrak{l}}:=\lambda_{\pi}(\mathfrak{l})/|\lambda_{\pi}(\mathfrak{l})|$. Consider 
\begin{align*}
\mathcal{J}_{\xi,\rho}^{\mathrm{new}}:=&\frac{1}{\zeta_{\mathfrak{q}}(2)}\sum_{\substack{\sigma\in \mathcal{F}(\mathbf{k},\mathfrak{q})}}\frac{L(1/2,\sigma)^2M_{\xi,\rho}(\sigma)^2}{L(1,\sigma,\Ad)}\cdot \Big|\sum_{\mathfrak{l}\in \mathcal{L}}\alpha_{\mathfrak{l}}\lambda_{\sigma}(\mathfrak{l})\Big|^2,\\
\mathcal{J}_{\xi,\rho}^{\mathrm{old}}:=&\frac{2V_{\mathfrak{q}}\textbf{1}_{\mathfrak{q}\subsetneq \mathcal{O}_F}}{N(\mathfrak{q})}\sum_{\substack{\sigma\in \mathcal{F}(\mathbf{k},\mathcal{O}_F)}}\frac{L_{\sigma_{\mathfrak{q}}}L(1/2,\sigma)^2M_{\xi,\rho}(\sigma)^2}{L(1,\sigma,\Ad)}\cdot \Big|\sum_{\mathfrak{l}\in \mathcal{L}}\alpha_{\mathfrak{l}}\lambda_{\sigma}(\mathfrak{l})\Big|^2.
\end{align*}

By definition, for $\sigma=\pi$, we have
\begin{align*}
\Big|\sum_{\mathfrak{l}\in \mathcal{L}}\alpha_{\mathfrak{l}}\lambda_{\sigma}(\mathfrak{l})\Big|^2=\Big|\sum_{\mathfrak{l}\in \mathcal{L}}|\lambda_{\pi}(\mathfrak{l})|\Big|^2\gg (\#\mathcal{L})^2\gg \frac{L^2}{(\log L)^2}.
\end{align*}
Therefore, dropping all but $\sigma=\pi$ yields 
\begin{equation}\label{eq11.10}
\mathcal{J}_{\xi,\rho}^{\mathrm{new}}+\mathcal{J}_{\xi,\rho}^{\mathrm{old}}\gg \frac{L(1/2,\pi)^2M_{\xi,\rho}(\pi)^2}{L(1,\pi,\Ad)}\cdot \frac{L^2}{(\log L)^2},
\end{equation}
where the implied constant depends on $F$ and $\varepsilon$. 

On the other hand, squaring the sum over $\mathfrak{l}=\mathfrak{p}^{r_{\mathfrak{p}}}\in \mathcal{L}$ yields
\begin{align*}
\mathcal{J}_{\xi,\rho}^{\mathrm{new}}+\mathcal{J}_{\xi,\rho}^{\mathrm{old}}=&\sum_{\substack{\mathfrak{l}_1,\mathfrak{l}_2\in\mathcal{L}\\
\gcd(\mathfrak{l}_1,\mathfrak{l}_2)=\mathcal{O}_F}}\alpha_{\mathfrak{l}_1}\overline{\alpha_{\mathfrak{l}_2}}\cdot (\mathcal{J}_{\xi,\rho}^{\mathrm{new}}(\mathfrak{l}_1\mathfrak{l}_2)+\mathcal{J}_{\xi,\rho}^{\mathrm{old}}(\mathfrak{l}_1\mathfrak{l}_2))\\
&+\sum_{\substack{\mathfrak{l}\in\mathcal{L}}}|\alpha_{\mathfrak{l}}|^2\cdot (\mathcal{J}_{\xi,\rho}^{\mathrm{new}}(\mathfrak{l}^2)+\mathcal{J}_{\xi,\rho}^{\mathrm{old}}(\mathfrak{l}^2)+\mathcal{J}_{\xi,\rho}^{\mathrm{new}}(\mathcal{O}_F)+\mathcal{J}_{\xi,\rho}^{\mathrm{old}}(\mathcal{O}_F)).
\end{align*}

Taking advantage of Proposition \ref{prop11.2} and $|\alpha_{\mathfrak{l}}|\leq 1$, we obtain 
\begin{align*}
\mathcal{J}_{\xi,\rho}^{\mathrm{new}}+\mathcal{J}_{\xi,\rho}^{\mathrm{old}}\ll & \xi^{\varepsilon}(N(\mathfrak{q})\|\mathbf{k}\|L)^{\varepsilon}\sum_{\substack{\mathfrak{l}_1,\mathfrak{l}_2\in\mathcal{L}\\
\gcd(\mathfrak{l}_1,\mathfrak{l}_2)=\mathcal{O}_F}}\Big[\frac{N(\mathfrak{q})\|\mathbf{k}\|}{N(\mathfrak{l}_1\mathfrak{l}_2)^{\frac{1}{2}}}+\xi^{2}\|\mathbf{k}\|^{\frac{1}{2}}N(\mathfrak{l}_1\mathfrak{l}_2)^{\frac{1}{2}}\Big]\\
&+\xi^{\varepsilon}(N(\mathfrak{q})\|\mathbf{k}\|L)^{\varepsilon}\cdot \sum_{\substack{\mathfrak{l}\in\mathcal{L}}}\Big[N(\mathfrak{q})\|\mathbf{k}\|N(\mathfrak{l}^2)^{-\frac{1}{2}}+\xi^{2}\|\mathbf{k}\|^{\frac{1}{2}}N(\mathfrak{l}^2)^{\frac{1}{2}}\Big]\\
&+\xi^{\varepsilon}(N(\mathfrak{q})\|\mathbf{k}\|L)^{\varepsilon}\cdot \sum_{\substack{\mathfrak{l}\in\mathcal{L}}}\Big[N(\mathfrak{q})\|\mathbf{k}\|+\xi^{2}\|\mathbf{k}\|^{\frac{1}{2}}\Big],
\end{align*}
where the implied constant depends on $F$ and $\varepsilon$. Consequently,
\begin{equation}\label{eq11.11}
\mathcal{J}_{\xi,\rho}^{\mathrm{new}}+\mathcal{J}_{\xi,\rho}^{\mathrm{old}}\ll\xi^{\varepsilon}(N(\mathfrak{q})\|\mathbf{k}\|L)^{\varepsilon}\cdot \Big[N(\mathfrak{q})\|\mathbf{k}\|L+\xi^{2}\|\mathbf{k}\|^{\frac{1}{2}}L^{4}\Big].
\end{equation}

Combining \eqref{eq11.10} and \eqref{eq11.11} we obtain 
\begin{equation}\label{11.12}
\frac{L(1/2,\pi)^2M_{\xi,\rho}(\pi)^2}{L(1,\pi,\Ad)}\ll \xi^{\varepsilon}(N(\mathfrak{q})\|\mathbf{k}\|L)^{\varepsilon}\cdot \Big[N(\mathfrak{q})\|\mathbf{k}\|L^{-1}+\xi^{2}\|\mathbf{k}\|^{\frac{1}{2}}L^2\Big].
\end{equation}

Taking $N(\mathfrak{q})\|\mathbf{k}\|L^{-1}=\xi^{2}\|\mathbf{k}\|^{\frac{1}{2}}L^2$, i.e., $L=\xi^{-2/3}N(\mathfrak{q})^{1/3}\|\mathbf{k}\|^{1/6}$, into \eqref{11.12}, along with $L(1,\pi,\Ad)\ll N(\mathfrak{q})^{\varepsilon}\|\mathbf{k}\|^{\varepsilon}$, we obtain \eqref{11.3}.

\subsection{The Final Steps} In this part, we will remove the harmonic weight and establish the Main Theorems. Using the same mollifier as that in \cite{KowalskiMichel1999}, we can obtain:
\begin{equation}
   \sum_{\pi\in \mathcal{F}(\mathbf{k},\mathfrak{q})}L(1/2,\pi)M_{\xi}(\pi)\sim\frac{\zeta_F(2)^2 D_F^{3/2}}{  \Res_{s=1}\zeta_F(s)}\prod_{v\mid\infty}\frac{(k_v-1)}{4\pi ^2}\cdot\mathcal{M}_{\fq,\mathbf{k}}^{(1)},
\end{equation}
and
\begin{equation}
\sum_{\pi\in \mathcal{F}(\mathbf{k},\mathfrak{q})}L(1/2,\pi)^2M_{\xi}(\pi)^2\sim\frac{\zeta_F(2)^3 D_F^{3/2}}{(\Res_{s=1}\zeta_F(s))^2}\prod_{v\mid\infty}\frac{k_v-1}{4\pi^2}\cdot \mathcal{M}_{\mathfrak{q},\mathbf{k}}^{(2)}.
\end{equation}

Let $A>(\log\xi)^{\frac{3}{2}+\epsilon}.$ By Cauchy's inequality, one has:
\[\sum_{\substack{\pi\in \mathcal{F}(\mathbf{k},\mathfrak{q})\\ L(1/2,\pi)<A^{-1}}}L(1/2,\pi)M_{\xi}(\pi)\ll \frac{(\#\cF(\mathbf{k},\fq))^{1/2}}{A}\cdot\left(\sum_{\substack{\pi\in \mathcal{F}(\mathbf{k},\mathfrak{q})}}\frac{M_{\xi}(\pi)^2L^{(\fq)}(1,\pi,\Ad)}{L^{(\mathfrak{q})}(1,\pi,\Ad)}\right).\]
Similar to the proof of \eqref{10.4}, we have:
\[\sum_{\substack{\pi\in \mathcal{F}(\mathbf{k},\mathfrak{q})}}\frac{M_{\xi}(\pi)^2L^{(\fq)}(1,\pi,\Ad)}{L^{(\mathfrak{q})}(1,\pi,\Ad)}\ll V_{\fq}\|\mathbf{k}\|D_F^{3/2}(\log\xi)^{3}.\]
Here is the idea for the proof: we first truncate $L^{(\mathfrak{q})}(1,\pi,\Ad)$ in the numerator by $\omega_{\pi}(x).$ This works since $M_{\xi}(\pi)^{2}$ satisfies the conditions \ref{conditions to estimate the middle sum}. The we apply Petersson and use the fact that $M_{\xi}(\pi)$ is summed  over squarefree ideals.

The proof of Theorem \ref{main theorem} can be established by following an argument similar to that in Theorem \ref{thm10.2}. By following a similar argument as presented in Section \ref{sec10}, we establish Corollary \ref{uniform nonvanishing corollary} and Corollary \ref{corollary in the level aspect}.

\appendix

\section{The Classical Method vs. The Relative Trace Formula}\label{classical vs RTF}


In this appendix we compare the Relative Trace Formula (\textbf{RTF}) used in this paper with the classical methods that were used before by several authors, and briefly explain why we choose to employ \textbf{RTF} in this problem. The basic strategies of both methods are the same. We always start with an exact formula of the second moment of central $L$-values weighted by the $\fn^{th}$-Hecke eigenvalue (commonly referred as Kuznetsov's formula, see, for example, \cite[Theorem 4.2]{BF21}), and then sum over $\fn$ on a short range to obtain the desired mollified second moment.


We briefly explain how the classical method works in the basic case $F = \bQ$. The desired weighted second moment is
$$\sum_{\pi \in \cF(k,q)} \omega_\pi \lambda_\pi(n)L(1/2,\pi)^2,$$
where $\omega_\pi$ is the harmonic weight, and $\lambda_\pi(n)$ is the $n^{th}$ Hecke eigenvalue of $\pi$. By opening $L(1/2,\pi)^2$ using the approximate functional equation, and applying the classical Petersson trace formula, we get
$$\sum_{\pi}\omega_\pi\lambda_{\pi}(n)L(1/2,\pi)^2\approx \mbox{diagonal term}+\sum_{m\geq1}\frac{V(m)d(m)}{m^{1/2}}\sum_{c>0,q|c}\frac{S(m,n;c)}{c}J_{k-1}\left(\frac{4\pi\sqrt{mn}}{c}\right).$$
Here $V$ is some smooth function, $d(m)$ is the divisor function, $S(m,n;c)$ is the usual Kloosterman sum, and $J_{k-1}$ is the $J$-Bessel function of order $k-1$. 


It can be shown that the diagonal term contributes to the main term. To treat the off-diagonal term, we further open $V(m)$, $S(m,n;c)$ by their definitions, and open $J_{k-1}$ using the Mellin-Barnes integral representation. Then the summation over $m$ yields the following series
\begin{equation}\label{key series in classical case}
    E\left(s,\frac{a}{c}\right) = \sum_{m=1}^\infty \frac{e\left(\frac{am}{c}\right)d(m)}{m^s}, \hspace{3mm} (a,c) = 1.
\end{equation}


It is known that the above series has a meromorphic continuation to the whole complex plane with a double pole at $s=1$ and satisfies a functional equation $s \to 1-s$. This allows us to shift integration contour and ultimately obtain certain hypergeometric function. It is important to note that the residue (when shifting contour) also contributes to the main term. 

Finally, we apply stationary phase analysis to the hypergeometric functions to obtain both the main term and a power-saving error term.


Another way to explain why the off-diagonal term contributes to the main term is as follows. For $(u,v)\in\mathbb{C}^2$ with $\Re(u+v),\Re(u-v)\gg1$ we investigate the weighted second moment 
\[\sum_{\pi\in\mathcal{F}(k,q)}\lambda_{\pi}(n)L(1/2+u+v,\pi)L(1/2+u-v,\pi)\]
and study its analytic continuation to $(u,v)=(0,0).$ To achieve this we apply the Hecke relation for $L(1/2+u+v,\pi)L(1/2+u-v,\pi)$, the Petersson trace formula and the Voronoi summation formula for the divisor function (see \cite[Theorem 4.2]{BF21} for details). The diagonal term contains a zeta factor $\zeta(1+2u)$ with a pole at $(u,v) = (0,0)$, which must be canceled by another term arising from the off-diagonal term. Further, the two constant terms in the Voronoi summation formula may also contribute to the main term. It is also worth noting that the Voronoi summation formula for the divisor function $d(n)$ is equivalent to the functional equation of the series \eqref{key series in classical case}, and both can be regarded as special cases of Poisson summation formulas.


We believe this method works over any totally real number field $F$. However, the calculation would be much more involved. The "Kloosterman sum" that appears in the Petersson trace formula is more complicated than the usual one, and products of $d_F$ Bessel functions would show up, instead of a single Bessel function. These all eventually lead to a series much more complicated than \eqref{key series in classical case}, whose analytic behavior is not as clear. 


One advantage of the \textbf{RTF} is that it gives a closed formula for the desired weighted second moment, without explicitly working out a functional equation of \eqref{key series in classical case}, even over a general totally real field $F$. In fact, the functional equation is hidden in the regularization of \textbf{RTF}. See \cite[Section 3]{Yan23c} for details.

We also note that, when $F=\bQ,$ the relative trace formula yields a closed formula for the weighted second moment that is nearly identical to that in \cite[Theorem 4.2]{BF21}. In particular, the singular orbital integral will correspond to the main term in \cite[Theorem 4.2]{BF21}. Additionally, the product of $\fp$-adic evaluation functions in the regular orbital integral simplifies to the product of divisor functions. (See Remark \ref{divisor function remark} for details.) This indicates that the regular orbital integrals correspond to the error term $E(l;u,v)$ in \cite[Theorem 4.2]{BF21}.


There are also some difficulties when dealing with the mollified first moment using the classical method. For instance, the presence of the global root number $\epsilon_\pi = \mu_F(\fq)i^{\sum_{v\mid\infty}k_v}\lambda_{\pi}(\fq)N(\fq)^{1/2}$ and the fact that contributions from oldforms cannot be treated trivially, since we allow both weight and level to vary simultaneously. These matters would cause a worse error term than the case $F = \bQ$ and $q=1$.  

In contrast, the advantage of the relative trace formula is that it begins in a region where absolute convergence is guaranteed and then extends to the central value through analytic continuation. This allows us to bypass the root number issue, and the discussion regarding oldforms becomes much clearer.

Finally, when applying the approximated functional equation, we encounter integrals of hypergeometric functions, which complicate the classical case. In the context of number fields, this becomes even more intricate due to the presence of more Archimedean places and units. The relative trace formula simplifies the analysis, as it is established through analytic continuation. The continuation transforms those integrals into other types of hypergeometric functions, which are significantly easier to study. The decay of the hypergeometric functions, combined with Dirichlet's unit theorem, effectively address the issue of units. (See Lemma \ref{lem6.11} for details.) This is another advantage of the relative trace formula in the context of number fields.

\bibliographystyle{alpha}	
\bibliography{WYZ}

\newcommand{\etalchar}[1]{$^{#1}$}
\begin{thebibliography}{BFK{\etalchar{+}}23}

\bibitem[BF21]{BF21}
Olga Balkanova and Dmitry Frolenkov.
\newblock Moments of {$L$}-functions and the {L}iouville-{G}reen method.
\newblock {\em J. Eur. Math. Soc. (JEMS)}, 23(4):1333--1380, 2021.

\bibitem[BFK{\etalchar{+}}23]{BlomerFouvryKowalskiMichelMilicevicSawin2023}
Valentin Blomer, \'{E}tienne Fouvry, Emmanuel Kowalski, Philippe Michel,
  Djordje Mili\'{c}evi\'{c}, and Will Sawin.
\newblock The second moment theory of families of {$L$}-functions---the case of
  twisted {H}ecke {$L$}-functions.
\newblock {\em Mem. Amer. Math. Soc.}, 282(1394):v+148, 2023.

\bibitem[BM15]{BM15}
Valentin Blomer and Djordje Mili\'cevi\'c.
\newblock The second moment of twisted modular {$L$}-functions.
\newblock {\em Geom. Funct. Anal.}, 25(2):453--516, 2015.

\bibitem[Bum97]{Bum97}
Daniel Bump.
\newblock {\em Automorphic forms and representations}, volume~55 of {\em
  Cambridge Studies in Advanced Mathematics}.
\newblock Cambridge University Press, Cambridge, 1997.

\bibitem[DK00]{DukeKowalski2000}
W.~Duke and E.~Kowalski.
\newblock A problem of {L}innik for elliptic curves and mean-value estimates
  for automorphic representations.
\newblock {\em Invent. Math.}, 139(1):1--39, 2000.
\newblock With an appendix by Dinakar Ramakrishnan.

\bibitem[FH95]{FriedbergHoffstein1995}
Solomon Friedberg and Jeffrey Hoffstein.
\newblock Nonvanishing theorems for automorphic {$L$}-functions on {${\rm
  GL}(2)$}.
\newblock {\em Ann. of Math. (2)}, 142(2):385--423, 1995.

\bibitem[ILS00]{IwaniecLuoSarnak2000}
Henryk Iwaniec, Wenzhi Luo, and Peter Sarnak.
\newblock Low lying zeros of families of {$L$}-functions.
\newblock {\em Inst. Hautes \'{E}tudes Sci. Publ. Math.}, (91):55--131, 2000.

\bibitem[IS99]{IS97}
H.~Iwaniec and P.~Sarnak.
\newblock Dirichlet {$L$}-functions at the central point.
\newblock In {\em Number theory in progress, {V}ol. 2
  ({Z}akopane-{K}o\'{s}cielisko, 1997)}, pages 941--952. de Gruyter, Berlin,
  1999.

\bibitem[IS00]{IwaniecSarnak2000}
Henryk Iwaniec and Peter Sarnak.
\newblock The non-vanishing of central values of automorphic {$L$}-functions
  and {L}andau-{S}iegel zeros.
\newblock {\em Israel J. Math.}, 120:155--177, 2000.

\bibitem[JK15]{JK15}
Julia Jackson and Andrew Knightly.
\newblock Averages of twisted {$L$}-functions.
\newblock {\em J. Aust. Math. Soc.}, 99(2):207--236, 2015.

\bibitem[KL08]{KL08}
Andrew Knightly and Charles Li.
\newblock Petersson's trace formula and the {H}ecke eigenvalues of {H}ilbert
  modular forms.
\newblock In {\em Modular forms on {S}chiermonnikoog}, pages 145--187.
  Cambridge Univ. Press, Cambridge, 2008.

\bibitem[KL10]{KL10}
Andrew Knightly and Charles Li.
\newblock Weighted averages of modular {$L$}-values.
\newblock {\em Trans. Amer. Math. Soc.}, 362(3):1423--1443, 2010.

\bibitem[KM99]{KowalskiMichel1999}
E.~Kowalski and P.~Michel.
\newblock The analytic rank of {$J_0(q)$} and zeros of automorphic
  {$L$}-functions.
\newblock {\em Duke Math. J.}, 100(3):503--542, 1999.

\bibitem[Luo99]{Luo1999}
Wenzhi Luo.
\newblock Values of symmetric square {$L$}-functions at {$1$}.
\newblock {\em J. Reine Angew. Math.}, 506:215--235, 1999.

\bibitem[Luo03]{Luo2003}
Wenzhi Luo.
\newblock Poincar\'{e} series and {H}ilbert modular forms.
\newblock volume~7, pages 129--140. 2003.
\newblock Rankin memorial issues.

\bibitem[Luo15]{Luo2015}
Wenzhi Luo.
\newblock Nonvanishing of the central {$L$}-values with large weight.
\newblock {\em Adv. Math.}, 285:220--234, 2015.

\bibitem[MOS66]{MOS66}
Wilhelm Magnus, Fritz Oberhettinger, and Raj~Pal Soni.
\newblock {\em Formulas and theorems for the special functions of mathematical
  physics}, volume Band 52 of {\em Die Grundlehren der mathematischen
  Wissenschaften}.
\newblock Springer-Verlag New York, Inc., New York, enlarged edition, 1966.

\bibitem[MV02]{MichelVanderKam2002}
P.~Michel and J.~Vanderkam.
\newblock Simultaneous nonvanishing of twists of automorphic {$L$}-functions.
\newblock {\em Compositio Math.}, 134(2):135--191, 2002.

\bibitem[MV10]{MV10}
Philippe Michel and Akshay Venkatesh.
\newblock The subconvexity problem for {${\rm GL}_2$}.
\newblock {\em Publ. Math. Inst. Hautes \'{E}tudes Sci.}, (111):171--271, 2010.

\bibitem[OS98]{OnoSkinner1998}
Ken Ono and Christopher Skinner.
\newblock Non-vanishing of quadratic twists of modular {$L$}-functions.
\newblock {\em Invent. Math.}, 134(3):651--660, 1998.

\bibitem[Roh89]{Rohrlich1989}
David~E. Rohrlich.
\newblock Nonvanishing of {$L$}-functions for {${\rm GL}(2)$}.
\newblock {\em Invent. Math.}, 97(2):381--403, 1989.

\bibitem[RR05]{RR05}
Dinakar Ramakrishnan and Jonathan Rogawski.
\newblock Average values of modular {$L$}-series via the relative trace
  formula.
\newblock {\em Pure Appl. Math. Q.}, 1(4):701--735, 2005.

\bibitem[Shi77]{Shimura1977}
Goro Shimura.
\newblock On the periods of modular forms.
\newblock {\em Math. Ann.}, 229(3):211--221, 1977.

\bibitem[Tro11]{Trotabas2011}
Denis Trotabas.
\newblock Non annulation des fonctions {$L$} des formes modulaires de {H}ilbert
  au point central.
\newblock {\em Ann. Inst. Fourier (Grenoble)}, 61(1):187--259, 2011.

\bibitem[TZ21]{ThornerZaman2021}
Jesse Thorner and Asif Zaman.
\newblock An unconditional {${\rm GL}_n$} large sieve.
\newblock {\em Adv. Math.}, 378:Paper No. 107529, 24, 2021.

\bibitem[Van99]{VanderKam1999}
Jeffrey~M. VanderKam.
\newblock The rank of quotients of {$J_0(N)$}.
\newblock {\em Duke Math. J.}, 97(3):545--577, 1999.

\bibitem[Yan23a]{Yan23c}
Liyang Yang.
\newblock Average of central {$L$}-values for
  {$\mathrm{GL}(2)\times\mathrm{GL}(1),$} hybrid subconvexity, and simultaneous
  nonvanishing.
\newblock {\em preprint}, 2023.

\bibitem[Yan23b]{Yan23a}
Liyang Yang.
\newblock Relative trace formula and {$L$}-functions for
  {$\mathrm{GL}(n+1)\times \mathrm{GL}(n)$}.
\newblock {\em arXiv preprint arXiv:2303.02225}, 2023.

\end{thebibliography}

\end{document}